\definecolor{dmagenta}{rgb}{.4,.1,.5}
\definecolor{dred}{rgb}{.5,.0,.0}
\definecolor{mred}{rgb}{.7,.0,.0}
\definecolor{dgreen}{rgb}{.0,.5,.0}
\definecolor{dblue}{rgb}{.0,.0,.5}
\definecolor{mblue}{rgb}{.0,.0,.7}
\newcommand \Dotfill {\leavevmode \cleaders \hb@xt@ .53em{\hss .\hss }\hfill \kern \z@}
\numberwithin{equation}{section}
\newcounter{dummy} \numberwithin{dummy}{section}
\theoremstyle{plain}
\newtheorem{theorem}[dummy]{Theorem}
\newtheorem{lemma}[dummy]{Lemma}
\newtheorem{proposition}[dummy]{Proposition}
\newtheorem{corollary}[dummy]{Corollary}
\theoremstyle{definition}
\newtheorem{definition}[dummy]{Definition}
\newtheorem{example}[dummy]{Example}
\newtheorem{hypothesis}[dummy]{Hypothesis}
\theoremstyle{remark}
\newtheorem{remark}[dummy]{Remark}
\newcommand{\cA}{\mathcal{A}}   
\newcommand{\sB}{\mathscr{B}}   
\newcommand{\Cc}{\mathcal{C}}   
\newcommand{\Ccc}{\mathcal{C}_{\mathrm{c}}}   
\newcommand{\sE}{\mathscr{E}}   
\newcommand{\sG}{\mathscr{G}}   
\newcommand{\cG}{\mathcal{G}}   
\newcommand{\cH}{\mathcal{H}}   
\newcommand{\sJ}{\mathscr{J}}   
\newcommand{\cJ}{\mathcal{J}}   
\newcommand{\fJ}{\mathfrak{J}}  
\newcommand{\fJs}{\mathfrak{J}_{\mspace{0.5mu}\mathrm{s}}}   
\newcommand{\fJc}{\mathfrak{J}_{\mspace{0.5mu}\mathrm{c}}}   
\newcommand{\cK}{\mathcal{K}}   
\newcommand{\Lg}{\mathcal{L}}   
\newcommand{\cN}{\mathcal{N}}   
\newcommand{\sN}{\mathscr{N}}   
\newcommand{\cP}{\mathcal{P}}   
\newcommand{\fP}{\mathfrak{P}}  
\newcommand{\sR}{\mathscr{R}}   
\newcommand{\sS}{\mathfrak{S}}   
\newcommand{\cS}{\mathcal{S}}   
\newcommand{\cSs}{\mathcal{S}_{\mspace{0.5mu}\mathrm{s}}}   
\newcommand{\cSo}{\mathcal{S}_{\mspace{0.5mu}\mathrm{o}}}   
\newcommand{\cU}{\mathcal{U}}   
\newcommand{\Lyap}{\mathcal{V}}  
\newcommand{\cX}{\mathcal{X}}   
\newcommand{\sX}{\mathscr{X}}   
\newcommand{\cZ}{\mathcal{Z}}   
\newcommand{\cZs}{\mathcal{Z}_{\mspace{0.5mu}\mathrm{s}}}   
\newcommand{\cZc}{\mathcal{Z}_{\mspace{0.5mu}\mathrm{c}}}   
\newcommand{\Lp}{{L}}       
\newcommand{\Lpl}{L_{\text{loc}}}       
\newcommand{\Sob}{{\mathscr{W}}}   
\newcommand{\Sobl}{\mathscr{W}_{\mathrm{loc}}} 
\newcommand{\grad}{\nabla}
\DeclareMathOperator{\Exp}{\mathbb{E}} 
\DeclareMathOperator{\Prob}{\mathbb{P}} 
\newcommand{\D}{\mathrm{d}} 
\newcommand{\E}{\mathrm{e}}  
\newcommand{\RR}{\mathbb{R}} 
\newcommand{\Rd}{{\mathbb{R}^d}}
\newcommand{\NN}{\mathbb{N}}  
\newcommand{\Act}{\mathbb{U}}    
\newcommand{\Uadm}{\mathfrak{U}} 
\newcommand{\Usm}{\mathfrak{U}_{\mathrm{SM}}}  
\newcommand{\Ussm}{\mathfrak{U}_{\mathrm{SSM}}} 
\newcommand{\bUssm}{\overline{\mathfrak{U}}_{\mathrm{SSM}}} 
\newcommand{\sF}{\mathfrak{F}}  
\newcommand{\Ind}{\mathds{1}}        
\newcommand{\transp}{^{\mathsf{T}}}  
\newcommand{\abs}[1]{\lvert#1\rvert}
\newcommand{\norm}[1]{\lVert#1\rVert}
\newcommand{\order}{{\mathscr{O}}} 
\newcommand{\sorder}{{\mathfrak{o}}} 
\newcommand{\babs}[1]{\bigl\lvert#1\bigr\rvert}
\newcommand{\babss}[1]{\biggl\lvert#1\biggr\rvert}
\newcommand{\df}{:=}
\DeclareMathOperator*{\Argmin}{Arg\,min}
\DeclareMathOperator*{\argmin}{arg\,min}
\DeclareMathOperator{\diag}{diag}
\DeclareMathOperator{\trace}{trace}
\DeclareMathOperator{\dist}{dist}
\begin{document}
\begin{frontmatter}

\title
{Controlled equilibrium selection in\\
stochastically perturbed dynamics}
\runtitle{Controlled equilibrium selection}

\begin{aug}
\author{\fnms{Ari} \snm{Arapostathis}\corref{}
\thanksref{e1}\ead[label=e1,mark]{ari@ece.utexas.edu}},
\author{\fnms{Anup} \snm{Biswas}
\thanksref{e2}\ead[label=e2,mark]{anup@iiserpune.ac.in}},
\and
\author{\fnms{Vivek S.} \snm{Borkar}
\thanksref{e3}\ead[label=e3,mark]{borkar@ee.iitb.ac.in}}
\affiliation{The University of Texas at Austin\thanksmark{m1},\\
Indian Institute of Science Education and Research, Pune\thanksmark{m2},\\ and
Indian Institute of Technology Bombay\thanksmark{m3}}

\runauthor{A.~Arapostathis, A.~Biswas, and V.~S.~Borkar}

\address{Dept.\ of Electrical and Computer Eng.\\
The University of Texas at Austin\\
1616 Guadalupe St., UTA 7.508\\
Austin, TX~~78701\\
\printead{e1}}

\address{Indian Institute of Science Education\\ and Research\\
Dr. Homi Bhabha Road\\
Pune 411008, India\\
\printead{e2}}

\address{Department of Electrical Engineering\\
Indian Institute of Technology\\
Powai, Mumbai,
India\\
\printead{e3}}
\end{aug}

\begin{abstract}
We consider a dynamical system with finitely
many equilibria and perturbed by small noise, in addition to being
controlled by an `expensive' control.
The controlled process is optimal for an ergodic criterion
with a running cost that consists of the sum of
the control effort and a penalty function on the state space.
We study the optimal stationary distribution of the controlled process
as the variance of the noise becomes vanishingly small.
It is shown that depending
on the relative magnitudes of the noise variance and the `running
cost' for control, one can identify three regimes, in each
of which the optimal control forces the invariant distribution of
the process to concentrate near equilibria that can be characterized according
to the regime.
We also obtain moment bounds for the optimal stationary distribution.
Moreover, we show that in the vicinity of
the points of concentration the density of optimal stationary distribution approximates
the density of a Gaussian, and we explicitly solve for its covariance matrix.
\end{abstract}

\begin{keyword}[class=MSC]
\kwd[Primary ]{35R60}
\kwd[; secondary ]{93E20}
\end{keyword}

\begin{keyword}
\kwd{Controlled diffusion}
\kwd{equilibrium selection}
\kwd{large deviations}
\kwd{small noise}
\kwd{expensive controls}
\kwd{ergodic control}
\kwd{HJB equation}
\kwd{ergodic LQG}
\end{keyword}

\end{frontmatter}

\tableofcontents

\section{Introduction} 

The study of dynamical systems has a long and profound history.
A lot of effort has been devoted to understand the behavior
of the system when it is perturbed by an additive noise
\cite{Berg-Gent,FrWe,Oliv-Var}.
Small noise diffusions have found applications in climate modeling
\cite{benzi, Berg-Gent-02}, electrical engineering \cite{Bobrov, Zei-Zak},
finance \cite{Feng} and many other areas.
Recent work on `stochastic resonance' (see, e.g., \cite{Mos}) introduces an
additional external input to the dynamics that may be viewed as a control.
This is the main motivation for the study of the model we introduce next.

\subsection{The model}
In this paper we consider a controlled dynamical system with
small noise, which is modelled as
a $d-$dimensional controlled diffusion $X =[X_{1},\dotsc,X_d]\transp$
governed by the stochastic integral equation
\begin{equation}\label{E-sde}
X_{t} \;=\; X_{0} + \int_{0}^{t}\bigl(m(X_{s}) + \varepsilon\,U_{s}\bigr)\,\D{s}
+ \varepsilon^{\nu}W_{t}\,, \quad
t \;\ge\; 0\,. 
\end{equation}
Here all processes live in a complete probability space
$(\Omega,\sF,\Prob)$ and the data of \eqref{E-sde} satisfies
the following.
\begin{itemize}
\item[(a)] $m = [m_{1},\dotsc,m_d]\transp\colon \Rd\to\Rd$
is a bounded $\Cc^{\infty}$ function with bounded derivatives.
\smallskip
\item[(b)] $W$ is a standard Brownian motion in $\Rd$.
\smallskip
\item[(c)] $U$ is an $\Rd-$valued control process which is jointly measurable
in $(t,\omega)\in [0,\infty)\times\Bar\Omega$ (in particular it has
measurable paths), and is \emph{nonanticipative}: for
$t > s$, $W_{t} - W_{s}$ is independent of
\begin{equation*}
\sF_s\,\df\, \text{the completion of\ }
\cap_{y>s}\sigma( X_{0}, W_{r}, U_{r}\,\colon r\le y) \text{\ relative to\ }
(\sF,\Prob)\,.
\end{equation*}
Such a control is called \emph{admissible},
and we denote the set of admissible controls by $\Uadm$.
As pointed out in \cite[p.~18]{Bor}, we may, without loss of
generality, assume that an admissible $U$ is adapted to the natural
filtration of $X$.
\smallskip
\item[(d)]
$0 < \varepsilon \ll 1$.
\smallskip
\item[(e)]
$\nu  > 0$.
\end{itemize}

Let $\sR\colon\Rd\times\Rd\to\Rd$ be a \emph{running cost} of the form
\begin{equation}\label{E-cRnew}
\sR(x,u)\;\df\; \ell(x) + \frac{1}{2}\, \abs{u}^{2}\,,
\end{equation}%
\nomenclature[Ab]{$\sR(x,u)$}{running cost, equation \eqref{E-cRnew}}%
where $\ell\colon\Rd \to \RR_+$ is a prescribed smooth, Lipschitz function
satisfying the condition 
\begin{equation*}
\lim_{\abs{x}\to\infty}\ell(x) \;=\; \infty\,.
\end{equation*}
The control objective is to minimize the long run average
(or \emph{ergodic}) cost
\begin{equation}\label{cost}
\sJ^\varepsilon(U) \;\df\; \limsup_{T\to\infty}\;\frac{1}{T}\,
\Exp\biggl[\int_{0}^{T} \sR(X_{s},U_{s})\,\D{s}\biggr]\,,
\end{equation}%
\nomenclature[Aa]{$\sJ^\varepsilon(U)$}{ergodic cost, equation \eqref{cost}}%
over all admissible controls.

We view \eqref{E-sde} as a perturbation of the o.d.e.\ (for \emph{ordinary
differential equation})
\begin{equation}\label{ode}
\dot{x}(t) \;=\; m\bigl(x(t)\bigr)\,, 
\end{equation}
perturbed by the `small noise' $\varepsilon^{\nu}W_{t}$ (`small'
because $\varepsilon \ll 1$), and a control term $\varepsilon U_{t}$.
Since $\varepsilon$ is small, the optimization criterion
in \eqref{cost} implies that the control is `expensive'.
We assume that the set of non-wandering points of the flow of \eqref{ode}
consists of finitely many hyperbolic equilibria, and that these are contained
in some bounded open set which is positively invariant under the flow
(see Hypothesis~\ref{H1.1}).

For the case when the control $U\equiv0$,
Freidlin and Wentzell developed a general framework for
the analysis of small noise perturbed dynamical
systems in \cite{FrWe} that is based on the theory of large deviations.
Under a stochastic Lyapunov condition we introduce later
(Hypothesis~\ref{H1.1}), the cost is
finite for $U= 0$,
ensuring in particular that the set of controls $U\in\Uadm$
resulting in a finite value for $\sJ^\varepsilon(U)$ is nonempty.
It is quite evident from ergodic theory that for $U=0$ the limit \eqref{cost}
is the expectation of $\ell$ with
respect to the invariant probability measure of \eqref{E-sde}.

The qualitative properties of the dynamics
are best understood if we consider the special case $d=1$,
and $m =- \frac{\D F}{\D x}$
for some smooth function $F\colon \RR\to \RR$.
Then the trajectory of \eqref{ode} converges to a critical point of $F$.
In fact, generically (i.e., for $x(0)$ in an open dense set) it converges
to a stable one, i.e., to a local minimum.
If one views the graph of $F$ as a `landscape', the
local minima are the bottoms of its `valleys'.
The behavior of the stochastically perturbed (albeit uncontrolled) version of
this model, notably the analysis of where the stationary distribution
concentrates, has been of considerable interest to physicists
(see, e.g., \cite[Chapter~8]{Sch} or \cite[Chapter~6]{FrWe}).
To find the  actual support of the limit in the case of multiple equilibria,
one often looks at the large deviation properties of these
invariant measures \cite{FrWe}. 
There are several studies in literature that deal with the
large deviation principle of invariant measures of
dynamical systems.
Among the most relevant to the  present
are \cite{Sheu-86, Day-87} which
obtain a large deviation principle for invariant measures
(more precisely, invariant densities) of \eqref{E-sde} under the assumption
that there is a unique equilibrium point.
This has been extended to multiple equilibria in \cite{BiBo}.
A large deviation principle
for invariant measures for a class of reaction-diffusion systems is
established in \cite{Cerrai-05}.
However, none of the above mentioned studies have any control component in
their dynamics.

The model in \eqref{E-sde} goes a step further and considers the
full-fledged optimal control version of this, wherein one tries to
induce a preferred equilibrium behavior through a feedback control.
The reason the latter has to be `expensive' is because
this captures the physically realistic situation that one can
`tweak' the dynamics but cannot replace it by something altogether
different without incurring considerable expense.
The function $\ell$ captures the relative preference among different
points in the state space.
Thus, the model in \eqref{E-sde} is closely related to the model of
stochastic resonance
which has applications in neuron modelling, physics, electronics, physiology, etc.
We refer to \cite[Chapter~1]{Herrmann} for various applications in the
presence of small noise.
In particular our model is closely related to the celebrated FitzHugh--Nagumo
model \cite{Lindner} in the presence of noise. The control
in \eqref{E-sde} should be seen as an external input.
In practice it is convenient to take
$U$ to be periodic in time, whereas we do not impose any periodicity
constraint on $U$. 
The $\varepsilon$ factor in
the control could be interpreted as the \emph{weak modulation} in \cite{Mos}.
We refer the reader to \cite{Mos,Russell-99} for a discussion on the interplay
between noise variance and the control magnitude and its relation to
stochastic resonance.
Nonlinear control theory has been useful in understanding
classes of systems that exhibit stochastic resonance \cite{Repperger-10}.
Optimization theory has also been applied with the aim of enhancing the
stochastic resonance effect for engineered systems \cite{Repperger-06,
Repperger-09}.

In our controlled setting we are interested in achieving
a desired value of $\beta^\varepsilon_*$, reflecting the desired behavior of the
corresponding stationary distribution. 
Although one can fix a suitable penalty
function $\ell$ beforehand, we will see in Theorem~\ref{T1.11}
in Section~\ref{S1.4} that the value of
$\beta^\varepsilon_*$, as well as the concentration of
the stationary distribution, change with $\nu$. Therefore a desired value of
$\beta^\varepsilon_*$ or a desired profile of
the stationary distribution
might be obtained for some specific values of $\nu$ for small $\varepsilon$.

We also wish to point out that, since the control and noise are scaled differently,
the ergodic control problem described can be viewed
as a multi-scale diffusion problem.

\subsubsection{Assumptions on the vector field $m$}

Recall that a continuous-time dynamical system on a topological space $\sX$
is specified by a map $\phi_t\colon \sX\to\sX$,
where $\{\phi_t\}$ is a one parameter continuous abelian group action on $\sX$
called the \emph{flow}.
A point $x\in\sX$ is called \emph{non-wandering} if for every open neighborhood
$U$ of $x$ and every time $T>0$ there exists $t>T$ such that
$\phi_{t}(U)\cap U\ne \varnothing$.

Recall also that a
critical point $z$ of a smooth vector field $m$ is called \emph{hyperbolic}
if the Jacobian matrix $Dm(z)$ of $m$ at $z$ has no eigenvalues on
the imaginary axis.
For a hyperbolic critical point $z$ of a vector field $m$,
we let $\mathcal{W}_{\text{s}}(z)$ and $\mathcal{W}_{\text{u}}(z)$
denote the stable and unstable manifolds of its flow.

The following hypothesis on the vector field $m$ is in effect throughout
the paper.

\begin{hypothesis}\label{H1.1}
The vector field $m$ is bounded and smooth and satisfies
\begin{enumerate}
\item
The set of non-wandering points of the flow of $m$ is a finite
set $\cS=\{z_{1},\dotsc,z_{n}\}$ of hyperbolic critical points.
\smallskip
\item
If $y$ and $z$ are critical points of $m$,
then $\mathcal{W}_{\text{s}}(y)$ and $\mathcal{W}_{\text{u}}(z)$
intersect transversally (if they intersect).
\item
There exist a smooth function $\Bar\Lyap\colon \Rd \to \RR_+$
and a bounded open neighborhood of the origin
$\cK\subset\Rd$ containing $\cS$, with the following properties.
\begin{itemize}
\item[(3a)]
$c_{1} \abs{x}^{2}\le\Bar\Lyap(x)\le c_{2} (1+\abs{x}^{2})$
for some positive constants $c_{1}$, $c_{2}$, and all $x\in\cK^{c}$.
\smallskip
\item[(3b)]
$\nabla \Bar\Lyap$ is Lipschitz and satisfies
\begin{equation}\label{Lyapunov}
\bigl\langle m(x),\nabla \Bar\Lyap(x)\bigr\rangle < -\gamma \abs{x} 
\end{equation}
for some $\gamma > 0$, and all $x\in\cK^{c}$.
\end{itemize}
\end{enumerate}
\end{hypothesis}

\begin{remark}
The vector field $m$ is assumed bounded for simplicity.
The reader however might notice that
the characterization of optimality (see Theorem~\ref{T1.4}) is based
on the regularity results in \cite{Bens-Fre-book},
and
the hypotheses in \cite[Section~4.6.1]{Bens-Fre-book}
permit $m$ to be unbounded as long as
\begin{equation*}
\limsup_{\abs{x}\to\infty}\;\frac{\abs{m(x)}^{2}}{\ell(x)}\;<\;\infty\,.
\end{equation*}
Provided that this condition is satisfied, the assumption that the drift is
bounded can be waived and all the results of this paper
hold unaltered, with the proofs requiring no major modification.
\end{remark}

The outline of the paper is as follows.
Section~\ref{S1.2} summarizes the notation, and provides
a glossary of special symbols used in the paper.
In Section~\ref{S1.5} we present an important property of LQG systems,
which plays a crucial role in the study of the critical regime
and also in the proof of Theorem~\ref{T1.13}.

In Section~\ref{S2} we discuss energy functions for gradient-like flows
(Theorem~\ref{T2.2}).
These are heavily used in the study of the subcritical regime.
The proofs of the main results comprise Sections~\ref{S3}--\ref{S5}.
Section~\ref{S3} is devoted to the study of the minimal stochastically
stable sets, Section~\ref{S4} is primarily devoted to the
proof of Theorem~\ref{T1.12}, while Section~\ref{S5} studies
the optimal stationary distribution under an appropriate scaling,
which leads to Theorem~\ref{T1.13}.
Appendix~\ref{AppA} contains the proofs of Lemma~\ref{L1.3} and 
Theorem~\ref{T1.4}, while Appendix~\ref{AppB} is devoted to the
proof of Lemma~\ref{L1.16} and Theorem~\ref{T1.18}.

\subsection{Notation}\label{S1.2}
The following notation is used in this paper.
The symbol $\RR$ denotes the field of real numbers,
and $\NN$ denotes the set of natural numbers.
The Euclidean norm on $\Rd$ is denoted by $\abs{\,\cdot\,}$,
and $\langle\cdot\,,\cdot\rangle$ denotes the inner product.
For two real numbers $a$ and $b$, $a\wedge b\df\min(a,b)$
and $a\vee b\df\max(a,b)$.
For a matrix $M$, $M\transp$ denotes its transpose, and
$\norm{M}$ denotes the operator norm relative to the Euclidean vector norm.
Also $I$ denotes the identity matrix.

The composition of two functions $f$ and $g$ is denoted by $f\circ g$.
A ball of radius $r>0$ in $\Rd$ around a point $x$ is denoted by $B_{r}(x)$,
or as $B_{r}$ if $x=0$.
For a compact set $K$,
we let $\dist(x,K)$ denote the Euclidean distance of $x\in\Rd$ from the set $K$,
and $B_r(K)\df\{y\in\Rd\,\colon \dist(y, K)< r\}$.
For a set $A\subset\Rd$, we use
$\Bar A$, $A^{c}$, and $\partial A$ to denote the closure,
the complement, and the boundary of $A$, respectively.
We define $\Cc^{k}_{b}(\RR^{d})$, $k\ge 0$, as the set of functions
whose $i^{\mathrm{th}}$ derivatives, $i=0,1,\dotsc,k$, are continuous and bounded
in $\RR^{d}$ and denote by
$\Cc^{k}_{c}(\RR^{d})$ the subset of $\Cc^{k}_{b}(\RR^{d})$ with compact support.
The space of all probability measures on a
Polish space $\cX$ with the Prohorov topology is denoted by $\cP(\cX)$.
\nomenclature[Ad]{$\cP(\cX)$}{space of probability measures on a Polish space $\cX$}
The density of the $d$-dimensional Gaussian distribution with
mean $0$ and covariance matrix $\Sigma$ is denoted by $\rho^{~}_{\Sigma}$.

The term \emph{domain} in $\RR^{d}$
refers to a nonempty, connected open subset of the Euclidean space $\RR^{d}$.
We introduce the following notation for spaces of real-valued functions on
a domain $G\subset\RR^{d}$.
The space $\Lp^{p}(G)$, $p\in[1,\infty)$, stands for the usual Banach space
of (equivalence classes of) measurable functions $f$ satisfying
$\int_{G} \abs{f(x)}^{p}\,\D{x}<\infty$, and $\Lp^{\infty}(G)$ is the
Banach space of functions that are essentially bounded in $G$.
The standard Sobolev space of functions on $G$ whose generalized
derivatives up to order $k$ are in $\Lp^{p}(G)$, equipped with its natural
norm, is denoted by $\Sob^{k,p}(G)$, $k\ge0$, $p\ge1$.

In general if $\mathcal{Y}$ is a space of real-valued functions on a domain $G$,
$\mathcal{Y}_{\mathrm{loc}}$ consists of all functions $f$ such that
$f\varphi\in\mathcal{Y}$ for every $\varphi\in\Ccc^{\infty}(G)$,
the space of smooth functions on $G$ with compact support.
In this manner we obtain for example the space $\Sobl^{2,p}(G)$.

The symbols
$\order(\abs{x}^a)$ and $\sorder(\abs{x}^a)$, for
$a\in(0,\infty)$, denote the sets
of functions $f\colon\Rd\to\RR$ having the property
\begin{equation*}
\limsup_{\abs{x}\searrow0}\;\frac{\abs{f(x)}}{\abs{x}^a}\;<\;\infty\,,
\qquad\text{and}\qquad
\limsup_{\abs{x}\searrow0}\;\frac{\abs{f(x)}}{\abs{x}^a}\;=\;0\,,
\end{equation*}
respectively.
Abusing the notation, $\order(\abs{x}^a)$ and $\sorder(\abs{x}^a)$ occasionally denote
generic members of these sets.
Thus, for example, an inequality of the form $\order(\abs{x}^2)\le f(x)
\le\order(\abs{x})$ is well defined, and is equivalent to the
statement that
$\limsup_{\abs{x}\searrow0}\;\frac{\abs{f(x)}}{\abs{x}}\;<\;\infty$,
and $\liminf_{\abs{x}\searrow0}\;\frac{\abs{f(x)}}{\abs{x}^2}\;>\;-\infty$.
\nomenclature[Fa]{$\order(\abs{x}^a)$, $\sorder(\abs{x}^a)$}{classes of functions}

Also $\kappa_{1}$, $\kappa_{2},\dotsc$ are generic constants whose definition
differs from place to place.

A glossary of commonly used symbols and the page where they are first defined
is provided below.


\begin{thenomenclature} 

 \nomgroup{A}

  \item [{$\sJ^\varepsilon(U)$}]\begingroup ergodic cost, equation \eqref{cost}\nomeqref {1.3}
		\nompageref{3}
  \item [{$\sR(x,u)$}]\begingroup running cost, equation \eqref{E-cRnew}\nomeqref {1.2}
		\nompageref{2}
  \item [{$\sR[v](x)$}]\begingroup running cost under control $v$, equation \eqref{E-sR}\nomeqref {3.1}
		\nompageref{19}
  \item [{$\cP(\cX)$}]\begingroup space of probability measures on a Polish space $\cX$\nomeqref {1.5}
		\nompageref{5}
  \item [{$\fP^\varepsilon$}]\begingroup set of infinitesimal ergodic occupation measures, equation \eqref{E-eom}\nomeqref {1.9}
		\nompageref{7}
  \item [{$\Phi^U_t$}]\begingroup set of mean empirical measures, equation \eqref{E-emp}\nomeqref {1.11}
		\nompageref{7}
  \item [{$\sJ^\varepsilon_\uppi$, $\sJ^\varepsilon_*$}]\begingroup objective and optimal value of primal problem, equation \eqref{EE1.9}\nomeqref {1.12}
		\nompageref{8}

 \nomgroup{B}

  \item [{$\Lg^{\varepsilon}_{0}$}]\begingroup operator\nomeqref {1.5}
		\nompageref{6}
  \item [{$\Lg^\varepsilon$}]\begingroup operator, equation \eqref{E-Lg}\nomeqref {1.10}
		\nompageref{7}
  \item [{$\Lg^{\varepsilon}_v$}]\begingroup operator, equation \eqref{E-Lgv}\nomeqref {1.15}
		\nompageref{9}

 \nomgroup{C}

  \item [{$V^\varepsilon$}]\begingroup solution of the HJB, equation \eqref{HJBerg}\nomeqref {1.13}
		\nompageref{8}
  \item [{$\widehat{V}_{z}^{\varepsilon}$, $\widetilde{V}^{\varepsilon}$, $\Breve{V}_{z}^{\varepsilon}$}]\begingroup scaled solutions of the HJB, Definition~\ref{D4.3}\nomeqref {4.13}
		\nompageref{29}
  \item [{$\beta^\varepsilon_*$}]\begingroup optimal value for the ergodic problem, equation \eqref{E-Birk}\nomeqref {1.14}
		\nompageref{8}
  \item [{$\eta^\varepsilon_*$}]\begingroup optimal stationary distribution, Theorem~\ref{T1.4}\nomeqref {1.14}
		\nompageref{9}
  \item [{$v^\varepsilon_*$}]\begingroup optimal stationary Markov control, Theorem~\ref{T1.4}\nomeqref {1.14}
		\nompageref{9}
  \item [{$\varrho^\varepsilon_*$}]\begingroup density of optimal stationary distribution\nomeqref {1.16}
		\nompageref{9}
  \item [{$\Hat{\eta}_{z}^{\varepsilon}$, $\mathring{\eta}_{z}^{\varepsilon}$}]\begingroup scaled optimal stationary distributions, Definition~\ref{D5.1}\nomeqref {5.0}
		\nompageref{33}
  \item [{$\Hat{\varrho}_{z}^{\varepsilon}$, $\mathring\varrho^\varepsilon_{z}$}]\begingroup scaled optimal densities, Definition~\ref{D5.1}\nomeqref {5.0}
		\nompageref{33}
  \item [{$\widehat{m}_{z}^{\varepsilon}$, $\widehat{\ell}_{z}^{\varepsilon}$}]\begingroup scaled vector field and potential, Definition~\ref{D4.3}\nomeqref {4.13}
		\nompageref{29}
  \item [{$\sG^\varepsilon_*$}]\begingroup optimal control effort, equation \eqref{E-sG}\nomeqref {1.18}
		\nompageref{10}
  \item [{$\zeta^{\varepsilon}$, $\xi^\varepsilon_1$, $\xi^\varepsilon_2$}]\begingroup constants, equation \eqref{EL3.6C}\nomeqref {3.38}
		\nompageref{26}

 \nomgroup{E}

  \item [{$\Lyap$}]\begingroup energy function, Lemma~\ref{L2.3}\nomeqref {2.6}
		\nompageref{18}
  \item [{$\cS$ ($\cSs$)}]\begingroup set of equilibria (stable equilibria) of \eqref{ode}, Definition~\ref{D1.8}\nomeqref {1.16}
		\nompageref{10}
  \item [{$\sS$}]\begingroup minimal stochastically stable set, Definition~\ref{D1.8}\nomeqref {1.16}
		\nompageref{10}
  \item [{$\cZc$, $\cZs$, $\cZ$, $\widetilde\cZ$}]\begingroup classes of equilibria, Definition~\ref{D1.10}\nomeqref {1.18}
		\nompageref{10}
  \item [{$\fJc$, $\fJs$, $\fJ$, $\widetilde\fJ$}]\begingroup Definition~\ref{D1.10}\nomeqref {1.18}
		\nompageref{10}

 \nomgroup{F}

  \item [{$\order(\abs{x}^a)$, $\sorder(\abs{x}^a)$}]\begingroup classes of functions\nomeqref {1.5}
		\nompageref{5}

 \nomgroup{G}

  \item [{$\varLambda^+(M)$}]\begingroup trace of unstable spectrum of a matrix $M$, Definition~\ref{D1.9}\nomeqref {1.16}
		\nompageref{10}
  \item [{$M_z$, $Dm(z)$}]\begingroup Jacobian of vector field $m(z)$, Definition~\ref{D1.9}\nomeqref {1.16}
		\nompageref{10}
  \item [{$\widehat{Q}_{z}$, $\widehat{\Sigma}_{z}$}]\begingroup symmetric matrices, equation \eqref{ED1.9}\nomeqref {1.17}
		\nompageref{10}

\end{thenomenclature}

\subsection{The optimal stationary distribution}\label{S1.3}

Recall the function $\Bar\Lyap$ defined in Hypothesis~\ref{H1.1}.
Since $\nabla \Bar\Lyap$ is Lipschitz, $\Delta \Bar\Lyap$ is bounded and thus
\eqref{Lyapunov} implies that with
\begin{equation*}
\Lg^{\varepsilon}_{0}\, f(x) \;\df\;  \frac{\varepsilon^{2\nu}}{2}\,\Delta f(x)
+ \bigl\langle m(x), \nabla f(x)\bigr\rangle
\qquad\forall\,x\in\Rd\,, \quad f \in \Cc^{2}(\Rd)\,,
\end{equation*}%
\nomenclature[Ba]{$\Lg^{\varepsilon}_{0}$}{operator}%
we have
\begin{equation*}
\Lg^{\varepsilon}_{0}\,\Bar\Lyap(x) \;\le\; \gamma_0-\gamma\, \abs{x}
\qquad\forall\,\varepsilon\in(0,1)\,,
\end{equation*}
for some positive constants $\gamma$ and $\gamma_0$. This Foster--Lyapunov condition
implies in particular that the process $X$ with
$U = 0$ has a unique invariant probability measure
$\eta_{0}^{\varepsilon}$, and
\begin{equation}\label{EE1.7}
\lim_{T\to\infty}\;\frac{1}{T}\,\Exp\biggl[\int_{0}^T \abs{X_{t}}\,\D{t}\biggr]
\;=\;\int_{\Rd}\abs{x}\,\eta_{0}^{\varepsilon}(\D{x}) \;\le\;
\frac{\gamma_0}{\gamma}\qquad\forall\,\varepsilon\in(0,1)\,.
\end{equation}
Since $\ell$ is Lipschitz, \eqref{EE1.7}
implies that there exists a constant $\Bar{c}_\ell$
independent of $\varepsilon$ such that
\begin{equation}\label{EE1.8}
\int\ell\,\D\eta_{0}^{\varepsilon}\;\le\;\Bar{c}_\ell\,.
\end{equation}
Moreover, from \cite{BiBo} there exists a unique Lipschitz continuous function
$Z\ge 0$, such that $\min_{\Rd}Z=0$, $Z(x)\to\infty$ as 
$\abs{x}\to\infty$ and
\begin{equation*}
Z(x)\;=\;\inf_{\phi\,\colon \phi(t)\to x_{i},\; x_{i}\in \cS}\;
\biggl[\frac{1}{2}\int_{0}^{\infty}\babs{\dot\phi(s)+m(\phi(s))}^{2}\,\D{s}
+Z(x_{i})\biggr]\,,\qquad\phi(0)=x\,,
\end{equation*}
and if $\varrho^\varepsilon_{0}$ denotes the density of
$\eta_{0}^{\varepsilon}$, then
$-\varepsilon^{2\nu}\ln\varrho^\varepsilon_{0}(x)\to Z(x)$
uniformly on compact subsets of $\Rd$ as $\varepsilon\searrow 0$.
The function $Z$ is generally referred to as the \textit{quasi-potential},
and plays a key role in the study of $\eta_{0}^{\varepsilon}$.

For the model in \eqref{E-sde} under the optimal control criterion in
\eqref{cost}, the standard method of analysis using quasi-potentials no longer
applies.
The first important step
is to characterize the stationary probability distributions of
the controlled diffusion under optimal controls.
It is evident that optimal controls belong to the class $\widehat\Uadm$
defined by
\begin{equation}\label{E-HUadm}
\widehat\Uadm\;\df\; \biggl\{U\in\Uadm\,\colon
\Exp\biggl[\int_{0}^{t}\,\abs{U_{s}}^{2}\,\D{s}\biggr] < \infty\text{\ \ for all\ \ }
t\, \ge 0\biggr\}\,.
\end{equation}
We state the following result concerning the existence
of solutions to \eqref{E-sde}.

\begin{lemma}\label{L1.3}
Under any $U\in\widehat\Uadm$, the diffusion in \eqref{E-sde} has a unique
strong solution.
\end{lemma}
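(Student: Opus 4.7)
The plan is to exploit the special structure of \eqref{dynamics}: the drift $m$ is bounded and globally Lipschitz (item (a) in the data), the diffusion coefficient is the constant matrix $\varepsilon^{\nu}I$, and the additional drift $\varepsilon U_s$ is path-wise integrable under $U\in\widehat\Uadm$. So rather than invoking a general SDE existence theorem, I would reduce the problem to a pathwise (Cauchy--Lipschitz) ODE driven by a continuous random signal.

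\textbf{Step 1 (pathwise integrability of the control).} Since $U\in\widehat\Uadm$ satisfies $\Exp[\int_{0}^{t}\abs{U_{s}}^{2}\,\D{s}]<\infty$ for every $t\ge 0$, Cauchy--Schwarz gives
\begin{equation*}
\int_{0}^{t}\abs{U_{s}}\,\D{s} \;\le\; \sqrt{t}\,\biggl(\int_{0}^{t}\abs{U_{s}}^{2}\,\D{s}\biggr)^{1/2}\;<\;\infty\quad\text{a.s.,}
\end{equation*}
so $A_{t}\df\varepsilon\int_{0}^{t}U_{s}\,\D{s}$ is, on a set of full probability, a continuous $\sF_{t}$-adapted process. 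Consequently $Y_{t}\df X_{0}+A_{t}+\varepsilon^{\nu}W_{t}$ is continuous and adapted, and the integral equation \eqref{dynamics} can be rewritten pathwise as
\begin{equation*}
X_{t}\;=\;Y_{t}+\int_{0}^{t}m(X_{s})\,\D{s}\,.
\end{equation*}

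\textbf{Step 2 (pathwise existence and uniqueness).} Fix $\omega$ outside the null set of Step 1 and regard $Y_{\cdot}(\omega)$ as a continuous forcing term. Since $m$ is bounded and globally Lipschitz with some constant $L$, the integral equation above falls in the scope of the classical Cauchy--Lipschitz theorem for ODEs perturbed by a continuous signal: Picard iteration
\begin{equation*}
X^{(n+1)}_{t}\;\df\;Y_{t}+\int_{0}^{t}m\bigl(X^{(n)}_{s}\bigr)\,\D{s}\,,\qquad X^{(0)}_{t}\df Y_{t}\,,
\end{equation*}
yields $\sup_{s\le t}\abs{X^{(n+1)}_{s}-X^{(n)}_{s}}\le \frac{(Lt)^{n}}{n!}\sup_{s\le t}\abs{X^{(1)}_{s}-X^{(0)}_{s}}$, and so the iterates converge uniformly on compacts to a continuous $X_{\cdot}$ solving the equation; uniqueness follows at once from Gronwall's inequality applied to $\abs{X_{t}-X'_{t}}\le L\int_{0}^{t}\abs{X_{s}-X'_{s}}\,\D{s}$.

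\textbf{Step 3 (measurability/adaptedness).} Because each iterate $X^{(n)}_{t}$ is $\sF_{t}$-measurable (being obtained by integrating the adapted $Y$ and the continuous function $m$ of an adapted process) and the convergence is uniform on compacts in $t$ for a.e.\ $\omega$, the limit $X_{t}$ is also $\sF_{t}$-adapted. Thus $X$ is a strong solution of \eqref{dynamics} on the given probability space and filtration, and it is pathwise unique.

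There is no real obstacle here beyond verifying the integrability of $U$, which is exactly the point of introducing the subclass $\widehat\Uadm$ in \eqref{E-HUadm}; the proof would require genuine work only if the diffusion matrix depended on $x$ or if $U$ were only locally square integrable, neither of which applies.
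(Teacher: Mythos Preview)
Your proof is correct and takes a genuinely different route from the paper's. The paper works in the Banach space $\cH^{2}_{T}$ of adapted continuous processes with $\Exp\bigl[\sup_{0\le s\le T}\abs{Y_{s}}^{2}\bigr]<\infty$, establishes an a priori bound of the form $\norm{X-X_{0}}_{\cH^{2}_{t}}^{2}\le \kappa_{0}t(1+t)\bigl(1+M_{t}+\Exp[\abs{X_{0}}^{2}]\bigr)\E^{\kappa_{1}t}$ with $M_{t}=\Exp[\int_{0}^{t}\abs{U_{s}}^{2}\,\D{s}]$, and then invokes a contraction mapping argument in $\cH^{2}_{T}$ (following \cite[Theorems~2.2.2 and 2.2.4]{book}). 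You instead exploit the constant diffusion coefficient to absorb the Brownian term and the control term into a single continuous adapted forcing $Y_{t}$, reducing \eqref{dynamics} pathwise to an ODE $X_{t}=Y_{t}+\int_{0}^{t}m(X_{s})\,\D{s}$, which you solve $\omega$-by-$\omega$ via Picard iteration.

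Your approach is more elementary and transparent for this specific equation, and it isolates exactly where the hypothesis $U\in\widehat\Uadm$ enters (to make $A_{t}$ continuous). Its limitation is that it relies on the diffusion matrix being constant; the paper's $\cH^{2}_{T}$ framework is the standard one that generalizes without change to state-dependent (Lipschitz) diffusion coefficients, which is presumably why they chose it even though it is overkill here.
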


\begin{proof}
See Appendix~\ref{AppA}.
\end{proof}

\subsubsection{The convex analytic approach}
In studying this problem, it is of course of paramount importance
to assert the existence of an optimal stationary distribution,
and ideally also prove that it is unique.

A proper framework for this study is to consider the class $\fP^\varepsilon$
of \emph{infinitesimal ergodic occupation measures}, i.e.,
measures $\uppi\in\cP(\Rd\times\Rd)$ which satisfy
\begin{equation}\label{E-eom}
\int_{\Rd\times\Rd} \Lg^{\varepsilon}[f](x,u)\,\uppi(\D{x},\D{u})
\;=\;0 \qquad \forall\,f\in\Ccc^{\infty}(\Rd)\,,
\end{equation}%
\nomenclature[Ae]{$\fP^\varepsilon$}{set of infinitesimal ergodic occupation measures,
equation \eqref{E-eom}}%
where $\Ccc^{\infty}(\Rd)$, as defined
in Section~\ref{S1.2} denotes the class of real-valued smooth functions
with compact support.
Here, the operator $\Lg^{\varepsilon}\colon\Cc^{2}(\Rd)\to \Cc(\Rd\times\Rd)$
is defined by
\begin{equation}\label{E-Lg}
\Lg^{\varepsilon} [f](x, u)\;\df\; \frac{\varepsilon^{2\nu}}{2}\Delta f(x)
+ \bigl\langle m(x) +\varepsilon u, \grad f(x)\bigr\rangle
\end{equation}%
\nomenclature[Bb]{$\Lg^\varepsilon$}{operator, equation \eqref{E-Lg}}%
for $f\in\Cc^{2}(\Rd)$.
We adopt the usual relaxed control framework,
where an admissible control is realized as a $\cP(\Rd)$-valued
measurable function (for details see \cite[Section~2.3]{book}).
Thus if we disintegrate $\uppi\in\fP^\varepsilon$ as
\begin{equation*}
\uppi(\D{x},\D{u}) \;=\; \eta(\D{x})\,v(\D{u}\,|\, x)\,,
\end{equation*}
and denote this as $\uppi=\eta\circledast v$, then $v$ is a relaxed Markov control,
and $\eta\in\cP(\Rd)$ is an invariant probability measure for the corresponding
controlled process,
provided that the diffusion under the
control $v$ in \eqref{E-sde} has a unique weak
solution for all $t\in[0,\infty)$ which is a Feller process.

Define
\begin{equation*}
\sJ^\varepsilon_\uppi\;\df\; \int_{\Rd\times\Rd} \sR(x,u)\,\uppi(\D{x},\D{u})\,,
\qquad \uppi\in\fP^\varepsilon\,.
\end{equation*}
For a control $U\in\Uadm$ under which the diffusion has
a unique weak solution
we define the collection of \emph{mean empirical measures}
$\Phi^U_t\in\cP(\Rd\times\Rd)$ by
\begin{equation}\label{E-emp}
\int_{\Rd\times\Rd} f(x,u)  \Phi^U_t(\D{x},\D{u})
\;=\; \Exp \biggl[\int_0^t f(X_s,U_s)\,\D{s}\biggr]
\end{equation}
for all $f\in\Cc_b(\Rd\times\Act)$.
\nomenclature[Af]{$\Phi^U_t$}{set of mean empirical measures, equation \eqref{E-emp}}
Recall that a continuous function $f\colon\RR^m\to\RR$ is called \emph{inf-compact}
if the set $\{x\in\RR^m\,\colon f(x)\le C\}$ is compact (or empty) for every
$C\in\RR$.
Suppose that that the ergodic cost $\sJ^\varepsilon(U)$  defined in
\eqref{cost} is finite.
Then the inf-compactness of $\sR(x,u)$ implies that $\{\Phi^U_t\}$ is tight in
$\cP(\Rd\times\Act)$.
It is standard to show, by following an argument similar
to the proof of Lemma~3.4.6 in \cite{book},
that any limit point $\uppi\in\cP(\Rd\times\Act)$
of $\Phi^U_t$ is an infinitesimal ergodic occupation measure.
Moreover, $\sJ^\varepsilon(U) \ge \inf_{\uppi\in\fP^\varepsilon}\,\sJ^\varepsilon_\uppi$
\cite[Theorem~3.4.6]{book}.
It is natural then to
consider the convex minimization problem
\begin{equation}\label{EE1.9}
\sJ^\varepsilon_*\;\df\;\inf_{\uppi\in\fP^\varepsilon}\;\sJ^\varepsilon_\uppi\,,
\end{equation}%
\nomenclature[Ag]{$\sJ^\varepsilon_\uppi$, $\sJ^\varepsilon_*$}{objective and
optimal value of primal problem, equation \eqref{EE1.9}}%
since $\sJ^\varepsilon_*$ provides a lower bound for $\sJ^\varepsilon(U)$.
This constitutes the \emph{primal problem}.
Since $\sR(x,u)$ is inf-compact,
$\uppi\mapsto\sJ^\varepsilon(\uppi)$ is lower semi-continuous,
and $\sJ^\varepsilon_\uppi$ is finite for at
least one $\uppi\in\fP^\varepsilon$ by \eqref{EE1.8}, it follows that
there exists some $\uppi_*^\varepsilon\in \fP^\varepsilon$
which attains the infimum in \eqref{EE1.9}.
If the disintegration of an optimal ergodic occupation measure
results in a Markov control under which \eqref{E-sde} has a solution,
then of course this infimum is attained for the ergodic control problem.
This is indeed the case, for a large class of problems where the
control takes values in a compact space.
For general results concerning this approach see \cite{BhBo, stoku}.
However, for problems when the control lives in $\Rd$, as is the case in the present
setup, it is in general difficult to show that under the Markov control associated with
$\uppi_*^\varepsilon$
the diffusion has a solution.

The dual of the infinite dimensional linear program in \eqref{EE1.9} consists
of a maximization over subsolutions of a HJB equation \cite{BhBo}.
We say that we have \emph{strong duality} if the optimal values of the
primal and the dual problems are equal.
To the best of our knowledge, there are no strong duality results
for ergodic control of diffusions where the control lives in $\Rd$.
In the next section we study the HJB equation and we establish strong duality
for the problem at hand.
Moreover, we establish the unicity of the optimal ergodic occupation measure
$\uppi_*^\varepsilon=\eta^\varepsilon_*\circledast v_*^\varepsilon$.
This of course implies that there exist a unique `optimal' stationary distribution
$\eta^\varepsilon_*$ and an a.e.\ unique optimal stationary Markov control,
and it turns out from the study of the HJB that this control is smooth.

\subsubsection{The HJB equation for the ergodic control problem}

Recall that a \emph{precise} stationary Markov control is specified
as $U_t=v(X_t)$ for a measurable function $v\colon\Rd\to\Rd$.
We identify the stationary Markov control with the function $v$.
Let $\Usm$ denote the class of stationary Markov controls
which are locally bounded and under
which \eqref{E-sde} has a unique strong solution for all $t\in[0,\infty)$.
Parenthetically, we note that, under a locally bounded stationary Markov control,
\eqref{E-sde} has a unique solution up to explosion time,
and it is strong Feller
\cite[Theorem~2.5]{Krylov-05}.
Linear growth of $\abs{v}$ is sufficient for the existence
of a unique strong solution for all $t\in[0,\infty)$.
We let $\Exp_{x}^{v}$ denote the expectation operator
on the canonical space of the process controlled by $v\in\Usm$,
and starting at $X_0=x$.
We say that $v\in\Usm$ is \emph{stable} if the controlled process under
$v$ is positive recurrent, and we let
$\Ussm^{\varepsilon}\subset\Usm$
denote the set of \emph{stable} controls in $\Usm$.
Parts (a)--(b) of the following theorem essentially follow from
\cite[Theorem~2.2]{Ichihara-11}.

\begin{theorem}\label{T1.4}
There exists a critical value $\beta^\varepsilon_*\in\RR$ such that
the HJB equation for the ergodic control problem given by
\begin{equation}\label{HJBerg}
\frac{\varepsilon^{2\nu}}{2}\Delta V^{\varepsilon}
+ \min_{u\in\Rd}\;\Bigl[
\langle m + \varepsilon u, \nabla V^{\varepsilon}\rangle
+ \ell + \tfrac{1}{2}\,\abs{u}^{2}\Bigr] \;=\; \beta^{\varepsilon}\,,
\end{equation}%
\nomenclature[Ca]{$V^\varepsilon$}{solution of the HJB, equation \eqref{HJBerg}}%
has no solution if $\beta^\varepsilon>\beta^\varepsilon_*$, while
if $\beta^\varepsilon<\beta^\varepsilon_*$ for any such solution
$V^\varepsilon$ the diffusion in \eqref{E-sde} under the  control
$v=-\varepsilon\grad V^\varepsilon$ is transient.
Moreover, the following hold.
\begin{itemize}
\item[\textup{(}a\/\textup{)}]
If $V^{\varepsilon}\in\Cc^2(\Rd)$ is any solution of  \eqref{HJBerg},
then $\abs{\nabla V^{\varepsilon}(x)}$ has at most affine growth in $x$.
\smallskip
\item[\textup{(}b\/\textup{)}]
If $\beta^\varepsilon=\beta^\varepsilon_*$, then \eqref{HJBerg}
has a unique solution $V^{\varepsilon}\in\Cc^2(\Rd)$
satisfying $V^{\varepsilon}(0)=0$.
The Markov control $v^\varepsilon_*\df -\varepsilon\grad V^\varepsilon$ is stable,
and if $\eta^\varepsilon_*\in\cP(\Rd)$ denotes the invariant probability measure
of the diffusion under the control $v^\varepsilon_*$, then
\begin{equation}\label{E-Birk}
\beta^\varepsilon_* \;=\; \int_{\Rd} \sR\bigl(x,v^\varepsilon_*(x)\bigr)
\eta^\varepsilon_*(\D{x})\,.
\end{equation}%
\nomenclature[Cc]{$\beta^\varepsilon_*$}{optimal value for the
ergodic problem, equation \eqref{E-Birk}}%

\item[\textup{(}c\/\textup{)}]
\textup{(}strong duality\textup{)}
$\sJ^\varepsilon_*=\beta^\varepsilon_*$.
\smallskip
\item[\textup{(}d\/\textup{)}]
The following optimality property holds,
with $\widehat\Uadm$ as defined in \eqref{E-HUadm}.
\begin{equation*}
\liminf_{T\to\infty}\;\inf_{U\in\widehat\Uadm}\;\frac{1}{T}\,
\Exp\biggl[\int_{0}^{T} \sR(X_{s},U_{s})\,\D{s}\biggr]\;\ge\;\beta^\varepsilon_*\,,
\end{equation*}
\item[\textup{(}e\/\textup{)}]
\textup{(}uniqueness of optimal stationary distribution\textup{)}
An ergodic occupation measure
$\uppi=\eta\circledast v\in\fP^\varepsilon$ is optimal
if and only if $v$ agrees with $v^\varepsilon_*$ a.e.\ in $\Rd$.
In particular, there exists a unique optimal invariant probability measure
$\eta^\varepsilon_*$.
\end{itemize}
\end{theorem}
\nomenclature[Cd]{$v^\varepsilon_*$}{optimal stationary Markov control,
Theorem~\ref{T1.4}}
\nomenclature[Cd]{$\eta^\varepsilon_*$}{optimal stationary distribution,
Theorem~\ref{T1.4}}

\begin{proof}
The proof is contained in Appendix~\ref{AppA}.
\end{proof}

For a stationary Markov control $v$, we define the \emph{extended generator}
of \eqref{E-sde} by
\begin{equation}\label{E-Lgv}
\Lg^{\varepsilon}_v f (x) \;\df\;
\frac{\varepsilon^{2\nu}}{2}\,\Delta f(x)
+ \bigl\langle m(x)+\varepsilon v(x), \nabla f(x)\bigr\rangle\,,
\qquad x\in\Rd\,,
\end{equation}%
\nomenclature[Bc]{$\Lg^{\varepsilon}_v$}{operator, equation \eqref{E-Lgv}}%
for $f \in \Cc^{2}(\Rd)$.
It follows from \eqref{HJBerg} that
\begin{equation}\label{E-HJB2}
\frac{\varepsilon^{2\nu}}{2}\Delta V^{\varepsilon} +
\langle m, \grad V^{\varepsilon}\rangle
-\frac{\varepsilon^{2}}{2}\abs{\grad V^{\varepsilon}}^{2} +\ell
\;=\; \beta^\varepsilon_*\,.
\end{equation}

Theorem~\ref{T1.4} shows that $\beta^\varepsilon_*=\sJ^\varepsilon_*$,
and this value is attained at an a.e.\ unique
$v^\varepsilon_*\in\Ussm^\varepsilon$ and is independent of the initial
condition $X_0$.
Given these uniqueness properties, we refer to
$\eta^\varepsilon_*$ as  \emph{the optimal invariant probability measure},
or as \emph{the optimal stationary distribution},
and we let $\varrho^\varepsilon_*$ denote its density.
We also refer to  $v^\varepsilon_*$ as \emph{the optimal stationary
Markov control}, and to $\beta^\varepsilon_*$ as \emph{the optimal value}
for the ergodic problem.

\nomenclature[Ce]{$\varrho^\varepsilon_*$}{density of optimal stationary distribution}

\begin{remark}
Due to the smoothness of coefficients, every weak solution
in $V^\varepsilon\in\Sobl^{1,\infty}(\Rd)$ of \eqref{HJBerg}
is automatically in $\Cc^k(\Rd)$ for any $k\in\NN$.
In the interest of notational economy, we often refer to any such $V^\varepsilon$
as a solution, without specifying the function space it belongs to.
\end{remark}

\begin{remark}
Existence and uniqueness of the solution to \eqref{HJBerg} is well
known \cite{Bens, Bens-Fre-book} and in fact, the results in
\cite{Bens-Fre-book} hold
for a more general class of HJB equations.
However, we were not able to find any reference that establishes
the verification of optimality results in Theorem~\ref{T1.4},
nor strong duality.

Note also that Theorem~\ref{T1.4}\,(d) asserts a much stronger
optimality property than the usual one.
This can be in fact strengthened to pathwise optimality, and assert
that the most ``pessimistic'' pathwise performance under $v^\epsilon_*$ is no worse
than the most ``optimistic'' pathwise performance under any control
in $\widehat\Uadm$.  The proof of this fact is identical to the proofs
of Lemma~3.4.6 and Theorem~3.4.7 in \cite{book}.

Recent work as in \cite{Ichihara-12,Ichihara-13} which investigates
the optimal control problem, does not exactly fit our model.
A strict growth condition for $\ell$ is imposed in Assumption~(H2)
of \cite{Ichihara-12}, which we do not require here.
On the other hand, in \cite{Ichihara-13} where convergence of the Cauchy
problem is investigated, and therefore optimality for the ergodic control
problem is addressed, a more stringent condition is imposed
(see Hypothesis (A3)$^\prime$) which for a Hamiltonian that
is quadratic in the gradient like ours, amounts to geometric ergodicity
under the uncontrolled dynamics.

The existence of a critical value for $\beta^\varepsilon$ for
\eqref{HJBerg} and the behavior of the solutions
above or below this critical value are studied in detail in \cite{Ichihara-11}.
However, the critical value is not necessarily the optimal value.
For more recent work on the relation of the
critical value of an elliptic HJB equation of the ergodic type
and the optimal value of the control problem
see \cite{Ichihara-15}. 
\end{remark}

\subsection{Main results}\label{S1.4}

In this section we summarize the main results of the paper.
We start with the following definition.

\begin{definition}\label{D1.8}
Let $\cSs \subset \cS$
denote the set of stable equilibria of \eqref{ode},
i.e., the set of points $z \in \cS$ for which the
eigenvalues of $Dm(z)$ have negative real parts.
\nomenclature[Eb]{$\cS$ ($\cSs$)}{set of equilibria (stable equilibria) of
\eqref{ode}, Definition~\ref{D1.8}}

We say that a set $K\subset\Rd$ is \emph{stochastically stable}
(or that $\eta_{*}^{\varepsilon}$ \emph{concentrates} on $K$)
if it is compact, and for any open neighborhood $\cN\supset K$
we have
$\lim_{\varepsilon\searrow0}\,\eta_{*}^{\varepsilon}(\cN)=1$.
If $\mathfrak{H}$ denotes the class of stochastically stable sets,
and $\sS\df\cap_{K\in\mathfrak{H}} K$, then
$\sS$ is stochastically stable (Remark~\ref{R1.9}).
We refer to $\sS$ as the \emph{minimal stochastically stable set}.
\nomenclature[Ec]{$\sS$}{minimal stochastically stable set, Definition~\ref{D1.8}}
\end{definition}

\begin{remark}\label{R1.9}
It is straightforward to show that $\sS$ in Definition~\ref{D1.8}
is stochastically stable.
This goes as follows.
For a set $K\subset\Rd$, and $\delta>0$, let $K^{\delta}$ denote the
open $\delta$-neighborhood of $K$, i.e.,
$K^{\delta} \df \{x\in\Rd\colon d(x,K)<\delta\}$, where $d(\cdot,\cdot)$
is  the Euclidean distance.
Since the collection $\mathfrak{H}$ consists of
compact sets, it follows there exists a finite subcollection
$K^{\delta}_1,\dotsc,K^{\delta}_n$ whose intersection lies in $\sS^{2\delta}$.
Then $\eta_{*}^{\varepsilon}\bigl((\sS^{2\delta})^c\bigr) \;\le\;
\cup_{i=1}^n \eta_{*}^{\varepsilon}\bigl((K^{\delta}_i)^c\bigr)$,
from which it follows, since $\delta>0$ is arbitrary,
that $\sS$ is stochastically stable.
\end{remark}

The behavior of $\eta_{*}^{\varepsilon}$ for small $\varepsilon$ depends
crucially on the parameter $\nu$.
We distinguish three regimes: The \emph{supercritical regime} ($\nu>1$),
the \emph{subcritical regime} ($\nu<1$), and the
\emph{critical regime} ($\nu=1$).
Roughly speaking, the control `exceeds' the noise level in the supercritical
regime, while the opposite is the case in the subcritical regime.
In the critical regime, which is the most interesting and more difficult
to study, the control and noise levels are equal.
The main results can be grouped in three categories:
(1) characterization of the minimal stochastically stable set
$\sS$ and asymptotic estimates of $\beta^\varepsilon_*$
for small $\varepsilon$ in the three regimes
(Theorem~\ref{T1.11}),
(2) concentration bounds for $\eta_{*}^{\varepsilon}$ (Theorem~\ref{T1.12}),
and (3) convergence of $\varrho^\varepsilon_*$, under appropriate
scaling, to a Gaussian density (Theorem~\ref{T1.13}).

\begin{definition}\label{D1.9}
For a square matrix $M\in\RR^{d\times d}$,
let $\varLambda^+(M)$ denote the sum of its eigenvalues
that lie in the open right half complex plane.
\nomenclature[Ga]{$\varLambda^+(M)$}{trace of unstable spectrum of a matrix
$M$, Definition~\ref{D1.9}}
For $z\in\cS$, and with $M_z\df Dm(z)$,
where as defined earlier $Dm(z)$ is the Jacobian of $m$ at $z$,
\nomenclature[Gb]{$M_z$, $Dm(z)$}{Jacobian of vector field $m(z)$,
Definition~\ref{D1.9}}
we let $\widehat{Q}_{z}$ and  $\widehat{\Sigma}_{z}$
be the symmetric, nonnegative definite, square matrices solving
the pair of equations
\begin{equation}\label{ED1.9}
\begin{split}
M_z\transp \widehat{Q}_{z}
&+ \widehat{Q}_{z} M_z \;=\; {\widehat{Q}}_{z}^{\,2}\,,\\[5pt]
\bigl(M_z-\widehat{Q}_{z}\bigr)\,\widehat{\Sigma}_{z}
&+ \widehat{\Sigma}_{z}\,\bigl(M_z-\widehat{Q}_{z}\bigr)\transp \;=\; - I\,.
\end{split}
\end{equation}
\nomenclature[Gc]{$\widehat{Q}_{z}$, $\widehat{\Sigma}_{z}$}{symmetric
matrices, equation \eqref{ED1.9}}
\end{definition}

By Theorem~\ref{T1.18}, which appears in Section~\ref{S1.5},
there exists a unique pair
$(\widehat{Q}_{z},\widehat{\Sigma}_{z})$
of symmetric positive semidefinite matrices solving \eqref{ED1.9}.
It is also evident by \eqref{ED1.9} that $\widehat{\Sigma}_{z}$ is
invertible.

In order to state the main results we need the following definition.

\begin{definition}\label{D1.10}
We define the \emph{optimal control effort} $\sG^\varepsilon_*$ by
\begin{equation}\label{E-sG}
\sG^\varepsilon_* \;\df\;\frac{1}{2}\,
\int_{\Rd}\abs{v^\varepsilon_*}^{2}\,\D\eta_{*}^{\varepsilon}\,,
\qquad \varepsilon>0\,.
\end{equation}
\nomenclature[Cj]{$\sG^\varepsilon_*$}{optimal control effort, equation \eqref{E-sG}}%
Also define
\begin{align*}
\cZc &\;\df\; \Argmin_{z\in\cS}\;
\bigl\{\ell(z)+\varLambda^+\bigl(Dm(z)\bigr)\bigr\}\,,
 &\fJc  &\;\df\; \min_{z\in \cS}\;
\bigl[\ell(z)+\varLambda^+\bigl(Dm(z)\bigr)\bigr]\,,\\
\cZs &\;\df\; \Argmin_{z\in\cSs}\;\bigl\{\ell(z)\bigr\}\,,
 &\fJs &\;\df\; \min_{z\in \cSs}\;\bigl[\ell(z)\bigr]\,,\\
\cZ &\;\df\;\Argmin_{z\in\cS}\;\bigl\{\ell(z)\bigr\}\,,
 &\fJ &\;\df\; \min_{z\in \cS}\;\bigl[\ell(z)\bigr]\,,\\
\widetilde\cZ &\;\df\; \Argmin_{z\in\cZ}\;\bigl\{\varLambda^+\bigl(Dm(z)\bigr)\bigr\}\,,
 &\widetilde\fJ &\;\df\;
\min_{z\in \cZ}\;\bigl[\varLambda^+\bigl(Dm(z)\bigr)\bigr]\,.
\end{align*}%
\nomenclature[Ed]{$\cZc$, $\cZs$, $\cZ$, $\widetilde\cZ$}{classes of equilibria,
Definition~\ref{D1.10}}%
\nomenclature[Ee]{$\fJc$, $\fJs$, $\fJ$, $\widetilde\fJ$}{Definition~\ref{D1.10}}%
\end{definition}

Recall the definition of $\order(\,\cdot\,)$ in Section~\ref{S1.2}.
The following theorem provides a comprehensive characterization of
the minimal stochastically stable set.

\begin{theorem}\label{T1.11}
The minimal stochastically stable set $\sS$ is a subset of $\cS$
for all $\nu>0$.
Also, the set $\sS$, the optimal value $\beta^\varepsilon_*$,
and the optimal control effort $\sG^\varepsilon_*$
depend on $\nu$ as follows.
\begin{enumerate}
\item[{\upshape(}i\/{\upshape)}]
For $\nu> 1$ {\textup(}`supercritical' regime{\textup)},
we have $\sS\,\subset\,\widetilde\cZ$.
In addition, if $\fJ=\fJs$, then
\begin{equation*}
\order\bigl(\varepsilon^{2\wedge\nu}\bigr)\;\le\;
\beta^\varepsilon_*-\fJ \;\le\;
\order\bigl(\varepsilon^{2\nu}\bigr)\,,\quad\text{and}\quad
\sG^\varepsilon_*\;\in\;\order\bigl(\varepsilon^{\nu\wedge2}\bigr)\,,
\end{equation*}
and if $\fJ<\fJs$, then
\begin{equation*}
\order\bigl(\varepsilon^{2\wedge\nu}\bigr)\;\le\;
\beta^\varepsilon_*-\fJ \;\le\;\varepsilon^{2\nu-2}\,\widetilde\fJ+
\order\bigl(\varepsilon^{2\nu}\bigr)\,,\quad\text{and}\quad
\sG^\varepsilon_*\;\in\;\order\bigl(\varepsilon^{(2\nu-2)\wedge2}\bigr)\,.
\end{equation*}

\item[\textup{(}ii\/{\upshape)}]
For $\nu < 1$ {\textup(}`subcritical' regime{\textup)}, we have
$\sS\,\subset\,\cZs$, and
\begin{equation}\label{ET1.11sub}
\order\bigl(\varepsilon^{\nu}\bigr)\;\le\;
\beta^\varepsilon_*-\fJs \;\le\;
\order\bigl(\varepsilon^{\nu\vee(4\nu-2)}\bigr)\,,
\qquad \sG^\varepsilon_*\;\in\;\order\bigl(\varepsilon^{\nu}\bigr)\,.
\end{equation}
\item[{\upshape(}iii\/{\upshape)}]
For $\nu = 1$
{\textup(}`critical' regime{\textup)},
we have  $\sS\,\subset\,\cZc$,
$\beta^\varepsilon_*\le\fJc+\order\bigl(\varepsilon^2\bigr)$, and
$\lim_{\varepsilon\searrow0}\,\beta^\varepsilon_*=\fJc$.
Moreover, if $\fJc=\fJs$, then the lower bound in \eqref{ET1.11sub} holds.
\end{enumerate}
\end{theorem}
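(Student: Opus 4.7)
The plan is to sandwich $\beta_*^\varepsilon$ between explicit upper bounds from feasible controls and matching lower bounds derived from the Birkhoff formula \eqref{E-Birk} together with concentration statements for $\eta_*^\varepsilon$. A local LQG linearization around each equilibrium drives all three regimes and produces the constants $\varLambda^+(Dm(z))$ via the Riccati/Lyapunov pair \eqref{ED1.9}.

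\textbf{Upper bounds.} For each $z\in\cS$ (or $z\in\cSs$ in the subcritical case) construct $v_z\in\Ussm$ that agrees near $z$ with the linear feedback $-\widehat Q_z(\cdot-z)$, suitably rescaled in $\varepsilon$, and far from $z$ inherits positive recurrence from $\Bar\Lyap$ of Hypothesis~\ref{H1.1}. Under the scaling $Y=\varepsilon^{-(\nu\wedge1)}(X-z)$, the local linearization becomes the LQG
\begin{equation*}
\D Y_t \;=\; \bigl(M_z Y_t+\widetilde U_t\bigr)\D t+\D W_t\,,
\end{equation*}
whose optimal ergodic control cost equals $\tfrac12\trace\bigl(\widehat Q_z^{\,2}\widehat\Sigma_z\bigr)=\varLambda^+(Dm(z))$ by Theorem~\ref{T1.18} and the Lyapunov identity. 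Rescaling back, this cost contributes $\order(\varepsilon^{2\nu-2})\,\varLambda^+(Dm(z))$ in the supercritical regime and $\varLambda^+(Dm(z))$ in the critical regime, while in the subcritical regime the choice $z\in\cSs$ permits $v_z\equiv0$ near $z$, so only noise fluctuations contribute — optimally augmented by a small $\varepsilon$-order mean-shift control producing the $4\nu-2$ exponent in \eqref{ET1.11sub}. Evaluating \eqref{E-Birk} at $\uppi_z=\eta_z\circledast v_z$ and minimizing over $z$ yields the three upper bounds stated.

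\textbf{Concentration and matching lower bounds.} Applying the Foster--Lyapunov inequality of Hypothesis~\ref{H1.1} to \eqref{E-Lgv}, together with the upper bound on $\sG_*^\varepsilon$ inherited from the previous step, gives tightness of $\{\eta_*^\varepsilon\}$ and uniform-in-$\varepsilon$ estimates on $\int\abs{x}\,\D\eta_*^\varepsilon$. Any weak-$*$ limit point of $\eta_*^\varepsilon$ must be supported on the non-wandering set $\cS$: outside any $\delta$-neighborhood of $\cS$ the drift has a definite transverse component for $\Bar\Lyap$ that neither the $\varepsilon^\nu$-noise nor the control $\varepsilon v_*^\varepsilon$ (whose $L^2(\eta_*^\varepsilon)$-norm is $2\sG_*^\varepsilon$, bounded) can sustain. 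The refined inclusions $\sS\subset\cZs,\cZc,\widetilde\cZ$ then follow by localization: the portion of $\eta_*^\varepsilon$-mass sitting near a given $z\in\cS$ contributes at least $\ell(z)+\varepsilon^{(2\nu-2)\vee 0}\,\varLambda^+(Dm(z))-\sorder(1)$ to $\sJ_{\uppi_*^\varepsilon}$ via the LQG lower bound on the rescaled local problem, so if $z$ did not lie in the regime-specific minimizing set, the alternative control $v_{z'}$ constructed in the previous step for some $z'$ in that set would violate the optimality of $\uppi_*^\varepsilon$. Splitting $\beta_*^\varepsilon=\int\ell\,\D\eta_*^\varepsilon+\sG_*^\varepsilon$ then yields the announced bounds on $\sG_*^\varepsilon$.

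\textbf{Principal obstacles.} The tightest step is the refinement $\sS\subset\widetilde\cZ$ in the supercritical regime, which requires resolving $\beta_*^\varepsilon$ to order $\varepsilon^{2\nu-2}$ so as to discriminate between minimizers of $\ell$ by their $\varLambda^+$-weight; this demands uniform control of the nonlinear corrections to $\ell$ and to $m$ in the rescaled coordinates near each candidate $z$, as well as a careful comparison of local ergodic averages across distinct equilibria. The critical regime $\nu=1$ is the second hard point, since noise and control share a single scale and the local LQG approximation must be valid on a neighborhood large enough that the $\sorder(1)$ remainders do not mask $\varLambda^+(Dm(z))$. In both cases the affine growth of $\nabla V^\varepsilon$ from Theorem~\ref{T1.4}(a) and the global moment bound implicit in \eqref{EE1.7} are the natural tools for transporting the LQG estimates from local to global.
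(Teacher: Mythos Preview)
Your upper-bound construction via the linear feedback $-\widehat Q_z(\cdot-z)$ is essentially the paper's Lemma~\ref{L3.2}--\ref{L3.3}, and the tightness/concentration-on-$\cS$ argument is Lemma~\ref{L3.1}. Where the proposal has a genuine gap is in the lower bounds and the refined inclusions.

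First, the formula $\varepsilon^{(2\nu-2)\vee0}$ in your localization bound is incorrect for $\nu<1$: the truncation $\vee0$ throws away precisely the information needed. The correct rescaling gives a control-effort lower bound $\varepsilon^{2\nu-2}\varLambda^+(M_z)$, which blows up for $\nu<1$ and is what actually forces mass off unstable equilibria; with your truncated bound you would only get a contribution $\ell(z)+\varLambda^+(M_z)$, and in the second dataset of Example~\ref{E1.14} this equals $10$ at $z=-1$ while $\fJs=16$, so your argument does not exclude $z=-1$ in the subcritical regime. More importantly, the paper does \emph{not} prove the subcritical exclusion $\sS\subset\cSs$ via LQG at all: Lemma~\ref{L3.6} instead uses the Morse--Smale energy function $\Lyap$ of Theorem~\ref{T2.2}, whose crucial property (iii) is that $\Delta\Lyap(z)<0$ at every unstable $z$. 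Testing the generator against $\varphi\circ\Lyap$ and a discriminant argument on the resulting quadratic in $\zeta^\varepsilon$ then forces $\sG_*^\varepsilon\ge -2C_0\varepsilon^{2\nu-2}\xi^\varepsilon$, a contradiction for $\nu<1$. The same energy-function machinery drives the $\sG_*^\varepsilon$ bounds and the lower bound $\beta_*^\varepsilon-\fJ\ge\order(\varepsilon^{\nu\wedge2})$ in Corollary~\ref{C4.2}; your ``splitting'' $\beta_*^\varepsilon=\int\ell\,\D\eta_*^\varepsilon+\sG_*^\varepsilon$ gives nothing without an independent lower bound on $\int\ell\,\D\eta_*^\varepsilon-\fJ$, and that bound (\eqref{EC4.2D}--\eqref{EC4.2E}) comes from $\Lyap$ via Lemma~\ref{L4.1}.

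Second, for the critical and supercritical refinements $\sS\subset\cZc$ and $\sS\subset\widetilde\cZ$, you treat the ``LQG lower bound on the rescaled local problem'' as a black box, but this is exactly the hard content. The paper's mechanism (Theorems~\ref{T5.3}, \ref{T5.4}, \ref{T5.7}) is to pass to the limit in the \emph{rescaled HJB equation} \eqref{EL4.4B}: uniform gradient bounds (Lemma~\ref{L4.4}) give compactness of $\Breve V_z^\varepsilon$, tightness of the normalized local measures $\mathring\eta_z^\varepsilon$ follows from the second-moment bound of Lemma~\ref{L4.1}, and then Theorem~\ref{T1.18}(c) identifies the limiting constant as $\varLambda^+(M_z)$ \emph{and} asserts that the limit diffusion is transient unless equality holds---this last alternative is what rules out suboptimal equilibria. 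A direct ergodic-average comparison of the type you sketch does not obviously deliver this, because the optimal control $v_*^\varepsilon$ near $z$ need not be close to the LQG feedback a priori; it is the HJB limit that forces it.
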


It is not hard to show that the optimal invariant measures
$\eta^{\varepsilon}_{*}$ concentrate on $\cS$ as $\varepsilon\searrow 0$
(see Lemma~\ref{L3.1}).
In Theorem~\ref{T1.11} we distinguish the three regimes corresponding
to different values of $\nu$, and provide asymptotic bounds for
$\beta^{\varepsilon}_{*}$ for small $\varepsilon$.
For $\nu>1$ one can find a control $U$ under which the invariant measure of
the dynamics \eqref{E-sde} concentrates on a point in $\cS$. 
Construction of invariant measures with similar properties is also possible
for $z\in\cSs$ when $\nu<1$.
The important difference is that for $\nu<1$
the optimal invariant measure $\eta^{\varepsilon}_{*}$ cannot concentrate
on $\cS\setminus\cSs$ (see Lemma~\ref{L3.6}).
To show this fact we construct a suitable energy function for the
Morse--Smale dynamics (see Theorem~\ref{T2.2}).
The analysis in the critical regime $\nu=1$ turns out to be more
subtle than the other two regimes.
To facilitate the study of the critical regime, we identify
an important property which concerns a singular ergodic control problem
for Linear Quadratic Gaussian (LQG) systems (Theorem~\ref{T1.18}).
This plays a crucial role in showing that $\sS\subset\cZc$.

To guide the reader, we indicate the results presented in
Sections~\ref{S3}--\ref{S4}
which comprise the proof of Theorem~\ref{T1.11}.

\begin{proof}[Proof of Theorem~\ref{T1.11}]
That $\sS\subset\cS$ is the statement of Lemma~\ref{L3.1}.
Note that if $\fJ=\fJs$, then
$\widetilde\fJ =
\min_{z\in \cZ}\;\bigl[\varLambda^+\bigl(Dm(z)\bigr)\bigr]=0$
by the definition of $\Lambda^+$.
Thus upper bounds of $\beta^\varepsilon_*-\fJ$ in part (i)
follow by the first inequality in \eqref{EL3.3A},
while the lower bounds are in Corollary~\ref{C4.2}\,(b).
The statements concerning $\sG^\varepsilon_*$ in part (i)
are in \eqref{EC4.2A}.

That $\sS\subset\cZs$ in the subcritical regime is in the statement
of Lemma~\ref{L3.6}.
The upper bound of $\beta^\varepsilon_*-\fJs$ in part (ii) is the combination
of the two separate upper bounds given in Lemma~\ref{L3.5}\,(ii),
for $\nu\in(0, \nicefrac{2}{3})$ and $\nu\in[\nicefrac{2}{3}, 1)$,
while the lower bound is in Corollary~\ref{C4.6}\,(b), where we also find
the assertion that $\sG^\varepsilon_*\in\order\bigl(\varepsilon^{\nu}\bigr)$.
 
We now turn to the proof of part (iii).
The inequality $\beta^\varepsilon_*\le\fJc+\order\bigl(\varepsilon^2\bigr)$
is the second inequality in \eqref{EL3.3A}.
That $\lim_{\varepsilon\searrow0}\,\beta^\varepsilon_*=\fJc$ is
in the statement of Theorem~\ref{T5.4}, and that
$\sS\subset\cZc$ is equivalent to
$\lim_{\varepsilon\searrow0}\,\eta_{*}^\varepsilon\bigl(B^c_r(\cZ_c)\bigr)=0$,
which is asserted in \eqref{ET5.4A}.
Lastly, that $\beta^{\varepsilon}_{*}-\fJs\ge\order(\varepsilon)$ when
$\fJc=\fJs$ is in Remark~\ref{R4.7}.
\end{proof}

The next theorem provides concentration bounds for the optimal
stationary distribution
in terms of moments.
Let $\dist(x,\cS)$ denote the Euclidean distance of $x\in\Rd$ from the set $\cS$,
and $B_r(\cS)\df\{y\in\Rd\,\colon \dist(y,\cS)< r\}$.

\begin{theorem}\label{T1.12}
For any $k\in\NN$ and $r>0$, there exist constants,
$\Hat{\kappa}_{0}=\Hat{\kappa}_{0}(k,r,\nu)$, and
$\Hat{\kappa}_{i}=\Hat{\kappa}_{i}(k)$, $i=1,2$, such that
with $\Hat{r}(\varepsilon) \df \Hat{\kappa}_{2}\varepsilon^{\nu\wedge 1}$
we have
\begin{equation}\label{T1.12A}
\begin{split}
\int_{B_{r}(\cS)} \bigl(\dist(x,\cS)\bigr)^{2}\,\eta_{*}^{\varepsilon}(\D{x})
&\;\le\; \Hat{\kappa}_{0}\,\varepsilon^{2(\nu\wedge 2)}\qquad
\forall\,\nu>0\,,\\[5pt]
\int_{B_{\Hat{r}(\varepsilon)}^{c}(\cS)}
\bigl(\dist(x,\cS)\bigr)^{2k}\, \eta_{*}^{\varepsilon}(\D{x})
&\;\le\; \Hat{\kappa}_{1}\, \varepsilon^{2(\nu\wedge 1)}
\qquad \forall\,\nu\in(0,2]\,,
\end{split}
\end{equation}
for all $\varepsilon\in(0,1)$.

Moreover, if $D$ is any open set such that
$\cSs\subset D$, then
\begin{equation*}
\eta^{\varepsilon}_*(D^c)\;\in\;
\order\bigl(\varepsilon^{2\nu\wedge(2-\nu)}\bigr)\,,
\end{equation*}
provided $\nu<1$, or $\fJc=\fJs$ and $\nu=1$, or
$\fJ=\fJs$ and $\nu\in(1,2)$.
\end{theorem}

\begin{proof}
The first inequality in \eqref{T1.12A} is the same as \eqref{EL4.1A},
while the second is established in Proposition~\ref{P4.5}.

That $\eta^{\varepsilon}_*(D^c)\in
\order\bigl(\varepsilon^{2\nu\wedge(2-\nu)}\bigr)$ when $\fJ=\fJs$ and $\nu\in(1,2)$,
or when $\nu<1$ is asserted in Corollary~\ref{C4.6}, and that the same inclusion
holds when $\fJc=\fJs$ and $\nu=1$ is explained in
Remark~\ref{R4.7}.
\end{proof}

Exploiting the results in Theorem~\ref{T1.12}, we scale the space suitably
and show that the resulting invariant measures are also tight.
In particular, we examine the asymptotic behavior of $\eta_{*}^{\varepsilon}$
and show that under an appropriate spatial scaling
it `converges' to a Gaussian distribution in the vicinity of
the minimal stochastically stable set.
This is the subject of the next theorem.

\begin{theorem}\label{T1.13}
Assume $\nu\in(0,2)$.
Let $z\in\cS$, and $\cN$ an open neighborhood of $z$
whose closure does not contain any other elements of $\cS$.
Suppose that along some sequence
$\varepsilon_{n}\searrow0$ we have
$\liminf_{\varepsilon_{n}\searrow0}\,\eta_{*}^{\varepsilon_{n}}(\cN)>0$.
Then along this sequence it holds that
\begin{equation}\label{ET1.13}
\frac{\varepsilon^{\nu d}\,
\varrho^\varepsilon_*\bigl(\varepsilon^{\nu}x+z\bigr)}
{\eta_{*}^{\varepsilon}(\cN)}\;\xrightarrow[\varepsilon\searrow0]{}\;
\frac{1}{(2\pi)^{\nicefrac{d}{2}}\,
\abs{\det\widehat{\Sigma}_{z}}^{\nicefrac{1}{2}}}
\exp\Bigl(-\tfrac{1}{2}\,\bigl\langle x,
\widehat{\Sigma}_{z}^{-1} x\bigr\rangle\Bigr)\,,
\end{equation}
uniformly on compact sets, where `\,$\det$' denotes
the determinant, and $\widehat{\Sigma}_{z}$
is given by \eqref{ED1.9}.
\end{theorem}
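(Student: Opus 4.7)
The plan is to rescale space by $x = \varepsilon^{\nu}y+z$, show that the rescaled stationary density (conditioned on $\cN$) converges along the subsequence to the invariant measure of the LQG closed-loop system of Theorem~\ref{T1.18}, and identify the limit via uniqueness of the limiting linear Fokker--Planck equation. Write $\tilde\varrho^{\varepsilon}(y)\df \varepsilon^{\nu d}\varrho_{*}^{\varepsilon}(\varepsilon^{\nu}y+z)$, which is a probability density on $\Rd$, and let $\tilde\mu^{\varepsilon}(y)\df \tilde\varrho^{\varepsilon}(y)/\eta_{*}^{\varepsilon}(\cN)$ be the quantity on the left-hand side of \eqref{ET1.13}. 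Set also $\tilde v^{\varepsilon}(y) \df \varepsilon^{1-\nu}v_{*}^{\varepsilon}(\varepsilon^{\nu}y+z) = -\varepsilon^{2-\nu}\nabla V^{\varepsilon}(\varepsilon^{\nu}y+z)$ and $\cN_{\varepsilon}\df\varepsilon^{-\nu}(\cN-z)$. A change of variables in the stationary Fokker--Planck equation of the optimally controlled process yields
\begin{equation*}
\tfrac{1}{2}\Delta_{y}\tilde\varrho^{\varepsilon} - \nabla_{y}\cdot\bigl(\tilde b^{\varepsilon}(y)\,\tilde\varrho^{\varepsilon}(y)\bigr)\;=\;0\,,\qquad\tilde b^{\varepsilon}(y)\df\varepsilon^{-\nu}m(\varepsilon^{\nu}y+z) + \tilde v^{\varepsilon}(y)\,.
\end{equation*}
The quadratic moment bound from Theorem~\ref{T1.12} rescales to $\int_{\cN_{\varepsilon}}\abs{y}^{2}\,\tilde\varrho^{\varepsilon}(y)\,\D{y} \in \order(1)$, and since $\eta_{*}^{\varepsilon}(\cN)$ is bounded below along the subsequence, $\{\tilde\mu^{\varepsilon}\Ind_{\cN_{\varepsilon}}\}$ is tight on $\Rd$. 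Combined with interior elliptic regularity for the Fokker--Planck equation with drift $\tilde b^{\varepsilon}$ (locally bounded uniformly in $\varepsilon$ thanks to the affine growth of $\nabla V^{\varepsilon}$ from Theorem~\ref{T1.4}\,(a)), this yields uniform local $\Cc^{\alpha}$ bounds on $\tilde\mu^{\varepsilon}$, so a subsequential locally uniform limit $\tilde\mu^{\varepsilon}\to\tilde\mu^{*}$ exists.

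The heart of the proof is establishing $\tilde v^{\varepsilon}(y)\to -\widehat Q_{z}\,y$ locally uniformly. For $\nu<1$ this is immediate: Theorem~\ref{T1.11}\,(ii) forces $z\in\cSs$, so $M_{z}$ is stable and $\widehat Q_{z}=0$, while $\abs{\tilde v^{\varepsilon}}\in\order(\varepsilon^{2-\nu})$ on compacts by Theorem~\ref{T1.4}\,(a). For $\nu\ge 1$, define $\hat W^{\varepsilon}(y)\df\varepsilon^{2-2\nu}\bigl[V^{\varepsilon}(\varepsilon^{\nu}y+z)-V^{\varepsilon}(z)\bigr]$ so that $\tilde v^{\varepsilon}=-\nabla_{y}\hat W^{\varepsilon}$. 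Substituting into \eqref{HJBerg}, expanding $m(\varepsilon^{\nu}y+z)=\varepsilon^{\nu}M_{z}y+\order(\varepsilon^{2\nu})$ and $\ell(\varepsilon^{\nu}y+z)=\ell(z)+\order(\varepsilon^{\nu})$, and dividing by $\varepsilon^{2\nu-2}$ produces
\begin{equation*}
\tfrac{1}{2}\Delta_{y}\hat W^{\varepsilon} + \langle M_{z}y, \nabla_{y}\hat W^{\varepsilon}\rangle - \tfrac{1}{2}\abs{\nabla_{y}\hat W^{\varepsilon}}^{2}\;=\;\varepsilon^{2-2\nu}\bigl[\beta_{*}^{\varepsilon}-\ell(z)\bigr] + \order(\varepsilon^{2-\nu})\,,
\end{equation*}
and Theorem~\ref{T1.11}, combined with the constraint $z\in\sS$, shows that the right-hand side converges to $\varLambda^{+}(M_{z})$ in all three sub-cases of the regime $\nu\ge 1$ (i.e.\ $\nu=1$, $\nu>1$ with $\fJ=\fJs$, and $\nu>1$ with $\fJ<\fJs$). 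Interior Schauder estimates for this semilinear equation produce uniform $\Cc^{1,\alpha}_{\mathrm{loc}}$ bounds on $\hat W^{\varepsilon}$, so a further subsequential limit $\hat W^{*}\in \Cc^{2}(\Rd)$ satisfies the ergodic LQG Bellman equation
\begin{equation*}
\tfrac{1}{2}\Delta\hat W^{*} + \langle M_{z}y, \nabla\hat W^{*}\rangle - \tfrac{1}{2}\abs{\nabla\hat W^{*}}^{2}\;=\;\varLambda^{+}(M_{z})\,.
\end{equation*}
By Theorem~\ref{T1.18}, the unique solution with at most quadratic growth whose gradient induces a stable closed loop is $\hat W^{*}(y)=\tfrac{1}{2}\langle y,\widehat Q_{z}y\rangle$ up to an additive constant; hence $\tilde v^{\varepsilon}\to -\widehat Q_{z}y$ locally uniformly.

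Combining the convergences gives $\tilde b^{\varepsilon}\to (M_{z}-\widehat Q_{z})y$ locally uniformly, and passing to the distributional limit of the Fokker--Planck equation shows that $\tilde\mu^{*}$ is a nonnegative distributional solution of
\begin{equation*}
\tfrac{1}{2}\Delta\tilde\mu^{*} - \nabla\cdot\bigl((M_{z}-\widehat Q_{z})y\,\tilde\mu^{*}\bigr)\;=\;0\quad\text{on }\Rd\,.
\end{equation*}
Since $M_{z}-\widehat Q_{z}$ has all eigenvalues in the open left half-plane by Theorem~\ref{T1.18}, every integrable nonnegative solution is a scalar multiple of the Gaussian density with covariance $\widehat\Sigma_{z}$. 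Using the tightness of $\{\tilde\mu^{\varepsilon}\Ind_{\cN_{\varepsilon}}\}$ together with Chebyshev to rule out mass escaping to infinity in $y$, the scalar equals $1$, which identifies $\tilde\mu^{*}$ as the Gaussian density in \eqref{ET1.13}; by its uniqueness, the whole sequence converges. The main technical obstacle is the analysis in the second paragraph: the rescaled HJB is nonlinear with a quadratic gradient term, and securing uniform local $\Cc^{1,\alpha}$ bounds on $\hat W^{\varepsilon}$ (sufficient both to pass to the limit and to justify the quadratic-growth condition on $\hat W^{*}$ used in Theorem~\ref{T1.18}) requires combining the sharp asymptotics of $\beta_{*}^{\varepsilon}-\ell(z)$ from Theorem~\ref{T1.11} with maximum-principle comparisons against quadratic barriers built from perturbed Riccati solutions.
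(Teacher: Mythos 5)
Your overall strategy is the same as the paper's (Theorems~\ref{T5.3} and \ref{T5.7}): rescale by $\varepsilon^{\nu}$, get tightness of the conditioned scaled densities from the second-moment bounds, pass to the limit in the scaled HJB and in the stationary Fokker--Planck equation, and identify the Gaussian through the LQG result of Theorem~\ref{T1.18}. The genuine gap is at the step you yourself call the heart of the proof: the assertion that Theorem~\ref{T1.11} gives $\varepsilon^{2-2\nu}\bigl[\beta_{*}^{\varepsilon}-\ell(z)\bigr]\to\varLambda^{+}(M_{z})$ in all sub-cases with $\nu\ge1$. For $\nu\in(1,2)$ with $\fJ<\fJs$, Theorem~\ref{T1.11}\,(i) only yields $\order(\varepsilon^{\nu})\le\beta_{*}^{\varepsilon}-\fJ\le\varepsilon^{2\nu-2}\,\widetilde\fJ+\order(\varepsilon^{2\nu})$, so after multiplying by $\varepsilon^{2-2\nu}$ you know only that the right-hand side of your rescaled HJB eventually lies between $-\sorder(1)$ and $\widetilde\fJ+\sorder(1)$; neither its convergence nor the value $\varLambda^{+}(M_{z})$ follows. (You also need an argument that the hypothesis $\liminf_{n}\eta_{*}^{\varepsilon_{n}}(\cN)>0$ forces $z\in\widetilde\cZ$, resp.\ $z\in\cZc$ when $\nu=1$, before $\widetilde\fJ$ or $\fJc-\ell(z)$ can be equated with $\varLambda^{+}(M_{z})$.) The paper closes exactly this point the other way around: it extracts only a subsequential limit $\Hat\beta$ of the constant, uses the tightness you have already established to show that the limiting diffusion with drift $M_{z}y-\grad\Hat{W}^{*}(y)$ possesses an integrable invariant density and is therefore positive recurrent, and then invokes the dichotomy of Theorem~\ref{T1.18}\,(c): no solution exists for $\Hat\beta>\varLambda^{+}(M_{z})$ and the closed loop is transient for $\Hat\beta<\varLambda^{+}(M_{z})$, forcing $\Hat\beta=\varLambda^{+}(M_{z})$, which simultaneously pins down $\Hat{W}^{*}(y)=\tfrac{1}{2}\langle y,\widehat{Q}_{z}y\rangle$ and yields $z\in\widetilde\cZ$ (this is Theorem~\ref{T5.7}\,(b)). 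Your proposal contains all the ingredients for that argument, but as written it asserts the constant's convergence as a consequence of Theorem~\ref{T1.11}, which it is not; likewise, the selection criterion ``gradient induces a stable closed loop'' that you use to single out $\Hat{W}^{*}$ in Theorem~\ref{T1.18} is exactly the recurrence input you have not supplied.

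A second, recurring problem is the appeal to Theorem~\ref{T1.4}\,(a) for $\varepsilon$-uniform bounds. That statement gives affine growth of $\abs{\grad V^{\varepsilon}}$ for each fixed $\varepsilon$; the constant is not uniform in $\varepsilon$ (near an unstable equilibrium $\grad V^{\varepsilon}$ scales like $\varepsilon^{-2}$, as the linear example shows). Hence neither the claimed uniform local boundedness of $\tilde b^{\varepsilon}$ nor the ``immediate'' subcritical estimate $\abs{\tilde v^{\varepsilon}}\in\order(\varepsilon^{2-\nu})$ follows from it. What is actually needed is the $\varepsilon$-uniform bound $\abs{\grad\Breve{V}_{z}^{\varepsilon}(x)}\le\Breve{c}_{0}(1+\abs{x})$ of Lemma~\ref{L4.4}, which the paper proves from the scaled equation together with the bounds on $\varepsilon^{2(1-\nu)}\bigl(\beta_{*}^{\varepsilon}-\ell(z)\bigr)$ in Corollary~\ref{C4.2} (this is the reason for the restriction $z\in\cZ$ when $\nu>1$); this is the content of what you label the ``main technical obstacle,'' but it must also replace your two citations of Theorem~\ref{T1.4}\,(a). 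In the subcritical case the correct conclusion is then obtained not from a rate bound but as in Theorem~\ref{T5.7}: the right-hand side of the scaled HJB tends to $0=\varLambda^{+}(M_{z})$ for $z\in\cSs$, and uniqueness in Theorem~\ref{T1.18}\,(c) gives $\Bar{V}_{z}=0$, i.e.\ $\tilde v^{\varepsilon}\to0$ and $\widehat{Q}_{z}=0$.
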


\begin{proof}
This follows from Theorems~\ref{T5.3} and \ref{T5.7}.
\end{proof}

We present a simple example to demonstrate the results.

\begin{example}\label{E1.14}
Let $m$ be a vector field in $\RR$ of the form
$m = - \nabla F$, with $F$ a `double well potential' given by
$F(x) \df \frac{x^4}{4} - \frac{x^3}{3} - x^{2}$ on $[-10, 10]$,
with $F$ suitably extended so that it is globally Lipschitz and does not
have any critical points outside the interval $[-10, 10]$.
Then $\nabla F$ vanishes at exactly three points: $-1, 0, 2$.
Of these, $0$ is a local maximum, hence an unstable equilibrium for the o.d.e.\
$\dot{x}(t) = m(x(t))$, and both $-1$ and $2$ are local minima, hence stable
equilibria thereof.
Let $\ell(x) = c\abs{x}^{2}$ on $[-10, 10]$ for a suitable $c > 0$, modified
suitably outside $[-10, 10]$ to render it globally Lipschitz.
Note that $F(0) = 0$, $F(-1) = -\frac{5}{12}$, $F(2) = -\frac{8}{3}$. 
Thus $x = 2$ is the unique global minimum of $F$.
Since $\ell(0)=0$, and $Dm(0)=2$, the results of Theorem~\ref{T1.11}
indicate that 
\begin{itemize}
\item in the supercritical regime $\sS=\{0\}$,
and $\beta^\varepsilon_* \approx \ell(0) = 0$ for $\varepsilon$ small;
\item in the subcritical regime $\sS=\{-1\}$,
$\beta^\varepsilon_* \approx \ell(-1) = c$ for $\varepsilon$ small;
\item
in the critical regime, we have $\sS=\{0\}$ if $c>2$,
with $\beta^\varepsilon_* \approx \ell(0) + Dm(0) = 2$ for $\varepsilon$ small,
and $\sS=\{-1\}$ if $c<2$, with
$\beta^\varepsilon_* \approx \ell(-1) = c$ for $\varepsilon$ small.
\end{itemize}

Next we change the data so that
$$F(x) \df \frac{x^6}{6} - \frac{x^5}{5} - \frac{7x^4}{4}+\frac{x^3}{3}
+3x^{2}\,\quad{on}\ [-10, 10]\,.$$
Then $\nabla F$ vanishes at exactly five points,
and $\cS=\{-2,-1,0,1,3\}$.
Of these, $-1$ and $1$ are local maxima of $F$, hence unstable equilibria
for the o.d.e.\
$\dot{x}(t) = m(x(t))$, while the rest are stable equilibria.
Hence $\cSs=\{-2,0,3\}$.
Let $\ell(x) = 5x^4-x^3-20x^2+16$ on $[-10, 10]$.
The critical point $z=3$ is the unique global minimum for $F$, which
means that it is stochastically stable for the uncontrolled dynamics.
Calculating the values of $\ell$ at $\cS$ we obtain
$\ell(-2)=24$, $\ell(-1)=2$, $\ell(0)=16$, $\ell(1)=0$, and $\ell(3)=214$.
Also, we have $Dm(-1)=8$, $Dm(1)=12$.
By Theorem~\ref{T1.11}, we have the following.
\begin{itemize}
\item in the supercritical regime, $\sS=\{1\}$,
and $\beta^\varepsilon_* \approx \ell(1) = 0$ for $\varepsilon$ small;
\item
in the critical regime, $\sS=\{-1\}$,
and $\beta^\varepsilon_* \approx \ell(-1)+Dm(-1) = 10$ for $\varepsilon$ small;
\item in the subcritical regime, $\sS=\{0\}$,
$\beta^\varepsilon_* \approx \ell(0) = 16$ for $\varepsilon$ small.
\end{itemize}
Note that in this example
 the stochastically stable sets are distinct in the three regimes.
\end{example}

\begin{remark}
Theorems~\ref{T1.11}--\ref{T1.12} suggest that $\nu=2$ is a critical value.
We present an example with linear drift and quadratic penalty,
so that explicit calculations are possible,
to show that indeed $\nu=2$ is a critical
value.
Consider a one-dimensional
model with data $m(x)=x$ and $\ell(x)=(x+1)^{2}$.
Direct substitution shows that the solution of the HJB
equation (see \eqref{E-HJB2}) is
\begin{align*}
V^{\varepsilon}(x)&\;=\;
\frac{1+\sqrt{1+2\varepsilon^{2}}}{2\varepsilon^2}\,
\biggl(x + \frac{2\varepsilon^2}
{\bigl(1+\sqrt{1+2\varepsilon^{2}}\bigr)\,\sqrt{1+2\varepsilon^2}}\biggr)^{2}\,,\\[5pt]
\beta^\varepsilon_*&\;=\; \frac{1}{1+2\varepsilon^2}
+ \varepsilon^{2\nu-2}\,\frac{1+\sqrt{1+2\varepsilon^{2}}}{2}\,.
\end{align*}
The closed loop drift is
\begin{align}\label{ER1.5a}
x - \varepsilon^2 \grad V^{\varepsilon}(x)
&\;=\; -\sqrt{1+2\varepsilon^2}\, x -
\frac{2\varepsilon^2}{\sqrt{1+2\varepsilon^2}}\\[5pt]
&\;=\; -\sqrt{1+2\varepsilon^2}
\biggl(x + \frac{2\varepsilon^2}{1+2\varepsilon^2}\biggr)\,.\nonumber
\end{align}
Thus, the optimal stationary distribution $\eta^\varepsilon_*$ is Gaussian
with variance $(\upsigma^\varepsilon_*)^2$ and mean $\mathfrak{m}^\varepsilon_*$
given by
\begin{equation}\label{ER1.5b}
(\upsigma^\varepsilon_*)^2\;\df\;
\frac{\varepsilon^{2\nu}}{2\sqrt{1+2\varepsilon^2}}\,,
\qquad
\mathfrak{m}^\varepsilon_*\;\df\;
-\frac{2\varepsilon^2}{1+2\varepsilon^2}\,.
\end{equation}
Consider the scaled distribution $\Hat\eta^\varepsilon_*$ with density
$\varepsilon^{\nu}\,
\varrho_{*}^{\varepsilon}\bigl(\varepsilon^{\nu}x+z\bigr)$.
Let $\sN(\mathfrak{m},\upsigma^2)$ denote the Normal distribution
with mean $\mathfrak{m}$ and variance $\upsigma^2$.
We have
\begin{itemize}
\item
For $\nu\in(0,2)$,
$\Hat\eta^\varepsilon_*$ converges to $\sN(0,\nicefrac{1}{2})$.
\item
For $\nu=2$, $\Hat\eta^\varepsilon_*$ converges to $\sN(-2,\nicefrac{1}{2})$.
\item
For $\nu>2$, we have $\frac{\mathfrak{m}^\varepsilon_*}{\upsigma^\varepsilon_*}
\to -\infty$, and thus $\Hat\eta^\varepsilon_*$ does not converge
as $\varepsilon\searrow0$.
\end{itemize}
Thus \eqref{ET1.13} does not hold for $\nu\ge2$.

A simple calculation also shows that the optimal control effort is
given by
\begin{align*}
\sG^\varepsilon_*&\;=\; \frac{\varepsilon^{-2}}{2}
\Bigl(1+\sqrt{1+2\varepsilon^{2}}\Bigr)^2\, (\upsigma^\varepsilon_*)^2
+ \frac{\varepsilon^{-2}}{2} \Bigl(1+\sqrt{1+2\varepsilon^{2}}\Bigr)^2\,
\biggl(\frac{2\varepsilon^2}
{\bigl(1+\sqrt{1+2\varepsilon^{2}}\bigr)\,\sqrt{1+2\varepsilon^2}}
+\mathfrak{m}^\varepsilon_*\biggr)^{2}
\\[5pt]
&\;=\; \varepsilon^{2\nu-2}\,
\frac{\bigl(1+\sqrt{1+2\varepsilon^{2}}\bigr)^2}{4\sqrt{1+2\varepsilon^2}}
+\frac{2 \varepsilon^2}{\bigl(1+2\varepsilon^2\bigr)^2}\,.
\end{align*}
Thus $\sG^\varepsilon_*\;\in\;\order\bigl(\varepsilon^{(2\nu-2)\wedge2}\bigr)$,
which matches the estimate in Theorem~\ref{T1.11}\,(i).

A better understanding of this can be reached by considering the limit
$\nu\to\infty$, in which case the dynamics are deterministic.
A simple calculation shows that
\begin{equation*}
\Bar{x} \;\df\; \argmin_x\;\Bigl\{\ell(\varepsilon x) + \tfrac{1}{2} \abs{x}^2\Bigr\}
\;=\; -\frac{2\varepsilon^2}{1+2\varepsilon^2}\,.
\end{equation*}
Thus for a feedback control to be optimal, the
point $\Bar{x}$ should be asymptotically stable for the closed loop system.
As a result, for the LQG problem, the optimal stationary distribution
is  centered at the point $\Bar{x}$ for all values of $\nu$.
The criticality at $\nu=2$ is generic, since in the
vicinity of an equilibrium $z$, solving
the minimization problem we have $\Bar{x} \approx \varepsilon^2 \grad\ell(z)$.

There is a similar behavior if the drift is stable. Let $m(x)=-x$.
We obtain
\begin{align*}
V^{\varepsilon}(x)&\;=\;
\frac{-1+\sqrt{1+2\varepsilon^2}}{2\varepsilon^2}\,
\biggl(x + \frac{2\varepsilon^2}
{\bigl(-1+\sqrt{1+2\varepsilon^2}\bigr)\,\sqrt{1+2\varepsilon^2}}\biggr)^{2}\,,\\[5pt]
\beta^\varepsilon_*&\;=\; \frac{1}{1+2\varepsilon^2}
+ \varepsilon^{2\nu-2}\,\frac{-1+\sqrt{1+2\varepsilon^2}}{2}\\[5pt]
&\;=\; 1-\frac{2\varepsilon^2}{1+2\varepsilon^2}
+ \varepsilon^{2\nu}\,\frac{1}{1+\sqrt{1+2\varepsilon^2}}\,.
\end{align*}
The closed loop drift, variance, and mean are as in \eqref{ER1.5a}--\eqref{ER1.5b}.
Using the identity
\begin{equation*}
\frac{-1+\sqrt{1+2\varepsilon^2}}{2\varepsilon^2}
\;=\;\frac{1}{1+\sqrt{1+2\varepsilon^2}}\,,
\end{equation*}
the optimal control effort takes the form
\begin{align*}
\sG^\varepsilon_*&\;=\;
\frac{2 (\upsigma^\varepsilon_*)^2}
{\bigl(1+\sqrt{1+2\varepsilon^2}\bigr)^2}\, 
+ \frac{2\varepsilon^2}{\bigl(1+\sqrt{1+2\varepsilon^2}\bigr)^2}\,
\biggl(\frac{1+\sqrt{1+2\varepsilon^2}}{\sqrt{1+2\varepsilon^2}}
+\mathfrak{m}^\varepsilon_*\biggr)^{2}
\\[5pt]
&\;=\; 
\frac{ \varepsilon^{2\nu}}
{\bigl(1+\sqrt{1+2\varepsilon^2}\bigr)^2\sqrt{1+2\varepsilon^2}}
+\frac{2 \varepsilon^2}{\bigl(1+2\varepsilon^2\bigr)^2}\,.
\end{align*}
Thus $\sG^\varepsilon_*\in\order\bigl(\varepsilon^{2\nu\wedge 2}\bigr)$.
\end{remark}

\subsection{A property of LQG systems}\label{S1.5}
As mentioned earlier, the study of the critical regime, and
also the proof of Theorem~\ref{T1.13}
rely on an important property of LQG systems which we describe next.
A matrix $M\in\RR^{d\times d}$ is called \emph{exponentially dichotomous}
if it has no eigenvalues on the imaginary axis.
Consider the diffusion
\begin{equation}\label{E-LQG}
\D{X}_{t}\;=\; \bigl(MX_{t}+v(X_{t})\bigr)\,\D{t} + \D{W}_{t}\,,
\end{equation}
with $M\in\RR^{d\times d}$ exponentially dichotomous.
Let $\bUssm$ denote the class of locally bounded stationary Markov controls $v$,
under which the diffusion in \eqref{E-LQG} has a unique strong solution,
is positive recurrent, and satisfies
\begin{equation}\label{E-effort}
\sE(v) \;\df\; \frac{1}{2}\,\int_{\Rd} \abs{v(x)}^2 \,\mu_v(\D{x})\;<\; \infty\,,
\end{equation}
where $\mu_v$ denotes the associated invariant probability measure.

As Theorem~\ref{T1.18} below asserts,
the minimal control effort, defined by
\begin{equation*}
\sE_* \;\df\; \inf_{v\in\bUssm}\; \sE(v),
\end{equation*}
which is required to render the diffusion positive recurrent by controls
in $\bUssm$, equals the
trace of the unstable spectrum of the matrix $M$,
which was denoted as $\varLambda^+(M)$ in Definition~\ref{D1.9}.
This result is related to classical results in deterministic
linear control systems and the Riccati equation \cite{Kucera-72b,Martensson-71,
Willems-71b},
but since we could not locate
it in this form in the literature, a proof is included in Appendix~\ref{AppB},
where the proof of the following auxiliary lemma is also located.

\begin{lemma}\label{L1.16}
Provided $M$ is exponentially dichotomous,
there exists a constant $\widetilde{C}_0$ depending only on $M$
such that
\begin{equation*}
\int_{\Rd} \abs{x}^2 \,\mu_v(\D{x})\;\le\;
\widetilde{C}_0
\biggl(1 + \int_{\Rd} \abs{v(x)}^2 \,\mu_v(\D{x})\biggr)
\qquad \forall\,v\in\bUssm\,.
\end{equation*}
\end{lemma}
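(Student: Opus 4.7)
The plan is to use a quadratic Lyapunov function whose indefiniteness mirrors the spectral dichotomy of $M$. Since $M$ is exponentially dichotomous, the Sylvester operator $X\mapsto M\transp X+XM$ is a bijection on symmetric $d\times d$ matrices, so there is a unique symmetric $P$ solving $M\transp P+PM=-I$. With $\Lyap(x)\df\langle x,Px\rangle$, a direct computation of the extended generator of \eqref{E-LQG} followed by Young's inequality gives
\begin{equation*}
\Lg_v\Lyap(x)\;=\;\trace(P)-\abs{x}^2+2\langle x,Pv(x)\rangle\;\le\;\trace(P)-\tfrac{1}{2}\abs{x}^2+2\norm{P}^2\abs{v(x)}^2.
\end{equation*}
Formally integrating against $\mu_v$ and using $\int\Lg_v\Lyap\,\D\mu_v=0$ yields the bound with $\widetilde{C}_0=\max\bigl\{2\abs{\trace(P)},\,4\norm{P}^2\bigr\}$, so the whole proof reduces to justifying this stationarity identity for a function $\Lyap$ that is neither bounded below nor a priori in $L^1(\mu_v)$.

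To this end I would decompose $\Rd=E^s\oplus E^u$ into the stable and unstable invariant subspaces of $M$ with (possibly oblique) spectral projections $\pi_s,\pi_u$, and estimate the two contributions separately. On $E^s$, the nonnegative function $\Lyap_s(x)\df\langle\pi_s x,P_s\pi_s x\rangle$, where $P_s>0$ solves the standard Lyapunov equation on $E^s$, yields a genuine Foster-type inequality $\Lg_v\Lyap_s\le C_1-\tfrac{1}{2}\abs{\pi_s x}^2+C_2\abs{v}^2$. A standard Dynkin--Fatou argument with the exit-time stopping $\tau_R\df\inf\{t\ge 0\colon\abs{X_t}\ge R\}$ and Birkhoff's ergodic theorem along positive-recurrent paths then gives $\int\abs{\pi_s x}^2\,\D\mu_v\le 2C_1+2C_2\int\abs{v}^2\,\D\mu_v$. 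For the unstable component the reverse Lyapunov has the wrong sign for a Foster bound, so I would instead exploit the linearity of the drift on $E^u$. With $M_u\df M|_{E^u}$ having spectrum in the open right half plane, $\norm{\E^{-M_u s}}\le C\,\E^{-\alpha s}$ for $s\ge 0$, and rearranging the forward variation-of-constants formula for $\pi_u X$ gives, under stationarity, the backward representation
\begin{equation*}
\pi_u X_0\;=\;-\int_0^\infty \E^{-M_u s}\pi_u v(X_s)\,\D s-\int_0^\infty \E^{-M_u s}\pi_u\,\D W_s,
\end{equation*}
from which Minkowski's integral inequality applied to the first integrand and Itô's isometry applied to the second yield $\bnorm{\pi_u X_0}_{L^2(\mu_v)}\le C_3+C_4\bnorm{v}_{L^2(\mu_v)}$. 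Adding this to the stable estimate gives the lemma.

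The main obstacle is the rigorous derivation of the backward representation, which requires $\E^{-M_u T}\pi_u X_T\to 0$ in $L^2$ as $T\to\infty$; by stationarity $\norm{\pi_u X_T}_{L^2}$ is constant in $T$, so the decay hinges on the very finiteness one is trying to prove. I would break the circularity by an approximation argument: replace $v$ by a sequence $v_n$ obtained by adding a mild outer confinement (e.g.\ augmenting the drift by $-\epsilon_n x$ outside a large ball with $\epsilon_n\searrow 0$) so that the perturbed diffusion lies in $\bUssm$, has a Gaussian-tailed invariant measure $\mu_n$ with finite second moment, and makes the argument above fully rigorous. Uniform constants in $n$, together with lower semicontinuity of $\uppi\mapsto\int\abs{x}^2\,\D\uppi$ under weak convergence of the ergodic occupation measures, then allow passage to the limit and recover the bound for the original $v$.
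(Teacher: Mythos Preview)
Your diagnosis of the difficulty is right, and the stable-component Foster argument is sound. The gap is in the unstable component. The backward variation-of-constants identity requires $\E^{-M_u T}\pi_u X_T\to 0$ in $L^2$, and as you observe this is circular; the approximation you propose does not break the circle. Adding $-\epsilon_n x$ outside a large ball with $\epsilon_n\searrow0$ leaves $M-\epsilon_n I$ with eigenvalues in the open right half plane, so there is no reason for the perturbed invariant measure to have finite second moments or Gaussian tails. If instead you fix a large $\lambda_0$ making $M-\lambda_0 I$ Hurwitz and let only $R_n\to\infty$, then $v_n=v-\lambda_0 x\,\Ind_{\{\abs{x}>R_n\}}$ gives $\int\abs{v_n}^2\,\D\mu_{v_n}\le 2\int\abs{v}^2\,\D\mu_{v_n}+2\lambda_0^2\int_{\abs{x}>R_n}\abs{x}^2\,\D\mu_{v_n}$, and the last integral is precisely the kind of second-moment quantity you have no handle on; nor is weak convergence $\mu_{v_n}\to\mu_v$ available without a uniform tightness estimate, which is again a second-moment bound. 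The approximation is therefore just as circular as the direct argument.

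The paper's proof avoids unbounded test functions altogether. After block-diagonalizing $M=\diag(M_1,-M_2)$ with $M_i$ Hurwitz and solving $M_i\transp S_i+S_iM_i=-I$, it takes
\[
\varphi(x)\;=\;1+\E^{-\alpha\langle x,\diag(S_1,0)x\rangle}-\E^{-\alpha\langle x,\diag(0,S_2)x\rangle}\;\in\;[0,2]\,.
\]
Because $\varphi$ is bounded, applying Dynkin's formula, dividing by $T$, and letting $T\to\infty$ is unproblematic with no a priori moment information whatsoever. A direct computation plus Young's inequality gives, with $S\df\diag(S_1,S_2)$,
\[
\Lg_v\varphi(x)\;\ge\;\alpha\Bigl(-\trace S_1+\E^{-\alpha\langle x,Sx\rangle}\bigl(\tfrac{1}{2}-2\alpha\norm{S}^2\bigr)\abs{x}^2-2\norm{S}^2\abs{v(x)}^2\Bigr)\,,
\]
which time-averages to a bound on $\int\E^{-\alpha\langle x,Sx\rangle}\abs{x}^2\,\D\mu_v$ that is uniform in small $\alpha$. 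Monotone convergence as $\alpha\searrow0$ then yields the lemma. The exponential damping is exactly the device that tames the unstable directions without ever presupposing integrability.
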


Recall that a real square matrix is called \emph{Hurwitz} if its eigenvalues
lie in the open left half complex plane.
We need the following definition.

\begin{definition}\label{D1.17}
Let $M\in\RR^{d\times d}$ be fixed.
Let $\cG(M)$ denote the collection of all matrices
$G\in\RR^{d\times d}$ such that $M-G$ is Hurwitz.
For $G\in\cG(M)$, let $\Sigma_{G}$ denote
the (unique) symmetric solution of the Lyapunov equation
\begin{equation}\label{ED1.17A}
(M-G)\,\Sigma_{G} + \Sigma_{G}\,(M-G)\transp \;=\; - I\,,
\end{equation}
and define
\begin{equation}\label{ED1.17B}
\begin{split}
\cJ_G(M) &\;\df\;
\frac{1}{2}\trace\bigl( G\,\Sigma_{G}\, G\transp\bigr)\,,\\[5pt]
\cJ_*(M) &\;\df\; \inf_{G\in\cG(M)}\;\cJ_G(M)\,.
\end{split}
\end{equation}
\end{definition}

Let $v_G(x)= - G x$  for some $G\in\RR^{d\times d}$.
It is clear that for the diffusion in \eqref{E-LQG} to be positive recurrent under
the linear control $v_G$, it is necessary that $M-G$ be Hurwitz.
If so, then the invariant probability distribution of the controlled
diffusion is Gaussian with covariance matrix $\Sigma_{G}$ given
by \eqref{ED1.17A}.
It is clear then that the control effort $\sE(v_G)$ defined in
\eqref{E-effort} satisfies $\sE(v_G)=\cJ_G(M)$. 
Therefore, provided the infimum in \eqref{ED1.17B} is attained,
then $\cJ_*(M)$ is the minimal control effort, as defined by \eqref{E-effort},
required to render \eqref{E-LQG} positive recurrent using
a linear stationary Markov control.
Theorem~\ref{T1.18} asserts that the infimum in \eqref{ED1.17B} is indeed
attained and that $\cJ_*(M)=\varLambda^+(M)$.
Moreover, linear stationary Markov controls are optimal for this task
within the class $\bUssm$.

\begin{theorem}\label{T1.18}
Suppose that $M\in\RR^{d\times d}$ is exponentially dichotomous.
Then the following hold.
\begin{itemize}
\item[\textup{(}a\/\textup{)}]
There exists a unique positive semidefinite symmetric solution $Q$ of the
matrix Riccati equation
\begin{equation}\label{ET1.18A}
M\transp Q + Q M\;=\; Q^{2}\,,
\end{equation}
satisfying
\begin{equation}\label{ET1.18B}
(M-Q) \Sigma + \Sigma (M-Q)\transp\;=\; -I
\end{equation}
for some symmetric positive definite matrix $\Sigma$.
Moreover, $A=M-Q$ attains the infimum in \eqref{ED1.17B} subject
to \eqref{ED1.17A}, and it holds that
\begin{equation*}
\cJ_*(M)\;=\;\varLambda^+(M)\;=\;\frac{1}{2}\trace(Q)\,.
\end{equation*}
\item[\textup{(}b\/\textup{)}]
With $\mu_v$ denoting the invariant probability measure
of \eqref{E-LQG} under a control $v\in\bUssm$, we have
\begin{equation}\label{ET1.18C}
\inf_{v\in\bUssm}\;\int_{\Rd}\tfrac{1}{2}\abs{v(x)}^{2}\,\mu_{v}(\D{x})
\;=\; \varLambda^+(M)\,.
\end{equation}
Moreover, any control $v_*\in\bUssm$ which attains the infimum in
\eqref{ET1.18C} satisfies $v_*(x)=-Qx$  for almost all $x$ in $\Rd$.
\item[\textup{(}c\/\textup{)}]
Let $\Bar\beta\in\RR$.  The equation
\begin{equation}\label{ET1.18D}
\frac{1}{2}\,\Delta \Bar{V}(x) + \bigl\langle Mx,\grad \Bar{V}(x)\bigr\rangle
-\frac{\abs{\grad \Bar{V}(x)}^2}{2}\;=\; \Bar\beta
\end{equation}
has no solution if $\Bar\beta>\varLambda^+(M)$.
If $\Bar\beta=\varLambda^+(M)$, then $\Bar{V}(x)=\tfrac{1}{2} \langle x,Qx\rangle$
is the unique solution of \eqref{ET1.18D} satisfying $\Bar{V}(0)=0$.
If $\Bar\beta<\varLambda^+(M)$ and $\Bar{V}$ is a solution of
\eqref{ET1.18D}, then the diffusion in \eqref{E-LQG} under the control
$v=-\grad\Bar{V}$ is transient.
\end{itemize}
\end{theorem}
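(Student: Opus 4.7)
I will prove parts (a), (b), (c) in order, with (a) providing an explicit construction of $Q$ that the analytic arguments for (b) and (c) leverage. For (a), I use the spectral dichotomy of $M$ to decompose $\Rd=E^s\oplus E^u$ into stable and unstable invariant subspaces; in an adapted basis, $M=\diag(M_s,M_u)$ with $M_s$ Hurwitz and $M_u$ anti-Hurwitz. I then seek $Q$ of the block form $\diag(0,Q_u)$ with $Q_u>0$ symmetric, so that the Riccati equation \eqref{ET1.18A} reduces on $E^u$ to $M_u^\transp Q_u+Q_uM_u=Q_u^{\,2}$. Setting $P_u\df Q_u^{-1}$ and multiplying on the left and right by $P_u$, this becomes the Lyapunov equation $(-M_u)P_u+P_u(-M_u)^\transp=-I$, which admits a unique symmetric positive definite solution since $-M_u$ is Hurwitz. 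To check that $M-Q$ is Hurwitz, I observe that the Lyapunov identity rearranges to $M_u-Q_u=-P_uM_u^\transp P_u^{-1}$, which is similar to $-M_u^\transp$ and therefore Hurwitz. Uniqueness of the stabilizing Riccati solution is standard: if $Q_1,Q_2$ are two such and $A_i\df M-Q_i$, subtracting the Riccati equations yields the Sylvester equation $A_1^\transp(Q_1-Q_2)+(Q_1-Q_2)A_2=0$, whose only solution is zero since both $A_i$ are Hurwitz. The trace identity $\tfrac12\trace(Q)=\varLambda^+(M)$ then follows from $\trace(M-Q)=\trace(M_s)+\trace(-M_u^\transp)=\trace(M)-2\trace(M_u)$ combined with $\trace(M-Q)=\trace(M)-\trace(Q)$.

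\textbf{Part (b).} Denote by $\Lg_v$ the generator of \eqref{E-LQG}, namely $\Lg_v f(x)\df\tfrac12\Delta f(x)+\langle Mx+v(x),\grad f(x)\rangle$. The core step is It\^o's formula applied to the candidate value function $\Bar V(x)\df\tfrac12\langle x,Qx\rangle$. Using $\grad\Bar V=Qx$, $\Delta\Bar V=\trace(Q)$, and the Riccati identity $\langle Mx,Qx\rangle=\tfrac12\abs{Qx}^2$, a direct computation yields
\begin{equation*}
\Lg_v\Bar V(x)+\tfrac{1}{2}\abs{v(x)}^2\;=\;\tfrac{1}{2}\trace(Q)+\tfrac{1}{2}\abs{Qx+v(x)}^2\;\ge\;\varLambda^+(M),
\end{equation*}
with equality a.e.\ iff $v(x)=-Qx$. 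For any $v\in\bUssm$, I invoke Lemma~\ref{L1.16} to control the quadratic growth of $\Bar V$ by $\sE(v)$, and use a localization by stopping times to justify integration against $\mu_v$ and passage to the ergodic limit; this yields $\sE(v)\ge\varLambda^+(M)$. For the upper bound I take $v_*(x)\df-Qx$, whose closed-loop dynamics is a stable Ornstein--Uhlenbeck diffusion with Gaussian invariant measure of covariance $\Sigma$ satisfying \eqref{ET1.18B}; a short calculation combining the Riccati equation with \eqref{ET1.18B} shows $\sE(v_*)=\tfrac12\trace(Q^2\Sigma)=\tfrac12\trace(Q)=\varLambda^+(M)$, so $v_*$ attains the infimum in \eqref{ET1.18C}. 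Any other minimizer $v\in\bUssm$ forces equality in the displayed inequality above, whence $v(x)=-Qx$ for $\mu_v$-a.e.\ $x$, and positivity of the density of $\mu_v$ promotes this to an a.e.\ equality on $\Rd$.

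\textbf{Part (c).} The HJB equation \eqref{ET1.18D} is verified directly for $\Bar V(x)=\tfrac12\langle x,Qx\rangle$ and $\Bar\beta=\varLambda^+(M)$ using the Riccati equation. For $\Bar\beta>\varLambda^+(M)$, I argue by contradiction: any $\Cc^2$ solution $\Bar V$ and any $v\in\bUssm$ would satisfy $\Lg_v\Bar V+\tfrac12\abs{v}^2=\Bar\beta+\tfrac12\abs{v+\grad\Bar V}^2\ge\Bar\beta$, and integrating against $\mu_v$ as in (b) would force $\sE(v)\ge\Bar\beta>\varLambda^+(M)$ for every $v\in\bUssm$, contradicting part (b). For $\Bar\beta<\varLambda^+(M)$, if the diffusion in \eqref{E-LQG} under $v\df-\grad\Bar V$ were positive recurrent with invariant measure $\mu_v$, then the identity $\Lg_v\Bar V=\Bar\beta-\tfrac12\abs{\grad\Bar V}^2$ derived from the HJB equation, combined with the ergodic theorem, would give $\int\tfrac12\abs{\grad\Bar V}^2\,\D\mu_v=\Bar\beta<\varLambda^+(M)$, contradicting part (b); the null-recurrent case is excluded by a stopping-time/exit-time argument based on the same identity. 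Uniqueness at $\Bar\beta=\varLambda^+(M)$ follows from part (b): any $\Cc^2$ solution $\Bar V$ with $\Bar V(0)=0$ generates via $v=-\grad\Bar V$ a feedback control that must agree with $-Qx$ a.e., and integrating recovers $\Bar V(x)=\tfrac12\langle x,Qx\rangle$.

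\textbf{Main obstacle.} The most delicate technical step is the rigorous passage to the ergodic limit in part (b): since $\Bar V$ has quadratic growth and the controls in $\bUssm$ are only locally bounded, justifying $\int\Lg_v\Bar V\,\D\mu_v=0$ requires second-moment bounds on $\mu_v$ in terms of $\sE(v)$, which is exactly what Lemma~\ref{L1.16} supplies, combined with stopping-time localization and uniform integrability to control $T^{-1}\Exp^v[\Bar V(X_T)]$ in the long-time limit. A secondary subtlety in part (c) is separating transience from null recurrence in the subcritical regime, which is resolved by observing that non-transience under $v=-\grad\Bar V$ would produce an invariant (or at least finite) measure violating the sharp lower bound established in (b).
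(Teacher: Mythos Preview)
Your construction of $Q$ in part~(a) has a genuine gap: the invariant subspaces $E^s$ and $E^u$ of an exponentially dichotomous $M$ need not be orthogonal, so the change of basis putting $M$ into block-diagonal form is generally not orthogonal, and the Riccati equation $M^\transp Q + QM = Q^2$ does not transform as you claim. Under similarity $Q = T^{-1}\tilde{Q}T$ the result is typically not symmetric; under congruence $Q = T^\transp\tilde{Q}T$ (which preserves symmetry) the equation in the new coordinates becomes $\tilde{M}^\transp\tilde{Q} + \tilde{Q}\tilde{M} = \tilde{Q}(TT^\transp)\tilde{Q}$, not $\tilde{Q}^2$. Concretely, for $M = \bigl(\begin{smallmatrix}1&1\\0&-1\end{smallmatrix}\bigr)$ the stabilizing solution is $Q = \tfrac{1}{5}\bigl(\begin{smallmatrix}8&4\\4&2\end{smallmatrix}\bigr)$; its kernel is $E^s$ but its range is $(E^s)^\perp$, not $E^u$, and no pull-back of your $\tilde{Q}=\diag(0,2)$ recovers it. The paper instead constructs $Q$ as the monotone limit of the positive-definite solutions $Q_\kappa$ of the regularized equations $M^\transp Q_\kappa + Q_\kappa M + 2\kappa I = Q_\kappa^2$ as $\kappa\searrow0$ (using Lemma~\ref{L1.16} to keep $\Sigma_{Q_\kappa}$ bounded), and then proves the trace identity by \emph{orthogonally} diagonalizing $Q$, which forces the transformed $M$ into block upper-triangular form with the unstable eigenvalues isolated in one block.

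Your part~(b) coincides with the paper's argument. In part~(c), however, integrating $\Lg_v\bar{V}$ against $\mu_v$ for an arbitrary $\Cc^2$ solution $\bar{V}$ of \eqref{ET1.18D} requires growth bounds on $\grad\bar{V}$ that you have not established (a priori estimates such as \cite[Lemma~5.1]{Met} would close this, but must be invoked explicitly), and your uniqueness argument at $\bar\beta=\varLambda^+(M)$ assumes $-\grad\bar{V}\in\bUssm$, which is exactly the issue. The paper circumvents both difficulties by verifying the structural hypotheses of \cite{Ichihara-11} and delegating nonexistence, transience, and uniqueness to Theorems~2.1--2.2 and Corollary~2.3 there.
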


\begin{remark}
Optimality and uniqueness of the optimal control $v(x)=-Qx$ in
Theorem~\ref{T1.18}\,(b) holds
over a larger class of Markov controls.
Indeed combining the results of \cite{Bogachev-12,Krylov-05},
we can replace `locally bounded' in the definition of $\bUssm$ by
$v\in \Lpl^p(\Rd)$ for some $p>d$.
Then the results of Theorem~\ref{T1.18}\,(b) hold for this class of controls.
\end{remark}

\section{Gradient-like flows and energy functions}\label{S2}

\subsection{Gradient-like Morse--Smale dynamical systems}

It is well known in the theory of dynamical systems
that if the set of non-wandering points of a flow on a compact manifold
consists of hyperbolic fixed points,
then the associated vector field
is generically \emph{gradient-like} (see Definition~\ref{D2.1} and
Theorem~\ref{T2.2} below).
This is also the case under Hypothesis~\ref{H1.1},
since the `point at infinity' is a source for the flow of $m$.

Recall that the \emph{index} of a hyperbolic critical point $z\in\Rd$
of a smooth vector field is defined as the dimension of the unstable
manifold
$\mathcal{W}_{\text{u}}(z)$.
This agrees with the number of eigenvalues of $Dm(z)$ which have
positive real parts.
The theorem below is well known \cite{Smale,Meyer}.
What we have added in its statement
is the assertion that the energy function can be chosen
in a manner that its Laplacian
at critical points of the vector field with positive index is negative.

We start with the following definition.

\begin{definition}\label{D2.1}
We say that $\Lyap\in \Cc^{\infty}(\RR^{d})$ is an \emph{energy function}
if it is inf-compact, and has a finite set $\cS=\{z_{1},\dotsc,z_{n}\}$ of
critical points, which are all nondegenerate.
A $\Cc^{\infty}$ vector field $m$ on $\RR^{d}$ is called
\emph{gradient-like relative to} an energy function $\Lyap$ provided
that every point in $\cS$ is a
hyperbolic critical point of $m$, and
\begin{equation*}
\bigl\langle m(x),\nabla\Lyap(x)\bigr\rangle \;<\;
0\qquad \forall x\, \in\RR^{d}\setminus\cS\,.
\end{equation*}
If $m$ satisfies these properties,
we also say that $m$ is \emph{adapted to} $\Lyap$.
\end{definition}

\begin{theorem}\label{T2.2}
Suppose that $m$ is a smooth vector field in $\RR^{d}$ for which
Hypothesis~\ref{H1.1} holds.
Let $G$ be any domain of $\Rd$ of the form
$\{x\in\Rd\,\colon \Bar\Lyap<c\}$ for some $c\in\RR$,
satisfying $G\supset\cK$, and let $\{a_z\,\colon z\in\cS\}$ be any set of
distinct real numbers
such that if $z$ and $z'$ are the $\alpha$- and $\omega$-limit points
of some trajectory, respectively, then $a_{z}>a_{z'}$.
Then there exists a function $\widehat\Lyap\in\Cc^\infty(\Bar G)$,
with the following properties.
\begin{itemize}
\item[\textup{(}i\/\textup{)}]
$\bigl\langle m(x),\nabla\widehat\Lyap(x)\bigr\rangle<0$ for all
$x\in \Bar G\setminus\cS$.
\smallskip
\item[\textup{(}ii\/\textup{)}]
For each $z\in\cS$, there exists a neighborhood $\cN_z$ of $z$ and
a symmetric matrix $Q_z\in\RR^{d\times d}$ such that
$\widehat\Lyap(x) = a_z+\langle x-z, Q_z(x-z)\rangle
+ \sorder(\abs{x-z}^{2})$ for all $x\in\cN_z$.
\smallskip
\item[\textup{(}iii\/\textup{)}]
$\Delta\widehat\Lyap(z) <0$, for all $z\in \cS\setminus\cSs$, where
$\cSs$, as defined earlier, denotes the stable equilibria
of the flow of $m$.
\smallskip
\item[\textup{(}iv\/\textup{)}]
There exists a constant $C_0>0$ such that 
\begin{equation}\label{ET2.2}
C_0\,\bigl(\dist(x,\cS)\vee\abs{\grad\widehat\Lyap(x)}\bigr)^2 \;\le\;
\babs{\bigl\langle m(x),\grad\widehat\Lyap(x)\bigr\rangle}
\;\le\; C_0^{-1}\,\bigl(\dist(x,\cS)\wedge\abs{\grad\widehat\Lyap(x)}\bigr)^2
\end{equation}
for all $x\in G$.
\end{itemize}
\end{theorem}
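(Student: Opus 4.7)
The plan is to start from the classical Smale--Meyer construction of an energy function for gradient-like Morse--Smale flows, which already supplies (i), (ii) and (iv) with some nondegenerate Hessians $\tilde Q_z$, and then modify those Hessians at the unstable critical points so as to force the trace condition (iii), which is the one new ingredient of the statement.

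First I would apply Smale--Meyer on the compact, positively invariant sublevel set $\Bar G$, using Hypothesis~\ref{H1.1} to supply hyperbolicity of $\cS$ and transversality of stable and unstable manifolds. This yields a $\Cc^\infty$ Morse Lyapunov function $\Lyap_0$ with critical set $\cS$, prescribed values $\Lyap_0(z)=a_z$ (admissible by the $\alpha$--$\omega$ ordering we are given), Morse expansion $\Lyap_0(x)=a_z+\langle x-z,\tilde Q_z(x-z)\rangle+\sorder(\abs{x-z}^2)$ at each $z$ with some invertible symmetric $\tilde Q_z$, and $\langle m,\grad\Lyap_0\rangle<0$ on $\Bar G\setminus\cS$. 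Differentiating the last inequality at $z$ shows $M_z\transp\tilde Q_z+\tilde Q_z M_z\prec 0$.

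Next, for each $z\in\cS\setminus\cSs$ I would produce a symmetric $Q_z$ of the same Morse signature as $\tilde Q_z$, satisfying $M_z\transp Q_z+Q_zM_z\prec 0$ \emph{and} $\trace Q_z<0$. Block-diagonalize $M_z=\diag(A_u,A_s)$ so that $A_u$ and $-A_s$ are Hurwitz, solve the Lyapunov equations
\begin{equation*}
A_u\transp P_u+P_u A_u\;=\;I\,,\qquad A_s\transp P_s+P_s A_s\;=\;-I
\end{equation*}
for symmetric $P_u,P_s\succ 0$, and set $Q_z\df\diag(-\alpha P_u,P_s)$ (transformed back) with $\alpha>\trace(P_s)/\trace(P_u)$; then $M_z\transp Q_z+Q_zM_z=\diag(-\alpha I,-I)\prec0$ and $\trace Q_z<0$. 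For $z\in\cSs$ take $Q_z\df\tilde Q_z$, so no modification is required. I would then define $\widehat\Lyap$ by local surgery: inside a Morse chart about each $z\in\cS\setminus\cSs$, set
\begin{equation*}
\widehat\Lyap\;\df\;\Lyap_0+\chi_z\,\bigl\langle x-z,(Q_z-\tilde Q_z)(x-z)\bigr\rangle
\end{equation*}
for a smooth radial cutoff $\chi_z$ equal to $1$ on $B_{r_z/2}(z)$ and supported in $B_{r_z}(z)$, and $\widehat\Lyap\df\Lyap_0$ elsewhere. By convexity of the cone of strict Lyapunov certificates for $M_z$, the leading quadratic part of $\langle m,\grad\widehat\Lyap\rangle$ in the transition annulus remains strictly negative, so that (i) is preserved globally.

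The verifications are then routine: (ii) holds with Hessian $2Q_z$ in the core of each chart; (iii) follows from $\Delta\widehat\Lyap(z)=2\trace(Q_z)<0$; and (iv) follows near each $z\in\cS$ from invertibility of $Q_z$ combined with the expansions $\grad\widehat\Lyap(x)=2Q_z(x-z)+\sorder(\abs{x-z})$ and $\langle m,\grad\widehat\Lyap\rangle=\langle x-z,(M_z\transp Q_z+Q_zM_z)(x-z)\rangle+\sorder(\abs{x-z}^2)$, while on the complement $\Bar G\setminus\bigcup_z B_{r_z/2}(z)$ the three quantities $\dist(\cdot,\cS)$, $\abs{\grad\widehat\Lyap}$ and $\abs{\langle m,\grad\widehat\Lyap\rangle}$ are bounded below and above by positive constants, uniformly by compactness. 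The main obstacle is ensuring that the strict Lyapunov decrease (i) survives the surgery: the boundary cross term $\langle x-z,(Q_z-\tilde Q_z)(x-z)\rangle\langle m,\grad\chi_z\rangle$ in the annulus \emph{a priori} scales like the leading negative quadratic, so sign preservation requires a quantitative bookkeeping. Using $m(x)=M_z(x-z)+\sorder(\abs{x-z})$ and the radial form of $\grad\chi_z$, this cross term is bounded by a constant multiple of $\norm{Q_z-\tilde Q_z}\cdot\norm{M_z}\cdot\abs{x-z}^2$ on the annulus, and one chooses $\alpha$ just above the threshold $\trace(P_s)/\trace(P_u)$ so that $\norm{Q_z-\tilde Q_z}$ remains small enough for the convex-cone lower bound on the Lyapunov matrix to dominate. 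Alternatively, one may appeal to the flexibility of the Smale--Meyer construction itself: it can be run with prescribed nondegenerate Hessians at each critical point of specified signature, in which case $\widehat\Lyap$ with Hessians $Q_z$ is obtained directly and the gluing estimate is bypassed.
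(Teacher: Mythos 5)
Your construction of the corrected Hessians is essentially the paper's: in coordinates splitting $M_z$ into its stable and unstable blocks, solve the block Lyapunov equations and scale the block acting on the unstable subspace by a factor ($\alpha$ for you, $\theta$ in the paper) exceeding the ratio of traces, so that $M_z\transp Q_z+Q_zM_z\prec0$ while $\trace Q_z<0$. (Minor slip: for your Lyapunov equations to have positive definite solutions you need $-A_u$ and $A_s$ Hurwitz, not ``$A_u$ and $-A_s$''; the formulas are otherwise consistent.) Where you diverge from the paper is in how the local quadratic germs become a global energy function: the paper prescribes these quadratics from the outset and invokes the Smale--Meyer handlebody construction to extend them to $\Bar G$ --- precisely the ``alternative'' you mention in your final sentence --- whereas your primary route takes a generic Smale--Meyer function $\Lyap_0$ and performs a cutoff surgery near each unstable critical point.

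The surgery route, as written, has a genuine gap exactly where you flag it. The dangerous term is $\langle m,\grad\chi_z\rangle\,\langle x-z,(Q_z-\tilde Q_z)(x-z)\rangle$, which on the transition annulus is of size comparable to $\norm{M_z}\,\norm{Q_z-\tilde Q_z}\,\abs{x-z}^{2}$, i.e.\ the same order as the negative leading term, and the ratio of the two is independent of $r_z$, so shrinking the chart does not help. Your proposed remedy --- taking $\alpha$ just above $\trace(P_s)/\trace(P_u)$ so that $\norm{Q_z-\tilde Q_z}$ is ``small enough'' --- does not work: $\tilde Q_z$ is whatever Hessian the generic Smale--Meyer function happens to have and bears no quantitative relation to $\diag(-\alpha P_u,P_s)$, so the difference can be arbitrarily large no matter how $\alpha$ is chosen. (A secondary issue: differentiating $\langle m,\grad\Lyap_0\rangle<0$ at $z$ only yields $M_z\transp\tilde Q_z+\tilde Q_z M_z\preceq0$, not $\prec0$, so even the convex-combination step in the annulus needs an extra argument or information about the local model of $\Lyap_0$.) The gap is repairable, e.g.\ by telescoping: move from $\tilde Q_z$ to $Q_z$ in finitely many small steps along the segment joining them, which stays in the open convex cone $\{Q\colon M_z\transp Q+QM_z\prec0\}$, gluing each small increment over its own annulus so that the cutoff cross term is proportional to the step size. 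But the cleanest repair is the one you name at the end, and it is what the paper actually does: construct the negative-trace quadratic germs first and obtain $\widehat\Lyap$ with these germs directly from the Smale--Meyer extension (the paper cites Smale's handlebody decomposition and Meyer for this), after which (i)--(iii) are immediate and (iv) follows, as you correctly argue, from the local expansions near $\cS$ together with compactness of $\Bar G$ away from a neighborhood of $\cS$.
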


\begin{proof}
Since $m$ is smooth and bounded,
and $m(z)=0$ for $z\in\cS$, there exists a constant $\Tilde{C}_m>0$
such that
\begin{equation}\label{E-Cm2}
\abs{M_z x-m(x)}\;\le\; \Tilde{C}_m \abs{x}^{2}\qquad\forall\,x\in\Rd\,,
\quad\forall\,z\in\cS\,.
\end{equation}
Let $z\in\cS$ be a critical point of $m$ of index $q\ge 0$.
Translating the coordinates we may assume that $z=0$.
Since $m(0)=0$, then by \eqref{E-Cm2}, $m(x)$ takes the form
\begin{equation*}
m(x) \;=\;  M x + \order\bigl(\abs{x}^2\bigr)
\end{equation*}
locally around $x=0$, where $M=Dm(0)$.
By hypothesis $M$ has exactly $q$ ($d-q$) eigenvalues in the open right half
(left half) complex space.
Therefore since the corresponding eigenspaces are invariant under $M$,
there exists a linear coordinate transformation $T$ such that, in the
new coordinates $\Tilde{x} = T(x)$, the linear map $x\mapsto Mx$ has the
matrix representation
$\Tilde{M}= T M T^{-1}$ and $\Tilde{M}=\diag(\Tilde{M}_{1},-\Tilde{M}_{2})$, where
$\Tilde{M}_{1}$ and $\Tilde{M}_{2}$ are square Hurwitz matrices of dimension
$d-q$ and $q$ respectively.
By the Lyapunov theorem there exist positive definite matrices
$\Tilde{Q}_{i}$, $i=1,2$, satisfying
\begin{equation}\label{ET2.2a}
\begin{split}
\Tilde{M}_{1}\transp\Tilde{Q}_{1}+\Tilde{Q}_{1}\Tilde{M}_{1} &\;=\; -I_{d-q}\,,
\\[3pt]
\Tilde{M}_{2}\transp\Tilde{Q}_{2}+\Tilde{Q}_{2}\Tilde{M}_{2} &\;=\; -I_{q}\,,
\end{split}
\end{equation}
where $I_{d-q}$ and $I_{q}$ are the identity matrices of dimension $d-q$
and $q$, respectively.
Suppose $q>0$, and
let $\theta>1$ be such that
\begin{equation}\label{ET2.2b}
\theta\,\trace\bigl(T\transp \diag(0,\Tilde{Q}_{2}) T\bigr)
\;>\; \trace\bigl(T\transp \diag(\Tilde{Q}_{1},0) T\bigr)\,,
\end{equation}
and define $\widehat\Lyap$ in some neighborhood of $0$ by
\begin{equation}\label{ET2.2c}
\widehat\Lyap(x) \;\df\;
a+\bigl\langle x, T\transp \diag(\Tilde{Q}_{1},-\theta \Tilde{Q}_{2}) Tx\bigr\rangle\,,
\end{equation}
where $a$ is a constant to be determined later.
By \eqref{ET2.2b} we obtain
$\Delta\widehat\Lyap(0) <0$, and thus (iii) holds.

Using \eqref{E-Cm2}, we have
\begin{equation*}
\bigl\langle m(x),\grad\widehat\Lyap(x)\bigr\rangle \;=\;
x\transp\bigl[ M\transp T\transp \diag(\Tilde{Q}_{1},-\theta \Tilde{Q}_{2}) T
+ T\transp \diag(\Tilde{Q}_{1},-\theta \Tilde{Q}_{2}) TM\bigr]x
+ \order\bigl(\abs{x}^3\bigr)\,.
\end{equation*}
Expanding we obtain
\begin{align*}
T\transp \diag(\Tilde{Q}_{1},-\theta \Tilde{Q}_{2}) TM &\;=\;
T\transp \diag(\Tilde{Q}_{1},-\theta \Tilde{Q}_{2}) T T^{-1} \Tilde{M} T\\
&\;=\;T\transp\diag(\Tilde{Q}_{1}\Tilde{M}_{1},\theta\Tilde{Q}_{2}\Tilde{M}_{2}) T\,.
\end{align*}
By \eqref{ET2.2a} we obtain
\begin{equation*}
\bigl\langle m(x),\grad\widehat\Lyap(x)\bigr\rangle \;=\;
- \bigl\langle x, T\transp\diag(I_{d-q},\theta I_{q}) T x\bigr\rangle
 + \order\bigl(\abs{x}^3\bigr)\,.
\end{equation*}
Therefore, since $\theta>1$, we have
\begin{equation}\label{ET2.2d}
- \abs{Tx}^{2} + \order\bigl(\abs{x}^3\bigr) \;\le\;
\bigl\langle m(x),\grad\widehat\Lyap(x)\bigr\rangle
\;\le\; - \theta\,\abs{Tx}^{2} + \order\bigl(\abs{x}^3\bigr)\,.
\end{equation}

As shown in \cite{Smale} one can select any real numbers $a_{i}$
and define $\widehat\Lyap$ on $\cS$ by setting $\widehat\Lyap(z_{i})=a_{i}$ as long
as the following consistency condition
is met.  If $z_{i}$ and $z_{j}$ are the $\alpha$- and $\omega$-limit points
of some trajectory then $a_{i}>a_{j}$.
Thus $\widehat\Lyap$ can be defined in non-overlapping neighborhoods of the
critical points
by \eqref{ET2.2c} so as to satisfy \eqref{ET2.2d} and parts (i)--(iii)
of the theorem.
Since $G$ is positively invariant under the flow of $m$,
the stable and unstable manifolds of $\cS$ intersect transversally
by Hypothesis~\ref{H1.1}\,(2),
and $m$ is transversal to the boundary of $\partial G$ by
Hypothesis~\ref{H1.1}\,(3b),
this function can then be extended to $\Bar{G}$ by the handlebody
decomposition technique introduced by Smale.
For details see \cite[Theorem~B]{Smale} and
\cite[Theorem~1]{Meyer}.

It is clear by \eqref{ET2.2c}--\eqref{ET2.2d} that \eqref{ET2.2} holds in some open
neighborhood of each $z\in\cS$, and thus, $\cS$ being a finite set,
it also holds in some neighborhood of $\cN$ of $\cS$.
Since $\langle m,\grad\widehat\Lyap\rangle$ is strictly negative on the
compact set $\Bar{G}\setminus\cN$
and $\bigl\langle m(x),\grad\widehat\Lyap(x)\bigr\rangle<0$
for all $x\notin\cS$, a constant $C_0$ can be selected so
that \eqref{ET2.2} holds on $G$.
This completes the proof.
\end{proof}

The function $\widehat\Lyap$ in Theorem~\ref{T2.2} can be extended
to $\Rd$, and constructed in a manner so that it agrees,
outside some ball, with
the Lyapunov function $\Bar\Lyap$ in Hypothesis~\ref{H1.1}.
This is stated in the following lemma.

\begin{lemma}\label{L2.3}
Under the assumptions of Theorem~\ref{T2.2}
the vector field $m$ is adapted to an energy function $\Lyap$ which satisfies
\nomenclature[Ea]{$\Lyap$}{energy function, Lemma~\ref{L2.3}}%
$\Lyap = \Bar\Lyap$ on the complement of some open ball
which contains $\cS$.
Also parts {\upshape(}i\/{\upshape)}---{\upshape(}iv\/{\upshape)}
of Theorem~\ref{T2.2} hold, and for every bounded domain $G$
there exists a constant $C_0=C_0(G)$ such that
\eqref{ET2.2} holds for all $x\in G$.
Moreover there exists a  constant $\overline{C}_0>0$ such that
with
\begin{align*}
\overline{\mathscr{V}}(x)&\;\df\;
\max\,\bigl\{\bigl(\dist(x,\cS)\bigr)^{2}\wedge\dist(x,\cS),\,
\abs{\grad\Lyap(x)}^{2}\wedge \abs{\grad\Lyap(x)}\bigr\}\,,\\[5pt]
\underline{\mathscr{V}}(x)&\;\df\;
\min\,\bigl\{\bigl(\dist(x,\cS)\bigr)^{2}\wedge\dist(x,\cS),\,
\abs{\grad\Lyap(x)}^{2}\wedge \abs{\grad\Lyap(x)}\bigr\}\,,
\end{align*}
we have
\begin{equation}\label{EL2.3}
\bigl(\overline{C}_0)^{-1}\,\overline{\mathscr{V}}(x)\;\le\;
\babs{\bigl\langle m(x),\grad\Lyap(x)\bigr\rangle}
\;\le\; \overline{C}_0\,\underline{\mathscr{V}}(x)
\qquad\forall\, x\in \Rd\,.
\end{equation}
\end{lemma}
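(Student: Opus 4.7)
The plan is to build $\Lyap$ by smoothly gluing $\widehat\Lyap$ from Theorem~\ref{T2.2} (applied on a large sublevel set of $\Bar\Lyap$) to $\Bar\Lyap$ in a transition collar that is disjoint from $\cS$. Pick constants $c_0<c_1<c_2$ so that $G_i\df\{\Bar\Lyap<c_i\}$ are bounded with $\cK\subset G_0$, and apply Theorem~\ref{T2.2} with $G=G_2$ to obtain $\widehat\Lyap\in\Cc^\infty(\Bar{G_2})$ satisfying (i)--(iv) on $G_2$. Since adding a constant does not affect the gradient or the values in parts (i)--(iv), we may subtract a sufficiently large constant so that $\widehat\Lyap<\Bar\Lyap$ on the collar $\{c_0\le\Bar\Lyap\le c_2\}$. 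Choose a smooth non-increasing $\phi\colon\RR\to[0,1]$ with $\phi\equiv1$ on $(-\infty,c_0]$ and $\phi\equiv0$ on $[c_1,\infty)$, set $\chi\df\phi\circ\Bar\Lyap$, and define
\begin{equation*}
\Lyap(x)\;\df\;\chi(x)\,\widehat\Lyap(x)+\bigl(1-\chi(x)\bigr)\Bar\Lyap(x)\,.
\end{equation*}
Then $\Lyap=\widehat\Lyap$ on $G_0\supset\cK\supset\cS$, $\Lyap=\Bar\Lyap$ outside $G_1$, and $\Lyap$ is smooth, positive, and inf-compact.

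Next I would verify that $\Lyap$ is an energy function adapted to $m$. On $G_0$, $\Lyap\equiv\widehat\Lyap$ so the critical points there are exactly $\cS$, the local normal forms in part (ii) are inherited verbatim, and $\Delta\Lyap(z)<0$ for $z\in\cS\setminus\cSs$. Outside $G_1$, $\grad\Lyap=\grad\Bar\Lyap\ne0$ because Hypothesis~\ref{H1.1}(3b) gives $\langle m,\grad\Bar\Lyap\rangle<-\gamma\abs{x}<0$. On the collar $G_1\setminus\Bar{G_0}$, the choice $\chi=\phi\circ\Bar\Lyap$ gives $\grad\chi=\phi'(\Bar\Lyap)\,\grad\Bar\Lyap$, so
\begin{equation*}
\langle m,\grad\Lyap\rangle\;=\;\chi\langle m,\grad\widehat\Lyap\rangle
+(1-\chi)\langle m,\grad\Bar\Lyap\rangle
+\phi'(\Bar\Lyap)\bigl(\widehat\Lyap-\Bar\Lyap\bigr)\langle m,\grad\Bar\Lyap\rangle\,.
\end{equation*}
Each of the three summands is $\le0$: the first two by Theorem~\ref{T2.2}(i) and Hypothesis~\ref{H1.1}(3b); the third because $\phi'\le0$, $\widehat\Lyap-\Bar\Lyap\le0$, and $\langle m,\grad\Bar\Lyap\rangle<0$. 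The first two are in fact strictly negative on the compact collar, so $\grad\Lyap\ne0$ there and $\cS$ is the full critical set.

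For the local bound (\ref{ET2.2}) on an arbitrary bounded domain $G$, I would split $G=(G\cap G_0)\cup(G\setminus G_0)$. On $G\cap G_0$ the bound follows directly from Theorem~\ref{T2.2}(iv). On the compact set $G\setminus G_0$ both $\dist(x,\cS)$ and $\abs{\grad\Lyap(x)}$ are bounded above and below by positive constants, and $\langle m,\grad\Lyap\rangle$ is continuous and strictly negative, so (\ref{ET2.2}) holds with a sufficiently small $C_0=C_0(G)$. For the global bound (\ref{EL2.3}), I would separately treat a large ball $\{\abs{x}\le R\}$ (handled exactly as above, with the $\wedge$-truncations irrelevant since both quantities are bounded) and its complement, where $\Lyap=\Bar\Lyap$. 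There Hypothesis~\ref{H1.1}(3b) gives $\abs{\langle m,\grad\Bar\Lyap\rangle}\ge\gamma\abs{x}$, while Lipschitzness of $\grad\Bar\Lyap$ and boundedness of $m$ yield $\abs{\langle m,\grad\Bar\Lyap\rangle}\le\norm{m}_\infty(\abs{\grad\Bar\Lyap(0)}+L\abs{x})$. Combining these shows $\abs{\grad\Bar\Lyap(x)}\asymp\abs{x}\asymp\dist(x,\cS)$ for large $\abs{x}$, at which point both $\overline{\mathscr{V}}$ and $\underline{\mathscr{V}}$ reduce to the linear quantities and (\ref{EL2.3}) is immediate.

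The main obstacle is the interpolation step: one must prevent the gradient of the cutoff from destroying the Lyapunov inequality. This is resolved by two small but crucial observations, namely that (a) taking $\chi$ as a function of $\Bar\Lyap$ converts the cross term into a multiple of $\langle m,\grad\Bar\Lyap\rangle$, which already has the right sign, and (b) shifting $\widehat\Lyap$ by a constant—which is free in Theorem~\ref{T2.2} since only its gradient enters (i), (iii), (iv)—arranges $\widehat\Lyap-\Bar\Lyap\le0$ throughout the collar so that the cross term enters with the correct sign.
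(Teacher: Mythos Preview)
Your argument is correct and constitutes a genuinely different gluing than the paper's. The paper defines $\Lyap = \psi\circ\widehat\Lyap + \Bar\psi\circ(\Bar\Lyap - 2c)$ as a \emph{sum} of monotone reparametrizations of the two functions; this makes $\langle m,\grad\Lyap\rangle = \psi'(\widehat\Lyap)\langle m,\grad\widehat\Lyap\rangle + \Bar\psi'(\Bar\Lyap-2c)\langle m,\grad\Bar\Lyap\rangle$ split cleanly with no cross term, at the cost of tracking several nested sublevel sets of both $\widehat\Lyap$ and $\Bar\Lyap$. Your convex-combination construction is more direct, but the price is the cross term $(\widehat\Lyap-\Bar\Lyap)\,\phi'(\Bar\Lyap)\,\langle m,\grad\Bar\Lyap\rangle$, which you disarm by (a) making the cutoff a function of $\Bar\Lyap$ so the cross term is a multiple of $\langle m,\grad\Bar\Lyap\rangle$, and (b) shifting $\widehat\Lyap$ down so that $\widehat\Lyap-\Bar\Lyap\le0$ on the collar. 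Both routes yield the same conclusion; yours is arguably more elementary, while the paper's avoids the shift entirely. Your handling of \eqref{ET2.2} on bounded domains and of \eqref{EL2.3} at infinity is essentially the same as the paper's.

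One small correction: the claim that ``the first two summands are strictly negative on the compact collar'' is not quite right, since the first vanishes where $\chi=0$ and the second where $\chi=1$. What is true, and all you need, is that their \emph{sum} is strictly negative everywhere on the collar, because $\chi+(1-\chi)=1$ and both inner products are strictly negative there.
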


\begin{proof}
Select $c\in\RR$ such that $G_1\df\{x\in\Rd\,\colon \Bar\Lyap<c\}$
contains $\cK$.
Let $G_2\df\{x\in\Rd\,\colon \Bar\Lyap<2c\}$.
By Theorem~\ref{T2.2} there exists $\widehat\Lyap\in\Cc^\infty(G_2)$
with the properties stated.
Without loss of generality we can assume that $\widehat\Lyap=2c$ on
$\partial G_2$ \cite[Theorem~B]{Smale}.
Let $c_1\df \sup_{G_1} \widehat\Lyap$.
Then $c_1<2c$ by the positive invariance of $G_2$, and the property
$\langle m,\grad\widehat\Lyap\rangle<0$ in $G_2\setminus G_1$.
We write $A\Subset B$ to indicate that $\Bar{A}\subset B$.
Let $\Tilde{G}\df\{x\in\Rd\,\colon \widehat\Lyap<\nicefrac{(c_1+2c)}{2}\}$,
and $c_2\df \sup_{\Tilde{G}} \Bar\Lyap$.
Then $G_1\Subset\Tilde{G}\Subset G_2$, and $c<c_2<2c$ by construction.

Let $\psi\colon\RR\to\RR$ be a smooth non-decreasing function such
that $\psi(t) = t$ for $t\le \frac{1}{2}(c_1+2c)$, 
$\psi(t) = 2c$ for $t\ge 2c$, and whose derivative
is strictly positive on the interval $\bigl[\frac{1}{2}(c_1+2c),2c\bigr]$.
Similarly, let $\Bar\psi\colon\RR\to\RR$
be a smooth non-decreasing function such
that $\Bar\psi(t) = 0$ for $t\le -c$
and $\Bar\psi(t) = t$ for $t\ge c_2-2c$.
Define $\Lyap\df \psi\circ\widehat\Lyap+\Bar\psi\circ(\Bar\Lyap-2c)$.
By construction $\Lyap$ agrees with $\widehat\Lyap$ on $G_1$
and with $\Bar\Lyap$ on $G_2^c$.
It can also be easily verified that
$\sup_{G_2\setminus G_1}\;\langle m,\grad\Lyap\rangle <0$.
Thus $\Lyap\in\Cc^\infty(\Rd)$ is an energy function,
and $m$ is adapted to $\Lyap$ according
to Definition~\ref{D2.1}.

Since $\bigl\langle m(x),\grad\Lyap(x)\bigr\rangle<0$
for all $x\notin\cS$, and $\Lyap$ agrees
with $\widehat\Lyap$ on $\cK$, Theorem~\ref{T2.2}\,(i)--(iv) clearly hold.
Also since \eqref{EL2.3} holds in some neighborhood of $\cS$ by
\eqref{ET2.2c}--\eqref{ET2.2d}, then, in view
of the linear growth of $\bigl\langle m(x),\grad\Bar\Lyap(x)\bigr\rangle\ne0$
in \eqref{Lyapunov}, and the assumptions on the growth of $\Bar\Lyap$ in
Hypothesis~\ref{H1.1}, \eqref{EL2.3} also holds
on $\Rd$.
\end{proof}

\section{Minimal stochastically stable sets}\label{S3}

Recall that $\beta^\varepsilon_*$ denotes the optimal value of
\eqref{cost}, $\eta_{*}^{\varepsilon}$ denotes the stationary
distribution of the process $X$ under the optimal stationary Markov control
$v^\varepsilon_*$,
and $\varrho_{*}^{\varepsilon}$ denotes its density.
These definitions are fixed throughout the rest of the paper.
Also recall the definition of the extended generator
in \eqref{E-Lgv}, and the definition of $\sR$ in \eqref{E-cRnew}.
For a stationary Markov control $v$, we use the notation
\begin{equation}\label{E-sR}
\sR[v](x)\;\df\; \sR\bigl(x,v(x)\bigr)
\;=\;\ell(x) + \frac{1}{2}\, \abs{v(x)}^{2}\,.
\end{equation}%
\nomenclature[Ac]{$\sR[v](x)$}{running cost under control $v$, equation \eqref{E-sR}}%

Throughout the rest of the paper $\Lyap$ is
a smooth function that satisfies (i)--(iv) in Theorem~\ref{T2.2}
and agrees with $\Bar\Lyap$ in Hypothesis~\ref{H1.1} on the complement
of some open ball which contains $\cS$ (Lemma~\ref{L2.3}).
We refer to $\Lyap$ as the \emph{energy function}.

We start the analysis with the following lemma which
asserts that $\eta_{*}^\varepsilon$ concentrates on $\cS$ as
$\varepsilon\searrow 0$.

\begin{lemma}\label{L3.1}
The family $\{\eta^\varepsilon_*,\, \varepsilon\in(0, 1)\}$ is tight,
and any sub-sequential limit as $\varepsilon\searrow0$ has support on $\cS$.
\end{lemma}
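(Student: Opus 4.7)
The plan is to establish tightness via the uniform bound on the cost, and then identify any sub-sequential limit as an invariant measure for the deterministic flow of $m$, whose support must then lie in the non-wandering set $\cS$.

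First, I would obtain the uniform cost bound. By optimality and \eqref{EE1.8}, we have $\beta^\varepsilon_*\le\sJ(0)\le\Bar{c}_\ell$ for all $\varepsilon\in(0,1)$. Combining this with \eqref{E-Birk}, \eqref{E-sR} gives
\begin{equation*}
\int_{\Rd}\ell\,\D\eta^\varepsilon_* + \tfrac{1}{2}\int_{\Rd}\abs{v^\varepsilon_*}^2\,\D\eta^\varepsilon_*
\;=\;\beta^\varepsilon_*\;\le\;\Bar{c}_\ell\qquad\forall\,\varepsilon\in(0,1)\,.
\end{equation*}
Since $\ell$ is inf-compact by hypothesis, Markov's inequality applied to the first term yields tightness of the family $\{\eta^\varepsilon_*\}$.

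Next, I would characterize any sub-sequential limit. Pass to a subsequence $\varepsilon_n\searrow0$ along which $\eta^{\varepsilon_n}_*\to\eta^*$ weakly. For any $f\in\Ccc^\infty(\Rd)$, the stationarity of $\eta^{\varepsilon_n}_*$ under $v^{\varepsilon_n}_*$, together with \eqref{E-eom} and \eqref{E-Lgv}, gives
\begin{equation*}
\int_{\Rd}\Bigl[\tfrac{\varepsilon_n^{2\nu}}{2}\Delta f
+\langle m,\nabla f\rangle + \varepsilon_n\langle v^{\varepsilon_n}_*,\nabla f\rangle\Bigr]\,\D\eta^{\varepsilon_n}_*\;=\;0\,.
\end{equation*}
The first term is bounded in absolute value by $\tfrac{\varepsilon_n^{2\nu}}{2}\norm{\Delta f}_\infty\to0$. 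For the control term, Cauchy--Schwarz combined with the uniform cost bound gives
\begin{equation*}
\Babs{\varepsilon_n\int_{\Rd}\langle v^{\varepsilon_n}_*,\nabla f\rangle\,\D\eta^{\varepsilon_n}_*}
\;\le\;\varepsilon_n\,\norm{\nabla f}_\infty\,\sqrt{2\Bar{c}_\ell}\;\xrightarrow[n\to\infty]{}\;0\,.
\end{equation*}
Since $\langle m,\nabla f\rangle$ is bounded and continuous, weak convergence yields $\int\langle m,\nabla f\rangle\,\D\eta^*=0$ for every $f\in\Ccc^\infty(\Rd)$; that is, $\eta^*$ is an invariant probability measure for the deterministic flow of the o.d.e.\ \eqref{ode}.

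Finally, I would invoke the classical fact that the support of any invariant probability measure of a continuous flow is contained in its set of non-wandering points (a consequence of the Poincar\'e recurrence theorem: if $x\in\mathrm{supp}\,\eta^*$ were wandering, one could find an open neighborhood whose forward images are eventually disjoint, yielding infinite total mass). Under Hypothesis~\ref{H1.1}\,(1), the non-wandering set of the flow of $m$ is exactly $\cS$, so $\mathrm{supp}\,\eta^*\subset\cS$, completing the proof. The only subtle step is the control estimate, where it is essential that the prefactor $\varepsilon$ multiplying $U$ in \eqref{dynamics} matches the square-root-of-cost scale produced by Cauchy--Schwarz, so that the control contribution vanishes irrespective of the regime of $\nu$.
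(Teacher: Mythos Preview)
Your proof is correct and takes a genuinely different route from the paper's. The paper proves tightness the same way you do, but for the support it argues pathwise: using Gronwall on \eqref{dynamics} it shows $X_t$ tracks the deterministic trajectory $\phi_t(x)$ in probability as $\varepsilon\searrow0$, deduces directly that any limit $\bar\eta$ satisfies $\int f(\phi_t(x))\,\bar\eta(\D{x})=\int f(x)\,\bar\eta(\D{x})$ for all $t\ge0$ and Lipschitz $f$, and then lets $t\to\infty$ using that every $\omega$-limit set lies in $\cS$. Your approach bypasses the pathwise estimate entirely, passing to the limit in the infinitesimal invariance equation \eqref{E-eom} instead; this is cleaner because the Cauchy--Schwarz bound on the control term is immediate from the uniform cost bound. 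The price is two extra (standard) steps: upgrading $\int\langle m,\nabla f\rangle\,\D\eta^*=0$ to genuine flow-invariance (which works because $m$ is bounded smooth, so $\phi_t$ is a diffeomorphism and $f\circ\phi_t\in\Ccc^\infty$), and invoking Poincar\'e recurrence. Both arguments are sound; yours is arguably more elementary, while the paper's gives the time-$t$ invariance directly without the intermediate infinitesimal step.
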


\begin{proof}
Recall that $\eta_{0}^{\varepsilon}$
denotes the invariant probability measure of \eqref{E-sde} under
the control $U= 0$.
Define
\begin{equation*}
\beta^{\varepsilon}_{0}\;\df\; \int_{\Rd}\ell(x)\, \eta_{0}^{\varepsilon}(\D{x})\,.
\end{equation*}
By \eqref{EE1.8} we have
\begin{equation}\label{EL3.1A}
\int_{\Rd}\ell(x)\, \eta^\varepsilon_*(\D{x})
\;\le\; \beta^\varepsilon_* \;\le\; \beta^{\varepsilon}_{0}
\;\le\; \Bar{c}_\ell\qquad\forall\,\varepsilon\in(0,1)\,.
\end{equation}
Since $\ell$ is inf-compact, \eqref{EL3.1A} implies that
$\{\eta^\varepsilon_*,\, \varepsilon\in(0, 1)\}$ is tight.
Let $\phi_t(x)$ denote the solution of \eqref{ode} starting at $x\in\Rd$
at $t=0$, i.e., $\phi_0(x)=x$.
If $C_{m}$ denotes a Lipschitz constant of $m$ and $X_{0}=x$, we
have
\begin{equation}\label{EL3.1B}
\abs{X_{t}-\phi_t(x)}\;\le\; C_{m} \int_{0}^{t}\abs{X_{s}-\phi_s(x)}\,\D{s}
+ \varepsilon \int_{0}^{t}\abs{v^\varepsilon_*(X_{s})}\,\D{s}
+ \varepsilon^\nu \abs{W_{t}}\,.
\end{equation}
Hence applying Gronwall's inequality we obtain from \eqref{EL3.1B} that
\begin{equation}\label{EL3.1C}
\sup_{s\in[0, t]}\, \abs{X_{s}-\phi_s(x)}\;\le\;
\E^{C_{m}t}\biggl(\varepsilon\int_{0}^{t}
\abs{v^\varepsilon_*(X_{s})}\,\D{s}
+\varepsilon^\nu \sup_{s\le t}\;\abs{W_{s}}\biggr)\,.
\end{equation}
In turn, for any $\delta>0$, \eqref{EL3.1C} implies that
\begin{equation*}
\Prob_{x}\Bigl(\abs{X_{t}-\phi_t(x)} \ge \delta\Bigr) 
\;\le\; \Prob_{x}\biggl(\int_{0}^{t}\abs{v^\varepsilon_*(X_{s})}\,\D{s}
\;\ge\; \frac{\delta \E^{-C_{m}t}}{2\varepsilon}\biggr)
+ \Prob_{x}\biggl(\sup_{s\le t}\; \abs{W_{s}}\;\ge\;
\frac{\delta \E^{-C_{m}t}}{2\varepsilon^\nu}\biggr)
\end{equation*}
for $t>0$\,.
By Jensen's inequality we obtain
\begin{align*}
\Prob_{x}\biggl(\int_{0}^{t} \abs{v^\varepsilon_*(X_{s})}\,\D{s}
\;\ge\; \frac{\delta \E^{-C_{m}t}}{2\varepsilon}\biggr)&\;\le\;
\Prob_{x}\biggl(\int_{0}^{t} \abs{v^\varepsilon_*(X_{s})}^{2} \,\D{s}
\;\ge\; \frac{\delta^{2} \E^{-2C_{m}t}}{4t\varepsilon^{2}}\biggr)\\[5pt]
&\;\le\; \frac{4t\varepsilon^{2}}{\delta^{2}}\,\E^{2C_{m}t}\,
\Exp_{x}\biggl[\int_{0}^{t} \abs{v^\varepsilon_*(X_{s})}^{2} \,
\D{s}\biggr]\,.
\end{align*}
Therefore for any compact set $K\subset\Rd$ we have
\begin{multline}\label{EL3.1D}
\int_{K}\Prob_{x}\bigl(\abs{X_{t}-\phi_t(x)}\ge \delta\bigr)\,
\eta^\varepsilon_*(\D{x})
\;\le\; \frac{4t^{2}\varepsilon^{2}}{\delta^{2}}\,\E^{2C_{m}t}\,
\int_{\Rd} \abs{v^\varepsilon_*(x)}^{2}\, \eta^\varepsilon_*(\D{x})\\
+ \sup_{x\in K}\;
\Prob_{x}\biggl(\sup_{s\le t}\; \abs{W_{s}}\;\ge\; \frac{\delta}{2\varepsilon^\nu}
\E^{-C_{m}t}\biggr)\,.
\end{multline}
It is clear that the right hand side of \eqref{EL3.1D} tends to $0$ as
$\varepsilon\searrow 0$.
Thus for any compact set $K\subset\Rd$,
and any Lipschitz function $f\in\Cc_{b}(\Rd)$ it holds that
\begin{equation}\label{EL3.1E}
\int_{K} \babs{\Exp^{v^\varepsilon_*}_{x}[f(X_{t})]-f\bigl(\phi_t(x)\bigr)}\,
\eta^\varepsilon_*(\D{x})
\;\xrightarrow[\varepsilon\searrow0]{}\;0\,.
\end{equation}
On the other hand, since $\eta^\varepsilon_*$ is an invariant probability
measure, we have
\begin{equation}\label{EL3.1F}
\int_{\Rd}\Exp^{v^\varepsilon_*}_{x}[f(X_{t})]\,\eta^\varepsilon_*(\D{x})
\;=\;\int_{\Rd}f(x)\,\eta^\varepsilon_*(\D{x})
\qquad\forall\,f\in\Cc_{b}(\Rd)\,,\ \forall\, t\ge0\,.
\end{equation}
Let $\Bar\eta\in\cP(\Rd)$ be any limit of
$\eta^\varepsilon_*$ along some sequence $\{\varepsilon_n\}$, with
$\varepsilon_n\searrow 0$ as $n\to\infty$.
By \eqref{EL3.1E}--\eqref{EL3.1F}, the tightness of
$\{\eta^\varepsilon_*,\, \varepsilon\in(0, 1)\}$, and a standard
triangle inequality, we obtain
\begin{equation}\label{EL3.1G}
\int_{\Rd}f\bigl(\phi_t(x)\bigr)\,\Bar\eta(\D{x})\;=\;\int_{\Rd}f(x)\,
\Bar\eta(\D{x})\qquad\forall\, t\ge0\,,
\end{equation}
for all Lipschitz functions $f\in\Cc_{b}(\Rd)$.
Since the $\omega$-limit set of any trajectory of \eqref{ode}
is contained in  $\cS$,
\eqref{EL3.1G} shows that $\Bar\eta$ has support on $\cS$.
This completes the proof.
\end{proof}

\subsection{Two Lemmas concerning the case \texorpdfstring{$\nu \ge 1$}{n>>1}}

For $z\in\cS$, let $\Bar{v}_z^{\varepsilon}$, $\varepsilon\in(0,1)$, denote
the stationary Markov control defined by
\begin{equation}\label{E-barv}
\Bar{v}_z^\varepsilon(x)
\;\df\; \frac{(M_z-\widehat{Q}_z)(x-z)-m(x)}{\varepsilon}\,,
\quad t \ge 0\,,
\end{equation}
where $M_z$ and $\widehat{Q}_z$ are as in Definition~\ref{D1.9}.
The controlled process, is then governed by the diffusion
\begin{equation}\label{E-bardiff}
\D{X}_{t}\;=\; (M_z-\widehat{Q}_z) (X_{t}-z)\,\D{t} + \varepsilon^\nu\,\D{W}_{t}\,.
\end{equation}
Since $M_z-\widehat{Q}_z$ is Hurwitz by Theorem~\ref{T1.18},
the diffusion has a stationary probability distribution $\Bar\mu^\varepsilon_z$,
which is Gaussian with mean $z$ and covariance matrix
$\varepsilon^{2\nu}\,\widehat\Sigma_z$, where $\widehat\Sigma_z$ is as
in \eqref{ED1.9}.

We start with the following lemma.

\begin{lemma}\label{L3.2}
Suppose that $\nu\ge 1$ and $z\in\cS$.
Let $\Bar{v}_z^\varepsilon$ be the stationary Markov control
in \eqref{E-barv}, and $\Bar\mu^\varepsilon_z$ the invariant probability measure
of the diffusion governed by \eqref{E-bardiff}.
Then
\begin{equation}\label{EL3.2A}
\begin{split}
\int_{\Rd} \tfrac{1}{2} \abs{\Bar{v}^\varepsilon(x)}^{2}\,\Bar\mu^\varepsilon_z(\D{x})
&\;=\;\varepsilon^{2\nu - 2} \varLambda^+ \bigl(Dm(z)\bigr)
+\order\bigl(\varepsilon^{4\nu-2}\bigr)\,,
\\[5pt]
\int_{\Rd}\ell(x)\,\Bar\mu^\varepsilon_z(\D{x})
&\;=\;\ell(z) + \order\bigl(\varepsilon^{2\nu}\bigr)\,.
\end{split}
\end{equation}
\end{lemma}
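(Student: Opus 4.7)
\medskip
\noindent\textbf{Proof plan (Lemma~\ref{L3.2}).}

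The plan is to exploit the fact that $\Bar\mu^\varepsilon_z$ is an explicit Gaussian. Since $M_z-\widehat Q_z$ is Hurwitz by Theorem~\ref{T1.18}(a), the linear diffusion in \eqref{E-bardiff} admits a stationary distribution which is $\mathcal N(z,\varepsilon^{2\nu}\widehat\Sigma_z)$. After the translation $y=x-z$, smoothness of $m$ together with $m(z)=0$ gives $m(x)=M_z y + q(y)$ with $\abs{q(y)}\le C\abs{y}^{2}$ on a fixed neighborhood of the origin, while the boundedness of $m$ yields a global bound $\abs{q(y)}\le C(1+\abs{y})$. Substituting into \eqref{E-barv} I obtain
\begin{equation*}
\varepsilon\,\Bar v_z^\varepsilon(x) \;=\; -\widehat Q_z\, y \,-\, q(y)\,,
\end{equation*}
which I will square and integrate against $\mathcal N(0,\varepsilon^{2\nu}\widehat\Sigma_z)$.

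For the principal contribution I use the standard Gaussian second-moment identity $\Exp[\,\abs{\widehat Q_z y}^{2}\,]=\varepsilon^{2\nu}\trace(\widehat Q_z^{2}\widehat\Sigma_z)$, together with the trace identity
\begin{equation*}
\trace\bigl(\widehat Q_z^{2}\widehat\Sigma_z\bigr)\;=\;\trace\bigl(\widehat Q_z\bigr)\;=\;2\varLambda^+\bigl(M_z\bigr).
\end{equation*}
The second equality is part of Theorem~\ref{T1.18}(a); the first is the main algebraic step and is the part I expect to be the chief obstacle. I will obtain it by left-multiplying the Lyapunov equation in \eqref{ED1.9} by $\widehat Q_z$, taking traces, using cyclicity and the symmetry of $\widehat Q_z$ and $\widehat\Sigma_z$ to merge the two off-diagonal terms, and then substituting $\widehat Q_z^{2}=M_z\transp \widehat Q_z+\widehat Q_z M_z$ from the Riccati equation in \eqref{ED1.9}; the linear-in-$M_z$ terms collapse and leave $\trace(\widehat Q_z^{2}\widehat\Sigma_z)=\trace(\widehat Q_z)$. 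Dividing by $2\varepsilon^{2}$ produces the asserted leading term $\varepsilon^{2\nu-2}\varLambda^+(M_z)$.

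Next I would control the cross term $\Exp[\langle\widehat Q_z y,q(y)\rangle]$ and the quadratic $\Exp[\abs{q(y)}^{2}]$. I split each expectation across $\{\abs{y}<\delta\}$ and $\{\abs{y}\ge\delta\}$ for a fixed small $\delta$. On the tail, the Gaussian density decays like $\exp(-c\delta^{2}/\varepsilon^{2\nu})$, which dominates every polynomial in $\abs{y}$, so together with the global bound $\abs{q(y)}\le C(1+\abs{y})$ the tail contribution is $\sorder(\varepsilon^{k})$ for every $k$. On the near-zero region I refine the Taylor expansion of $m$ one order further, $q(y)=B(y,y)+\order(\abs{y}^{3})$ with $B$ a symmetric vector-valued bilinear form; the $\langle\widehat Q_z y,B(y,y)\rangle$ contribution is a cubic polynomial in the centered Gaussian vector, whose expectation vanishes by symmetry, and the $\order(\abs{y}^{3})$ remainder together with the $B(y,y)$ term yield expectations of order $\order(\abs{y}^{4})=\order(\varepsilon^{4\nu})$. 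After dividing by $\varepsilon^{2}$ this gives the advertised $\order(\varepsilon^{4\nu-2})$, completing the first line of \eqref{EL3.2A}.

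The second line is easier. I expand $\ell$ smoothly at $z$ as $\ell(x)=\ell(z)+\langle\nabla\ell(z),y\rangle+\order(\abs{y}^{2})$ on a neighborhood of $z$, and use the global Lipschitz bound $\abs{\ell(x)-\ell(z)}\le L\abs{y}$ on its complement. The linear term integrates to $0$ because $y$ is centered, the quadratic piece contributes $\order(\Exp[\abs{y}^{2}])=\order(\varepsilon^{2\nu})$, and the complement is again super-polynomially small in $\varepsilon$ by the same Gaussian-tail argument, giving the second claim in \eqref{EL3.2A}.
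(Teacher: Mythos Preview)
Your proof is correct and follows essentially the same line as the paper's: both recognize $\Bar\mu_z^\varepsilon$ as the centered Gaussian $\mathcal N(0,\varepsilon^{2\nu}\widehat\Sigma_z)$, expand $\abs{\varepsilon\Bar v_z^\varepsilon}^2$ into the quadratic $\abs{\widehat Q_z y}^2$ plus a cubic cross term (zero mean under the Gaussian) plus an $\order(\abs{y}^4)$ remainder, and identify the leading term via $\tfrac{1}{2}\trace(\widehat Q_z^2\widehat\Sigma_z)=\varLambda^+(M_z)$. The paper shortcuts your work in two places---it invokes Theorem~\ref{T1.18}\,(b) directly rather than rederiving the trace identity from the Riccati/Lyapunov pair, and it dispenses with your near/far splitting by using the \emph{global} quadratic bounds \eqref{E-Cm2} and \eqref{E-Cl2} (valid because $m$ bounded and $\ell$ Lipschitz give linear growth at infinity, which is dominated by $\abs{y}^2$ for $\abs{y}\ge1$)---but the strategy is the same.
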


\begin{proof}
Without loss of generality assume that $z=0$, and simplifying the
notation we let $M=M_z$, $Q=\widehat{Q}_z$, $\Sigma=\widehat\Sigma_z$,
and $\Bar\mu^{\varepsilon}=\Bar\mu^{\varepsilon}_z$.

We have
\begin{equation}\label{EL3.2B}
\abs{(M-Q)x-m(x)}^{2}\;=\;\abs{Qx}^{2} + 2\bigl\langle Qx,Mx-m(x)\bigr\rangle
+ \abs{Mx-m(x)}^{2}\,.
\end{equation}
Since by Taylor's theorem it holds that
\begin{equation*}
\bigl\langle Qx,Mx-m(x)\bigr\rangle \;=\;
\bigl\langle Qx, F(x)\bigr\rangle + \order\bigl(\abs{x}^4\bigr)\,,
\end{equation*}
with
\begin{equation*}
F(x)\;\df\;\bigl(F_1(x),\dotsc, F_d(x)\bigr) \quad \text{and}
\quad F_i(x) \;\df\;\tfrac{1}{2}\bigl\langle x, \grad^2 m_i(0) x\bigr\rangle\,,
\end{equation*}
by \eqref{E-Cm2} and \eqref{EL3.2B} we obtain
\begin{equation}\label{EL3.2C}
\abs{(M-Q)x-m(x)}^{2}\;=\;\abs{Qx}^{2} + 2\bigl\langle Qx, F(x)\bigr\rangle
+ \order\bigl(\abs{x}^4\bigr)\,.
\end{equation}
As mentioned in the paragraph preceding the lemma, $\Bar\mu^{\varepsilon}$
is Gaussian, with zero mean, and
covariance matrix $\varepsilon^{2\nu}\,\Sigma$, where
$\Sigma$ is the solution of \eqref{ET1.18B}.
Since $\bigl\langle Qx, F(x)\bigr\rangle$ is a homogeneous polynomial
of degree $3$ it has zero mean under the Gaussian.
Also the fourth moments of $\Bar\mu^{\varepsilon}$
are of order $\varepsilon^{4\nu}$.
It then follows by the estimate in \eqref{EL3.2C} and Theorem~\ref{T1.18}\,(b)
that
\begin{align}\label{EL3.2D}
\frac{1}{2}\,\int_{\Rd}\abs{\Bar{v}^\varepsilon(x)}^{2}\,\Bar\mu^{\varepsilon}(\D{x})
&\;=\;
\int_{\Rd} \tfrac{1}{2\varepsilon^2} \abs{Qx}^2\,\Bar\mu^{\varepsilon}(\D{x})
+ \order\bigl(\varepsilon^{4\nu - 2}\bigr)\\[5pt]
&\;=\; 
\varepsilon^{2\nu-2}\,\varLambda^+(M)+\order\bigl(\varepsilon^{4\nu - 2}\bigr)\,.
\nonumber
\end{align}

To prove the second equation
in \eqref{EL3.2A}, we use the bound
\begin{equation}\label{E-Cl2}
\babs{\ell(x)-\ell(z)- D\ell(z)(x-z)}\;\le\; \Tilde{C}_\ell\,\abs{x-z}^2
\qquad\forall\, x\in\Rd\,,\quad\forall\,z\in\cS\,,
\end{equation}
for some constant $\Tilde{C}_\ell$,
and since $\Bar\mu^{\varepsilon}$ has zero mean we obtain
\begin{equation}\label{EL3.2E}
\babss{\int_{\Rd} \bigl(\ell(x)-\ell(0)\bigr)\,\Bar\mu^{\varepsilon}(\D{x})}
\;\le\; \varepsilon^{2\nu}\Tilde{C}_\ell\,\trace(\Sigma)\,.
\end{equation}
By combining \eqref{EL3.2D} and \eqref{EL3.2E}
we obtain \eqref{EL3.2A}.
The proof is complete.
\end{proof}

Recall the notation in Definition~\ref{D1.10}.
Lemma~\ref{L3.2} in conjunction with Lemma~\ref{L3.1} leads to
the following.

\begin{lemma}\label{L3.3}
It holds that
\begin{equation}\label{EL3.3A}
\begin{aligned}
\beta^\varepsilon_* &\;\le\;
\fJ+\varepsilon^{2\nu - 2}\,\min_{z\in\cZ}\,\varLambda^+\bigl(Dm(z)\bigr)
+\order\bigl(\varepsilon^{2\nu}\bigr)\qquad\text{if\ \ } \nu>1\,,\\[5pt]
\beta^\varepsilon_* &\;\le\; \fJc+\order\bigl(\varepsilon^{2}\bigr)
\qquad\text{if\ \ } \nu=1\,.
\end{aligned}
\end{equation}
Moreover, if $\nu>1$, then
\begin{equation}\label{EL3.3B}
\lim_{\varepsilon\searrow0}\;\beta^\varepsilon_*\;=\; \fJ\,,
\end{equation}
and
$\sS\,\subset\,\cZ$.
\end{lemma}

\begin{proof}
Recall the function $\sR[v]$ defined in \eqref{E-sR}.
By Lemma~\ref{L3.2} we have
\begin{align}\label{EL3.3C}
\beta^\varepsilon_* &\;\le\; \int_{\Rd}\sR[\Bar{v}_z^\varepsilon](x)\,
\Bar\mu^{\varepsilon}_z(\D{x})\\[5pt]
&\;\le\;
\ell(z) + \varepsilon^{2\nu - 2} \varLambda^+ \bigl(Dm(z)\bigr)
+ \order\bigl(\varepsilon^{2\nu}\bigr) 
\qquad\forall\,z\in\cS\,,\ \nu\ge1\,.\nonumber
\end{align}
Since $\ell(z)=\fJ$ for all $z\in\widetilde\cZ\subset\cZ$,
the first inequality in \eqref{EL3.3A} follows by evaluating
\eqref{EL3.3C} at a point $z\in\widetilde\cZ$,
while the second inequality in \eqref{EL3.3A} follows by evaluating
\eqref{EL3.3C} at a point $z\in\cZc$.

Since
\begin{equation}\label{EL3.3D}
\lim_{\varepsilon\searrow0}\;\beta^\varepsilon_*\;\ge\;\fJ
\end{equation}
for all $\nu>0$ by Lemma~\ref{L3.1},
\eqref{EL3.3B} follows by \eqref{EL3.3A}
and \eqref{EL3.3D} when $\nu>1$, and clearly then, in this case we have
$\sS\,\subset\,\cZ$.
\end{proof}

\begin{remark}\label{R3.4}
It is worth mentioning here that if $z\in\cSs$,
then a control that renders $\{z\}$ stochastically stable
can be synthesized from the energy function $\Lyap$.
Note that by Theorem~\ref{T2.2}\,(ii), $\Lyap$ can be selected
so that $\Lyap(z)=0$ and $\Lyap(z')>0$ for all $z'\in\cS\setminus\{z\}$.
Consider the control
\begin{equation*}
\Breve{v}^{\varepsilon}(x) \;\df\; 
- \frac{1}{\varepsilon}\bigl(m(x) +\grad\Lyap(x)\bigr)\,,
\quad t \ge 0\,.
\end{equation*}
Then $X$ is given by
\begin{equation*}
\D X_{t} \;=\; -\grad\Lyap(X_{t})\,\D{t}
+ \varepsilon^{\nu}\,\D{W}_{t}\,, \quad t \ge 0\,.
\end{equation*}
Let $\Breve\mu^{\varepsilon}$ denote its unique invariant probability
measure.
Recall the definition in \eqref{E-Lgv}.
Since
\begin{equation*}
\Lg^{\varepsilon}_{\Breve{v}^{\varepsilon}}\Lyap \;\le\;
\frac{\varepsilon^{2\nu}}{2} \norm{\Delta\Lyap}_{\infty}
- \abs{\grad\Lyap}^{2}\,,
\end{equation*}
it follows that
\begin{equation*}
2\int_{\Rd}\abs{\grad\Lyap}^{2}\,\D\Breve\mu^{\varepsilon}\;\le\;
\varepsilon^{2\nu} \norm{\Delta\Lyap}_{\infty}\,.
\end{equation*}
Note that $\Breve\mu^{\varepsilon}$ has density
$\varrho^{\varepsilon}(x)
= C(\varepsilon)\,\E^{-\frac{2\Lyap(x)}{\varepsilon^{2\nu}}}$,
where $C(\varepsilon)$ is a normalizing constant.
Therefore we have
\begin{align*}
\int_{\Rd}\abs{\Breve{v}^{\varepsilon}(x)}^{2}\,\Breve\mu^{\varepsilon}(\D{x})
&\;\le\; 2\int_{\Rd} \bigl(\abs{m(x)}^{2}
+ \abs{\grad\Lyap(x)}^{2}\bigr)\varepsilon^{-2}\,\Breve\mu^{\varepsilon}(\D{x})
\nonumber\\[5pt]
&\;\le\; 2\int_{\Rd}\varepsilon^{-2} \abs{m(x)}^{2} \,\Breve\mu^{\varepsilon}(\D{x})
+ \varepsilon^{2\nu - 2} \norm{\Delta\Lyap}_{\infty}
\nonumber\\[5pt]
&\;\le\; \order\bigl(\varepsilon^{2\nu - 2}\bigr)
+ \varepsilon^{2\nu - 2} \norm{\Delta\Lyap}_{\infty}\,. 
\end{align*}
For the last inequality we used the fact that $m$ is bounded, $m(z)=0$,
and that $\Lyap$ is locally quadratic around $z$.
\end{remark}

\subsection{Results concerning stable equilibria}

Recall that $\cSs$ is the collection of stable equilibrium points,
and $\fJs=\min_{z\in\cSs}\,\bigl\{\ell(z)\bigr\}$.
The following lemma holds for any $\nu>0$.
It shows that if $z\in\cSs$ then there exists a Markov stationary
control $v^{\varepsilon}$ with invariant measure $\mu^{\varepsilon}$ satisfying
$\int_{\Rd}\abs{v^{\varepsilon}(x)}^2\mu^{\varepsilon}(\D{x})
\in\order\bigl(\varepsilon^{n}\bigr)$ for any $n\in\NN$,
under which $\{z\}$ is stochastically stable.

\begin{lemma}\label{L3.5}
The following hold.
\begin{itemize}
\item[\textup{(}i\/\textup{)}]
For any $\nu>0$ and $z\in\cSs$ there exists a Markov control $\Check{v}^{\varepsilon}$,
and constants $\varepsilon_0=\varepsilon_0(\nu)>0$,
and $c_0>0$ independent of $\nu$,
with the following properties.
With $\Check\mu^{\varepsilon}$ denoting the invariant probability measure
of \eqref{E-sde} under the control $\Check{v}^{\varepsilon}$, it holds that
\begin{equation}\label{EL3.5A}
\begin{split}
\int_{\abs{x-z}\,\ge\,\varepsilon^{\nicefrac{\nu}{2}}}
\abs{x-z}^{2}\,\Check\mu^{\varepsilon}(\D{x}) &\;\le\;
\frac{\varepsilon^{2\nu}}{c_0(1-\varepsilon^\nu)}\,
\E^{-c_0\varepsilon^{-\nu}}\,,\\[5pt]
\int_{\Rd} \abs{\Check{v}^{\varepsilon}(x)}^{2}\, \Check\mu^{\varepsilon}(\D{x})
&\;\le\; \frac{\varepsilon^{2(\nu-1)}}{c_0(1-\varepsilon^\nu)}\,
\E^{-c_0\varepsilon^{-\nu}}
\end{split}
\end{equation}
for all $\varepsilon< \varepsilon_0$,
and
\begin{equation}\label{EL3.5B}
\varepsilon^{-\nu}\;\babss{\int_{\Rd} \ell(x)\,\Check\mu^{\varepsilon}(\D{x})-\ell(z)}
\;\xrightarrow[\varepsilon\searrow0]{}\;0\,.
\end{equation}
In particular, we have
\begin{equation*}
\limsup_{\varepsilon\searrow0}\;
\frac{1}{\varepsilon^{n}}\;\int_{\Rd} \abs{\Check{v}^{\varepsilon}(x)}^{2}\,
\Check\mu^{\varepsilon}(\D{x})\;=\;0\qquad\forall\,n\in\NN\,.
\end{equation*}
\item[\textup{(}ii\/\textup{)}]
It holds that
$\beta^\varepsilon_*\le\fJs+\sorder\bigl(\varepsilon^{\nu}\bigr)$
for $\nu\in(0, \nicefrac{2}{3})$, 
and $\beta^\varepsilon_*\le\fJs+\order\bigl(\varepsilon^{4\nu-2}\bigr)$ for
$\nu\in[\nicefrac{2}{3}, 1)$.
\end{itemize}
\end{lemma}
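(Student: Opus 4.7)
The plan for part (i) is to construct a control that vanishes on a fixed neighborhood of $z$ and linearizes the dynamics outside, producing a closed-loop process with a nearly-Gaussian invariant measure centered at $z$. Since $z\in\cSs$, $M_z\df Dm(z)$ is Hurwitz, so there exists a symmetric positive definite $P$ with $M_z\transp P+PM_z=-I$. By \eqref{E-Cm2} one may choose $\delta>0$ small enough (independent of $\varepsilon$ and $\nu$) so that, with $\chi\in\Cc^\infty(\Rd,[0,1])$ a cutoff equal to $1$ on $B_\delta(z)$ and vanishing outside $B_{2\delta}(z)$, the modified drift $b(x)\df\chi(x)\,m(x)+(1-\chi(x))\,M_z(x-z)$ satisfies $\bigl\langle b(x),P(x-z)\bigr\rangle\le-\tfrac14\abs{x-z}^2$ globally. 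Setting
\begin{equation*}
\Check v^{\varepsilon}(x)\;\df\;\frac{1-\chi(x)}{\varepsilon}\bigl(M_z(x-z)-m(x)\bigr),
\end{equation*}
the closed-loop drift is $b$, which has linear growth and attracts strongly to $z$, so the controlled diffusion admits a unique invariant probability measure $\Check\mu^{\varepsilon}$.

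With $\mathsf V(x)\df\langle x-z,P(x-z)\rangle$, the stationarity identity $\int\Lg^\varepsilon_{\Check v^\varepsilon}\E^{\lambda\mathsf V/\varepsilon^{2\nu}}\,\D\Check\mu^\varepsilon=0$ for $\lambda$ small enough (e.g.\ $\lambda\le\tfrac{1}{8\norm{P}^2}$) together with the Foster--Lyapunov bound above produces a \emph{relative} inequality $\int f\,\abs{x-z}^2\,\D\Check\mu^\varepsilon\le C\varepsilon^{2\nu}\int f\,\D\Check\mu^\varepsilon$ for $f=\E^{\lambda\mathsf V/\varepsilon^{2\nu}}$. Splitting at a radius of order $\varepsilon^\nu$ upgrades this to an absolute bound $\int\E^{\lambda\mathsf V/\varepsilon^{2\nu}}\,\D\Check\mu^\varepsilon\le K$ with $K$ independent of $\varepsilon,\nu$; Chebyshev then gives $\Check\mu^\varepsilon(\abs{x-z}\ge r)\le K\E^{-\tilde c r^2/\varepsilon^{2\nu}}$ for some $\tilde c>0$. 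Cauchy--Schwarz combined with the fourth-moment estimate $\int\abs{x-z}^4\,\D\Check\mu^\varepsilon\in\order(\varepsilon^{4\nu})$ delivers the first inequality in \eqref{EL3.5A} for any $c_0<\tilde c/2$. Since $\Check v^\varepsilon\equiv0$ on $B_\delta(z)$ and $\abs{\Check v^\varepsilon(x)}^2\le\Tilde C_m^2\varepsilon^{-2}\abs{x-z}^4$ on the complement by \eqref{E-Cm2}, the tail bound yields an estimate of order $\varepsilon^{-2}\E^{-\tilde c\delta^2/\varepsilon^{2\nu}}$, dominated by the target $\varepsilon^{2(\nu-1)}\E^{-c_0/\varepsilon^\nu}$ for small $\varepsilon$. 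Finally, $\int b\,\D\Check\mu^\varepsilon=0$ combined with $b(x)-M_z(x-z)=\order(\abs{x-z}^2)$ forces the barycenter of $\Check\mu^\varepsilon$ within $\order(\varepsilon^{2\nu})$ of $z$, so Taylor-expanding $\ell$ and using $\int\abs{x-z}^2\,\D\Check\mu^\varepsilon\in\order(\varepsilon^{2\nu})$ yields $\int\ell\,\D\Check\mu^\varepsilon-\ell(z)\in\order(\varepsilon^{2\nu})\subset\sorder(\varepsilon^\nu)$, which is \eqref{EL3.5B}.

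Part (ii) now follows by picking $z\in\cSs$ with $\ell(z)=\fJs$. For $\nu\in(0,\nicefrac23)$, (i) gives $\beta^\varepsilon_*\le\int\ell\,\D\Check\mu^\varepsilon+\tfrac12\int\abs{\Check v^\varepsilon}^2\,\D\Check\mu^\varepsilon\le\fJs+\sorder(\varepsilon^\nu)$, since the control-effort contribution is $o(\varepsilon^n)$ for every $n$. For $\nu\in[\nicefrac23,1)$ I instead use the pure linearization $\Bar v^\varepsilon_z$ of \eqref{E-barv}; since $M_z$ is Hurwitz, uniqueness in Theorem~\ref{T1.18}\,(a) forces $\widehat Q_z=0$, so $\Bar v^\varepsilon_z(x)=\varepsilon^{-1}(M_z(x-z)-m(x))$ and the closed-loop diffusion is exactly Ornstein--Uhlenbeck with Gaussian invariant measure $\Bar\mu^\varepsilon_z$ centered at $z$ with covariance $\varepsilon^{2\nu}\widehat\Sigma_z$. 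Then $\int\tfrac12\abs{\Bar v^\varepsilon_z(x)}^2\,\Bar\mu^\varepsilon_z(\D x)\le\frac{\Tilde C_m^2}{2\varepsilon^2}\int\abs{x-z}^4\,\Bar\mu^\varepsilon_z(\D x)\in\order(\varepsilon^{4\nu-2})$ by \eqref{E-Cm2} and the fourth Gaussian moment, while $\int\ell\,\D\Bar\mu^\varepsilon_z=\ell(z)+\order(\varepsilon^{2\nu})\subset\ell(z)+\order(\varepsilon^{4\nu-2})$ because $2\nu\ge 4\nu-2$ for $\nu\le1$; summing yields $\beta^\varepsilon_*\le\fJs+\order(\varepsilon^{4\nu-2})$.

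The main technical hurdle is upgrading the Foster--Lyapunov inequality into the absolute exponential-moment bound $\int\E^{\lambda\mathsf V/\varepsilon^{2\nu}}\,\D\Check\mu^\varepsilon\le K$: the identity only produces a relative bound, and the splitting argument at radius of order $\varepsilon^\nu$ must be carried out so that the resulting $K$ is independent of both $\varepsilon$ and $\nu$. This same absorption step is what produces the factor $(1-\varepsilon^\nu)^{-1}$ appearing in the stated bounds.
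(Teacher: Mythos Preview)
Your approach is correct and in several respects cleaner than the paper's, but it is genuinely different, so a brief comparison is in order.

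For part~(i) the paper sets the control to zero on a \emph{shrinking} ball $\{|Rx|<\varepsilon^{\nu/2}\}$ and linearizes outside, whereas you zero the control on a \emph{fixed} ball $B_\delta(z)$ and linearize beyond $B_{2\delta}(z)$ via the cutoff $\chi$. Both yield a closed-loop drift satisfying $\langle b(x),P(x-z)\rangle\le -c\abs{x-z}^2$ and hence an exponential Lyapunov function $\exp(\lambda\mathsf V/\varepsilon^{2\nu})$; the paper exploits the exact linearity outside the shrinking ball to write down explicit inequalities (their (3.19)--(3.24)), while you run a generic Foster--Lyapunov/exponential-moment argument. Your choice of a fixed $\delta$ actually buys you a sharper control-effort bound, of order $\varepsilon^{-2}\E^{-c\delta^2/\varepsilon^{2\nu}}$, which dominates the stated $\varepsilon^{2(\nu-1)}\E^{-c_0/\varepsilon^\nu}$ by a wide margin. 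For \eqref{EL3.5B} you use the stationarity identity $\int b\,\D\Check\mu^\varepsilon=0$ together with $b(x)-M_z(x-z)=\order(\abs{x-z}^2)$ and the invertibility of $M_z$ to pin the barycenter within $\order(\varepsilon^{2\nu})$ of $z$; the paper instead argues via convergence of the rescaled density to a Gaussian (their (3.27)--(3.29)), which yields only the weaker qualitative statement \eqref{EL3.5B}. Your route is more elementary and gives a rate.

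Two minor remarks. First, the identity $\int\Lg^\varepsilon_{\Check v^\varepsilon}\E^{\lambda\mathsf V/\varepsilon^{2\nu}}\,\D\Check\mu^\varepsilon=0$ requires justification since the test function is unbounded; the standard fix is a truncation $\phi_R\circ\E^{\lambda\mathsf V/\varepsilon^{2\nu}}$ followed by monotone convergence, using that the bracket $[\varepsilon^{2\nu}\trace(P)-\tfrac14\abs{x-z}^2]$ is eventually negative. Second, your attribution of the factor $(1-\varepsilon^\nu)^{-1}$ to the absorption step is not quite how it arises in the paper---there it comes from the algebraic inequality $\abs{y}^2\le(\abs{y}^2-a^4)/(1-a^2)$ on $\{\abs{y}\ge a\}$ with $a=\varepsilon^{\nu/2}$. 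In your argument the bounds you obtain are already stronger than the stated ones, so the specific prefactor is immaterial.

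Part~(ii) is essentially identical to the paper's proof: both use part~(i) for $\nu<\nicefrac23$ and the full linearization $\varepsilon^{-1}(M_z(x-z)-m(x))$ for $\nu\in[\nicefrac23,1)$, observing (as you do via Theorem~\ref{T1.18}) that $\widehat Q_z=0$ when $M_z$ is Hurwitz.
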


\begin{proof}
In order to simplify
the notation, we translate the origin so that $z=0$,
and we let $M\df Dm(0)$.
Let $R^{-1}$ be the symmetric positive definite solution to
the Lyapunov equation
$M R^{-1} + R^{-1}M\transp = -4I$.
Thus $M\transp R + R M = -4 R^{2}$.
Since scaling $R$ by multiplying it with a positive constant smaller than $1$
preserves the inequality
\begin{equation}\label{EL3.5C}
M\transp R + R M \le -4 R^2\,,
\end{equation}
we may assume that
$\trace(R)\le1$ and \eqref{EL3.5C} holds.
The sole purpose of this scaling is to simplify the calculations in
the proof.
We define the control $\Check{v}^{\varepsilon}$ by
\begin{equation*}
\Check{v}^{\varepsilon}(x) \;\df\;
\begin{cases}\varepsilon^{-1}
\bigl(Mx-m(x)\bigr)&\text{if\ \ }\abs{Rx}\ge\varepsilon^{\nicefrac{\nu}{2}}\,,
\\[5pt]
0&\text{otherwise.}
\end{cases}
\end{equation*}

We apply the function
$F(x)\df\varepsilon^{2\nu}\,
\exp\bigl(\varepsilon^{-2\nu}\,\langle x, R x\rangle\bigr)$ to
$\Lg^{\varepsilon}_{\Check{v}^{\varepsilon}}$, which is
defined in \eqref{E-Lgv}.
By \eqref{EL3.5C}, and since $\trace(R)\le1$, we obtain
\begin{align}\label{EL3.5D}
\Lg^{\varepsilon}_{\Check{v}^{\varepsilon}}\,F(x)
&\;=\; \bigl(\varepsilon^{2\nu}\trace(R) + 2\abs{Rx}^{2}
+ \bigl\langle x,(M\transp R + R M)x\bigr\rangle\bigr)\,
\E^{\frac{\langle x, R x\rangle}{\varepsilon^{2\nu}}} \\[5pt]
&\;\le\;
\bigl(\varepsilon^{2\nu} - 2\abs{Rx}^{2}\bigr)\,
\E^{\frac{\langle x, R x\rangle}{\varepsilon^{2\nu}}}
\qquad\text{if\ \ }\abs{Rx}\ge\varepsilon^{\nicefrac{\nu}{2}} \,.\nonumber
\end{align}
If $\abs{Rx}<\varepsilon^{\nicefrac{\nu}{2}}$, then $\Check{v}^{\varepsilon}=0$,
and we obtain
\begin{align}\label{EL3.5F}
\Lg^{\varepsilon}_{\Check{v}^{\varepsilon}}\,F(x)
&\;=\;\bigl(\varepsilon^{2\nu}\trace(R) + 2\abs{Rx}^{2}
+ 2\langle m(x), Rx\rangle\bigr)\,
\E^{\frac{\langle x, R x\rangle}{\varepsilon^{2\nu}}} \\[5pt]
&\;\le\;
\bigl(\varepsilon^{2\nu} - 2\abs{Rx}^{2}
+2\abs{Mx-m(x)}\abs{Rx}\bigr)\,
\E^{\frac{\langle x, R x\rangle}{\varepsilon^{2\nu}}} \nonumber\\[5pt]
&\;\le\;
\bigl(\varepsilon^{2\nu} - \abs{Rx}^{2}\bigr)\,
\E^{\frac{\langle x, R x\rangle}{\varepsilon^{2\nu}}}
\qquad\text{if\ \ } \abs{Rx}<
\varepsilon^{\nicefrac{\nu}{2}}\wedge \tfrac{1}{2}\norm{R}^2\,\Tilde{C}_m^{-1}\,,
\nonumber
\end{align}
where in the first inequality we use \eqref{EL3.5C},
and in the second we use \eqref{E-Cm2}.
Thus selecting $\varepsilon_0$ as
\begin{equation*}
\varepsilon_0 \;\df\; 1\wedge
\Bigl(\tfrac{1}{2}\norm{R}^2\,\Tilde{C}_m^{-1}\Bigr)^{\nicefrac{2}{\nu}}\,,
\end{equation*}
provided $\varepsilon<\varepsilon_0$,
\eqref{EL3.5F} holds for all $x$ such that
$\abs{Rx}< \varepsilon^{\nicefrac{\nu}{2}}$.
It follows by \eqref{EL3.5D} and \eqref{EL3.5F} that
$\Lg^{\varepsilon}_{\Check{v}^{\varepsilon}}\,F(x)\le0$
if $\abs{Rx}\ge \varepsilon^{\nu}$, and
\begin{equation}\label{EL3.5G}
\sup\;\bigl\{\Lg^{\varepsilon}_{\Check{v}^{\varepsilon}}\,F(x)\,\colon
\abs{Rx}\le \varepsilon^{\nu}\,,~\varepsilon<\varepsilon_0\bigr\}
\;\le\;\E^{\norm{R^{-1}}}\varepsilon^{2\nu}
\qquad\forall\,\varepsilon<\varepsilon_{0}\,.
\end{equation}

Thus, by \eqref{EL3.5D}, \eqref{EL3.5F}, and \eqref{EL3.5G}, 
we obtain
\begin{equation}\label{EL3.5H}
\Lg^{\varepsilon}_{\Check{v}^{\varepsilon}}\,F(x)
\;\le\; \E^{\norm{R^{-1}}}\varepsilon^{2\nu}\,
\Ind_{\{\abs{Rx}\,\le\,\varepsilon^{\nu}\}}
-\bigl(\abs{Rx}^{2}-\varepsilon^{2\nu}\bigr)\,
\E^{\frac{\langle x, R x\rangle}{\varepsilon^{2\nu}}}\,
\Ind_{\{\abs{Rx}\,\ge\,\varepsilon^{\nu}\}}
\end{equation}
for all $x\in\Rd$ and $\varepsilon<\varepsilon_{0}$.
Note that \eqref{EL3.5H} is a Foster--Lyapunov equation and $F$ is inf-compact.
Therefore $\Check{v}^{\varepsilon}$ is a stable Markov control with invariant measure 
$\Check\mu^{\varepsilon}$.
Thus, integrating \eqref{EL3.5H} with respect to the invariant probability measure
$\Check\mu^{\varepsilon}$, we obtain
\begin{equation}\label{EL3.5I}
\int_{\{\abs{Rx}\,\ge\,\varepsilon^{\nu}\}}
\bigl(\abs{Rx}^{2}-\varepsilon^{2\nu}\bigr)\,
\E^{\frac{\langle x, R x\rangle}{\varepsilon^{2\nu}}}\,
\Check\mu^{\varepsilon}(\D{x})\;\le\; \E^{\norm{R^{-1}}}\varepsilon^{2\nu}
\qquad\forall\,\varepsilon<\varepsilon_{0}\,.
\end{equation}

For any $a\in(0,1)$ we have
\begin{equation}\label{EL3.5J}
\abs{y}^2 \;\le\;
\frac{\abs{y}^{2}-a^4}{1-a^2} 
\qquad\text{if\ \ } \abs{y}\ge a\,.
\end{equation}
Thus using \eqref{EL3.5I}, and applying \eqref{EL3.5J}
with $a=\varepsilon^{\nicefrac{\nu}{2}}$, and the inequality
$\langle x, R x\rangle\ge \norm{R}^{-1}\abs{Rx}^2$, we obtain
\begin{align}\label{EL3.5K}
&\int_{\abs{Rx}\,\ge\,\varepsilon^{\nicefrac{\nu}{2}}}
\abs{Rx}^{2}\,\Check\mu^{\varepsilon}(\D{x})\\[5pt]
&\;\le\;
\int_{\abs{Rx}\,\ge\,\varepsilon^{\nicefrac{\nu}{2}}}
\frac{\abs{Rx}^{2}-\varepsilon^{2\nu}}{1-\varepsilon^{\nu}}\,
\E^{-\norm{R}^{-1}\varepsilon^{-\nu}}\,
\E^{\frac{\langle x, R x\rangle}{\varepsilon^{2\nu}}}\,
\Check\mu^{\varepsilon}(\D{x})\nonumber\\[5pt]
&\;\le\;
\frac{1}{1-\varepsilon^{\nu}}\,\E^{-\norm{R}^{-1}\varepsilon^{-\nu}}\,
\int_{\abs{Rx}\,\ge\,\varepsilon^{\nu}}
\bigl(\abs{Rx}^{2}-\varepsilon^{2\nu}\bigr)\,
\E^{\frac{\langle x, R x\rangle}{\varepsilon^{2\nu}}}\,
\Check\mu^{\varepsilon}(\D{x})\nonumber\\[5pt]
&\;\le\; \E^{\norm{R^{-1}}}\,\frac{\varepsilon^{2\nu}}{1-\varepsilon^{\nu}}\,
\E^{-\norm{R}^{-1}\varepsilon^{-\nu}}
\qquad \forall\,\varepsilon<\varepsilon_0\,.\nonumber
\end{align}

Similarly, by \eqref{EL3.5I}, and using the inequality
$(N^2-1)\abs{y}^2\le N^2(\abs{y}^2-\varepsilon^{2\nu})$ if
$\abs{y}\ge N\varepsilon^{\nu}$
for any $N\geq 2$, we obtain
\begin{equation}\label{EL3.5L}
\int_{\abs{Rx}\,\ge\, N\varepsilon^\nu}
\abs{Rx}^{2}\,\Check\mu^{\varepsilon}(\D{x})\;\le\;
 \E^{\norm{R^{-1}}}
\frac{N^2\varepsilon^{2\nu}}{N^2-1}\,
\E^{-N^{-2}\varepsilon^{-2\nu}\norm{R}^{-1}}
\end{equation}
for all $\varepsilon<\varepsilon_0$.

Also, since by definition
$\Check{v}^{\varepsilon}=0$ for $\abs{Rx}\le \varepsilon^{\nicefrac{\nu}{2}}$, and
$\abs{\Check{v}^{\varepsilon}(x)}\; \le\; \Tilde{C}_m\frac{\abs{x}}{\varepsilon}$
by \eqref{E-Cm2}, it follows
by \eqref{EL3.5K} that
\begin{equation}\label{EL3.5M}
\int_{\Rd}\abs{\Check{v}^{\varepsilon}(x)}^2 \Check\mu^{\varepsilon}(\D{x})
\;\le\;\norm{R^{-1}}^2\,
\frac{\Tilde{C}_m^2}{1-\varepsilon^{\nu}}\,\E^{\norm{R^{-1}}}\,
\varepsilon^{2\nu-2}\,
\E^{-\norm{R}^{-1}\varepsilon^{-\nu}}
\end{equation}
for all $\varepsilon<\varepsilon_0$.
Then \eqref{EL3.5A} follows from \eqref{EL3.5K} and \eqref{EL3.5M}, by
choosing a common constant $c_0$.

Consider the `scaled' diffusion
\begin{equation*}
\D \Hat{X}_{t} \;=\; \Hat{b}^{\varepsilon}(\Hat{X}_{t})\,\D{t}
+ \,\D{W}_{t}\,, \quad t \ge 0\,,
\end{equation*}
where
\begin{equation*}
\Hat{b}^{\varepsilon}\;\df\;
\frac{m(\varepsilon^{\nu}x)+\varepsilon\,\Check{v}^{\varepsilon}(\varepsilon^{\nu}x)}
{\varepsilon^{\nu}}\,.
\end{equation*}
and let $\Hat\mu^{\varepsilon}$ 
denote its invariant probability measure.
It $\Check\varrho^{\varepsilon}$ and $\Hat\varrho^{\varepsilon}$
denote the densities of $\Check\mu^{\varepsilon}$ and $\Hat\mu^{\varepsilon}$
respectively, then
$\varepsilon^{\nu d}\Check\varrho^{\varepsilon}(\varepsilon^{\nu}x)
=\Hat\varrho^{\varepsilon}(x)$
for all $x\in\Rd$.
Substituting $x=\varepsilon^{\nu}y$ in \eqref{EL3.5I} we deduce that
the family of probability measures
$\{\Hat{\mu}^{\varepsilon}\, \colon \varepsilon\in(0,1)\}$ is tight.
The (discontinuous) drift $\Hat{b}^{\varepsilon}$ converges to $Mx$
as $\varepsilon\searrow0$, uniformly on compact sets.
This implies that $\Hat\varrho^{\varepsilon}$ converges, as $\varepsilon\searrow0$,
to the Gaussian density $\rho^{~}_{\Sigma}$ with mean $0$ and covariance matrix
$\Sigma$, given by $M\Sigma+\Sigma M\transp = -I$, i.e, $\Sigma=\tfrac{1}{4}R^{-1}$,
uniformly on compact sets.
Indeed, 
since $\Hat{b}^{\varepsilon}$ is locally bounded uniformly in $\varepsilon\in(0,1)$,
and the family $\{\Hat\mu^\varepsilon\,,\;\varepsilon\in(0,1)\}$ is tight,
the densities $\Hat\varrho^\varepsilon$ of $\Hat\mu^\varepsilon$ are 
locally H\"older
equicontinuous (see Lemma~3.2.4 in \cite{book}).
Let $\Hat\varrho$ be any limit point of $\Hat\varrho^{\varepsilon_n}$
along some sequence $\varepsilon_n\searrow0$.
Since $\{\Hat{\mu}^{\varepsilon}\, \colon \varepsilon\in(0,1)\}$ is tight
it follows that $\Hat\varrho^{\varepsilon_n}$
also converges in $L^{1}(\Rd)$, as $n\to\infty$, and hence
$\int_{\Rd}\Hat\varrho(x)\,\D{x}=1$.
With $\Hat\Lg^\varepsilon \df \frac{1}{2}\Delta
+ \langle\Hat{b}^{\varepsilon},\grad\rangle$
and $\Hat\Lg^0 \df \frac{1}{2}\Delta + \langle Mx,\grad\rangle$,
and since $\int_{\Rd}\Hat\Lg^\varepsilon f(x)\,\Hat\varrho^\varepsilon(x)\,\D{x}=0$
for all $f\in\Ccc^\infty(\Rd)$,
we have
\begin{equation}\label{EL3.5N}
\int_{\Rd}\Hat\Lg^0 f(x)\,\Hat\varrho(x)\,\D{x} \;=\;
\int_{\Rd}\bigl(\Hat\Lg^0 f(x)-\Hat\Lg^\varepsilon f(x)\bigr)\,\Hat\varrho(x)\,\D{x}
+ \int_{\Rd}\Hat\Lg^\varepsilon f(x)\,
\bigl(\Hat\varrho(x)-\Hat\varrho^\varepsilon(x)\bigr)\,\D{x}
\end{equation}
for all $f\in\Ccc^\infty(\Rd)$.
It is clear that both terms on the right hand side of
\eqref{EL3.5N} converge to $0$ as $\varepsilon=\varepsilon_n\searrow0$.
This implies that $\Hat\varrho$ is the density of the invariant
probability measure of the diffusion $\D{X_t} = M X_t\,\D{t} + \D{W}_t$,
which is Gaussian as claimed.

Since the Gaussian density $\rho^{~}_{\Sigma}$ has zero mean, then by uniform
integrability implied by \eqref{EL3.5L} we have
\begin{equation}\label{EL3.5O}
\varepsilon^{-\nu}\;
\int_{\Rd}\bigl(D\ell(0)x\bigr)\,\Check\mu^{\varepsilon}(\D{x})
\;\xrightarrow[\varepsilon\searrow0]{}\;0\,.
\end{equation}
It follows by \eqref{EL3.5L} that for some constant $\kappa_1>0$
we have $\int_{\Rd} \abs{x}^2\Check\mu^{\varepsilon}(\D{x})<\kappa_1$
for all $\varepsilon<\varepsilon_0$.
Thus, using \eqref{E-Cl2}, we obtain
\begin{equation}\label{EL3.5P}
\varepsilon^{-\nu}\;
\int_{\Rd}\abs{\ell(x)-\ell(0)- D\ell(0)x}\,\Check\mu^{\varepsilon}(\D{x})
\;\le\;\kappa_1\Tilde{C}_\ell\,\varepsilon^\nu\,.
\end{equation}
Combining \eqref{EL3.5O}--\eqref{EL3.5P}, we obtain \eqref{EL3.5B}.

Next we turn to part~(ii).
Consider the control
$v^{\varepsilon}(x)=\varepsilon^{-1}
\bigl(Mx-m(x)\bigr)$ for $x\in\Rd$.
Then $m(x)+\varepsilon v^{\varepsilon}(x)=Mx$ and
the associated invariant measure $\mu^{\varepsilon}$ is Gaussian with mean $0$ and
covariance matrix $\varepsilon^{2\nu}\Sigma$.
Using the bound in \eqref{E-Cm2},
we obtain
\begin{equation*}
\int_{\Rd} \abs{v^{\varepsilon}}^{2}\,\D \mu^{\varepsilon}
\;\le\; \int_{\Rd} \Tilde{C}_m^{2}\,\varepsilon^{-2}\abs{x}^{4}\,
\mu^{\varepsilon}(\D{x})
\;\in\; \order\bigl(\varepsilon^{4\nu-2}\bigr)\,.
\end{equation*}
Since $\mu^{\varepsilon}$ has zero mean, using a triangle inequality,
and \eqref{E-Cl2}, as in the proof of Lemma~\ref{L3.2}, we obtain
\begin{equation*}
\babss{\int_{\Rd} \bigl(\ell(x)-\ell(0)\bigr)\,\mu^{\varepsilon}(\D{x})}
\;\le\; \varepsilon^{2\nu}\,\Tilde{C}_\ell\,\trace(\Sigma)\,.
\end{equation*}
Since $4\nu-2<2\nu$ for $\nu<1$, we obtain that $\beta^{\varepsilon}_{*}\leq \fJs +
\order\bigl(\varepsilon^{4\nu-2}\bigr)$.
On the other hand, by part (1) we already know that
$\beta^{\varepsilon}_{*}\leq \fJs + \sorder(\varepsilon^{\nu})$.
To complete
the proof we observe that $\nu\leq 4\nu-2$ for $\nu\ge \nicefrac{2}{3}$ and 
$\nu> 4\nu-2$ for $\nu< \nicefrac{2}{3}$.
\end{proof}

\subsection{Results concerning the subcritical regime}

By Lemma~\ref{L3.5} we can always find a stable admissible
control such that the corresponding invariant probability
measure concentrates on a stable equilibrium point as $\varepsilon\searrow0$,
while keeping the ergodic cost in \eqref{cost} bounded, uniformly
in $\varepsilon\in(0,1)$.
Now we proceed to show that for $\nu<1$, $\eta_{*}^{\varepsilon}$
concentrates on $\cSs$.

\begin{lemma}\label{L3.6}
Suppose $\nu<1$.
Then
\begin{equation*}
\eta_{*}^{\varepsilon}(\cS\setminus\cSs)\;\xrightarrow[\varepsilon\searrow0]{}\;0\,,
\quad\text{and}\quad
\lim_{\varepsilon\searrow0}\,\beta^\varepsilon_*\;=\; \fJs\,.
\end{equation*}
\end{lemma}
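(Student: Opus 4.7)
The plan is to obtain both parts of the statement by combining the upper bound $\beta_*^\varepsilon\le\fJs+o(1)$ from Lemma~\ref{L3.5}(ii) with a local Linear-Quadratic argument showing that $\eta_*^\varepsilon$ cannot concentrate mass near any unstable equilibrium. As a preliminary reduction, Lemma~\ref{L3.5}(ii) together with Birkhoff \eqref{E-Birk} (and $\ell\ge 0$) yields the uniform bound $\sG_*^\varepsilon\le\beta_*^\varepsilon=O(1)$. Applying the stationary identity $\int\Lg_{v_*^\varepsilon}^\varepsilon\Lyap\,\D\eta_*^\varepsilon=0$ to the energy function $\Lyap$ of Lemma~\ref{L2.3}, using Cauchy--Schwarz on $\varepsilon\int\langle v_*^\varepsilon,\nabla\Lyap\rangle\,\D\eta_*^\varepsilon$ and the bound $-\langle m,\nabla\Lyap\rangle\ge\overline{C}_0^{-1}\overline{\mathscr{V}}$, I obtain the quantitative tightness estimate $\int\overline{\mathscr{V}}\,\D\eta_*^\varepsilon=O(\varepsilon^{(2\nu)\wedge 1})$. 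In particular, this gives $\int_{B_\delta(z_0)}|x-z_0|^2\,\D\eta_*^\varepsilon=O(\varepsilon^{(2\nu)\wedge 1})$ for small $\delta$, which will control both boundary/annular contributions and the nonlinear remainder below.

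For the main step, fix $z_0\in\cS\setminus\cSs$ and $\delta>0$ with $B_{2\delta}(z_0)\cap\cS=\{z_0\}$. I will use the quadratic test function $\phi_{z_0}(x)=\langle x-z_0,\widehat{Q}_{z_0}(x-z_0)\rangle$, where $\widehat{Q}_{z_0}$ is the symmetric positive semidefinite Riccati solution from Definition~\ref{D1.9} (so that $\tfrac{1}{2}\trace(\widehat{Q}_{z_0})=\varLambda^+(M_{z_0})>0$ by Theorem~\ref{T1.18}), multiplied by a smooth cutoff $\chi$ equal to $1$ on $B_\delta(z_0)$ and supported in $B_{2\delta}(z_0)$. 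On $B_\delta(z_0)$, the Riccati identity \eqref{ED1.9}, the Taylor expansion $m(x)=M_{z_0}(x-z_0)+\order(|x-z_0|^2)$ from \eqref{E-Cm2}, and completing the square in $v_*^\varepsilon$ give
\begin{equation*}
\Lg_{v_*^\varepsilon}^\varepsilon\phi_{z_0}(x)\;\ge\;2\varepsilon^{2\nu}\varLambda^+(M_{z_0})-\varepsilon^2|v_*^\varepsilon(x)|^2-C|x-z_0|^3.
\end{equation*}
The boundary contributions from the annulus $B_{2\delta}(z_0)\setminus B_\delta(z_0)$ involve derivatives of $\chi$ and are estimated via Cauchy--Schwarz together with the uniform bound on $\sG_*^\varepsilon$ and the annular mass bound $\eta_*^\varepsilon(B_{2\delta}\setminus B_\delta)=O(\varepsilon^{(2\nu)\wedge 1}/\delta^2)$ coming from $\int\overline{\mathscr{V}}\,\D\eta_*^\varepsilon$, so they vanish with $\varepsilon$. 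Moreover, the nonlinear term is controlled by the moment bound: $\int_{B_\delta}|x-z_0|^3\,\D\eta_*^\varepsilon\le C\delta\,\varepsilon^{(2\nu)\wedge 1}$. Setting $\int\Lg(\phi_{z_0}\chi)\,\D\eta_*^\varepsilon=0$ and rearranging yields, for $\delta$ sufficiently small,
\begin{equation*}
2\varepsilon^{2\nu}\varLambda^+(M_{z_0})\,\eta_*^\varepsilon(B_\delta(z_0))\;\le\;2\varepsilon^2\sG_*^\varepsilon+C\delta\,\varepsilon^{(2\nu)\wedge 1}+o(1).
\end{equation*}
Dividing by $\varepsilon^{2\nu}$ and exploiting $\nu<1$ so that $\varepsilon^{2-2\nu}\to 0$, one concludes $\limsup_{\varepsilon\searrow 0}\eta_*^\varepsilon(B_\delta(z_0))\le C'\delta$ for all small $\delta$; letting $\delta\searrow 0$ and recalling $\liminf\eta_*^\varepsilon(B_\delta(z_0))\ge\overline\eta(B_\delta(z_0))\ge\overline\eta(\{z_0\})$ for every weak limit $\overline\eta$, this forces $\overline\eta(\{z_0\})=0$ for every $z_0\in\cS\setminus\cSs$. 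Hence $\eta_*^\varepsilon(\cS\setminus\cSs)\to 0$.

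With the support of every weak limit $\overline\eta$ contained in $\cSs$, we have $\int\ell\,\D\overline\eta\ge\fJs$, so passing to the limit in $\beta_*^\varepsilon\ge\int\ell\,\D\eta_*^\varepsilon$ gives $\liminf_{\varepsilon\searrow 0}\beta_*^\varepsilon\ge\fJs$, matching the upper bound from Lemma~\ref{L3.5}(ii). The main technical obstacle is the balancing of scales in the local Riccati estimate: the $\order(|x-z_0|^3)$ remainder must be dominated uniformly in $\varepsilon$, and the annular boundary terms must vanish in the limit. Both require the sharp $\overline{\mathscr{V}}$-moment bound derived from the energy function, and in the regime $\nu\in(1/2,1)$ this balance becomes delicate and may require iteration or a slight refinement (e.g.\ splitting the integration into a region of size $O(\varepsilon^{\nu})$ around $z_0$ and its complement within $B_\delta$) to close the estimate for all $\nu\in(0,1)$.
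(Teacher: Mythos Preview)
Your Riccati computation is correct: with $\phi_{z_0}(x)=\langle x-z_0,\widehat Q_{z_0}(x-z_0)\rangle$, the identity $M_{z_0}\transp\widehat Q_{z_0}+\widehat Q_{z_0}M_{z_0}=\widehat Q_{z_0}^{\,2}$ together with completing the square indeed gives
\[
\Lg_{v_*^\varepsilon}^\varepsilon\phi_{z_0}(x)\;\ge\;2\varepsilon^{2\nu}\varLambda^+(M_{z_0})-\varepsilon^2\abs{v_*^\varepsilon(x)}^2-C\abs{x-z_0}^3
\]
near $z_0$. The difficulty is not the nonlinear remainder $\abs{x-z_0}^3$ (which your splitting handles), but the annular term coming from $\phi_{z_0}\langle m,\nabla\chi\rangle$. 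On $B_{2\delta}\setminus B_\delta$ one has $\abs{\phi_{z_0}}\le C\delta^2$, $\abs{m}\le C_m\delta$, $\abs{\nabla\chi}\le C\delta^{-1}$, so the integrand is $O(\delta^2)$; combined with the sharp annular mass bound $\eta_*^\varepsilon(B_{2\delta}\setminus B_\delta)\le \hat\kappa_0\delta^{-2}\varepsilon^{2\nu}$ (this is the correct rate from Lemma~\ref{L4.1}, not your $\varepsilon^{(2\nu)\wedge1}$), the integrated contribution is $O(\varepsilon^{2\nu})$ with a constant \emph{independent of $\delta$}. Thus your displayed inequality really reads
\[
2\varepsilon^{2\nu}\varLambda^+(M_{z_0})\,\eta_*^\varepsilon\bigl(B_\delta(z_0)\bigr)\;\le\;2\varepsilon^2\sG_*^\varepsilon+C\delta\,\varepsilon^{2\nu}+K\varepsilon^{2\nu}+o(\varepsilon^{2\nu}),
\]
and after dividing by $\varepsilon^{2\nu}$ the $K$-term survives: you only obtain $\limsup_{\varepsilon\searrow0}\eta_*^\varepsilon(B_\delta(z_0))\le (K+C\delta)/(2\varLambda^+)$, which does not force the limit mass at $z_0$ to vanish. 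Writing the annular terms as ``$o(1)$'' and then dividing by $\varepsilon^{2\nu}$ is where the argument breaks; an $o(1)$ divided by $\varepsilon^{2\nu}$ need not be small. This scaling obstruction is intrinsic to the quadratic test function: widening the cutoff annulus or using higher moments does not improve the balance, because $\phi_{z_0}\sim\abs{x-z_0}^2$ and $\langle m,\nabla\chi\rangle=O(1)$ on the annulus always produce exactly the rate $\varepsilon^{2\nu}$.

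The paper avoids this by using a test function $\varphi\circ\Lyap$ built from the energy function of Theorem~\ref{T2.2}, with $\varphi$ a cutoff in the \emph{value} variable around $\Lyap(z_0)$. The crucial property is Theorem~\ref{T2.2}(iii): $\Delta\Lyap(z_0)<0$ for $z_0\in\cS\setminus\cSs$. The stationary identity then yields a quadratic inequality in $\zeta^\varepsilon=(\int\varphi'(\Lyap)\abs{\nabla\Lyap}^2\,\D\eta_*^\varepsilon)^{1/2}$, namely $C_0(\zeta^\varepsilon)^2-\varepsilon\sqrt{2\sG_*^\varepsilon}\,\zeta^\varepsilon-\varepsilon^{2\nu}\xi^\varepsilon\le0$, where $\xi^\varepsilon\to\tfrac12\Delta\Lyap(z_0)\cdot\theta<0$ if $\eta_*^\varepsilon(B_r(z_0))\to\theta>0$. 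The nonnegativity of the discriminant forces $\sG_*^\varepsilon\ge -2C_0\varepsilon^{2\nu-2}\xi^\varepsilon\to+\infty$, contradicting the bound $\sG_*^\varepsilon\le\beta_*^\varepsilon\le\bar c_\ell$. The point is that $\langle m,\nabla(\varphi\circ\Lyap)\rangle=\varphi'(\Lyap)\langle m,\nabla\Lyap\rangle\le0$ everywhere, so the drift term has a definite sign and feeds into $(\zeta^\varepsilon)^2$ rather than appearing as an uncontrolled annular remainder; your quadratic $\phi_{z_0}$ has no such monotonicity with respect to the flow of $m$, which is why the boundary term cannot be absorbed.
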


\begin{proof}
We argue by contradiction.
Suppose that
$$\limsup_{\varepsilon\searrow0}\,\eta_{*}^{\varepsilon}\bigl(B_{r}(z)\bigr)\;>\; 0$$
for some $r>0$ and $z\notin \cSs$.
In Theorem~\ref{T2.2} we may select $a_z$ such that
$a_z\ne a_z'$ for $z\ne z'$.
Thus by Theorem~\ref{T2.2}\,(ii), there exists
$\delta>0$ be such that the interval 
$(\Lyap(z)-3\delta,\Lyap(z)+3\delta)$ contains no other
critical values of $\Lyap$ other than $\Lyap(z)$.
Let $\varphi\in\Cc^2(\RR)$ be such that
\begin{itemize}
\item[(a)]
$\varphi(\Lyap(z)+y)=y$ for
$y\in(\Lyap(z)-\delta,\Lyap(z)+\delta)\,$;
\smallskip
\item[(b)]
$\varphi'\in[0,1]$ on $(\Lyap(z)-2\delta,\Lyap(z)+2\delta)\,$;
\smallskip
\item[(c)]
$\varphi'= 0$ on $(\Lyap(z)-2\delta,\Lyap(z)+2\delta)^{c}\,$.
\end{itemize}
Select $r>0$ such that
\begin{equation}\label{EL3.6A}
\sup_{x\in B_{r}(z)}\;\babs{\Delta\Lyap(x)-\Delta\Lyap(z)}
\;<\;\frac{1}{2}\babs{\Delta\Lyap(z)}\,.
\end{equation}
Note that by Theorem~\ref{T2.2} and Lemma~\ref{L2.3} the
function $\Lyap$ takes distinct values on $\cS$.  
Therefore we may also choose this $r$ small enough so that
\begin{equation*}
B_{r}(z)\;\subset\; \{x\,\colon \abs{\Lyap(x)-\Lyap(z)}\le \delta\}
\;\subset\; B^{c}_{r}(S\setminus\{z\})\,.
\end{equation*}

By the infinitesimal characterization of an invariant probability measure
we have
\begin{equation*}
\int_{\Rd} \Lg_{v^\varepsilon_*}^{\varepsilon}(\varphi\circ\Lyap)(x)\,
\eta_{*}^{\varepsilon}(\D{x}) \;=\;0\,,
\end{equation*}
which we write as
\begin{multline}\label{EL3.6B}
\frac{\varepsilon^{2\nu}}{2}\biggl(\int_{\Rd}\varphi'(\Lyap)\Delta\Lyap\,
\D\eta_{*}^{\varepsilon}
+\int_{\Rd}\varphi''(\Lyap)\,\abs{\grad\Lyap}^2\,
\D\eta_{*}^{\varepsilon}\biggr)\\[5pt]
+\varepsilon\int_{\Rd}\varphi'(\Lyap)\,
\langle v^\varepsilon_*,\grad\Lyap\rangle\,\D\eta_{*}^{\varepsilon}
+\int_{\Rd}\varphi'(\Lyap)\langle m,\grad\Lyap\rangle\,
\D\eta_{*}^{\varepsilon}\;=\;0\,.
\end{multline}
Recall the definition of the optimal control effort $\sG^\varepsilon_*$
in \eqref{E-sG}, and also define
\begin{equation}\label{EL3.6C}
\begin{gathered}
\zeta^{\varepsilon}\;\df\;\biggl(\int_{\Rd}
\varphi'(\Lyap)\,\abs{\grad \Lyap}^{2}\,
\D\eta_{*}^{\varepsilon}\biggr)^{\nicefrac{1}{2}}\,,\\[5pt]
\xi^\varepsilon_1\;\df\;\frac{1}{2}\,\int_{\Rd}\varphi'(\Lyap)\Delta\Lyap\,
\D\eta_{*}^{\varepsilon}\,,\qquad
\xi^\varepsilon_2\;\df\;\frac{1}{2}\,
\int_{\Rd}\varphi''(\Lyap)\,\abs{\grad\Lyap}^2\,\D\eta_{*}^{\varepsilon}\,,
\end{gathered}
\end{equation}%
\nomenclature[Ck]{$\zeta^{\varepsilon}$, $\xi^\varepsilon_1$,
$\xi^\varepsilon_2$}{constants, equation \eqref{EL3.6C}}%
and $\xi^\varepsilon\df\xi^\varepsilon_1+\xi^\varepsilon_2$.
By the Cauchy--Schwarz inequality we have
\begin{equation}\label{EL3.6D}
\babss{\int_{\Rd} \varphi'(\Lyap)
\langle v^\varepsilon_*,\grad\Lyap\rangle\,
\D\eta_{*}^{\varepsilon}}
\;\le\;\norm{\sqrt{\varphi'}}_\infty\,
\sqrt{2\sG^\varepsilon_*}\,\zeta^{\varepsilon}
\;\le\;
\sqrt{2\sG^\varepsilon_*}\,\zeta^{\varepsilon}\,.
\end{equation}

By Theorem~\ref{T2.2}\,(iv)
we have
$C_0\,(\zeta^{\varepsilon})^{2}\le
-\int_{\Rd}\varphi'(\Lyap)\langle m,\grad\Lyap\rangle\,
\D\eta_{*}^{\varepsilon}$.
Therefore, by \eqref{EL3.6B} and \eqref{EL3.6D} we obtain
\begin{equation}\label{EL3.6E}
C_0\,(\zeta^{\varepsilon})^{2}
- \varepsilon\,\sqrt{2\sG^\varepsilon_*}\,\zeta^{\varepsilon}
- \varepsilon^{2\nu}\,\xi^\varepsilon\;\le\;0\,.
\end{equation}

We write
\begin{equation}\label{EL3.6F}
\xi^\varepsilon_1 \;=\;
\int_{B_{r}(z)}\varphi'(\Lyap)\Delta\Lyap\,\D\eta_{*}^{\varepsilon}
+ \int_{B^{c}_{r}(z)}\varphi'(\Lyap)\Delta\Lyap\,\D\eta_{*}^{\varepsilon}\,.
\end{equation}
Since $\Lyap$ is inf-compact, it follows that $\varphi\circ\Lyap$
is constant outside a compact set.
Therefore, the support of $\varphi'(\Lyap(\cdot))$ is compact,
and as a result $\Delta\Lyap$ is bounded on this set.
By \eqref{EL3.6A}, \eqref{EL3.6F}, Theorem~\ref{T2.2}\,(iii), and since
$\eta_{*}^{\varepsilon}\bigl(B^{c}_{r}(\cS)\bigr)\searrow0$
as $\varepsilon\searrow0$ (by Lemma~\ref{L3.1}),
we obtain
\begin{equation}\label{EL3.6G}
\limsup_{\varepsilon\searrow0}\; (-\xi^\varepsilon_1)
\;\ge\; -
\tfrac{1}{2}
\Delta\Lyap(z)\,
\limsup_{\varepsilon\searrow0}\;\eta_{*}^{\varepsilon}\bigl(B_{r}(z)\bigr)\;>\; 0\,.
\end{equation}

On the other hand, since $\varphi''(\Lyap)=0$ on some open neighborhood of
$\cS$, it follows that $\xi^\varepsilon_2\to0$ as $\varepsilon\searrow0$.
Therefore, we have $\limsup_{\varepsilon\searrow0}\, (-\xi^\varepsilon)>0$.
However, since the discriminant of \eqref{EL3.6E} must be nonnegative, we obtain
\begin{equation}\label{E-disc}
\varepsilon^2\sG^\varepsilon_* \;\ge\; -2\,
C_0\,\varepsilon^{2\nu}\xi^\varepsilon\,,
\end{equation}
which leads to a contradiction.
Hence, $\eta_{*}^{\varepsilon}(\cS\setminus\cSs)\,
\xrightarrow[\varepsilon\searrow0]{}\,0$.
This implies that
$\liminf_{\varepsilon\searrow0}\,\beta^\varepsilon_*\ge \fJs$,
which combined with Lemma~\ref{L3.5}\,(ii),
results in equality for the limit as claimed.
\end{proof}

We revisit the subcritical regime in Corollary~\ref{C4.2} to obtain
a lower bound for $\beta^{\varepsilon}_{*}$.

It is worthwhile at this point to present the following one-dimensional
example, which shows how the value of $\beta^\varepsilon_*$
for small $\varepsilon$ bifurcates as we cross the critical regime.

\begin{example}
Let $d=1$,  $m(x)= Mx$, and $\ell(x) = \frac{1}{2}Lx^{2}$, with $M>0$ and $L>0$.
Then the solution to \eqref{E-HJB2} takes the form
\begin{align*}
V^{\varepsilon}&\;=\;
\frac{M+\sqrt{M^{2} + L\varepsilon^{2}}}{2\varepsilon^{2}}\,x^{2}\,,\\[5pt]
\beta^\varepsilon_*&\;=\;\frac{\varepsilon^{2\nu-2}}{2}\,
\bigl(M+\sqrt{M^{2} + L\varepsilon^{2}}\bigr)\,.
\end{align*}
Note that $\beta^\varepsilon_*\to\ell(0)=0$,
$\beta^\varepsilon_*\to M$, and $\beta^\varepsilon_*\to\infty$,
as $\varepsilon\searrow0$,
when $\nu>1$, $\nu=1$, and $\nu<1$, respectively.
\end{example}

\section{Concentration bounds for the optimal
stationary distribution}\label{S4}

We start with the following lemma, which is valid for all $\nu$.

\begin{lemma}\label{L4.1}
For any bounded domain $G$ there exists a constant
$\Hat{\kappa}_{0}=\Hat{\kappa}_{0}(G,\nu)$ such that
\begin{equation}\label{EL4.1A}
\int_{G} \bigl(\dist(x,\cS)\bigr)^{2}\,\eta_{*}^{\varepsilon}(\D{x}) \;\le\;
\Hat{\kappa}_{0}\,\varepsilon^{2(\nu\wedge2)}\qquad
\forall\,\nu>0\,,\qquad\forall\,\varepsilon\in(0,1)\,,
\end{equation}
where $\dist(x,\cS)$ denotes the Euclidean distance of $x$ from
the set $\cS$.
\end{lemma}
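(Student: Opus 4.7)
The plan is to apply the infinitesimal invariance identity $\int \Lg^\varepsilon_{v^\varepsilon_*} F\,\D\eta_*^\varepsilon = 0$ to a test function $F = \varphi\circ\Lyap$ constructed from the energy function $\Lyap$ of Lemma~\ref{L2.3}, with $\varphi\in\Cc^2(\RR)$ chosen so that $0\le\varphi'\le 1$, $\varphi'\equiv 1$ on $\Lyap(\overline{G})$, and $\varphi'$ vanishing outside $\Lyap(G')$ for some bounded $G'\Supset G$. With this choice both $\Delta F$ and $\varphi'(\Lyap)\grad\Lyap$ are bounded with compact support, so all integrals that appear are finite uniformly in $\varepsilon$. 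Using $\langle m,\grad\Lyap\rangle\le 0$ off $\cS$, the identity rearranges to
\begin{equation*}
\int \varphi'(\Lyap)\,\babs{\langle m,\grad\Lyap\rangle}\,\D\eta_*^\varepsilon
\;=\; \tfrac{1}{2}\varepsilon^{2\nu}\int\Delta F\,\D\eta_*^\varepsilon
+ \varepsilon\int\varphi'(\Lyap)\langle v^\varepsilon_*,\grad\Lyap\rangle\,\D\eta_*^\varepsilon\,.
\end{equation*}

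The LHS would then be bounded from below via \eqref{EL2.3}, which on $G'$ supplies both $\babs{\langle m,\grad\Lyap\rangle}\ge C_0(\dist(x,\cS))^2$ and $\abs{\grad\Lyap}^2\le C_0^{-1}\babs{\langle m,\grad\Lyap\rangle}$. The control term on the RHS is estimated by Young's inequality, with the resulting $\abs{\grad\Lyap}^2$ contribution absorbed into the LHS through the second of these inequalities. Since $\varphi'(\Lyap)\equiv 1$ on $G$, the outcome is the fundamental estimate
\begin{equation*}
Y_G \;\df\; \int_G (\dist(x,\cS))^2\,\D\eta_*^\varepsilon
\;\le\; \kappa_1\,\varepsilon^{2\nu} + \kappa_2\,\varepsilon^2\,\sG^\varepsilon_*\,,\tag{$\star$}
\end{equation*}
with $\kappa_1,\kappa_2$ depending only on $G$, $G'$ and $\Lyap$. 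Combined with the crude bound $\sG^\varepsilon_*\le\beta^\varepsilon_*\le\Bar c_\ell$ from \eqref{EL3.1A}, this already yields $Y_G\le\Hat\kappa_0\,\varepsilon^{2\nu}=\Hat\kappa_0\,\varepsilon^{2(\nu\wedge 2)}$ in the case $\nu\le 1$.

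For $\nu>1$ the bound $\sG^\varepsilon_*\le\Bar c_\ell$ is too weak, and I would bootstrap as follows. Lipschitz continuity of $\ell$ gives $\ell(x)\ge\fJ-L\dist(x,\cS)$, so
\begin{equation*}
\sG^\varepsilon_* \;=\; \beta^\varepsilon_*-\int\ell\,\D\eta_*^\varepsilon
\;\le\; (\beta^\varepsilon_*-\fJ)+L\int\dist(x,\cS)\,\D\eta_*^\varepsilon
\;\le\; C\varepsilon^{2\nu-2}+L\int\dist\,\D\eta_*^\varepsilon\,,
\end{equation*}
the last step by the upper bound on $\beta^\varepsilon_*-\fJ$ supplied by Lemma~\ref{L3.3}. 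Writing $\int\dist\,\D\eta_*^\varepsilon\le\sqrt{Y_G}+T_G$ with $T_G\df\int_{G^c}\dist(x,\cS)\,\D\eta_*^\varepsilon$, and controlling $T_G$ by Chebyshev applied to a slightly larger bounded $G_1\Supset G$ (using the weaker form of $(\star)$ already available on $G_1$) together with a first-moment bound $\int\abs{x}\,\D\eta_*^\varepsilon\le C$ obtained from a Foster--Lyapunov computation for $\Bar\Lyap$ (using Hypothesis~\ref{H1.1}(3b) together with $\sG^\varepsilon_*\le\Bar c_\ell$), one substitutes back into $(\star)$ to obtain the self-consistent recursion $Y_G\le C\varepsilon^{2\nu}+C\varepsilon^2\sqrt{Y_G}+C\varepsilon^{2+\alpha/2}$ whenever $Y_{G_1}\le C\varepsilon^\alpha$. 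Starting from $\alpha=2$ (the basic bound), this drives $\alpha$ monotonically to the fixed point $2(\nu\wedge 2)$, delivering $Y_G\le\Hat\kappa_0\varepsilon^{2(\nu\wedge 2)}$.

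The main obstacle is closing this bootstrap with uniform constants: the first-moment estimate for $\eta_*^\varepsilon$ derived from $\Bar\Lyap$ is only uniform in $\varepsilon$ (not small), so the sharp rate cannot be attained in a single step, and one must carefully propagate the Chebyshev tail estimate on $G_1$ through the recursion and verify that the constants in $(\star)$ and in the moment bound remain uniformly controlled as $\alpha\nearrow 2(\nu\wedge 2)$.
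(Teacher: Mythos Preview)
Your derivation of $(\star)$ via the invariance identity applied to $\varphi\circ\Lyap$ is essentially the paper's argument: the paper writes it as the quadratic inequality \eqref{EL4.1B} in $\Tilde\zeta^\varepsilon$, and Young's inequality reduces that to your $(\star)$. For $\nu\le 1$ the crude bound $\sG^\varepsilon_*\le\Bar c_\ell$ finishes the job, as you say. For $\nu>1$ you are right that a bootstrap through $\sG^\varepsilon_*$ is needed; the paper's proof of the lemma is cryptic at this step, and the required estimate is actually supplied in the proof of Corollary~\ref{C4.2} (equations \eqref{EC4.2D}--\eqref{EC4.2H}) together with $\beta_*^\varepsilon-\fJ\in\order(\varepsilon^{2\nu-2})$ from Lemma~\ref{L3.3}. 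Your inequality $\sG^\varepsilon_*\le(\beta_*^\varepsilon-\fJ)+L\int\dist(x,\cS)\,\D\eta_*^\varepsilon$ is precisely that route.

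The gap is in how you control the tail $T_G$. A Foster--Lyapunov first-moment bound gives only $T_G\le C$ uniformly in $\varepsilon$, and feeding that constant back into $(\star)$ leaves a persistent $C\varepsilon^2$ term, so the recursion never improves past $\alpha=2$; the obstacle you flag at the end is real and in fact blocks your scheme as written. The paper sidesteps this entirely by first enlarging $G$ so that $\ell>\fJ$ on $G^c$ (this is the ``without loss of generality'' clause in the proof, and it is the key device). Then $\int_{G^c}(\ell-\fJ)\,\D\eta_*^\varepsilon\ge 0$ and can be \emph{dropped}, giving
\[
\int_{\Rd}(\ell-\fJ)\,\D\eta_*^\varepsilon\;\ge\;\int_G(\ell-\fJ)\,\D\eta_*^\varepsilon\;\ge\; -C_\ell\sqrt{Y_G}\,,
\]
and hence $\sG^\varepsilon_*\le(\beta_*^\varepsilon-\fJ)+C_\ell\sqrt{Y_G}$ with no tail term at all. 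Substituting into $(\star)$ and using Lemma~\ref{L3.3} yields the closed quadratic $Y_G\le C\varepsilon^{2\nu}+C\varepsilon^2\sqrt{Y_G}$, which solves in one step to $Y_G\le C\varepsilon^{2(\nu\wedge 2)}$---no iteration, no Chebyshev on nested domains, no first-moment input.
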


\begin{proof}
We fix some bounded domain $G$ which, without loss of
generality contains $\cS$, and choose
some number $\delta$ such that
$\delta\ge \sup_{x\in G}\,\Lyap(x)$. Without loss of generality assume that
$\ell(x)>\fJ$ for all $x\in G^{c}$, otherwise we enlarge $G$.
Let $\Tilde\varphi\colon\RR\to\RR$ be a smooth function such that
\begin{itemize}
\item[(a)]
$\Tilde\varphi(y)=y$ for
$y\in(-\infty,\delta)\,$;
\smallskip
\item[(b)]
$\Tilde\varphi'\in(0,1)$ on $(\delta,2\delta)\,$;
\smallskip
\item[(c)]
$\Tilde\varphi'= 0$ on $[2\delta,\infty)\,$;
\smallskip
\item[(d)]
$\Tilde\varphi''\le 0$.
\end{itemize}
Define $\Tilde\zeta^\varepsilon$, $\Tilde\xi^\varepsilon_1$,
and $\Tilde\xi^\varepsilon_2$,  as in \eqref{EL3.6C}
by replacing $\varphi$ with $\Tilde\varphi$, and let
$\Tilde\xi^\varepsilon\df \Tilde\xi^\varepsilon_1+\Tilde\xi^\varepsilon_2$.
As in \eqref{EL3.6E} we obtain
\begin{equation}\label{EL4.1B}
C_0\,(\Tilde\zeta^{\varepsilon})^{2}
- \varepsilon\,\sqrt{2\sG^\varepsilon_*}\,\Tilde\zeta^{\varepsilon}
- \varepsilon^{2\nu}\,\Tilde\xi^\varepsilon\;\le\;0\,.
\end{equation}
By Theorem~\ref{T2.2}\,(iv) we have
\begin{equation}\label{EL4.1C}
\int_{\{x\,\colon \Lyap(x)\,\le\,\delta\}}
\bigl(\dist(x,\cS)\bigr)^{2}\,
\eta_{*}^{\varepsilon}(\D{x}) \;\le\; C_0^{-1}\,(\Tilde\zeta^{\varepsilon})^{2}\,.
\end{equation}
By an application of Young's inequality
to \eqref{EL4.1B}, we obtain
$$\frac{C_0}{2}\,(\Tilde\zeta^{\varepsilon})^{2}
- \frac{1}{C_0} \varepsilon^2 \sG^\varepsilon_* - \varepsilon^{2\nu}
\Tilde\xi^{\varepsilon}\;\le\;0\,,$$
and hence we have
$\Tilde\zeta^{\varepsilon}\in\order\bigl(\varepsilon^{\nu\wedge 2})$.
Thus \eqref{EL4.1A} follows by \eqref{EL4.1C}.
\end{proof}

\begin{corollary}\label{C4.2}
Suppose $\nu\ge1$.
Then following hold.
\begin{enumerate}
\item[\textup{(}a\/\textup{)}]
The optimal control effort $\sG^\varepsilon_*$ satisfies
\begin{equation}\label{EC4.2A}
\begin{aligned}
&\sG^\varepsilon_*\;\in\;\order\bigl(\varepsilon^{\nu\wedge2}\bigr)
&&\text{if\ } \fJ=\fJs,\ \nu>1\,,
\text{\ or if\ \ }\fJc=\fJs,\ \nu=1\,,\\[5pt]
&\sG^\varepsilon_*\;\in\;\order\bigl(\varepsilon^{(2\nu-2)\wedge2}\bigr)
&&\text{if\ } \fJ<\fJs\text{\ and \ }\nu>1\,,
\end{aligned}
\end{equation}
and
\begin{equation}\label{EC4.2B}
\liminf_{\varepsilon\searrow0}\,
\frac{1}{\varepsilon^{2\nu-2}}\,\sG^\varepsilon_* \;>\;0
\qquad\text{if\ \ } \fJ<\fJs \text{\ and\ \ } \nu>1\,.
\end{equation}
\item[\textup{(}b\/\textup{)}]
$\beta^{\varepsilon}_{*}-\fJ\ge\order\bigl(\varepsilon^{\nu\wedge2}\bigr)$
for $\nu>1$.
\end{enumerate}
\end{corollary}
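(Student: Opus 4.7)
My plan is to exploit the Birkhoff identity \eqref{E-Birk}, which can be rewritten as
\begin{equation*}
\sG^\varepsilon_*\;=\;\beta^\varepsilon_*\;-\;\int_{\Rd}\ell\,\D\eta^\varepsilon_*\,,
\end{equation*}
and to extract every claim by pairing the upper bounds on $\beta^\varepsilon_*$ from Lemma~\ref{L3.3} with a matching lower bound on $\int_{\Rd}\ell\,\D\eta^\varepsilon_*$ that is distilled from the concentration estimate in Lemma~\ref{L4.1}. The single auxiliary inequality to prove first is
\begin{equation*}
\int_{\Rd}\ell\,\D\eta^\varepsilon_*\;\ge\;\fJ\;-\;C\,\varepsilon^{2(\nu\wedge 2)}\,.
\end{equation*}
Writing $\ell-\fJ=(\ell-\fJ)_+ -(\ell-\fJ)_-$ and using $\ell\ge\fJ$ on $\cS$, the set $A\df\{\ell<\fJ\}$ is closed, separated from $\cS$ by some positive distance $\delta$, and bounded (because $\ell$ is inf-compact). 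A Chebyshev argument against Lemma~\ref{L4.1} then gives $\eta^\varepsilon_*(A)\le\delta^{-2}\Hat\kappa_0\,\varepsilon^{2(\nu\wedge 2)}$, and boundedness of $\fJ-\ell$ on $A$ converts this into the announced bound for the negative part.

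Part~(b) is now immediate: combining this with $\sG^\varepsilon_*\ge 0$ yields $\beta^\varepsilon_*-\fJ\ge -C\varepsilon^{2(\nu\wedge 2)}$, and since $2(\nu\wedge 2)\ge\nu\wedge 2$ the bound lies in $\order(\varepsilon^{\nu\wedge 2})$. For the upper bounds in part~(a), I substitute the auxiliary lower bound into the Birkhoff identity and invoke Lemma~\ref{L3.3} in each sub-case. When $\fJ=\fJs$ with $\nu>1$, $\cZ$ meets $\cSs$ and so $\widetilde\fJ=\min_{z\in\cZ}\varLambda^+\bigl(Dm(z)\bigr)=0$; Lemma~\ref{L3.3} gives $\beta^\varepsilon_*\le\fJ+\order(\varepsilon^{2\nu})$, hence $\sG^\varepsilon_*\in\order(\varepsilon^{2(\nu\wedge 2)})\subset\order(\varepsilon^{\nu\wedge 2})$. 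The critical sub-case $\fJc=\fJs$ with $\nu=1$ proceeds in parallel from $\beta^\varepsilon_*\le\fJc+\order(\varepsilon^2)$, provided the auxiliary lower bound is sharpened to $\int\ell\,\D\eta^\varepsilon_*\ge\fJs-\order(\varepsilon)$; this sharpening requires showing that the mass of $\eta^\varepsilon_*$ on the unstable component of $\cZc$ is $\order(\varepsilon)$, because otherwise the effort $\varLambda^+(Dm(z))$ needed to stabilize there would cause $\beta^\varepsilon_*$ to exceed $\fJc+\order(\varepsilon^2)$. In the remaining sub-case $\fJ<\fJs$ with $\nu>1$, $\widetilde\fJ>0$ and Lemma~\ref{L3.3} furnishes $\beta^\varepsilon_*\le\fJ+\varepsilon^{2\nu-2}\widetilde\fJ+\order(\varepsilon^{2\nu})$; combined with the auxiliary lower bound this yields
$\sG^\varepsilon_*\le\varepsilon^{2\nu-2}\widetilde\fJ+\order\bigl(\varepsilon^{2\nu\wedge 2(\nu\wedge 2)}\bigr)\in\order\bigl(\varepsilon^{(2\nu-2)\wedge 2}\bigr)$.

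The main obstacle is the $\liminf$ estimate \eqref{EC4.2B}. Here $\fJ<\fJs$ with $\nu>1$ forces $\sS\subset\cZ\setminus\cSs$, so $\eta^\varepsilon_*$ concentrates on a set of unstable equilibria; Theorem~\ref{T1.18} then asserts that the minimal control effort needed to render an exponentially dichotomous LQG system positive recurrent is exactly $\varLambda^+$. My plan is to localize $\sG^\varepsilon_*$ to small neighborhoods of the points $z\in\cZ\setminus\cSs$, rescale spatially by $\varepsilon^\nu$ so the drift becomes essentially linear with matrix $M_z=Dm(z)$, and apply Theorem~\ref{T1.18}\,(b) to the limiting LQG problem to conclude $\sG^\varepsilon_*\ge c\,\varepsilon^{2\nu-2}\widetilde\fJ$ for some $c>0$. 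The delicate point, and what I expect to be the main technical hurdle, is establishing uniform tightness and nondegenerate mass of the scaled invariant measures near each such $z$—which is precisely the content of the scaling analysis of Section~\ref{S5} that underpins Theorem~\ref{T1.13}—so that the LQG lower bound from Theorem~\ref{T1.18} can be invoked legitimately in the limit.
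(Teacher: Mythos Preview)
Your overall strategy for the upper bounds—combining Birkhoff with a lower bound on $\int\ell\,\D\eta^\varepsilon_*$ and the upper bounds on $\beta^\varepsilon_*$ from Lemma~\ref{L3.3}—is exactly what the paper does. However, there are two genuine problems in the execution.

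\medskip
\noindent\textbf{The auxiliary inequality is flawed.} The set $A=\{\ell<\fJ\}$ is open, not closed, and more importantly it is \emph{not} in general separated from $\cS$: any $z\in\cZ$ has $\ell(z)=\fJ$ and can lie in $\bar A$. A concrete counterexample is $d=1$, $\cS=\{0\}$, $\ell(x)=(x+1)^2$ near the origin, where $A=(-2,0)$ and $\dist(A,\cS)=0$. This kills your Chebyshev step, and hence your claimed rate $\varepsilon^{2(\nu\wedge2)}$. The paper instead uses the Lipschitz bound $\ell(x)-\fJ\ge -C_\ell\,\dist(x,\cS)$ for all $x$, applies Cauchy--Schwarz against Lemma~\ref{L4.1} on a large bounded domain chosen so that $\ell>\fJ$ on its complement, and obtains $\int\ell\,\D\eta^\varepsilon_*\ge\fJ-C\,\varepsilon^{\nu\wedge2}$. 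This weaker rate still suffices for parts~(a) and~(b), so your program survives once you make this correction.

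\medskip
\noindent\textbf{The plan for \eqref{EC4.2B} is circular.} You propose to rescale near unstable $z\in\cZ$ and invoke Theorem~\ref{T1.18}, relying on the tightness and nondegeneracy analysis of Section~\ref{S5}. But that analysis already uses Corollary~\ref{C4.2}: Lemma~\ref{L4.4} and the proof of Theorem~\ref{T5.7} explicitly require the boundedness of $\varepsilon^{2(1-\nu)}(\beta^\varepsilon_*-\ell(z))$, which is supplied by this corollary. The paper avoids the LQG scaling entirely here and uses instead the energy function $\Lyap$ from Theorem~\ref{T2.2}. Since $\fJ<\fJs$ and $\nu>1$ force $\sS\subset\cZ\setminus\cSs$, some unstable $z$ carries positive limiting mass, and Theorem~\ref{T2.2}(iii) gives $\Delta\Lyap(z)<0$. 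Integrating $\Lg^\varepsilon_{v^\varepsilon_*}(\Tilde\varphi\circ\Lyap)$ against $\eta^\varepsilon_*$ produces a quadratic inequality in $\Tilde\zeta^\varepsilon$ whose discriminant condition reads $\varepsilon^2\sG^\varepsilon_*\ge -2C_0\,\varepsilon^{2\nu}\Tilde\xi^\varepsilon$; since $\liminf_{\varepsilon\searrow0}(-\Tilde\xi^\varepsilon)>0$ by the argument of \eqref{EL3.6G}, the claim follows. This is a direct energy-dissipation argument, far more elementary than the LQG route you sketch.

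\medskip
Finally, your treatment of the case $\nu=1$, $\fJc=\fJs$ is also problematic: showing that the mass on $\cZc\setminus\cSs$ is $\order(\varepsilon)$ again requires either the discriminant argument above or results (Remark~\ref{R4.7}) that depend on Corollary~\ref{C4.2} itself.
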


\begin{proof}
Select a domain $G$ as in the proof of Lemma~\ref{L4.1}.
Define $\Tilde\zeta^\varepsilon$, $\Tilde\xi^\varepsilon_1$,
and $\Tilde\xi^\varepsilon_2$  as in \eqref{EL3.6C}
by replacing $\varphi$ with $\Tilde\varphi$, and let
$\Tilde\xi^\varepsilon\df \Tilde\xi^\varepsilon_1+\Tilde\xi^\varepsilon_2$.
Then \eqref{EL4.1B} holds,
and thus $\Tilde\zeta^{\varepsilon}\in\order\bigl(\varepsilon^{\nu\wedge 2})$.
Recall the notation in Definition~\ref{D1.10}.
With $C_\ell$ a Lipschitz constant
for $\ell$, and some fixed $\Bar{z}\in\cZ$, we have
\begin{equation*}
\ell(x)-\fJ\;=\;\bigl(\ell(x) - \ell(z)\bigr)
+ \bigl(\ell(z) - \ell(\Bar{z})\bigr)\;\ge\; -C_\ell \abs{x-z}\qquad
\forall\,z\in\cS\,,\ \forall\,x\in\Rd\,,
\end{equation*}
since $\ell(z) - \ell(\Bar{z})\ge0$ for all $z\in\cS$.
Therefore, we obtain
\begin{equation}\label{EC4.2C}
\ell(x)-\fJ\;\ge\; -C_{\ell} \dist(x,\cS)\qquad\forall\,x\in\Rd\,,
\end{equation}
and using the Cauchy--Schwarz inequality, and the
assumption that $\ell(x)>\fJ$ on $G^c$, we deduce from \eqref{EC4.2C}
and Theorem~\ref{T2.2}\,(iv) that
\begin{equation}\label{EC4.2D}
\int_{\Rd}\,\ell\,\D\eta_{*}^{\varepsilon}-
\fJ\;\ge\; \int_{G}(\ell(x)-\fJ)\D\eta_{*}^{\varepsilon}
\;\ge\;-\frac{C_\ell}{\sqrt{C_0}}\,\Tilde\zeta^{\varepsilon}\,.
\end{equation}
Thus by \eqref{EC4.2D} and non-negativity of $\sG^\varepsilon$ we have
\begin{equation}\label{EC4.2E}
-\frac{C_\ell}{\sqrt{C_0}}\,\Tilde\zeta^{\varepsilon}\;\le\;
\beta^\varepsilon_*-\fJ\,.
\end{equation}
By \eqref{EC4.2D}--\eqref{EC4.2E}, we obtain
\begin{align}\label{EC4.2F}
\sG^\varepsilon_*
&\;\le\; \beta^{\varepsilon}_{*}-\int_{\Rd}\ell\, \D\eta^{\varepsilon}_{*}
\\[5pt]
&\;\le\; \beta^{\varepsilon}_{*}-\fJ
+\fJ-\int_{\Rd}\ell\, \D\eta^{\varepsilon}_{*}\nonumber\\[5pt]
&\;\le\; \beta^{\varepsilon}_{*}-\fJ
+ \frac{C_\ell}{\sqrt{C_0}}\,\Tilde\zeta^{\varepsilon}\,.\nonumber
\end{align}
By an application of Young's inequality
to \eqref{EL4.1B}, we obtain
$$\frac{C_0}{2}\,(\Tilde\zeta^{\varepsilon})^{2}
- \frac{1}{C_0} \varepsilon^2 \sG^\varepsilon_* - \varepsilon^{2\nu}
\Tilde\xi^{\varepsilon}\;\le\;0\,,$$
and thus
\begin{equation}\label{EC4.2G}
\Tilde\zeta^{\varepsilon}\;\le\;
\frac{\sqrt 2}{C_0} \varepsilon \sqrt{\sG^\varepsilon_*}
+\frac{\sqrt 2}{\sqrt{C_0}} \varepsilon^{\nu}
\sqrt{\abs{\Tilde\xi^{\varepsilon}}}\,.
\end{equation}
Combining \eqref{EC4.2F}--\eqref{EC4.2G}, and using
again Young's inequality in the form
$\frac{C_\ell}{\sqrt{C_0}}\frac{\sqrt 2}{C_0} \varepsilon \sqrt{\sG^\varepsilon_*}
\le \frac{C_\ell^2}{C_0^3}\,\varepsilon^2+\tfrac{1}{2}\,\sG^\varepsilon_*$,
and rearranging terms, we have
\begin{equation}\label{EC4.2H}
\tfrac{1}{2}\,\sG^\varepsilon_*
\;\le\; \beta^{\varepsilon}_{*}-\fJ
+ \frac{C_\ell^2}{C_0^3}\,\varepsilon^2
+ \frac{\sqrt{2}C_\ell}{C_0}\varepsilon^{\nu}\,
\sqrt{\abs{\Tilde\xi^{\varepsilon}}}\,.
\end{equation}

By Lemma~\ref{L3.3} and \eqref{EC4.2H} we obtain
$\sG^\varepsilon_*\in\order(\varepsilon^{\nu\wedge2}\bigr)$
if $\fJ=\fJs$ for $\nu>1$, or if $\fJc=\fJs$ and $\nu=1$.
We also obtain
$\sG^\varepsilon_*\in\order\bigl(\varepsilon^{2\wedge{(2\nu-2)}}\bigr)$
if $\fJ<\fJ_s$ and $\nu>1$.
Thus \eqref{EC4.2A} holds.

If $\fJ<\fJs$ and $\nu>1$, then $\cZ\subset\cS\setminus\cSs$,
and $\sS\subset\cZ$ by Lemma~\ref{L3.3}.
Fix some $z\in\sS$.
Then $\liminf_{\varepsilon\searrow0}\,\eta_{*}^{\varepsilon}\bigl(B_{r}(z)\bigr)> 0$
for any $r>0$.
Also $\Delta\Lyap(z)<0$ by Theorem~\ref{T2.2}.
Therefore \eqref{EL3.6G} holds, with `$\liminf$' replacing the `$\limsup$'.
Expanding $\Tilde\xi^\varepsilon_1$ as in \eqref{EL3.6F},
and arguing as in Lemma~\ref{L3.6} it follows that
\eqref{EL3.6G} with `$\liminf$' also holds for $\Tilde\xi^\varepsilon$.
In fact, it easily follows that for some constant $\kappa_{1}$, we have
\begin{equation}\label{EC4.2I}
\liminf_{\varepsilon\searrow0}\,\bigl(-\Tilde\xi^\varepsilon\bigr) \;\ge\;
\min_{z\in\cZ}\; \kappa_{1}\bigl(-\tfrac{1}{2}\Delta\Lyap(z)\bigr)\,.
\end{equation}
The discriminant of the quadratic polynomial in \eqref{EL4.1B} is nonnegative
and this implies that
\begin{equation}\label{E-disc2}
\varepsilon^2\sG^\varepsilon_* \;\ge\; -2\, C_0\,\varepsilon^{2\nu}
\Tilde\xi^\varepsilon\,,
\end{equation}
in direct analogy with \eqref{E-disc}.
Thus, \eqref{EC4.2B} follows by \eqref{EC4.2I} and \eqref{E-disc2}.
This completes the proof of part~(a).

Since $\Tilde\zeta^\varepsilon\in\order\bigl(\varepsilon^{\nu\wedge2}\bigr)$, we
obtain $\beta^{\varepsilon}_{*}-\fJ\ge\order\bigl(\varepsilon^{\nu\wedge2}\bigr)$
by \eqref{EC4.2E}.
This proves part~(b),
and completes the proof.
\end{proof}

We define the following scaled quantities.

\begin{definition}\label{D4.3}
For $z\in\cS$, and $V^\varepsilon$ as in Theorem~\ref{T1.4}, we define
\begin{equation*}
\widehat{V}_{z}^{\varepsilon}(x)\;\df\;V^{\varepsilon}(\varepsilon^{\nu} x+z)\,,
\qquad x\in\Rd\,.
\end{equation*}
and
\begin{equation*}
\widetilde{V}^{\varepsilon}\;\df\; {\varepsilon}^{2}V^{\varepsilon}\,,
\qquad\Breve{V}_{z}^{\varepsilon}\;\df\;\varepsilon^{2(1-\nu)}
\widehat{V}_{z}^{\varepsilon}\,.
\end{equation*}
We also define the `scaled' vector field and penalty by
\begin{equation*}
\widehat{m}_{z}^{\varepsilon}(x)\;\df\;\frac{m(\varepsilon^{\nu} x+z)}
{\varepsilon^{\nu}}\,,
\qquad 
\widehat{\ell}_{z}^{\varepsilon}(x)\;\df\;\ell(\varepsilon^{\nu} x+z)\,.
\end{equation*}
\end{definition}
\nomenclature[Cb]{$\widehat{V}_{z}^{\varepsilon}$, $\widetilde{V}^{\varepsilon}$,
$\Breve{V}_{z}^{\varepsilon}$}{scaled solutions of the HJB, Definition~\ref{D4.3}}
\nomenclature[Ch]{$\widehat{m}_{z}^{\varepsilon}$,
$\widehat{\ell}_{z}^{\varepsilon}$}{scaled vector field and potential,
Definition~\ref{D4.3}}

The next lemma shows provides estimates for the growth of
$\grad \widehat{V}_{z}^{\varepsilon}$,
and $\grad \widetilde{V}^{\varepsilon}$.

\begin{lemma}\label{L4.4}
Assume $\nu\in(0,2]$, and let $\widehat{V}_{z}^{\varepsilon}$,
$\widetilde{V}^{\varepsilon}$, and $\Breve{V}_{z}^{\varepsilon}$,
be as in Definition~\ref{D4.3}.
Then
\begin{itemize}
\item[\textup{(}a\/\textup{)}]
Under the restriction that $z\in\cZ$ when $\nu\in(1,2]$,
there exists a constant $\Breve{c}_0$ such that
\begin{equation}\label{EL4.4A}
\abs{\grad \Breve{V}_{z}^{\varepsilon}(x)}\;\le\; \Breve{c}_0\,(1+\abs{x})
\qquad\forall\,\varepsilon\in(0,1)\,,\quad\forall\,x\in\Rd\,.
\end{equation}
\item[\textup{(}b\/\textup{)}]
The bound in \eqref{EL4.4A} also holds for $\widetilde{V}^{\varepsilon}$
for all $\nu\in(0,2]$, with no restrictions on $z$.
\end{itemize}
\end{lemma}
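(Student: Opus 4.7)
The plan is to apply the Bernstein gradient-estimate technique to the scaled HJB equations satisfied by $\Breve V_z^\varepsilon$ and $\Tilde V^\varepsilon$, then close the argument by a maximum-principle test on the auxiliary function $\psi(x)-K(1+\abs{x}^2)$ with a constant $K$ verified to be independent of $\varepsilon$.

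First I would derive the relevant PDEs from \eqref{E-HJB2}. The substitution $y=\varepsilon^\nu x+z$ followed by multiplication by $\varepsilon^{2(1-\nu)}$ shows that $\Breve V_z^\varepsilon$ solves
\[
\tfrac12\Delta \Breve V_z^\varepsilon + \bigl\langle\widehat m_z^\varepsilon,\nabla\Breve V_z^\varepsilon\bigr\rangle - \tfrac12\abs{\nabla\Breve V_z^\varepsilon}^2 \;=\; \varepsilon^{2-2\nu}\bigl(\beta_*^\varepsilon-\widehat\ell_z^\varepsilon\bigr),
\]
while multiplying \eqref{E-HJB2} by $\varepsilon^2$ shows that $\Tilde V^\varepsilon$ solves
\[
\tfrac{\varepsilon^{2\nu}}{2}\Delta \Tilde V^\varepsilon + \langle m,\nabla\Tilde V^\varepsilon\rangle - \tfrac12\abs{\nabla\Tilde V^\varepsilon}^2 \;=\; \varepsilon^2(\beta_*^\varepsilon-\ell).
\]
Both fit the common form $\tfrac a2\Delta w+\langle b,\nabla w\rangle-\tfrac12\abs{\nabla w}^2 = g$ with $a\in(0,1]$, and the coefficients satisfy $\abs{b(x)}\le C(1+\abs{x})$, $\abs{g(x)}\le C(1+\abs{x})$, $\norm{Db}_\infty+\norm{\nabla g}_\infty\le C$ uniformly in $\varepsilon$. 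In (b) this is immediate from boundedness of $m$ and $\beta_*^\varepsilon$ together with the Lipschitz property of $\ell$. In (a), $\abs{\widehat m_z^\varepsilon(x)}\le C\abs{x}$ follows from $m(z)=0$ and Lipschitz continuity of $m$; for $g$, one splits $g(x) = \varepsilon^{2-2\nu}(\beta_*^\varepsilon-\ell(z))+\varepsilon^{2-2\nu}(\ell(z)-\widehat\ell_z^\varepsilon(x))$, where the first summand is $\order(1)$ either trivially when $\nu\le 1$ (so $\varepsilon^{2-2\nu}\le 1$), or by the bound $\beta_*^\varepsilon-\fJ\in\order(\varepsilon^{2\nu-2})$ from Lemma~\ref{L3.3} combined with $\ell(z)=\fJ$ when $\nu\in(1,2]$ and $z\in\cZ$, and the second summand is $\order(\varepsilon^{2-\nu}\abs{x})\le \order(\abs{x})$ by Lipschitz continuity of $\ell$.

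For the Bernstein step, set $\psi\df\abs{\nabla w}^2$. Differentiating the PDE in $x_i$, multiplying by $\partial_i w$, and summing yields, after standard manipulations,
\[
\tfrac a2\Delta\psi + \bigl\langle b-\nabla w,\nabla\psi\bigr\rangle - a\norm{D^2 w}_F^2 + 2(\nabla w)\transp(Db)\transp \nabla w \;=\; 2\langle\nabla g,\nabla w\rangle.
\]
The algebraic inequality $\norm{D^2 w}_F^2\ge d^{-1}(\Delta w)^2$, combined with the substitution $a\Delta w = \psi - 2\langle b,\nabla w\rangle + 2g$ read off the original PDE, produces the crucial lower bound
\[
a\norm{D^2 w}_F^2 \;\ge\; \tfrac{1}{ad}\Bigl(\tfrac12\psi^2 - C(1+\abs{x}^2)\psi - C(1+\abs{x})^2\Bigr),
\]
in which the $a^{-1}$ enhancement of the $\psi^2$ coefficient is actually favorable as $a\searrow 0$ and supplies precisely the damping needed to offset the degenerating Laplacian coefficient in part (b).

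Finally I would test $\Phi(x)\df\psi(x)-K(1+\abs{x}^2)$ against this identity. By Theorem~\ref{T1.4}(a), $\abs{\nabla V^\varepsilon}$ has at most affine growth, hence $\psi$ is at most quadratic and $\Phi$ attains a global maximum at some $x_0\in\Rd$ once $K$ exceeds an (a priori $\varepsilon$-dependent) threshold. If $\Phi(x_0)>0$, so that $\psi(x_0)>K(1+\abs{x_0}^2)$ and in particular $\abs{x_0}^2<\psi(x_0)/K$, then substituting $\nabla\psi(x_0)=2Kx_0$ and $\Delta\psi(x_0)\le 2Kd$ into the $\psi$-identity and using the lower bound on $a\norm{D^2 w}_F^2$, with cross terms absorbed by Cauchy--Schwarz and Young's inequality, produces a polynomial inequality of the form $\psi(x_0)^2\le C_1(K)\psi(x_0)+C_2(K)$ whose coefficients depend only on $K$ and on the universal constants. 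For $K$ larger than a threshold depending only on $d$, $\norm{m}_\infty$, $\norm{\nabla\ell}_\infty$, and $\sup_\varepsilon\beta_*^\varepsilon$, this forces $\psi(x_0)\le K$, contradicting $\psi(x_0)>K$. Hence $\Phi\le 0$ on $\Rd$, yielding $\abs{\nabla w(x)}\le \Breve c_0(1+\abs{x})$ with $\Breve c_0=\sqrt K$ independent of $\varepsilon$. The main technical obstacle is the bookkeeping of all $a$-dependent terms to ensure that the closing threshold for $K$ does not depend on $a=\varepsilon^{2\nu}$; the $a^{-1}$ enhancement of the $\psi^2$ coefficient is the critical structural feature that makes this possible.
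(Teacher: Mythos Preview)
Your Bernstein approach for part~(a) is essentially the content of \cite[Lemma~5.1]{Met}, which the paper invokes as a black box, so the strategies agree there. Two corrections, though. First, for $\nu\in(1,2]$ you need $\abs{\varepsilon^{2-2\nu}(\beta_*^\varepsilon-\ell(z))}$ bounded in absolute value; Lemma~\ref{L3.3} supplies only the upper bound $\beta_*^\varepsilon-\fJ\le C\varepsilon^{2\nu-2}$, and the matching lower bound $\beta_*^\varepsilon-\fJ\ge -C\varepsilon^{\nu}$ requires Corollary~\ref{C4.2}(b), which the paper itself cites at this step. Second---and this affects both parts of your sketch---your maximum-principle closing is circular as written: the $\varepsilon$-dependent threshold you invoke so that $\Phi_K$ attains its maximum is essentially $\sup_x\psi(x)/(1+\abs{x}^2)$, and for any $K$ strictly above that value $\Phi_K$ is already everywhere negative, so the clause ``if $\Phi_K(x_0)>0$'' never fires and no $\varepsilon$-uniform bound emerges. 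The standard cure is a localization (a cutoff on balls, or subtract a vanishing $\delta\abs{x}^4$ penalty and send $\delta\searrow0$); in~(a) this is precisely the work that \cite[Lemma~5.1]{Met} absorbs for you.

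For part~(b) the paper takes a genuinely different and much shorter route: rather than redoing Bernstein with the degenerate coefficient $a=\varepsilon^{2\nu}$, it fixes any $z\in\cZ$, writes $\grad\Tilde V^\varepsilon(x+z)=\varepsilon^{\nu}\grad\Breve V_z^\varepsilon(\varepsilon^{-\nu}x)$ via the chain rule, and then~\eqref{EL4.4A} from part~(a) gives $\abs{\grad\Tilde V^\varepsilon(x+z)}\le\Breve c_0(\varepsilon^\nu+\abs{x})$ in one line. This sidesteps the degeneracy and all $a$-bookkeeping entirely. Your observation that the $(ad)^{-1}\psi^2$ enhancement compensates for the vanishing Laplacian coefficient is correct and would make a direct proof work once the localization issue above is handled, but the paper's scaling device is both simpler and immune to that technicality.
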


\begin{proof}
By \eqref{E-HJB2}, the function $\Breve{V}_{z}^{\varepsilon}$ satisfies
\begin{equation}\label{EL4.4B}
\frac{1}{2}\Delta \Breve{V}_{z}^{\varepsilon}(x)
+ \bigl\langle\widehat{m}_{z}^{\varepsilon}(x),
\grad \Breve{V}_{z}^{\varepsilon}(x)\bigr\rangle
-\frac{1}{2}\,\abs{\grad \Breve{V}_{z}^{\varepsilon}(x)}^{2}
\;=\; \varepsilon^{2(1-\nu)}\bigl(\beta^\varepsilon_*
-\widehat{\ell}_{z}^{\varepsilon}(x)\bigr)\,.
\end{equation}
Since $\ell$ is Lipschitz, the gradient of the map
$x\mapsto\varepsilon^{2(1-\nu)}
\bigl(\widehat{\ell}_{z}^{\varepsilon}(x)-\ell(z)\bigr)$ is bounded
in $\Rd$, uniformly in $\varepsilon\in(0,1)$, and $\nu\in(0,2]$.
Similarly, $\abs{\widehat{m}_{z}^{\varepsilon}(x)}$,
$\norm{D\widehat{m}_{z}^{\varepsilon}(x)}$ and
$\norm{D^2\widehat{m}_{z}^{\varepsilon}(x)}$,
are bounded in $\Rd$, uniformly in $\varepsilon\in(0,1)$, and $\nu\in(0,2]$.
By Theorem~\ref{T1.11}\,(i), which is established in
Corollary~\ref{C4.2}, the constants
$\varepsilon^{2(1-\nu)}\bigl(\beta^\varepsilon_*-\ell(z)\bigr)$
are  bounded uniformly in $\varepsilon\in(0,1)$, and $\nu\in(1,2]$
for $z\in\cZ$.
Applying \cite[Lemma~5.1]{Met} to \eqref{EL4.4B} it follows
that $\Breve{V}_{z}^{\varepsilon}$ satisfies \eqref{EL4.4A} if $\nu\in(1,2]$
and $z\in\cZ$.
On the other hand, if $\nu\in(0,1]$, then the gradient of the right hand
side of \eqref{EL4.4B} is bounded in $\Rd$, uniformly in
$\varepsilon\in(0,1)$, and the restriction $z\in\cZ$ is not needed.
This completes the proof of part~(a).

Next show that \eqref{EL4.4A} holds for $\widetilde{V}^{\varepsilon}$.
Fix an arbitrary $z\in\cZ$.
We have
\begin{align*}
\grad_{x}V^{\varepsilon}(x+z)&\;=\;
\varepsilon^{-\nu}\,\grad_{y}\widehat{V}_{z}^{\varepsilon}(y)
\bigr|_{y=\varepsilon^{-\nu}x}\\[5pt]
&\;\le\;\frac{\varepsilon^{-\nu}}{\varepsilon^{2(1-\nu)}}\,
\Breve{c}_{0}\bigl(1+ \abs{\varepsilon^{-\nu}x}\bigr)\\[5pt]
&\;=\;
\frac{\Breve{c}_{0}}{\varepsilon^{2}}\,
\bigl(\varepsilon^{\nu}+ \abs{x}\bigr)\,,
\end{align*}
where in the inequality we use the identity
$\widehat{V}_{z}^{\varepsilon} =\varepsilon^{2(\nu-1)}\Breve{V}_{z}^{\varepsilon}$
and \eqref{EL4.4A}.
Since $\widetilde{V}^{\varepsilon}\;=\; {\varepsilon}^{2}V^{\varepsilon}$,
this proves the property for $\widetilde{V}^{\varepsilon}$.
This completes the proof.
\end{proof}

We continue with a version of Lemma~\ref{L4.1} for unbounded domains.

\begin{proposition}\label{P4.5}
Let $\nu\in(0,2]$.  Then for any $k\in\NN$ and $r>0$,
there exist constants
and $\Hat{\kappa}_{1}=\Hat{\kappa}_{1}(k)$ and
$\Hat{\kappa}_{2}=\Hat{\kappa}_{1}(k)$ such that
with $\Hat{r}(\varepsilon) \df \Hat{\kappa}_{2}\varepsilon^{\nu\wedge 1}$
we have
\begin{equation*}
\int_{B_{\Hat{r}(\varepsilon)}^{c}(\cS)}
\bigl(\dist(x,\cS)\bigr)^{2k}\, \eta_{*}^{\varepsilon}(\D{x})
\;\le\; \Hat{\kappa}_{1}\, \varepsilon^{2(\nu\wedge 1)}
\qquad\forall\,\varepsilon\in(0,1)\,.
\end{equation*}
\end{proposition}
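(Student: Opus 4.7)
My plan is to split the integral over a bounded region containing $\cS$ and its unbounded complement, and estimate each separately.

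For the bounded part, I would fix a bounded domain $G \supset \cS$ (to be chosen depending on $k$). Since $\dist(x,\cS)$ is uniformly bounded by some $D_G < \infty$ on $G$, we have $(\dist(x,\cS))^{2k} \le D_G^{2k-2}(\dist(x,\cS))^2$ on $G$, so Lemma~\ref{L4.1} yields
\begin{equation*}
\int_{G \cap B_{\Hat{r}(\varepsilon)}^c(\cS)} \bigl(\dist(x,\cS)\bigr)^{2k}\, \eta_*^\varepsilon(\D{x}) \;\le\; D_G^{2k-2}\, \Hat{\kappa}_0(G,\nu)\,\varepsilon^{2(\nu \wedge 2)}.
\end{equation*}
Since $\nu \in (0,2]$ implies $\varepsilon^{2\nu} \le \varepsilon^{2(\nu \wedge 1)}$, this contribution already has the required order.

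For the tail $\int_{G^c} (\dist(x,\cS))^{2k}\, \eta_*^\varepsilon(\D{x})$, I would enlarge $G$ so that $\Lyap$ agrees with $\Bar\Lyap$ on $G^c$, $\dist(x,\cS) \le 2|x|$ there, and the coercivity $\langle m(x), \grad \Lyap(x) \rangle \le -\gamma|x|$ from Hypothesis~\ref{H1.1}(3b) applies. The natural approach is a Foster--Lyapunov argument: apply the extended generator $\Lg^\varepsilon_{v_*^\varepsilon}$ to the test function $\Lyap^{k+1}$ (with a smooth truncation for integrability), and use infinitesimal invariance $\int \Lg^\varepsilon_{v_*^\varepsilon} f\, \D\eta_*^\varepsilon = 0$. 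The drift's coercivity produces a negative term of order $|x|^{2k+1}$, the Laplacian contributes an $O(\varepsilon^{2\nu})$ factor times moments of $\Lyap$, and the control cross-term $\varepsilon \int \Lyap^k \langle v_*^\varepsilon, \grad \Lyap \rangle\,\D\eta_*^\varepsilon$ can be split by Cauchy--Schwarz into a piece bounded by $\varepsilon^2 \sG^\varepsilon_*$ and a piece absorbed into a lower-order moment of $\Lyap^k |\grad\Lyap|^2$. Combining this with the pointwise bound $|\varepsilon v_*^\varepsilon(x)| \le \Breve{c}_0(1+|x|)$ from Lemma~\ref{L4.4}(b) and the uniform bound $\sG^\varepsilon_* \le \beta_*^\varepsilon \le C$ (from Lemmas~\ref{L3.3} and~\ref{L3.5}(ii)), I expect to iteratively derive $\int_{G^c} |x|^{2j}\, \D\eta_*^\varepsilon \le C_j\,\varepsilon^{2(\nu \wedge 1)}$ for $j = 1, \dots, k$, with the base case supplied by Lemma~\ref{L4.1}.

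The hard part will be closing this iteration: each Cauchy--Schwarz split on the control term generates a higher moment that must be reabsorbed into the drift's coercivity, and the threshold $\Hat{r}(\varepsilon) = \Hat{\kappa}_2 \varepsilon^{\nu \wedge 1}$ must be chosen with $\Hat{\kappa}_2$ large enough (depending on $k$) so that this reabsorption is valid outside $B_{\Hat{r}(\varepsilon)}(\cS)$. The scale $\varepsilon^{\nu \wedge 1}$ is natural: it is the length at which the noise $\varepsilon^\nu$ and the control $\varepsilon$ become comparable to the drift linearized near $\cS$, which is also the rescaling under which $\varrho_*^\varepsilon$ converges to a Gaussian in Theorem~\ref{T1.13}.
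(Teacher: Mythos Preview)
Your plan has a genuine gap in the tail estimate. The Foster--Lyapunov argument with test function $\Lyap^{k+1}$ cannot close, and neither Cauchy--Schwarz nor the pointwise bound from Lemma~\ref{L4.4}(b) rescues it. The obstruction is that on $G^c$ the control term dominates the drift coercivity rather than being absorbed by it. Concretely, Hypothesis~\ref{H1.1} gives $\langle m,\grad\Lyap\rangle \le -\gamma|x|$ and $|\grad\Lyap|\sim|x|$, so the drift contribution to $\Lg^\varepsilon_{v^\varepsilon_*}[\Lyap^{k+1}]$ is of order $-|x|^{2k+1}$. But Lemma~\ref{L4.4}(b) only says $|\varepsilon v^\varepsilon_*(x)|=|\grad\Tilde V^\varepsilon(x)|\le \Breve c_0(1+|x|)$, so the control cross term $\varepsilon\,\Lyap^{k}\langle v^\varepsilon_*,\grad\Lyap\rangle$ can be as large as $|x|^{2k+2}$, one order \emph{above} the drift. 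Your Cauchy--Schwarz split fares no better: it produces the moment $\int\Lyap^{2k}|\grad\Lyap|^2\,\D\eta^\varepsilon_*\sim\int|x|^{4k+2}\,\D\eta^\varepsilon_*$, which is higher than what you are trying to bound, so the iteration runs in the wrong direction. The bound $\sG^\varepsilon_*\le C$ is far too weak to compensate.

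What is missing is the use of the HJB structure to \emph{cancel} the control term rather than estimate it. The paper's proof takes the test function $\Lyap^{2k}\,\E^{\Tilde V^\varepsilon}$ with $\Tilde V^\varepsilon=\varepsilon^2 V^\varepsilon$. Because $\varepsilon v^\varepsilon_*=-\grad\Tilde V^\varepsilon$, the exponential factor is tuned so that, once one invokes the identity $\Lg^\varepsilon_*[\Tilde V^\varepsilon]=\varepsilon^2(\beta^\varepsilon_*-\ell)-\tfrac12|\grad\Tilde V^\varepsilon|^2$ (which is the HJB equation rewritten), the entire control contribution collapses into the negative complete square
\[
-\tfrac{1-\varepsilon^{2\nu}}{2}\,\Bigl|\grad\Tilde V^\varepsilon+2k\,\tfrac{\grad\Lyap}{\Lyap}\Bigr|^{2},
\]
which can simply be dropped. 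What remains is governed by $\varepsilon^2(\beta^\varepsilon_*-\ell)$ (negative for large $|x|$ since $\ell$ is inf-compact) and by $\langle m,\grad\Lyap\rangle/\Lyap$, with no control term left to fight. The threshold $\Hat r(\varepsilon)\sim\varepsilon^{\nu\wedge1}$ then arises from balancing $\varepsilon^{2\nu}\Delta\Lyap$ and $\varepsilon^{2\wedge2\nu}$ against $\langle m,\grad\Lyap\rangle$ near $\cS$, not from any reabsorption of the control. Your bounded-region estimate via Lemma~\ref{L4.1} is fine, but for the tail you need this HJB-based completion of the square; treating $v^\varepsilon_*$ as a generic control with bounded effort and linear growth is not enough.
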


\begin{proof}
Let $\widetilde{V}^{\varepsilon}\df {\varepsilon}^{2}V^{\varepsilon}$.
Since $V^{\varepsilon}(0)=0$, by Lemma~\ref{L4.4}
the function $\widetilde{V}^{\varepsilon}={\varepsilon}^{2}V^{\varepsilon}$
is locally bounded, uniformly in $\varepsilon>0$.
Applying the operator
\begin{equation*}
\Lg^{\varepsilon}_{*}\;\df\;\frac{\varepsilon^{2\nu}}{2}\Delta
+ \langle m-\varepsilon^{2}\grad V^{\varepsilon},\nabla\rangle
\end{equation*}
to the function $\Lyap^{2k}\,\E^{\widetilde{V}^{\varepsilon}}$
and using the identities
$\Lg^{\varepsilon}_{*}\bigl[\widetilde{V}^{\varepsilon}\bigr]
= \varepsilon^{2} \bigl(\beta^{\varepsilon}_{*}-\ell\bigr)
- \frac{1}{2}\,\abs{\grad\widetilde{V}^{\varepsilon}}^{2}$,
and rearranging terms we obtain
\begin{align}\label{EP4.5A}
\Lg^{\varepsilon}_{*} \bigl[\Lyap^{2k}\,\E^{\widetilde{V}^{\varepsilon}}\bigr]
&\;=\; \Lyap^{2k}\,\Lg^{\varepsilon}_{*} \bigl[\E^{\widetilde{V}^{\varepsilon}}\bigr]
+ \E^{\widetilde{V}^{\varepsilon}}\Lg^{\varepsilon}_{*}
\bigl[\Lyap^{2k}\bigr]
+2k\,\varepsilon^{2\nu}\,\Lyap^{(2k-1)}\,\E^{\widetilde{V}^{\varepsilon}}
\langle\grad\widetilde V^\varepsilon, \grad\Lyap\rangle
\\[5pt]
&\;=\; \Lyap^{2k} \E^{\widetilde{V}^{\varepsilon}}
\bigl( \varepsilon^2(\beta^{\varepsilon}_{*}-\ell)
+ \frac{\varepsilon^{2\nu}}{2}\abs{\grad\widetilde{V}^\varepsilon}^2\bigr)
+2k\,\varepsilon^{2\nu}\,\Lyap^{(2k-1)}\,\E^{\widetilde{V}^{\varepsilon}}
\langle\grad\widetilde V^\varepsilon, \grad\Lyap\rangle\nonumber\\[5pt]
&\mspace{50mu}
+ \E^{\widetilde{V}^{\varepsilon}}\biggl( 2k\varepsilon^{2\nu}\Lyap^{2k-1} \Delta \Lyap 
+
k(2k-1)\varepsilon^{2\nu}\Lyap^{2k-2}\abs{\grad\Lyap}^2 + \langle m-\varepsilon^2\grad V^\varepsilon, 
\grad\Lyap\rangle\biggr)\nonumber\\[5pt]
&\;=\; \Lyap^{2k}\,\E^{\widetilde{V}^{\varepsilon}}
\biggl[\varepsilon^{2} \bigl(\beta^{\varepsilon}_{*}-\ell\bigr)
+k\varepsilon^{2\nu}\frac{\Delta\Lyap}{\Lyap}
-\frac{1-\varepsilon^{2\nu}}{2}\abs{\grad \widetilde V^\varepsilon}^2- 
 \frac{2k(1-\varepsilon^{2\nu})}{\Lyap}
\langle\grad\widetilde V^\varepsilon, \grad\Lyap\rangle\nonumber\\[6pt]
&\mspace{330mu}+ 2k\,\frac{\langle m, \grad\Lyap\rangle}{\Lyap}
+k(2k-1)\varepsilon^{2\nu}
\,\frac{\abs{\grad\Lyap}^{2}}{\Lyap^{2}}\biggr]\,\nonumber\\[5pt]
&\;=\;
\Lyap^{2k}\,\E^{\widetilde{V}^{\varepsilon}}
\biggl[\varepsilon^{2} \bigl(\beta^{\varepsilon}_{*}-\ell\bigr)
+k\varepsilon^{2\nu}\frac{\Delta\Lyap}{\Lyap}
-\frac{1-\varepsilon^{2\nu}}{2}\biggl(\grad\widetilde{V}^{\varepsilon}+
2k\,
\frac{\grad\Lyap}{\Lyap}\biggr)^{2}\nonumber\\[6pt]
&\mspace{330mu}+ 2k\,\frac{\langle m, \grad\Lyap\rangle}{\Lyap}
+k\bigl(2k-\varepsilon^{2\nu}\bigr)
\,\frac{\abs{\grad\Lyap}^{2}}{\Lyap^{2}}\biggr]\,.\nonumber
\end{align}
By \eqref{EL2.3}, and since $\Bar\Lyap$ has
strict quadratic growth and $\grad\Bar\Lyap$ is Lipschitz by Hypothesis~\ref{H1.1},
and $\Lyap$ agrees with $\Bar\Lyap$ outside a compact set,
it follows that
$\frac{\abs{\grad\Lyap}^2}{\Lyap}$ is bounded on $\Rd$.
Therefore, in view of the bounds in \eqref{ET2.2} and \eqref{EL2.3},
 we can add a positive constant to $\Lyap$ so that
\begin{equation}\label{EP4.5B}
2\,\frac{\langle m, \grad\Lyap\rangle}{\Lyap}
+ \bigl(2k-\varepsilon^{2\nu}\bigr)\, \frac{\abs{\grad\Lyap}^{2}}{\Lyap^{2}}
\;\le\; \frac{\langle m, \grad\Lyap\rangle}{\Lyap}\qquad\text{on\ }\Rd\,,
\quad\forall\,\varepsilon>0\,.
\end{equation}
The constant is selected so that $\Lyap\ge1$ on $\Rd$.
Define
\begin{equation*}
G^{\varepsilon}_{0}\;\df\; \varepsilon^{2-2\wedge2\nu}(\beta^\varepsilon_*-\ell)
-\frac{1-\varepsilon^{2\nu}}{2\varepsilon^{2\wedge2\nu}}
\babss{\grad\widetilde{V}^{\varepsilon}
+2k\,\frac{\grad\Lyap}{\Lyap}}^{2}\,.
\end{equation*}
Since $\ell$ is inf-compact, there exists $r_{0}>0$ such that
$G^{\varepsilon}_{0}\le0$ on $B_{r_{0}}^{c}$. We may choose $r_0$ large enough so that
$\cS\subset B_{r_{0}}$.
Let $\kappa_0$ be a bound of $\beta^\varepsilon_* -\ell$
on $B_{r_{0}}$.
Using this bound and \eqref{EP4.5A}--\eqref{EP4.5B}, we obtain
\begin{equation}\label{EP4.5C}
\frac{1}{\varepsilon^{2\wedge 2\nu}}\,
\Lg^{\varepsilon}_{*}\bigl[\Lyap^{2k}\,\E^{\widetilde{V}^{\varepsilon}}\bigr](x)
\le\;\Lyap^{2k}(x)\,\E^{\widetilde{V}^{\varepsilon}(x)}
\biggl[\kappa_0\Ind_{B_{r_{0}}}(x) +\frac{k}{\varepsilon^{2\wedge2\nu}}\,
\frac{\varepsilon^{2\nu}\Delta\Lyap(x)
+ \bigl\langle m(x),\grad\Lyap(x)\bigr\rangle}
{\Lyap(x)}\biggr]
\end{equation}
for all $x\in\Rd$, and all $\varepsilon\in(0,1)$.
By \eqref{ET2.2} we have
\begin{equation}\label{EP4.5D}
\varepsilon^{2\nu}\Delta\Lyap(x)+ \bigl\langle m(x),\grad\Lyap(x)\bigr\rangle
\;\le\; \frac{1}{2} \bigl\langle m(x),\grad\Lyap(x)\bigr\rangle
\end{equation}
for all $x\in\Rd$ such that
$\dist(x,\cS)\ge \kappa_1\varepsilon^\nu$,
with $\kappa_1\df\sqrt{2C_0^{-1}\norm{\Delta\Lyap}_\infty}$ .
Using \eqref{ET2.2} once more, if we define
$\kappa_2\df \bigl(4 k^{-1} C_0^{-1} \kappa_0\,
\sup_{B_{r_0}}\Lyap\bigr)^{\nicefrac{1}{2}}$,
then we have
\begin{equation}\label{EP4.5E}
\varepsilon^{2\wedge2\nu}\kappa_0
+ \frac{k \bigl\langle m(x),\grad\Lyap(x)\bigr\rangle}
{4\Lyap(x)} \;\le\; 0
\end{equation}
in $\bigl\{x\in B_{r_0}\,\colon
\dist(x,\cS)\ge \kappa_2 \varepsilon^{1\wedge \nu}\bigr\}$.
Combining \eqref{EP4.5C}, \eqref{EP4.5D}, and \eqref{EP4.5E}, we obtain
\begin{equation}\label{EP4.5F}
\frac{1}{\varepsilon^{2\wedge 2\nu}}\,
\Lg^{\varepsilon}_{*}\bigl[\Lyap^{2k}\,\E^{\widetilde{V}^{\varepsilon}}\bigr](x)
\;\le\; \frac{k}{4\varepsilon^{2\wedge 2\nu}}\,
\Lyap^{2k-1}(x)\,\E^{\widetilde{V}^{\varepsilon}(x)}
\bigl\langle m(x),\grad\Lyap(x)\bigr\rangle
\end{equation}
for all $x\in\Rd$ such that
$\dist(x,\cS)\ge \Hat{r}(\varepsilon)\df
(\kappa_1\vee\kappa_2) \varepsilon^{\nu\wedge 1}$.
Let $\kappa_3$ be a bound of the right hand side of
\eqref{EP4.5C} on $B_{\Hat{r}(\varepsilon)}(\cS)$.
This bound does not depend on $\varepsilon$, since $\widetilde{V}^{\varepsilon}$
is locally bounded, uniformly in $\varepsilon\in(0,1)$.
Then, by \eqref{EP4.5C} and \eqref{EP4.5F} we obtain
\begin{equation}\label{EP4.5G}
\frac{1}{\varepsilon^{2\wedge 2\nu}}\,
\Lg^{\varepsilon}_{*}\bigl[\Lyap^{2k}\,\E^{\widetilde{V}^{\varepsilon}}\bigr](x)
\;\le\;\kappa_3 
+\frac{k}{4\varepsilon^{2\wedge 2\nu}}\,
\bigl\langle m(x),\grad\Lyap(x)\bigr\rangle
\Lyap^{2k-1}(x)\,\E^{\widetilde{V}^{\varepsilon}(x)}\,
\Ind_{B_{\Hat{r}(\varepsilon)}^c(\cS)}(x)
\end{equation}
for all $x\in\Rd$, and $\varepsilon\in(0,1)$.

By the strong maximum principle, $V^{\varepsilon}$ attains
its infimum in $\Rd$ in the set
$\{x\in\Rd\,\colon \ell(x)\le\beta^{\varepsilon}_{*}\}$.
Therefore, $\widetilde{V}^{\varepsilon}$ is bounded below in $\Rd$,
uniformly in $\varepsilon$,
by Lemma~\ref{L4.4}.
Thus, from \eqref{EP4.5G} we obtain
\begin{equation}\label{EP4.5H}
\int_{B^{c}_{\Hat{r}(\varepsilon)}(\cS)}
\frac{\babs{\bigl\langle m(x),\grad\Lyap(x)\bigr\rangle}}{\varepsilon^{2\wedge 2\nu}}\,
\Lyap^{2k-1}(x)\,\eta_{*}^{\varepsilon}(\D{x})
\;\le\; \frac{4\kappa_{3}}{k(\inf_{\Rd}\E^{\widetilde{V}^{\varepsilon}})}
\end{equation}
for all $\varepsilon<\varepsilon_{0}$.
By the strict quadratic growth of $\Lyap$ mentioned earlier, together
with \eqref{EL2.3} and \eqref{EP4.5H}, there exists
a constant $\kappa_{4}$, such that
\begin{equation*}
\int_{B^{c}_{\Hat{r}(\varepsilon)}(\cS)}
\frac{1}{\varepsilon^{2\wedge 2\nu}}\,\bigl(\dist(x,\cS)\bigr)^{4k-1}\,
\eta_{*}^{\varepsilon}(\D{x})
\;\le\;\kappa_{4}\qquad\forall\varepsilon\in(0,1)\,.
\end{equation*}
This finishes the proof.
\end{proof}

\begin{corollary}\label{C4.6}
Let $D$ be any open set such that $\cSs\subset D$. The following hold.
\begin{itemize}
\item[\textup{(}a\/\textup{)}]
If $\fJ=\fJs$, then
$\eta^{\varepsilon}_*(D^c)\in\order\bigl(\varepsilon^{2-\nu}\bigr)$
for all $\nu\in(1,2)$.

\smallskip
\item[\textup{(}b\/\textup{)}]
If $\nu\in(0,1)$ then
\begin{equation}\label{EC4.6A}
\sG^\varepsilon_*\;\in\;\order\bigl(\varepsilon^{\nu}\bigr)\,,~
\beta^{\varepsilon}_{*}-\fJ\;\ge\;\order\bigl(\varepsilon^{\nu}\bigr)\,,~
\text{and\ }
\eta^{\varepsilon}_*(D^c)\;\in\;
\order\bigl(\varepsilon^{2\nu\wedge(2-\nu)}\bigr)\, .
\end{equation}
\end{itemize}
\end{corollary}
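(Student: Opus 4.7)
The engine of the proof is a localized variant of the argument from Lemma~\ref{L3.6}. For each $z\in\cS\setminus\cSs$, Theorem~\ref{T2.2}(iii) gives $\Delta\Lyap(z)<0$, so replaying the derivation of \eqref{EL3.6E}--\eqref{E-disc} with the cutoff $\varphi$ of Lemma~\ref{L3.6} localized near the critical value $\Lyap(z)$ yields a quantitative bound of the form
\[
\eta^\varepsilon_*\bigl(B_r(z)\bigr)\;\le\; C_z\,\sG^\varepsilon_*\,\varepsilon^{2-2\nu}+\order\bigl(\varepsilon^{2(\nu\wedge 1)}\bigr)
\]
for some $r>0$ and all sufficiently small $\varepsilon$, the error coming from mass in the support of the cutoff outside $B_r(z)$, which is bounded away from $\cS$ and controlled by Proposition~\ref{P4.5}. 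Combining this with $\eta^\varepsilon_*(B_\rho^c(\cS))\in\order(\varepsilon^{2(\nu\wedge 1)})$ (Chebyshev applied to Proposition~\ref{P4.5} with $k=1$) and choosing $r$ and $\rho$ small enough that $B_\rho(\cSs)\subset D$ and each $B_r(z)$ is contained either in $D$ or in $D^c$, I obtain the basic bound
\begin{equation*}
\eta^\varepsilon_*(D^c)\;\le\; C\,\sG^\varepsilon_*\,\varepsilon^{2-2\nu}+\order\bigl(\varepsilon^{2(\nu\wedge 1)}\bigr)\,.
\end{equation*}

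Part~(a) follows directly: under $\fJ=\fJs$ and $\nu\in(1,2)$, Corollary~\ref{C4.2}(a) supplies $\sG^\varepsilon_*\in\order(\varepsilon^\nu)$, so the basic bound yields $\eta^\varepsilon_*(D^c)\in\order(\varepsilon^{2-\nu})+\order(\varepsilon^2)=\order(\varepsilon^{2-\nu})$, since $2-\nu<2$.

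For part~(b), the lower bound $\beta^\varepsilon_*-\fJ\ge\order(\varepsilon^\nu)$ is immediate from $\ell(x)\ge\fJ-C_\ell\dist(x,\cS)$, Cauchy--Schwarz, and $\int\dist(x,\cS)^2\,\D\eta^\varepsilon_*\in\order(\varepsilon^{2\nu})$ (Lemma~\ref{L4.1} plus Proposition~\ref{P4.5}). For the bound on $\sG^\varepsilon_*$, I sharpen the lower bound on $\int\ell\,\D\eta^\varepsilon_*$ by using $\ell(x)\ge\fJs-C_\ell\dist(x,\cSs)$ on the neighborhood $D$ of $\cSs$ (shrunk so that the nearest point of $\cS$ to any $x\in D$ lies in $\cSs$) together with $\ell\ge\fJ-C_\ell\dist(x,\cS)$ on $D^c$. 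This yields
\[
\int\ell\,\D\eta^\varepsilon_*\;\ge\;\fJs-(\fJs-\fJ)\,\eta^\varepsilon_*(D^c)-\order(\varepsilon^\nu)\,.
\]
Combined with $\beta^\varepsilon_*\le\fJs+\order(\varepsilon^\nu)$ from Lemma~\ref{L3.5}(ii), this produces $\sG^\varepsilon_*\le(\fJs-\fJ)\,\eta^\varepsilon_*(D^c)+\order(\varepsilon^\nu)$, and substituting the basic bound gives the implicit inequality
\[
\sG^\varepsilon_*\;\le\;(\fJs-\fJ)\,C\,\sG^\varepsilon_*\,\varepsilon^{2-2\nu}+\order(\varepsilon^\nu)\,.
\]

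The main obstacle is precisely this circular dependence between $\sG^\varepsilon_*$ and $\eta^\varepsilon_*(D^c)$. It is resolved by observing that the coupling factor $\varepsilon^{2-2\nu}$ vanishes as $\varepsilon\searrow 0$ in the subcritical regime $\nu<1$: for $\varepsilon$ sufficiently small the coefficient $(\fJs-\fJ)\,C\,\varepsilon^{2-2\nu}<\tfrac{1}{2}$, so the $\sG^\varepsilon_*$ term on the right is absorbed, giving $\sG^\varepsilon_*\in\order(\varepsilon^\nu)$. Feeding this back into the basic bound then yields $\eta^\varepsilon_*(D^c)\in\order(\varepsilon^{2-\nu})+\order(\varepsilon^{2\nu})=\order(\varepsilon^{2\nu\wedge(2-\nu)})$, completing the proof.
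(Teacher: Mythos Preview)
Your proof is correct and follows essentially the same strategy as the paper: the discriminant inequality \eqref{E-disc} from Lemma~\ref{L3.6}, sharpened via Proposition~\ref{P4.5} to control the error terms $\xi^\varepsilon_2$ and the off-ball part of $\xi^\varepsilon_1$, gives the basic bound on $\eta^\varepsilon_*\bigl(B_r(z)\bigr)$ for each unstable $z$, and in the subcritical case the circular dependence between $\sG^\varepsilon_*$ and $\eta^\varepsilon_*(D^c)$ is closed by absorption since $\varepsilon^{2-2\nu}\to 0$.

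The paper organizes part~(b) differently: it splits into the cases $\fJ=\fJs$ and $\fJ<\fJs$, and in the latter constructs a special energy function with all unstable equilibria at higher energy than the stable ones, so as to rerun the $\tilde\zeta^\varepsilon$ machinery of Corollary~\ref{C4.2} with $\fJs$ in place of $\fJ$ (yielding \eqref{EC4.6C}--\eqref{EC4.6D}). Your direct Lipschitz bound on $\int\ell\,\D\eta^\varepsilon_*$ bypasses both the case split and the special construction, which is cleaner. The trade-off is that the paper's route, through \eqref{EC4.6C}, delivers the marginally stronger lower bound $\beta^\varepsilon_*-\fJs\ge\order(\varepsilon^\nu)$, which is what Theorem~\ref{T1.11}\,(ii) actually asserts; your argument gives only $\beta^\varepsilon_*-\fJ\ge\order(\varepsilon^\nu)$, matching the corollary as literally stated.
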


\begin{proof}
Since $2-\nu< 2(\nu\wedge1)$ for $\nu\in[1,2)$,
then, in view of Proposition~\ref{P4.5}, it suffices to prove that
$\eta^{\varepsilon}_*(\cN)\in
\order\bigl(\varepsilon^{2-\nu}\bigr)$
for  a bounded open neighborhood $\cN$ of $z\in\cS\setminus\cSs$.
Let $\varphi$ be as in the proof of Lemma~\ref{L3.6}.
By Proposition~\ref{P4.5}, we have
\begin{equation*}
\xi^\varepsilon_2\in\order\bigl(\varepsilon^{2(\nu\wedge 1)}\bigr)\,,\quad
\text{and}\quad\int_{B^{c}_{r}(\cS)}\varphi'(\Lyap)\Delta\Lyap\,\D\eta_{*}^{\varepsilon}
\in\order\bigl(\varepsilon^{2(\nu\wedge 1)}\bigr)\,.
\end{equation*}
Thus
\begin{equation}\label{EC4.6B}
\xi^\varepsilon \;\le\; \frac{1}{2}\Delta\Lyap(z)\,
\eta^{\varepsilon}_*\bigl(B_r(z)\bigr)
+\order\bigl(\varepsilon^{2(\nu\wedge 1)}\bigr)
\end{equation}
by \eqref{EL3.6A} and \eqref{EL3.6F}.
In addition, we have $\sG^\varepsilon_*\in\order\bigl(\varepsilon^{\nu\wedge2}\bigr)$ by
Corollary~\ref{C4.2}\,(a),
and
$-C_0\,\xi^\varepsilon\le\frac{1}{2}\varepsilon^{2-2\nu}\sG^\varepsilon_*$
by \eqref{E-disc}.
We combine these with \eqref{EC4.6B} for $\nu\in(1,2)$ to obtain
\begin{equation*}
-C_0\Delta\Lyap(z)\,
\eta^{\varepsilon}_*\bigl(B_r(z)\bigr)
+\order\bigl(\varepsilon^{2}\bigr)
\;\le\;  \varepsilon^{2-2\nu}\,\sG^\varepsilon_*
 \;\in\; \order\bigl(\varepsilon^{2-\nu}\bigr)\,.
\end{equation*}
Thus
$\eta^{\varepsilon}_*\bigl(B_r(z)\bigr)\in\order\bigl(\varepsilon^{2-\nu}\bigr)$
for $\nu\in(1,2)$.
This completes the proof of part~(a).

The proof of part~(b) is divided in two steps.
\noindent
\paragraph{Step 1}  Suppose $\fJ=\fJs$.
Then \eqref{EC4.2E}--\eqref{EC4.2H} hold with $\fJ$
replaced by $\fJs$.
By Lemma~\ref{L3.5}\,(ii) we have
$\beta^\varepsilon_*-\fJs\le\order\bigl(\varepsilon^{\nu\vee(4\nu-2)}\bigr)$.
Therefore $\sG^\varepsilon_*\in\order\bigl(\varepsilon^{\nu}\bigr)$ by
\eqref{EC4.2H},
and thus $\Tilde\zeta^\varepsilon\in\order\bigl(\varepsilon^\nu\bigr)$
by \eqref{EC4.2G}.
Hence, $\beta^\varepsilon_*-\fJs\ge\order\bigl(\varepsilon^{\nu}\bigr)$
by \eqref{EC4.2E}.
The estimate $\eta^{\varepsilon}_*(D^c)\in
\order\bigl(\varepsilon^{2\nu\wedge(2-\nu)}\bigr)$ is obtained
exactly as in Corollary~\ref{C4.6}\,(a).

\noindent
\paragraph{Step 2}
Suppose $\fJ<\fJs$.
By Theorem~\ref{T2.2}\,(ii), we may construct $\Lyap$ such that
$\Lyap(z)> 5\,\max_{\cSs}\,\Lyap$ for all $z\in\cS\setminus\cSs$.
Let $G=\bigl\{x\in\Rd\,\colon \Lyap(x)<2\,\max_{\cSs}\,\Lyap\bigr\}$
and $\Tilde\varphi$ be as in the proof of Lemma~\ref{L4.1}, with
$\delta= 2\,\max_{\cSs}\,\Lyap$.
We have
\begin{equation*}
\fJs-\ell(x)\;\le\; \ell(z) - \ell(x) \;\le\; C_\ell\,\abs{x-z}\qquad
\forall\,z\in\cSs\,,\text{\ \ and\ \ } x\in\Rd\,.
\end{equation*}
Thus
\begin{equation*}
\ell(x) -\fJs\;\ge\; \max_{z\in\cSs}\; \bigl\{- C_\ell\,\abs{x-z}\bigr\}
\;=\; - C_\ell\,\dist(x,\cSs)\qquad\forall\,x\in\Rd\,.
\end{equation*}
Also by Proposition~\ref{P4.5}, for some positive constants
$r$ and $\kappa_1$ we obtain
\begin{equation*}
\int_{G^c}(\ell(x)-\fJs)\D\eta_{*}^{\varepsilon}
\;\ge\; -\kappa_1 \sum_{z\in\cS\setminus\cSs} \eta^\varepsilon_*\bigl(B_r(z)\bigr)
+\order\bigl(\varepsilon^{2\nu}\bigr)\,.
\end{equation*}
Therefore, splitting the integral over $G$ and $G^c$, we obtain
as in \eqref{EC4.2D} that
\begin{equation*}
\int_{\Rd}\,\ell\,\D\eta_{*}^{\varepsilon}-\fJs
\;\ge\; -\kappa_1 \sum_{z\in\cS\setminus\cSs} \eta^\varepsilon_*\bigl(B_r(z)\bigr)
+\order\bigl(\varepsilon^{2\nu}\bigr)
-\frac{C_\ell}{\sqrt{C_0}}\,\Tilde\zeta^{\varepsilon}\,,
\end{equation*}
and since $\Tilde\zeta^{\varepsilon}\in\order\bigl(\varepsilon^{\nu}\bigr)$,
following the steps in \eqref{EC4.2E}--\eqref{EC4.2H} we have
\begin{equation}\label{EC4.6C}
-\kappa_1 \sum_{z\in\cS\setminus\cSs} \eta^\varepsilon_*\bigl(B_r(z)\bigr)
-\order\bigl(\varepsilon^{2\nu}\bigr)
-\frac{C_\ell}{\sqrt{C_0}}\,\Tilde\zeta^{\varepsilon}\;\le\;
\beta^\varepsilon_*-\fJs\,,
\end{equation}
and
\begin{equation}\label{EC4.6D}
\tfrac{1}{2}\,\sG^\varepsilon_*
\;\le\; \beta^{\varepsilon}_{*}-\fJs
+ \frac{C_\ell}{C_0^3}\,\varepsilon^2
+ \frac{\sqrt{2}\,C_\ell}{C_0}\varepsilon^{\nu}\,\sqrt{\abs{\Tilde\xi^{\varepsilon}}}
+\kappa_1 \sum_{z\in\cS\setminus\cSs} \eta^\varepsilon_*\bigl(B_r(z)\bigr)
+\order\bigl(\varepsilon^{2\nu}\bigr)\,.
\end{equation}
In view of \eqref{E-disc2} and \eqref{EC4.6B} we have
\begin{equation}\label{EC4.6E}
\sum_{z\in\cS\setminus\cSs} \eta^\varepsilon_*\bigl(B_r(z)\bigr)
\;\le\; \kappa_2(\varepsilon^{2-2\nu}\sG^{\varepsilon}_{*} + \varepsilon^{2\nu})
\end{equation}
for some positive constant $\kappa_2$.
Since $\beta^\varepsilon_*-\fJs\le\order\bigl(\varepsilon^{\nu\vee(4\nu-2)}\bigr)$
by Lemma~\ref{L3.6}, and $\nu<1$,
combining \eqref{EC4.6D} and \eqref{EC4.6E} we obtain
$\sG^{\varepsilon}_{*}\in\order(\varepsilon^{\nu})$.
Therefore by \eqref{EC4.6E}, we obtain
$\eta^{\varepsilon}_*\bigl(B_r(z)\bigr)\in
\order\bigl(\varepsilon^{2\nu\wedge(2-\nu)}\bigr)$
for all $z\in\cS\setminus\cSs$.
In turn, $\beta^{\varepsilon}_{*}-\fJ\ge\order\bigl(\varepsilon^{\nu}\bigr)$
by \eqref{EC4.6C}.
This completes the proof.
\end{proof}

\begin{remark}\label{R4.7}
If $\nu=1$ and $\fJc=\fJs$, then following the argument in
Step~2 of the proof of Corollary~\ref{C4.6} we obtain the
same estimates as in \eqref{EC4.6A}.
In this case we don't estimate $\sG^\varepsilon_*$ from \eqref{EC4.6D},
but rather use Corollary~\ref{C4.2}\,(a) which asserts
that $\sG^\varepsilon_*\in\order(\varepsilon)$.
Thus $\eta^\varepsilon_*\bigl(B_r(z)\bigr)\in\order(\varepsilon)$
by \eqref{EC4.6E}, which, in turn, implies that
$\beta^{\varepsilon}_{*}-\fJs\ge\order(\varepsilon)$
by \eqref{EC4.6C}.
\end{remark}

\section{Convergence of the scaled optimal stationary distributions}\label{S5}

We need the following definition.

\begin{definition}\label{D5.1}
For the rest of the paper $\{\sB_{z}\,\colon z\in\cS\}$ is
some collection of nonempty, disjoint
balls, with each $\sB_{z}$ centered around $z$, and we define
$\sB_{\cS}\,\df\,\cup_{z\in\cS}\,\sB_{z}$.

Recall $\widehat{V}_{z}^{\varepsilon}$ from Definition~\ref{D4.3}.
For $z\in\cS$, we define the `scaled' density $\Hat{\varrho}_{z}^{\varepsilon}(x)\df
\varepsilon^{\nu d}\varrho_{*}^{\varepsilon}(\varepsilon^{\nu} x+z)$,
and denote by $\Hat{\eta}_{z}^{\varepsilon}$ the corresponding probability
measure in $\Rd$.
We also define the `normalized'
probability density $\mathring\varrho^\varepsilon_{z}$ supported
on $\eta_{*}^{\varepsilon}(\sB_{z})$ by
\begin{equation*}
\mathring\varrho^\varepsilon_{z}(x)\;\df\;
\begin{cases}
\frac{\Hat{\varrho}_{z}^{\varepsilon}(x)}{\eta_{*}^{\varepsilon}(\sB_{z})}
&\text{if\ }\varepsilon^{\nu}x+z\in
\sB_{z}\,,\\[5pt]
0&\text{otherwise,}
\end{cases}
\end{equation*}
and let
$\mathring{\eta}_{z}^{\varepsilon}(\D{x})
=\mathring\varrho^\varepsilon_{z}(x)\,\D{x}$.%
\nomenclature[Cf]{$\Hat{\varrho}_{z}^{\varepsilon}$,
$\mathring\varrho^\varepsilon_{z}$}{scaled optimal densities, Definition~\ref{D5.1}}%
\nomenclature[Cf]{$\Hat{\eta}_{z}^{\varepsilon}$,
$\mathring{\eta}_{z}^{\varepsilon}$}{scaled optimal stationary
distributions, Definition~\ref{D5.1}}%
\end{definition}

Section~\ref{S5.1} which follows concerns the critical regime.
The subcritical and supercritical regimes are treated
in Section~\ref{S5.2}.

\subsection{Convergence to a Gaussian in the critical regime}\label{S5.1}

Recall the notation in Definitions~\ref{D1.9} and \ref{D1.10}.
Also the scaled quantities in Definition~\ref{D4.3}.
We start with the following lemma.

\begin{lemma}\label{L5.2}
Assume $\nu=1$. Fix any $z\in\cS$.
Then every sequence $\varepsilon_n\searrow0$ has a subsequence along which
$\widehat{V}_{z}^{\varepsilon}(x)-\widehat{V}_{z}^{\varepsilon}(z)$
converges to some $\Bar{V}_z\in\Cc^{2}(\Rd)$ uniformly on compact
subsets of $\Rd$, and $\beta^\varepsilon_*$ converges to some constant
$\Bar{\beta}_*$, and these satisfy
\begin{equation}\label{EL5.2A}
\tfrac{1}{2}\Delta \Bar{V}_z(x)
+ \bigl\langle M_z\,x, \grad \Bar{V}_z(x)\bigr\rangle
-\tfrac{1}{2}\abs{\grad \Bar{V}_z(x)}^{2} \;=\; \Bar{\beta}_*-\ell(z)\,.
\end{equation}
Moreover, for some constant $\Hat{c}_0$ we have
\begin{equation}\label{EL5.2B}
\abs{\grad \Bar{V}_z(x)}\;\le\; \Hat{c}_{0}\,(1+\abs{x})
\qquad\forall\,\varepsilon\in(0,1)\,,\quad\forall\,x\in\Rd\,,
\end{equation}
and
\begin{equation}\label{EL5.2C}
\Bar{\beta}_*\;\le\;\varLambda^+(M_z)+\ell(z)\,.
\end{equation}
\end{lemma}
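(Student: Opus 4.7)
The plan is to pass to a limit in the scaled HJB equation satisfied by $\widehat{V}_z^\varepsilon$, using Lemma~\ref{L4.4}(a) to obtain gradient control, and then invoke Theorem~\ref{T1.18}(c) to derive \eqref{EL5.2C}. Setting $\nu=1$ in \eqref{EL4.4B} and noting that $\Breve{V}_z^\varepsilon = \widehat{V}_z^\varepsilon$ in this regime, the function $\widehat{V}_z^\varepsilon$ satisfies
\begin{equation*}
\tfrac{1}{2}\Delta \widehat{V}_z^\varepsilon + \bigl\langle \widehat{m}_z^\varepsilon, \grad \widehat{V}_z^\varepsilon\bigr\rangle - \tfrac{1}{2}\bigl|\grad \widehat{V}_z^\varepsilon\bigr|^2 \;=\; \beta^\varepsilon_* - \widehat{\ell}_z^\varepsilon.
\end{equation*}
Since $m(z)=0$ and $m,\ell \in \Cc^\infty$, a Taylor expansion gives $\widehat{m}_z^\varepsilon(x) \to M_z x$ and $\widehat{\ell}_z^\varepsilon(x) \to \ell(z)$ uniformly on compact subsets of $\Rd$, while \eqref{EL3.1A} ensures that $\beta^\varepsilon_*$ is uniformly bounded in $\varepsilon \in (0,1)$. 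Thus along any sequence $\varepsilon_n \searrow 0$ I can extract a subsequence along which $\beta^{\varepsilon_n}_* \to \Bar\beta$ for some $\Bar\beta \in \RR$.

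Next I would set $W^\varepsilon \df \widehat{V}_z^\varepsilon - \widehat{V}_z^\varepsilon(z)$, which satisfies the same equation (the additive constant drops out of every derivative) together with $W^\varepsilon(z)=0$. Lemma~\ref{L4.4}(a), which for $\nu=1$ applies without restriction on $z$, provides $\bigl|\grad W^\varepsilon(x)\bigr| \le \Breve{c}_0(1+|x|)$ uniformly in $\varepsilon$; combined with $W^\varepsilon(z)=0$ this yields local uniform boundedness of $W^\varepsilon$ itself. Consequently the quadratic term $|\grad W^\varepsilon|^2$ and the first-order coefficient $\widehat{m}_z^\varepsilon - \tfrac{1}{2}\grad W^\varepsilon$ are both locally bounded uniformly in $\varepsilon$. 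Viewing the equation as a linear second-order elliptic PDE for $W^\varepsilon$ with these bounded data, interior $\Sob^{2,p}_{\mathrm{loc}}$ estimates for any $p<\infty$, followed by Sobolev embedding and Schauder theory, upgrade this to uniform $\Cc^{2,\alpha}_{\mathrm{loc}}$ bounds. Arzel\`a--Ascoli with a diagonal extraction then delivers a further subsequence along which $W^{\varepsilon_n} \to \Bar{V}_z$ in $\Cc^2_{\mathrm{loc}}(\Rd)$. Passing to the limit term by term yields \eqref{EL5.2A}, and \eqref{EL5.2B} is inherited by pointwise passage in $\bigl|\grad W^\varepsilon(x)\bigr| \le \Breve{c}_0(1+|x|)$.

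For the bound \eqref{EL5.2C}, Hypothesis~\ref{H1.1}(1) asserts that $z$ is a hyperbolic critical point, which is precisely the statement that $M_z = Dm(z)$ is exponentially dichotomous in the sense of Section~\ref{S1.4}. The equation \eqref{EL5.2A} coincides with \eqref{ET1.18D} upon the identification $M = M_z$ and $\Bar\beta_{\mathrm{there}} = \Bar\beta - \ell(z)$, and $\Bar{V}_z$ is a classical solution. Theorem~\ref{T1.18}(c) then rules out any solution when $\Bar\beta_{\mathrm{there}} > \varLambda^+(M_z)$, forcing $\Bar\beta - \ell(z) \le \varLambda^+(M_z)$. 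The principal technical obstacle is the elliptic bootstrap: extracting $\Cc^2_{\mathrm{loc}}$ compactness from only the affine-growth gradient bound of Lemma~\ref{L4.4}(a), so that the nonlinear term $|\grad W^\varepsilon|^2$ may be passed to the limit. This is handled by absorbing the quadratic term into the linearized data once the gradient is locally controlled, reducing the analysis to standard interior regularity for linear elliptic equations.
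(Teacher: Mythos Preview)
Your proposal is correct and follows essentially the same approach as the paper: derive the scaled HJB equation, use the affine gradient bound (you cite Lemma~\ref{L4.4}(a), the paper re-derives it in place from \cite[Lemma~5.1]{Met}), bootstrap to $\Cc^{2,\alpha}_{\mathrm{loc}}$ compactness, pass to the limit, and conclude \eqref{EL5.2C} from Theorem~\ref{T1.18}(c). The only cosmetic difference is that you spell out the $\Sob^{2,p}$-to-Schauder bootstrap more explicitly than the paper does.
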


\begin{proof}
If $\nu=1$, then by \eqref{EL4.4B} we obtain
\begin{equation}\label{EL5.2D}
\frac{1}{2}\Delta \widehat{V}_{z}^{\varepsilon}
+\bigl\langle\widehat{m}_{z}^{\varepsilon}, \grad \widehat{V}_{z}^{\varepsilon}
\bigr\rangle
-\frac{1}{2}\abs{\grad \widehat{V}_{z}^{\varepsilon}}^{2}
+\widehat{\ell}_{z}^{\varepsilon} \;=\; \beta^\varepsilon_*\,.
\end{equation}
By applying
\cite[Lemma~5.1]{Met} to \eqref{EL5.2D}
and using the assumptions on the growth of $m$ and $\ell$,
it follows that
there exists a constant $\Hat{c}_{0}$ such that
\begin{equation}\label{EL5.2E}
\abs{\grad \widehat{V}_{z}^{\varepsilon}(x)}\;\le\; \Hat{c}_{0}\,(1+\abs{x})
\qquad\forall\,\varepsilon\in(0,1)\,,\quad\forall\,x\in\Rd\,.
\end{equation}
It follows by \eqref{EL5.2D} and the bound in \eqref{EL5.2E}
that $\widehat{V}_{z}^{\varepsilon}$ is locally bounded in
$\Cc^{2,\alpha}(\Rd)$, for any $\alpha\in(0,1)$.
It is also clear that $\widehat{m}_{z}^{\varepsilon}(x)\to M_z\,x$
and $\widehat{\ell}_{z}^{\varepsilon}(x)\to \ell(z)$, as $\varepsilon\searrow0$,
uniformly over compact sets.
Thus, taking limits in \eqref{EL5.2D} along some sequence
$\varepsilon_{n}\searrow0$ we obtain a function $\Bar{V}_z\in\Cc^{2}(\Rd)$
and a constant $\Bar{\beta}_*$ which satisfy \eqref{EL5.2A}.
The bound in \eqref{EL5.2B} follows by \eqref{EL5.2E}, while
the bound in \eqref{EL5.2C} follows by applying Theorem~\ref{T1.18}\,(c)
to \eqref{EL5.2A} with $\Bar\beta= \Bar{\beta}_*-\ell(z)$.
\end{proof}

We fix some notation.
The function $\Bar{V}_z$ for $z\in\cS$ denotes the limit obtained
in Lemma~\ref{L5.2}.
The associated  `diffusion limit', takes the form
\begin{equation}\label{ES5.1A}
\D \Bar{X}_{t}\;=\;\bigl(M_z\,\Bar{X}_{t}-\grad \Bar{V}_z(\Bar{X}_{t})\bigr)\,\D{t}
+\D\Bar{W}_{t}\,,
\end{equation}
and its extended generator is denoted by
\begin{equation}\label{ES5.1B}
\Bar{\Lg}_z\;\df\;\frac{1}{2}\Delta+
\bigl\langle M_z\,x-\grad \Bar{V}_z(x), \grad\bigr\rangle\,.
\end{equation}

Since \eqref{EL5.2C} holds for all $z\in\cS$, then we must have
$\Bar{\beta}_*\le\fJc$, and Lemma~\ref{L5.2} provides
an alternate proof of the upper bound
$\limsup_{\varepsilon\searrow0}\,\beta^\varepsilon_*\le\fJc$, which was
already shown in Lemma~\ref{L3.3}.
In the next theorem we show that if
$\liminf_{\varepsilon_n\searrow0}\,\eta_{*}^{\varepsilon_{n}}(\sB_{z})>0$,
over some sequence $\{\varepsilon_n\}$, then
the diffusion in \eqref{ES5.1A} is positive recurrent.

\begin{theorem}\label{T5.3}
Assume $\nu=1$, and let $\{\sB_{z}\,\colon z\in\cS\}$ be as in
Definition~\ref{D5.1}.
Let $\varepsilon_{n}\searrow0$ be any sequence
satisfying  $\liminf_{n\to\infty}\,\eta_{*}^{\varepsilon_{n}}(\sB_{z})= \theta_z>0$
for some $z\in\cS$,
and $(\Bar{V}_z,\Bar{\beta}_*)\in\Cc^2(\Rd)\times\RR$ be any limit point of
$\bigl(\widehat{V}_{z}^{\varepsilon}(x)-\widehat{V}_{z}^{\varepsilon}(z),
\beta^\varepsilon_*\bigr)$ along some subsequence of $\{\varepsilon_{n}\}$
\textup{(}see Lemma~\ref{L5.2}\,\textup{)}.
Recall Definition~\ref{D1.9}.
Then
\begin{itemize}
\item[\textup{(}a\/\textup{)}]
The diffusion in \eqref{ES5.1A}
is positive recurrent with invariant probability measure $\Bar\eta_z$,
and the density
$\mathring\varrho^\varepsilon_{z}$ in Definition~\ref{D5.1} converges to
the density $\Bar\varrho_z$ of $\Bar\eta_z$, uniformly on compact subsets of $\Rd$.
\smallskip
\item[\textup{(}b\/\textup{)}]
The invariant probability measure $\Bar\eta_z$ has finite second moments.
\smallskip
\item[\textup{(}c\/\textup{)}]
It holds that $\Bar{\beta}_*=\ell(z)+\varLambda^+(M_z)$.
\smallskip
\item[\textup{(}d\/\textup{)}] We have
\begin{equation}\label{ET5.3A}
\widehat{V}_{z}(x)\;=\;\tfrac{1}{2}\, \bigl\langle x,\widehat{Q}_z x
\bigr\rangle\,,
\end{equation} 
and that $\Bar\varrho_z$ is the density of a Gaussian with mean $0$
and covariance matrix $\widehat\Sigma_z$.
Here $(\widehat{Q}_{z},\widehat{\Sigma}_{z})$ are the pair
of matrices which solve \eqref{ED1.9}.
\item[\textup{(}e\/\textup{)}]
It holds that
\begin{equation*}
\liminf_{\varepsilon_{n}\searrow0}\; \int_{\sB_z}
\Bigl(\ell(x) +\tfrac{1}{2}\abs{v_{*}^{\varepsilon_{n}}(x)}^{2}\Bigr)\,
\eta_{*}^{\varepsilon_{n}}(\D{x}) \;\ge\; \theta_{z}\,
\bigl(\ell(z)+\varLambda^+(M_z)\bigr)\,.
\end{equation*}
\end{itemize}
\end{theorem}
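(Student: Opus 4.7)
The plan is to work in the $\varepsilon^{-1}$-scaled spatial variable about $z$. Under the optimal control $v^\varepsilon_* = -\varepsilon\grad V^\varepsilon$, the process $Y^\varepsilon_t \df \varepsilon^{-1}(X_t - z)$ satisfies
\begin{equation*}
\D Y^\varepsilon_t \;=\; \bigl(\widehat m^\varepsilon_z(Y^\varepsilon_t) - \grad \widehat V^\varepsilon_z(Y^\varepsilon_t)\bigr)\,\D t + \D W_t\,,
\end{equation*}
with invariant density $\Hat\varrho^\varepsilon_z$. First I would shrink $\sB_z$ so that $\dist(\cdot,\cS) = \abs{\cdot-z}$ on $\sB_z$, and apply Lemma~\ref{L4.1} on a bounded $G\supset\sB_z$ to obtain $\int_{\sB_z}\abs{x-z}^2\,\D\eta^\varepsilon_* \le \Hat{\kappa}_0\varepsilon^2$; the change of variables $x = \varepsilon y+z$ converts this into $\int \abs{y}^2\,\Hat\varrho^\varepsilon_z(y)\,\D y \le \Hat{\kappa}_0$ on $\varepsilon^{-1}(\sB_z - z)$, and dividing by $\eta^\varepsilon_*(\sB_z) \ge \theta_z/2$ (along a tail of the given subsequence) yields a uniform second-moment bound for $\mathring\eta^\varepsilon_z$. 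This settles (b) and provides tightness.

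Next I would pass to the limit in the stationary Fokker--Planck equation $\tfrac{1}{2}\Delta\Hat\varrho^\varepsilon_z - \div\bigl((\widehat m^\varepsilon_z - \grad\widehat V^\varepsilon_z)\Hat\varrho^\varepsilon_z\bigr) = 0$. Lemma~\ref{L4.4}(a) imposes no restriction on $z$ when $\nu=1$, so the drift is locally bounded uniformly in $\varepsilon$; Harnack inequality and De Giorgi--Nash--Moser regularity then give local H\"older equicontinuity and positivity of $\mathring\varrho^\varepsilon_z$ uniformly in $\varepsilon$. Extracting a further subsequence, $\mathring\varrho^\varepsilon_z \to \Bar\varrho_z$ uniformly on compacts, and by tightness $\int\Bar\varrho_z = 1$. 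Schauder applied to \eqref{EL5.2D} upgrades the convergence in Lemma~\ref{L5.2} to $C^2_{\mathrm{loc}}$, so $\grad\widehat V^\varepsilon_z \to \grad\Bar V_z$ locally uniformly; combined with $\widehat m^\varepsilon_z \to M_z\,x$ (Taylor), I can pass to the limit in the Fokker--Planck equation to conclude that $\Bar\eta_z \df \Bar\varrho_z\,\D x$ is invariant for \eqref{ES5.1A}. Non-degeneracy and smoothness of the limit drift make the limit diffusion strong Feller and irreducible, so $\Bar\eta_z$ is its unique invariant probability measure; the diffusion is therefore positive recurrent, and the convergence does not depend on the choice of subsequence, which gives (a).

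To identify $\Bar\beta$ and $\Bar V_z$, I would invoke a truncated It\^o's formula under stationarity --- justified by the linear growth of $\grad\Bar V_z$ from \eqref{EL5.2B} and the finite second moments of $\Bar\eta_z$ --- to obtain $\int \Bar\Lg_z \Bar V_z\,\D\Bar\eta_z = 0$. Subtracting this from the integral of \eqref{EL5.2A} against $\Bar\eta_z$ cancels the $\tfrac{1}{2}\Delta\Bar V_z + \langle M_z\,x,\grad\Bar V_z\rangle$ terms and gives $\int\tfrac{1}{2}\abs{\grad\Bar V_z}^2\,\D\Bar\eta_z = \Bar\beta - \ell(z)$. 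Since $-\grad\Bar V_z\in\bUssm$ for the limit LQG system, Theorem~\ref{T1.18}(b) forces $\Bar\beta - \ell(z) \ge \varLambda^+(M_z)$, and together with the upper bound \eqref{EL5.2C} this yields (c). Then \eqref{EL5.2A} becomes precisely \eqref{ET1.18D} with $\Bar\beta_{\mathrm{LQG}} = \varLambda^+(M_z)$, and uniqueness in Theorem~\ref{T1.18}(c) applied to $\Bar V_z - \Bar V_z(0)$ gives $\Bar V_z(x) = \tfrac{1}{2}\langle x,\widehat Q_z\,x\rangle + \Bar V_z(0)$; the closed-loop drift $(M_z-\widehat Q_z)x$ is Hurwitz and $\Bar\eta_z$ is Gaussian with mean $0$ and covariance $\widehat\Sigma_z$, settling (d).

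For (e) I would change variables, using $v^\varepsilon_*(\varepsilon y+z) = -\grad\widehat V^\varepsilon_z(y)$, to rewrite the integral as $\eta^\varepsilon_*(\sB_z)\int\bigl[\widehat\ell^\varepsilon_z + \tfrac{1}{2}\abs{\grad\widehat V^\varepsilon_z}^2\bigr]\,\D\mathring\eta^\varepsilon_z$. The non-negative integrand converges uniformly on compacts to $\ell(z) + \tfrac{1}{2}\langle y,\widehat Q_z^2 y\rangle$, and a standard Fatou-type argument (restrict to a compact $K$, exploit the uniform convergence of both integrand and density on $K$, then let $K\uparrow\Rd$) produces
\begin{equation*}
\liminf_{n\to\infty}\int\bigl[\widehat\ell^{\varepsilon_n}_z + \tfrac{1}{2}\abs{\grad\widehat V^{\varepsilon_n}_z}^2\bigr]\,\D\mathring\eta^{\varepsilon_n}_z \;\ge\; \ell(z) + \tfrac{1}{2}\trace\bigl(\widehat Q_z^2\widehat\Sigma_z\bigr) \;=\; \ell(z)+\varLambda^+(M_z)\,,
\end{equation*}
where the final equality follows from Theorem~\ref{T1.18}(a); multiplying by $\liminf\eta^{\varepsilon_n}_*(\sB_z) = \theta_z$ gives (e). The hard step is the identity $\int\Bar\Lg_z\Bar V_z\,\D\Bar\eta_z = 0$, because $\Bar\Lg_z\Bar V_z$ has quadratic growth while only second moments of $\Bar\eta_z$ are known a priori; this sits right at the boundary of applicability and requires a careful truncation in It\^o's formula together with a uniform integrability argument based on \eqref{EL5.2B} and the moment bound from the first step.
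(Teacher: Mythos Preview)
Your proposal is correct and follows essentially the paper's approach for parts (a), (b), (d), and (e): the same scaled diffusion, the same use of Lemma~\ref{L4.1} for tightness, Harnack/H\"older equicontinuity for the densities, passage to the limit in the generator equation, Fatou for the second-moment bound, and the same Fatou-on-compacts argument for (e).

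The one genuine difference is your route to part~(c). You first establish the energy identity $\int\tfrac{1}{2}\abs{\grad\Bar V_z}^2\,\D\Bar\eta_z=\Bar\beta-\ell(z)$ and then invoke Theorem~\ref{T1.18}\,(b) (minimal control effort $=\varLambda^+(M_z)$) for the lower bound, pairing it with \eqref{EL5.2C}. The paper instead bypasses the energy identity entirely at this stage: once (a) gives positive recurrence of \eqref{ES5.1A}, the trichotomy in Theorem~\ref{T1.18}\,(c) forces $\Bar\beta-\ell(z)=\varLambda^+(M_z)$ immediately, since $\Bar\beta-\ell(z)>\varLambda^+(M_z)$ admits no solution and $\Bar\beta-\ell(z)<\varLambda^+(M_z)$ makes the closed-loop diffusion transient. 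This is cleaner precisely because it sidesteps the ``hard step'' you flag. The paper does eventually prove the energy identity \eqref{ET5.3H}, but only for use in part~(e), and by then part~(d) has already identified $\Bar V_z$ as quadratic and $\Bar\eta_z$ as Gaussian, so the identity reduces to an elementary Gaussian computation (the paper's citation of \cite[Theorem~4.12]{Ichihara-12} at that point is arguably more than is needed). Your approach is perfectly valid --- the stationarity argument you sketch (start from $\Bar\eta_z$, use the $L^2$ bound on $\grad\Bar V_z$ to make the stochastic integral a true martingale) goes through --- but the paper's ordering avoids having to justify $\int\Bar\Lg_z\Bar V_z\,\D\Bar\eta_z=0$ before the explicit form of $\Bar V_z$ is known.
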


\begin{proof}
In order to show that the diffusion in \eqref{ES5.1A} is positive recurrent,
we examine the scaled diffusion
\begin{equation}\label{ET5.3E}
\D{X}_{t} = \bigl(\widehat{m}_{z}^{\varepsilon}(X_{t})
-\grad \widehat{V}_{z}^{\varepsilon}(X_{t})\bigr)\,\D{t} + \D{W}_{t}\,.
\end{equation}
Recall from Definition~\ref{D5.1} that $\Hat{\eta}_{z}^{\varepsilon}$
and $\Hat{\varrho}_{z}^{\varepsilon}$ denote the
invariant probability measure of \eqref{ET5.3E} and its density, respectively.
Let 
\begin{equation*}
\widehat{\Lg}_{z}^{\varepsilon}\;\df\;\tfrac{1}{2}\Delta+
\bigl\langle\widehat{m}_{z}^{\varepsilon}
-\grad \widehat{V}_{z}^{\varepsilon}, \grad\bigr\rangle
\end{equation*}
denote the extended generator of \eqref{ET5.3E}.
It follows by Lemma~\ref{L4.1} and the Markov inequality that
$\eta_{*}^{\varepsilon}\bigl(\sB_{z}\setminus B_{n\varepsilon}(z)\bigr)
\;\le\; \frac{\Hat{\kappa}_{0}}{n^{2}}$ for all $n\in\NN$.
Hence,
$\{\mathring{\eta}_{z}^{\varepsilon_{n}}\,\colon n\in\NN\}$
is a tight family of measures.
By the Harnack inequality the family
$\{\Hat{\varrho}_{z}^{\varepsilon_{n}}\,\colon n\in\NN\}$ is locally
bounded, and locally H\"older equicontinuous,
and the same of course applies to
$\{\mathring{\varrho}_{z}^{\varepsilon_{n}}\,\colon n\in\NN\}$.
Moreover,
the tightness of $\{\mathring{\eta}_{z}^{\varepsilon_{n}}\,\colon n\in\NN\}$
implies the uniform integrability of
$\{\mathring{\varrho}_{z}^{\varepsilon_{n}}\,\colon n\in\NN\}$.
Select any subsequence, also denoted by $\{\varepsilon_{n}\}$ along which
$\mathring{\varrho}_{z}^{\varepsilon_{n}}$ converges locally uniformly,
and denote the limit by $\Bar\varrho_z$.
By uniform integrability, $\mathring{\varrho}_{z}^{\varepsilon_{n}}$
also converges in $L^{1}(\Rd)$, as $n\to\infty$, and hence
$\int_{\Rd}\Bar\varrho_z(x)\,\D{x}=1$.
Therefore $\Bar\eta_z(\D{x})\df\Bar\varrho_z(x)\,\D{x}$
is a probability measure.
Let $f$ be a smooth function with compact support,
and $\Bar\Lg_z$ be as in \eqref{ES5.1B}.
Then
\begin{align}\label{ET5.3G}
\babss{\int_{\Rd}\widehat{\Lg}_{z}^{\varepsilon_{n}}f(x)
\mathring{\varrho}_{z}^{\varepsilon_{n}}(x)\,
\D{x} - \int_{\Rd}\Bar\Lg_{z}f(x)\Bar\varrho_z(x)\,\D{x}}
&\;\le\;
\babss{\int_{\Rd}\widehat{\Lg}_{z}^{\varepsilon_{n}}f(x)
\bigl(\mathring{\varrho}_{z}^{\varepsilon_{n}}(x)-
\Bar\varrho_z(x)\bigr)\,\D{x}}\\[5pt]
&\mspace{50mu}+ \babss{\int_{\Rd}\bigl(\widehat{\Lg}_{z}^{\varepsilon_{n}}f(x)
-\Bar\Lg_{z}f(x)\bigr)
\Bar\varrho_z(x)\,\D{x}}\,.\nonumber
\end{align}
Since $\mathring{\varrho}_{z}^{\varepsilon_{n}}\to\Bar\varrho_z$ in $L^{1}(\Rd)$,
the first term on the right hand side of \eqref{ET5.3G} converges to $0$
as $n\to\infty$.
Similarly, since $\widehat{m}_{z}^{\varepsilon_{n}}(x)\to M_z\,x$ and
$\grad\widehat{V}_{z}^{\varepsilon_{n}}\to\grad\Bar{V}_z$ uniformly on compact
subsets of $\Rd$,
the second term also converges to $0$.
Since $\Hat{\eta}_{z}^{\varepsilon}$ is an invariant probability measure
of \eqref{ET5.3E}, by the definition of $\mathring{\varrho}_{z}^{\varepsilon_{n}}$
we have $\int_{\Rd}\widehat{\Lg}_{z}^{\varepsilon_{n}}f(x)
\mathring{\varrho}_{z}^{\varepsilon_{n}}(x)\,\D{x}=0$, for all large
enough ${n}$, which implies
that $\int_{\Rd}\Bar\Lg_{z}f(x)\Bar\varrho_z(x)\,\D{x}=0$.
Hence, $\Bar\eta_z$ is an infinitesimal
invariant probability measure of \eqref{ES5.1A}, and since the diffusion
is regular, it is also an invariant probability measure.
This proves part~(a).

Since the diffusion in \eqref{ES5.1A} has an invariant probability measure,
it follows that it is positive recurrent.
By Lemma~\ref{L4.1} we have
\begin{equation*}
\sup_{\varepsilon\in(0,1)}\;
\int_{\{\varepsilon^{\nu}x+z\,\in\,\sB_{z}\}}\abs{x}^{2}\,
\Hat{\eta}_{z}^{\varepsilon}(\D{x})<\infty\,,
\end{equation*}
which implies by Fatou's lemma that $\int_{\Rd}\abs{x}^{2}\,\Bar\eta_z(\D{x})<\infty$.
Also by Theorem~\ref{T1.4} and Theorem~\ref{T1.18}\,(c)
we must have $\Bar{\beta}_*-\ell(z)= \varLambda^+(M_z)$.
This completes the proof of parts~(b) and (c).

By part~(c) and Theorem~\ref{T1.18}\,(c) the solution of $\eqref{EL5.2A}$
is unique and is given by \eqref{ET5.3A}.
That $\Bar\varrho_z$ is Gaussian with
covariance matrix $\widehat\Sigma_z$ follows by the second equation in \eqref{ED1.9}.
This proves part~(d).

Since $\Bar{V}_z$ has at most quadratic growth by \eqref{EL5.2E}, we have
$$\int_{\Rd}\abs{\Bar{V}_z(x)}\,\Bar\eta_z(\D{x})\;<\;\infty\,.$$
Therefore, with $\overline{\Exp}_{x}$ denoting
the expectation operator for the process governed by \eqref{ES5.1A},
it is the case that
$\overline{\Exp}_{x}\bigl[\Bar{V}_z(X_{t})\bigr]$ converges as $t\to\infty$
\cite[Theorem~4.12]{Ichihara-12}.
Integrating both sides of \eqref{EL5.2A} with respect to $\Bar\eta_z$,
we deduce that
\begin{equation}\label{ET5.3H}
\int_{\Rd}
\tfrac{1}{2}\,\abs{\grad \Bar{V}_z(x)}^{2}\,\Bar\eta_z(\D{x})\;=\;
\Bar{\beta}_* - \ell(z)\,.
\end{equation}
Using Fatou's lemma, we obtain by part~(d) that
\begin{align*}
\liminf_{\varepsilon_{n}\searrow0}\; \int_{\sB_z}
\sR[v_{*}^{\varepsilon_{n}}](x)\, \eta_{*}^{\varepsilon_{n}}(\D{x})
&\;=\;
\liminf_{\varepsilon_{n}\searrow0}\;
\int_{\{\varepsilon^{\nu}x+z\,\in\,\sB_z\}}
\Bigl(\widehat{\ell}_{z}^{\varepsilon_{n}}(x)
+\tfrac{1}{2}\abs{\grad \widehat{V}_{z}^{\varepsilon_{n}}(x)}^{2}\Bigr)\,
\Hat{\eta}_{z}^{\varepsilon_{n}}(\D{x})\nonumber\\[5pt]
&\;\ge\;
\lim_{R\to\infty}\liminf_{\varepsilon_{n}\searrow0}\;
\int_{\{\abs{x}\le R\}}
\Bigl(\widehat{\ell}_{z}^{\varepsilon_{n}}(x)
+\tfrac{1}{2}\abs{\grad \widehat{V}_{z}^{\varepsilon_{n}}(x)}^{2}\Bigr)\,
\eta_{*}^{\varepsilon_n}(\sB_z)\,
\mathring{\eta}_{z}^{\varepsilon_{n}}(\D{x})\nonumber\\[5pt]
&\;\ge\;\theta_z\,\bigl(\varLambda^+(M_z)+\ell(z)\bigr)\,,
\end{align*}
where in the second inequality we use \eqref{ET5.3H},
along with the hypothesis that $\eta_{*}^{\varepsilon_{n}}(\sB_{z})\to \theta_z>0$.
This proves part~(e) and thus completes the proof.
\end{proof}

Part of the statement in Theorem~\ref{T1.11}\,(iii)
follows from the following result.

\begin{theorem}\label{T5.4}
Recall the definition of $\fJc$ from Theorem~\ref{T1.11}.
We assume $\nu=1$.
Then, it holds that $\lim_{\varepsilon\searrow0}\,\beta^\varepsilon_*= \fJc$.
Also $\Bar{\beta}_*$ in \eqref{EL5.2A} equals $\fJc$.
Moreover, for any $r>0$ we have
\begin{equation}\label{ET5.4A}
\lim_{\varepsilon\searrow0}\;\eta_{*}^\varepsilon\bigl(B^c_r(\cZ_c)\bigr)\;=\; 0\,,
\quad\text{and}\quad
\lim_{\varepsilon\searrow0}\;
\int_{B^{c}_r(\cZc)} \abs{v^\varepsilon_*(x)}^{2}\,
\eta_{*}^{\varepsilon}(\D{x}) \;=\;0\,.
\end{equation}
\end{theorem}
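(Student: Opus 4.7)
The plan is to combine the upper bound from Lemma~\ref{L3.3} with the sharp lower bound supplied by Theorem~\ref{T5.3}\,(e), and to extract both the concentration on $\cZc$ and the vanishing of the control effort from the resulting equality conditions. Fix a collection $\{\sB_z\,\colon z\in\cS\}$ of pairwise disjoint open balls as in Definition~\ref{D5.1}, chosen small enough that each $\sB_z\subset B_r(z)$; then Lemma~\ref{L3.3} already gives $\limsup_{\varepsilon\searrow0}\beta^\varepsilon_*\le\fJc$. For the matching lower bound I would work along an arbitrary sequence $\varepsilon_n\searrow 0$, passing to successive subsequences (relabelled) so that $\eta^{\varepsilon_n}_*(\sB_z)\to\theta_z$ for each $z\in\cS$, $\int_{\sB_z}\sR[v^{\varepsilon_n}_*]\,\D\eta^{\varepsilon_n}_*\to a_z\in[0,\infty]$, $\int_{\sB^c_\cS}\sR[v^{\varepsilon_n}_*]\,\D\eta^{\varepsilon_n}_*\to a_\infty\in[0,\infty]$, and $\beta^{\varepsilon_n}_*\to\Bar\beta\le\fJc$; Lemma~\ref{L3.1} forces $\sum_{z\in\cS}\theta_z=1$.

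The crux is then the chain of inequalities
\begin{equation*}
\fJc\;\ge\;\Bar\beta\;=\;\sum_{z\in\cS}a_z + a_\infty\;\ge\;\sum_{z\in\cS}\theta_z\bigl(\ell(z)+\varLambda^+(M_z)\bigr)\;\ge\;\fJc\,\sum_{z\in\cS}\theta_z\;=\;\fJc\,,
\end{equation*}
where the first equality is the partition of $\beta^{\varepsilon_n}_*$ into the pieces indexed above, the middle inequality applies Theorem~\ref{T5.3}\,(e) term by term (with $a_z\ge 0=\theta_z\fJc$ holding trivially when $\theta_z=0$), and the last inequality uses the definition of $\fJc$. Equality throughout yields three consequences: \emph{(i)} $\Bar\beta=\fJc$, whence $\lim_{\varepsilon\searrow 0}\beta^\varepsilon_*=\fJc$ and, via Theorem~\ref{T5.3}\,(c), the limit $\Bar\beta$ appearing in Lemma~\ref{L5.2} equals $\fJc$; \emph{(ii)} $a_\infty=0$ and $\theta_z\bigl(\ell(z)+\varLambda^+(M_z)-\fJc\bigr)=0$ for each $z$, so $\theta_z=0$ for all $z\notin\cZc$; \emph{(iii)} combining $\sum_{z\in\cZc}a_z\ge\fJc\sum_{z\in\cZc}\theta_z=\fJc$ with $\sum_{z\in\cS}a_z=\fJc$ and $a_z\ge 0$ forces $a_z=0$ for every $z\notin\cZc$ as well.

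The first limit in \eqref{ET5.4A} then follows from \emph{(ii)} and Lemma~\ref{L3.1} via the inclusion $B^c_r(\cZc)\subset\sB^c_\cS\cup\bigcup_{z\notin\cZc}\sB_z$ ensured by our choice of balls. For the second limit in \eqref{ET5.4A} I would use \emph{(iii)} together with the pointwise bound $\sR[v^\varepsilon_*]\ge\tfrac{1}{2}\abs{v^\varepsilon_*}^2$: the vanishing of $a_z$ for $z\notin\cZc$ and of $a_\infty$ implies $\int_{\sB_z}\abs{v^{\varepsilon_n}_*}^2\,\D\eta^{\varepsilon_n}_*\to 0$ for every $z\notin\cZc$ and $\int_{\sB^c_\cS}\abs{v^{\varepsilon_n}_*}^2\,\D\eta^{\varepsilon_n}_*\to 0$. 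All four conclusions have been obtained along an arbitrary subsequence, so the standard sub-subsequence principle upgrades them to limits along the full sequence. The delicate step in this program is consequence \emph{(iii)}: a priori the $a_z$'s for $z\notin\cZc$ are only bounded below by zero, so what actually eliminates the possibility that the optimal control effort leaks into non-critical equilibria is the exact saturation of the chain of inequalities, which in turn depends crucially on the equality $\sum_{z\in\cS}\theta_z=1$ furnished by Lemma~\ref{L3.1}. Once this is in place, everything else amounts to assembling the quantitative input already supplied by Theorem~\ref{T5.3} and Lemmas~\ref{L3.1} and \ref{L3.3}.
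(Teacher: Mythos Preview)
Your proposal is correct and follows essentially the same route as the paper: both combine the upper bound from Lemma~\ref{L3.3} with the lower bound from Theorem~\ref{T5.3}\,(e), applied termwise over the balls $\sB_z$, and read off the concentration on $\cZc$ from the equality conditions. Your explicit bookkeeping of the partial costs $a_z$ and $a_\infty$ makes the argument for the second limit in \eqref{ET5.4A} somewhat cleaner than the paper's terse treatment; one minor point is that the appeal to Theorem~\ref{T5.3}\,(c) in your consequence~(i) is unnecessary, since $\Bar\beta$ in Lemma~\ref{L5.2} is by definition a subsequential limit of $\beta_*^\varepsilon$ and therefore equals $\fJc$ directly once $\lim_{\varepsilon\searrow0}\beta_*^\varepsilon=\fJc$ is established.
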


\begin{proof}
Since the collection $\{\sB_z\}$ used in Theorem~\ref{T5.3} was
arbitrary, without loss of generality, we may let $\sB_z=B_r(z)$.
Let $\varepsilon_{n}\searrow0$ be any sequence such that
$\eta_{*}^{\varepsilon_{n}}\bigl(B_r(z)\bigr)\to \theta_z$ for all $z\in\cS$,
and define $\cSo\df\{z\in\cS\,\colon \theta_z>0\}$.
Since $\cS$ is stochastically stable as shown in Theorem~\ref{T1.11},
we have $\sum_{z\in\cSo}\theta_z=1$.
By Theorem~\ref{T5.3}\,(e) we have
\begin{align}\label{ET5.4B}
\liminf_{n\to\infty}\; \beta_*^{\varepsilon_{n}}&\;\ge\;
\sum_{z\in\cSo}\,\int_{B_r(z)}
\Bigl(\ell(x) +\tfrac{1}{2}\abs{v_{*}^{\varepsilon_{n}}(x)}^{2}\Bigr)\,
\eta_{*}^{\varepsilon_{n}}(\D{x}) \\[5pt]
&\;\ge\;
\sum_{z\in\cSo}\,\theta_z\,\Bigl(\ell(z)+\varLambda^+\bigl(Dm(z)\bigr)\Bigr)
\;\ge\;\fJc\,.\nonumber
\end{align}
Since
$\limsup_{\varepsilon\searrow0}\,\beta^\varepsilon_*\,\le\,\fJc$
by Lemma~\ref{L3.3}, \eqref{ET5.4B} implies that
$\lim_{\varepsilon\searrow0}\,\beta^\varepsilon_*= \fJc$.
By Lemma~\ref{L5.2} we have
$\liminf_{\varepsilon\searrow0}\,\beta^\varepsilon_*\,\le\,\Bar{\beta}_*$,
and $\Bar{\beta}_*\le\fJc$ by \eqref{EL5.2C}.
Therefore $\Bar{\beta}_*=\fJc$.

Given any sequence $\varepsilon_{n}\searrow0$, we can extract a
subsequence also denoted by $\{\varepsilon_{n}\}$ along which
$\lim_{n\to\infty}\,\eta_{*}^{\varepsilon_{n}}\bigl(B_r(z)\bigr)\to\theta_z$
for all $z\in\cS$.
Then \eqref{ET5.4B} holds.
Also, by Proposition~\ref{P4.5} we have
$\int_{B^c_r(z)} \ell(x)\,\eta_{*}^{\varepsilon}(\D{x})\to0$
as $\varepsilon\searrow0$.
It is then clear that
both assertions in \eqref{ET5.4A} follow by \eqref{ET5.4B}.
\end{proof}

It is interesting to note that, even if
$\lim_{\varepsilon\to0}\,\eta_{*}^{\varepsilon_{n}}(\sB_{z})=0$,
equation \eqref{ET5.3A} still holds for any $z\in\cZc$.
This is part of the corollary that follows.

\begin{corollary}
Suppose $\nu=1$.
Then for any $z\in\cZc$, we have
\begin{equation*}
\widehat{V}_{z}^{\varepsilon}(x)
-\widehat{V}_{z}^{\varepsilon}(z)\;\xrightarrow[\varepsilon\searrow0]{}\;
\frac{1}{2}\, \bigl\langle x, \widehat{Q}_{z}\, x\bigr\rangle\,,
\end{equation*}
uniformly on compact sets.
Also, unless $z\in\cZc$, then the family
$\{\mathring{\eta}_{z}^{\varepsilon}\,\colon \varepsilon\in(0,1)\}$
is not tight.
\end{corollary}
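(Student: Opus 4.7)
The plan is to prove the two assertions in turn, coupling Lemma~\ref{L5.2} in each case with the sharp uniqueness/transience dichotomy of Theorem~\ref{T1.18}\,(c).

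For the convergence statement, I would fix $z\in\cZc$ and use the standard subsequence device. Any sequence $\varepsilon_n\searrow 0$ admits, by Lemma~\ref{L5.2}, a subsequence along which $\widehat V_z^{\varepsilon_n}-\widehat V_z^{\varepsilon_n}(z)\to\Bar V_z$ locally uniformly, with $\Bar V_z$ solving \eqref{EL5.2A} and $\beta_*^{\varepsilon_n}\to\Bar\beta$. Theorem~\ref{T5.4} pins $\Bar\beta=\fJc$, and the hypothesis $z\in\cZc$ turns this into $\Bar\beta-\ell(z)=\varLambda^+(M_z)$, so \eqref{EL5.2A} becomes the critical equation \eqref{ET1.18D}. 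The uniqueness clause of Theorem~\ref{T1.18}\,(c), together with the normalization at the origin of the scaled coordinates, then forces $\Bar V_z(x)=\tfrac12\langle x,\widehat Q_z x\rangle$. Since the limit is identified unambiguously, the full family converges locally uniformly.

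For the tightness statement, I would argue by contradiction. Assume $\{\mathring\eta_z^{\varepsilon_n}\}$ is tight along some $\varepsilon_n\searrow 0$, where $z\in\cS\setminus\cZc$. By Theorem~\ref{T5.4}, after shrinking $\sB_z$, the normalizer $\eta_*^\varepsilon(\sB_z)$ tends to zero. The bounds from Lemma~\ref{L5.2} make the scaled drift $\widehat m_z^\varepsilon-\grad\widehat V_z^\varepsilon$ locally bounded uniformly in $\varepsilon$, so the Harnack inequality furnishes local equicontinuity and local uniform bounds for $\mathring\varrho_z^{\varepsilon_n}$ on the (growing) scaled domain; tightness upgrades any local uniform sub-limit to an $L^1$-limit $\Bar\varrho_z$ that is a probability density. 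A rerun of the splitting \eqref{ET5.3G} then shows $\Bar\varrho_z\,\D x$ is an infinitesimal invariant probability measure of \eqref{ES5.1A}. But $z\notin\cZc$ means $\fJc-\ell(z)<\varLambda^+(M_z)$, so by Theorem~\ref{T1.18}\,(c) the closed-loop diffusion is transient---incompatible with the existence of an invariant probability measure, giving the contradiction.

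The step that will require the most care is the invariance passage in the second part, since one is dividing by a vanishing mass $\eta_*^\varepsilon(\sB_z)$. What makes it work is that the hypothetical tightness exactly compensates: it ensures the local uniform limit of $\mathring\varrho_z^{\varepsilon_n}$ is honestly a probability density, and for any fixed test function the support is eventually inside the scaled ball $\{x\colon\varepsilon^\nu x+z\in\sB_z\}$, so invariance of $\Hat\eta_z^\varepsilon$ after division by $\eta_*^\varepsilon(\sB_z)>0$ yields $\int\widehat\Lg_z^\varepsilon f\,\mathring\varrho_z^\varepsilon\,\D x=0$ without any boundary effects entering the limit. Once this is secured, the transience criterion of Theorem~\ref{T1.18}\,(c) delivers the contradiction immediately.
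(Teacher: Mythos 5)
Your proposal is correct and follows essentially the same route as the paper: the first assertion is obtained by combining Lemma~\ref{L5.2}, the identification $\Bar\beta=\fJc$ from Theorem~\ref{T5.4}, and the uniqueness clause of Theorem~\ref{T1.18}\,(c), while the second is the same contradiction the paper uses—assumed tightness lets one rerun the invariance argument from the proof of Theorem~\ref{T5.3} to get positive recurrence of the diffusion limit, which is incompatible with the transience forced by Theorem~\ref{T1.18}\,(c) when $\Bar\beta-\ell(z)<\varLambda^+(M_z)$. Your added remarks on the subsequence device and on why division by the vanishing mass $\eta_*^{\varepsilon}(\sB_z)$ causes no trouble are just explicit versions of steps the paper leaves implicit.
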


\begin{proof}
Since $\Bar{\beta}_*$ in \eqref{EL5.2A} equals $\fJc$ by Theorem~\ref{T5.4},
then, provided $z\in\cZc$, the right hand side of \eqref{EL5.2A}
equals $\varLambda^+(M_z)$.
The first assertion then follows by Theorem~\ref{T1.18}\,(c).

If the family $\{\mathring{\eta}_{z}^{\varepsilon}\,\colon \varepsilon\in(0,1)\}$
is tight, then it follows from the proof of Theorem~\ref{T5.3} that
the diffusion limit in \eqref{ES5.1A} is positive recurrent.
However, if $z\notin\cZc$, then $\Bar{\beta}_*-\ell(z)=\fJ_c-\ell(z)<\varLambda^+(M_z)$,
and by the results of Theorem~\ref{T1.4} and Theorem~\ref{T1.18}\,(c),
the diffusion in \eqref{ES5.1A} has to be transient.
Therefore, $\{\mathring{\eta}_{z}^{\varepsilon}\}$ cannot be tight.
\end{proof}

\begin{remark}
It is worth examining the diffusion in \eqref{ES5.1A} in the context
of Example~\ref{E1.14}.
Consider the example with the first set of data, and let $c=5$.
Then $\sS=\{0\}$ and $\fJc=2$.
Thus, for $z=0$, we have $\Bar{V}_z= \Bar{V}_0 = 2x^2$,
and the drift in \eqref{ES5.1A} equals $-2\Bar{X}_t$.
For $z=-1$, we have $\ell(-1)=5$, $Dm(-1)=-3$, and direct substitution shows
that $\Bar{V}_{-1} = -3x^2$ solves \eqref{EL5.2A}.
The associated diffusion in \eqref{ES5.1A} has drift $3\Bar{X}_t$,
and thus it is transient. 
\end{remark}

\subsection{Convergence to a Gaussian in the subcritical/supercritical
regime}\label{S5.2}

We return to the analysis of the subcritical and supercritical regimes
in order to determine the asymptotic behavior of the density 
of the optimal stationary distribution in the vicinity of the
stochastically stable set.
In these regimes there are two scales.
If we center the coordinates around a point in $\sS$,
then we have
$V^{\varepsilon}(x)\in\order\bigl(\varepsilon^{-2}\abs{x}^2\bigr)$,
and
$-\log\varrho_{*}^{\varepsilon}(x)\in\order\bigl(\varepsilon^{-2\nu}\abs{x}^2
\bigr)$.
To avoid this incompatibility we use the function
$\Breve{V}_z(x) = \varepsilon^{2(1-\nu)}V^{\varepsilon}(\varepsilon^{\nu}x)$
in the analysis,
which scales correctly in space for all $\nu$.
We have the following  result.

\begin{theorem}\label{T5.7}
Assume $\nu\in(0,2)$ and
let $\{\sB_{z}\,\colon z\in\cS\}$ be as in Definition~\ref{D5.1}.
The following hold.
\begin{itemize}
\item[\textup{(}a\/\textup{)}]
Suppose that for some $z\in\cS$ and a sequence $\varepsilon_{n}\searrow0$
it holds that
$\liminf_{n\to\infty}\,\eta_{*}^{\varepsilon_{n}}(\sB_{z})>0$.
Then the density $\mathring{\varrho}_{z}^{\varepsilon_{n}}$
in Definition~\ref{D5.1} converges
as $n\to\infty$ (uniformly on compact sets) to the density
of a Gaussian with mean $0$ and covariance matrix $\widehat{\Sigma}_{z}$
given in \eqref{ED1.9}.
\item[\textup{(}b\/\textup{)}]
If $\nu\in(1,2)$ and $z\in\cS\setminus\widetilde\cZ$, then
$\lim_{\varepsilon\searrow0}\,\eta_{*}^{\varepsilon}(\sB_{z})=0$.
\end{itemize}
\end{theorem}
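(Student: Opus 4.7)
The plan is to adapt the argument of Theorem~\ref{T5.3} to the noncritical regimes by working with the correctly-scaled value function $\Breve{V}_z^\varepsilon=\varepsilon^{2(1-\nu)}\widehat{V}_z^\varepsilon$, which by~\eqref{EL4.4B} satisfies
\begin{equation*}
\tfrac{1}{2}\Delta\Breve{V}_z^\varepsilon+\bigl\langle\widehat{m}_z^\varepsilon,\grad\Breve{V}_z^\varepsilon\bigr\rangle-\tfrac{1}{2}\babs{\grad\Breve{V}_z^\varepsilon}^2=\varepsilon^{2(1-\nu)}\bigl(\beta_*^\varepsilon-\widehat{\ell}_z^\varepsilon\bigr),
\end{equation*}
and enjoys the linear gradient bound of Lemma~\ref{L4.4}\,(a). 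Under the spatial rescaling $\Tilde{X}_t\df\varepsilon^{-\nu}(X_t-z)$ the optimally-controlled process has closed-loop drift $\widehat{m}_z^\varepsilon-\grad\Breve{V}_z^\varepsilon$, which converges to $M_zx-\grad\Bar{V}_z(x)$ for any subsequential limit $\Bar{V}_z$ of $\Breve{V}_z^\varepsilon-\Breve{V}_z^\varepsilon(0)$.

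For part~(a), the hypothesis $\liminf_n\eta_*^{\varepsilon_n}(\sB_z)>0$ combined with Lemma~\ref{L3.6} when $\nu<1$ and Lemma~\ref{L3.3} when $\nu>1$ forces $z\in\cSs$ or $z\in\cZ$ respectively, so Lemma~\ref{L4.4}\,(a) applies. Following the template of Theorem~\ref{T5.3}: Lemma~\ref{L4.1} plus the Markov inequality give tightness of $\{\mathring{\eta}_z^{\varepsilon_n}\}$, Harnack gives local H\"older equicontinuity of $\mathring{\varrho}_z^{\varepsilon_n}$, and along a subsequence these converge locally uniformly and in $L^1$ to a probability density $\Bar{\varrho}$. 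The gradient bound makes $\{\Breve{V}_z^\varepsilon-\Breve{V}_z^\varepsilon(0)\}$ locally bounded in $\Cc^{2,\alpha}(\Rd)$, yielding along a further subsequence a limit $\Bar{V}_z\in\Cc^2(\Rd)$ with $\Bar{V}_z(0)=0$ and $\abs{\grad\Bar{V}_z(x)}\le\Breve{c}_0(1+\abs{x})$. Using Lipschitz continuity of $\ell$, a Taylor expansion gives
\begin{equation*}
\varepsilon^{2(1-\nu)}\bigl(\beta_*^\varepsilon-\widehat{\ell}_z^\varepsilon(x)\bigr)=\varepsilon^{2(1-\nu)}\bigl(\beta_*^\varepsilon-\ell(z)\bigr)+\order\bigl(\varepsilon^{2-\nu}\abs{x}\bigr)
\end{equation*}
locally uniformly, with the remainder vanishing since $\nu<2$. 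In the subcritical regime $\varepsilon^{2(1-\nu)}\to0$ and $\beta_*^\varepsilon$ is bounded, so the right-hand side tends to zero. In the supercritical regime $\ell(z)=\fJ$ because $z\in\cZ$, and Lemma~\ref{L3.3} together with Corollary~\ref{C4.2}\,(b) confines $\varepsilon^{2(1-\nu)}(\beta_*^\varepsilon-\fJ)$ to $[-o(1),\widetilde\fJ+o(1)]$, so along a further subsequence it converges to some $c\in[0,\widetilde\fJ]$. Passing to the limit in the rescaled HJB equation then yields
\begin{equation*}
\tfrac{1}{2}\Delta\Bar{V}_z+\bigl\langle M_zx,\grad\Bar{V}_z\bigr\rangle-\tfrac{1}{2}\babs{\grad\Bar{V}_z}^2=c,
\end{equation*}
with $c=0$ in the subcritical case.

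Passing to the limit in the stationary Fokker--Planck relation for $\mathring{\varrho}_z^{\varepsilon_n}$ exactly as in~\eqref{ET5.3G} identifies $\Bar{\varrho}$ as the density of an invariant probability measure of the limit diffusion $\D\Bar{X}_t=(M_z\Bar{X}_t-\grad\Bar{V}_z(\Bar{X}_t))\,\D{t}+\D\Bar{W}_t$. Positive recurrence of this limit, via Theorem~\ref{T1.18}\,(c), forces $c=\varLambda^+(M_z)$ and, by the uniqueness clause, $\Bar{V}_z(x)=\tfrac{1}{2}\langle x,\widehat{Q}_zx\rangle$; the limit drift $(M_z-\widehat{Q}_z)x$ is then Hurwitz, and its invariant Gaussian has covariance $\widehat{\Sigma}_z$ from the second equation of~\eqref{ED1.9}. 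Since this limit is independent of the chosen subsequence, subsequential convergence upgrades to convergence along the full sequence, proving part~(a). Part~(b) then follows: Lemma~\ref{L3.3} already handles $z\in\cS\setminus\cZ$ for $\nu>1$, and if $z\in\cZ\setminus\widetilde\cZ$ had $\liminf_n\eta_*^{\varepsilon_n}(\sB_z)>0$ along some sequence, the same analysis would force $c=\varLambda^+(M_z)>\widetilde\fJ$, contradicting $c\le\widetilde\fJ$.

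The main obstacle is precisely this identification of $c$: the argument hinges entirely on Theorem~\ref{T1.18}\,(c), which excludes positive recurrence of the limit diffusion unless $c=\varLambda^+(M_z)$ and then pins $\Bar{V}_z$ down to $\tfrac{1}{2}\langle x,\widehat{Q}_zx\rangle$. The delicate estimates of Lemma~\ref{L3.3} and Corollary~\ref{C4.2}\,(b) that trap $c$ in the compact interval $[0,\widetilde\fJ]$ are precisely what is needed both to close part~(a) and to produce the contradiction in part~(b).
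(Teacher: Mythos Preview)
Your proposal is correct and follows essentially the same approach as the paper: rescale by $\varepsilon^{-\nu}$, use Lemma~\ref{L4.4}\,(a) to control $\grad\Breve{V}_z^\varepsilon$, invoke Lemma~\ref{L4.1} plus Harnack for tightness and equicontinuity of $\mathring{\varrho}_z^{\varepsilon_n}$, pass to the limit in the scaled HJB and Fokker--Planck equations, and use Theorem~\ref{T1.18}\,(c) to identify the limiting $\Bar{V}_z$ and hence the Gaussian covariance $\widehat{\Sigma}_z$. The paper separates the cases $\nu<1$ and $\nu\in(1,2)$ a bit more explicitly (noting directly that $\Bar{V}_z=0$ when $z\in\cSs$ since $M_z$ is Hurwitz), but your unified treatment via the constant $c$ and the contradiction for part~(b) matches the paper's reasoning exactly.
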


\begin{proof}
The proof closely follows those of Lemma~\ref{L5.2} and Theorem~\ref{T5.3}.
Only the scaling differs.
We summarize the essential steps.

First, suppose $\nu<1$.
Since
$\liminf_{n\to\infty}\,\eta_{*}^{\varepsilon_{n}}(\sB_{z})>0$
then necessarily $z\in\cSs$ by Lemma~\ref{L3.6}.
We scale the space as $\nicefrac{1}{\varepsilon^{\nu}}$,
and use \eqref{EL4.4B} which we write again here as
\begin{equation}\label{ET5.7A}
\frac{1}{2}\Delta \Breve{V}_{z}^{\varepsilon}(x)
+ \bigl\langle\widehat{m}_{z}^{\varepsilon}(x),
\grad \Breve{V}_{z}^{\varepsilon}(x)\bigr\rangle
-\frac{1}{2}\,\abs{\grad \Breve{V}_{z}^{\varepsilon}(x)}^{2}
\;=\; \varepsilon^{2(1-\nu)}\bigl(\beta^\varepsilon_*
-\widehat{\ell}_{z}^{\varepsilon}(x)\bigr)\,.
\end{equation}
By Lemma~\ref{L4.4},
$\grad\Breve{V}_{z}^{\varepsilon}=\varepsilon^{2(1-\nu)}
\grad\widehat{V}_{z}^{\varepsilon}$
is locally bounded and has at most linear growth.
We write \eqref{ET5.7A} as a HJB equation
\begin{equation}\label{ET5.7B}
\tfrac{1}{2}\Delta \Breve{V}_{z}^{\varepsilon}(x)
+ \min_{\Breve{u}\in\Rd}\;\Bigl[\bigl\langle\widehat{m}_{z}^{\varepsilon}(x)
+\Breve{u},\grad\Breve{V}_{z}^{\varepsilon}(x)\bigr\rangle
+\tfrac{1}{2}\,\abs{\Breve{u}}^{2}\Bigr]
\;=\; \varepsilon^{2(1-\nu)}\bigl(\beta^\varepsilon_*
-\widehat{\ell}_{z}^{\varepsilon}(x)\bigr)\,.
\end{equation}
The associated scaled controlled diffusion is
\begin{equation}\label{ET5.7C}
\D\widehat{X}_{t}\;=\; \bigl(\widehat{m}_{z}^{\varepsilon}(\widehat{X}_{t})
-\Breve{U}_t\bigr)\,
\D{t} + \D\widehat{W}_{t}\,.
\end{equation}
Taking limits in \eqref{ET5.7B} along some subsequence
$\varepsilon_{n}\searrow0$, 
we obtain a function $\Bar{V}_z\in\Cc^{2}(\Rd)$ of at most quadratic
growth satisfying
\begin{equation}\label{ET5.7D}
\tfrac{1}{2}\Delta \Bar{V}_z(x)
+ \min_{\Bar{u}\in\Rd}\;\Bigl[\bigl\langle M_z\, x+\Bar{u},
\grad\Bar{V}_z(x)\bigr\rangle
+ \tfrac{1}{2}\abs{\Bar{u}}^{2}\Bigr] \;=\; 0\,.
\end{equation}
The associated diffusion limit is
\begin{equation}\label{ET5.7E}
\D \Bar{X}_{t}\;=\;\bigl(M_z\,\Bar{X}_{t}-\grad \Bar{V}_z(\Bar{X}_{t})\bigr)\,\D{t}
+\D\Bar{W}_{t}\,.
\end{equation}

As in Section~\ref{S5.1}, $\Hat\eta^\varepsilon_*$ denotes
the invariant probability measure of \eqref{ET5.7C}
under the control $\Breve{U}_t= - \grad\Breve{V}_{z}^{\varepsilon}(X_t)$,
and $\Hat\varrho^\varepsilon_*$ its density.
Following the proof of Theorem~\ref{T5.3}, 
and using Lemma~\ref{L4.1}, we deduce that the density
$\mathring\varrho^\varepsilon_{z}$ in Definition~\ref{D5.1}
converges as $\varepsilon_{n}\searrow0$
to the density $\Bar\varrho_z$ of the invariant probability
measure of \eqref{ET5.7E}.
However since $M_z$ is Hurwitz, then $\Lambda^+(M_z)=0$,
and by Theorem~\ref{T1.18} we obtain $\Bar{V}_z\equiv 0$.
So in this case \eqref{ET5.7D} is trivial,
and the covariance matrix $\widehat{\Sigma}_{z}$
of the Gaussian is the solution of \eqref{ED1.9}
with  $\widehat{Q}_{z}=0$.

Next we assume $\nu\in(1,2)$, and we use the same scaling and definitions
as for the subcritical regime,
except that $z\in\cZ$.
It is clear that
\begin{equation*}
\varepsilon^{2(1-\nu)}\bigl(\widehat{\ell}_{z}^{\varepsilon}(x)-\ell(z)\bigr)
\;\le\; C_\ell\, \varepsilon^{2(1-\nu)} \varepsilon^{\nu}\abs{x}
\;\xrightarrow[\varepsilon\searrow0]{}\;0\,,
\end{equation*}
where $C_\ell$
denotes a Lipschitz constant of $\ell$.
By Corollary~\ref{C4.2} the constants
$\varepsilon^{2(1-\nu)}\bigl(\beta^\varepsilon_*-\ell(z)\bigr)$
are bounded, uniformly in $\varepsilon\in(0,1)$.
Therefore, as argued in the proof of Theorem~\ref{T5.3},
for every sequence $\varepsilon_{n}\searrow0$,
there exists a subsequence, also denoted
as $\{\varepsilon_{n}\}$ along which
$\varepsilon_{n}^{2(1-\nu)}\bigl(\beta^\varepsilon_*-\ell(x)\bigr)$
converges to a constant $\Hat{\beta}$, and
$\Breve{V}_{z}^{\varepsilon}(\cdot)-\Breve{V}_{z}^{\varepsilon}(z)$
converges to some
$\Bar{V}_z\in\Cc^{2}(\Rd)$, uniformly on compact sets.
Taking limits in \eqref{ET5.7B} along this subsequence, we obtain
\begin{equation}\label{ET5.7F}
\frac{1}{2}\Delta \Bar{V}_z(x)
+ \min_{\Bar{u}\in\Rd}\;\Bigl[\bigl\langle M_z\,x+\Bar{u},
\grad\Bar{V}_z(x)\bigr\rangle
+ \tfrac{1}{2}\abs{\Bar{u}}^{2}\Bigr] \;=\; \Hat{\beta}\,.
\end{equation}
Recall the notation $\widetilde\cZ$ and $\widetilde\fJ$ in Definition~\ref{D1.10}.
By Lemma~\ref{L3.3} we have
\begin{equation}\label{ET5.7G}
\Hat\beta\;\le\; \widetilde\fJ\;=\;\min_{z\in\cZ}\,\varLambda^+\bigl(Dm(z)\bigr)\,.
\end{equation}

Following exactly the same steps as in the proof of Theorem~\ref{T5.3},
we deduce that the diffusion in \eqref{ET5.7E}
is positive recurrent, with an invariant probability
measure $\Bar\eta_z$ that has finite second moments,
and that the density
$\mathring\varrho^\varepsilon_{z}$ in Definition~\ref{D5.1}
converges as $\varepsilon_{n}\searrow0$
to the density $\Bar\varrho_z$ of $\Bar\eta_z$.
Therefore,
\begin{equation}\label{ET5.7H}
\varLambda^+\bigl(Dm(z)\bigr)\;=\;\Hat{\beta}
\end{equation}
by Theorem~\ref{T1.18}\,(c).
Thus
$\Hat\beta= \widetilde\fJ=\varLambda^+\bigl(Dm(z)\bigr)$
by \eqref{ET5.7G}--\eqref{ET5.7H}.
This shows that unless $z\in\widetilde\cZ$, the hypothesis
$\liminf_{n\to\infty}\,\eta_{*}^{\varepsilon_{n}}(\sB_{z})>0$ cannot hold,
thus establishing part~(b) of the theorem.

With $z\in\widetilde\cZ$, and $\Hat\beta=\widetilde\fJ$, equation
\eqref{ET5.7F} has a unique solution by Theorem~\ref{T1.18}\,(c),
and we obtain
$\Bar{V}_{z}(x)=\tfrac{1}{2}\, \bigl\langle x,\widehat{Q}_z\, x\bigr\rangle$,
and that $\Bar\varrho_z$ is the density of a Gaussian with mean $0$
and covariance matrix $\widehat\Sigma_z$,
with $(\widehat{Q}_{z},\widehat{\Sigma}_{z})$ the pair
of matrices which solve \eqref{ED1.9}.
This completes the proof.
\end{proof}

\section{Concluding remarks}
In general, Morse--Smale flows may contain hyperbolic closed orbits,
and it would be desirable to extend the results of the paper accordingly.
An energy function $\Lyap$ as in Theorem~\ref{T2.2} may be constructed
to account for critical elements that are closed orbits
\cite{Smale,Meyer}.
Note that under the control used in Remark~\ref{R3.4}
the optimal stationary distribution concentrates on
the minimum of $\Lyap$.
In the case that $z\in\Rd$ belongs to a stable periodic orbit
with period $T_{0}$, we can construct $\Lyap$
so that it attains its minimum on this closed orbit.
In this manner, if $\phi_{t}$ denotes the flow of the
vector field $m$, then it follows by \eqref{EL3.1G}
that under the control used in Remark~\ref{R3.4}, we obtain
\begin{equation*}
\int_{\Rd}\ell(x)\,\mu^{\varepsilon}(\D{x})
\;\xrightarrow[\varepsilon\searrow0]{}\;
\frac{1}{T_{0}}\int_{0}^{T_{0}}\ell\bigl(\phi_{t}(z)\bigr)\,\D{t}\,.
\end{equation*}
The same can be done in the subcritical regime, by modifying
the proof of Lemma~\ref{L3.5}, and using instead the approach in
Remark~\ref{R3.4}.
We leave it up to the reader to verify that Lemma~\ref{L3.1} still holds
if the set of critical elements $\cS$ contains
hyperbolic closed orbits.
Let us define
\begin{equation*}
\mathring{\ell}(z) \;\df\;
\frac{1}{T_{0}}\int_{0}^{T_{0}}\ell\bigl(\phi_{t}(z)\bigr)\,\D{t}\,,
\end{equation*}
when $z$ belongs to a closed orbit,
and $\mathring{\ell}(z)=\ell(z)$, when $m(z)=0$.
Then, provided $\Argmin_{z\in\cS}\,\mathring{\ell}(z)$
contains only stable critical elements, then the support of
the limit of the optimal stationary distribution lies in
$\cSs$, and this is true in any of the three regimes.
However, the full analysis when unstable
closed orbits are involved seems to be more difficult.

\appendix
\section{Proofs of the results in Section~\ref{S1.3}}\label{AppA}

We start with the proof of Lemma~\ref{L1.3}.

\begin{proof}[Proof of Lemma~\ref{L1.3}]
The proof is standard.
Let $U$ be given and define
$M_t\,\df\,\Exp\bigl[\int_{0}^{t}\,\abs{U_{s}}^{2}\,\D{s}\bigr]$, $t\in\RR_+$.
For $T>0$, let $\cH^2_T$ denote the space of $\{\sF_{t}\}$-adapted processes $Y$
defined on $[0,T]$, having continuous sample paths, and satisfying
$\Exp\bigl[\sup_{0\le t\le T}\, \abs{Y_t}^2\bigr]<\infty$.
The space $\cH^2_T$ (more precisely the set of equivalence classes
in $\cH^2_T$) is a Banach space under the norm
\begin{equation*}
\norm{Y}_{\cH^2_T}\;\df\;
\biggl(\Exp\biggl[\sup_{0\le t\le T}\, \abs{Y_t}^2\biggr]\biggr)^{\nicefrac{1}{2}}\,.
\end{equation*}
It is standard to show, for example following the proof
of \cite[Theorem~2.2.2]{book}
that any solution $X$ of \eqref{E-sde} satisfies
\begin{equation}\label{EL1.3-bound}
\norm{X-X_0}_{\cH^2_t}^2\;\le\; \kappa_0 t(1+t)
\bigl(1 + M_t +\Exp\bigl[\abs{X_0}^2\bigr]\bigr)\,\E^{\kappa_1 t}
\qquad \forall\,t\ge0\,,
\end{equation}
for some constants $\kappa_0$ and $\kappa_1$ that depend only on $m$.
The existence of a pathwise unique solution then follows by applying
the contraction mapping theorem as in \cite[Theorem~2.2.4]{book}.
\end{proof}

The rest of this section is devoted to the proof of Theorem~\ref{T1.4}.
Without loss of generality we fix $\varepsilon=1$, and suppress
the dependence on $\varepsilon$ in all the variables.
Also throughout the rest of this section, without loss of generality
we assume that $\ell\ge0$.

We proceed by establishing two key lemmas, followed by
the proof of Theorem~\ref{T1.4}.
Recall the definition of $\sR$ in \eqref{E-cRnew}.
For $x\in\Rd$, and $\alpha>0$,
we define the subset $\Uadm_x^\alpha$ of admissible controls by
\begin{equation}\label{E-Uadmalph}
\Uadm_x^\alpha\;\df\;
\biggl\{U\in\Uadm\,\colon
\Exp^U_{x}\biggl[\int_{0}^\infty \E^{-\alpha s}\,\sR(X_{s},U_{s})\,\D{s}\biggr]
\;<\;\infty\biggr\}\,,
\end{equation}
where $\Exp_x^{U}$ denotes the expectation under the law of $(X,U)$,
with $X_0=x$ for the solution of
\begin{equation}\label{ELA.1A}
X_{t}\;=\;x+\int_{0}^{t} m(X_{s})\,\D{s}
+\int_{0}^{t} U_{s}\,\D{s} + W_{t}\,, \quad t\ge 0\,.
\end{equation}

\begin{lemma}\label{LA.1}
The equation
\begin{equation}\label{ELA.1B}
\tfrac{1}{2}\,\Delta V_{\alpha}+\langle m, \nabla V_{\alpha}\rangle
 - \tfrac{1}{2}\,|\nabla V_{\alpha}|^{2}+ \ell\;=\; \alpha V_{\alpha}
\end{equation}
has a solution in $\Cc^2(\Rd)$ for all $\alpha\in(0,1)$.
Moreover, for all $\alpha\in(0,1)$, we have the following.
\begin{itemize}
\item[\textup{(}i\/\textup{)}]
For some constant $c_0>0$, not depending on $\alpha$, it holds that
\begin{equation}\label{ELA.1C}
\abs{\grad V_\alpha(x)} \;\le\; c_0 \sqrt{1+\abs{x}}\,,\quad\text{and\ \ }
\abs{\alpha V_\alpha(x)} \;\le\;  \ell(x)+\tfrac{c_0}{\alpha} 
\end{equation}
for all $x\in\Rd$.

\item[\textup{(}ii\/\textup{)}]
The function $V_\alpha$ satisfies
\begin{equation}\label{ELA.1D}
V_{\alpha}(x)\;\le\; \inf_{U\in\,\Uadm_x^\alpha}\;\Exp^{U}_{x}
\biggl[\int_{0}^{\infty} \E^{-\alpha s}\,\sR(X_{s},U_{s})\,\D{s}\biggr]\,,
\qquad \forall\,x\in\Rd\,.
\end{equation}

\item[\textup{(}iii\/\textup{)}]
With $\Bar{c}_\ell$ the constant in \eqref{EE1.8}, we have
\begin{equation*}
\inf_{\{x\,\colon \ell(x)\,\le\,\Bar{c}_\ell\}}\; \alpha V_\alpha
\;=\; \inf_{\Rd}\; \alpha V_\alpha
\;\le\; \Bar{c}_\ell\,.
\end{equation*}
\end{itemize}
\end{lemma}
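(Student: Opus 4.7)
The plan is to construct $V_\alpha\in\Cc^2(\Rd)$ by an exhaustion of $\Rd$ by balls $B_R$. On each $B_R$ I would solve the Dirichlet problem for \eqref{ELA.1B} with zero boundary data; a smooth solution $V_{\alpha,R}\ge 0$ exists either via the classical stochastic control representation as the discounted cost up to exit from $B_R$ (which automatically gives $V_{\alpha,R}\ge 0$ since $\ell\ge 0$) or from the PDE theory for quadratic Hamiltonians in \cite{Bens,Bens-Fre-book}. The crucial $R$-uniform a priori estimate is the Bernstein-type gradient bound $\abs{\grad V_{\alpha,R}(x)}\le c_0\sqrt{1+\abs{x}}$, which I would derive by applying the argument of \cite[Lemma~5.1]{Met} to $W\df\tfrac12\abs{\grad V_{\alpha,R}}^2$, exploiting boundedness and Lipschitz continuity of $m$ together with the at-most-linear growth of the Lipschitz function $\ell$. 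Combined with interior Schauder estimates, this yields convergence of a subsequence in $\Cc^2_{\mathrm{loc}}(\Rd)$ to a nonnegative classical solution $V_\alpha$ of \eqref{ELA.1B} satisfying the first inequality in \eqref{ELA.1C}.

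For part (ii) I apply It\^o's formula to $e^{-\alpha t}V_\alpha(X_t)$ along the solution of \eqref{ELA.1A} driven by an arbitrary $U\in\Uadm_x^\alpha$. Completing the square in \eqref{ELA.1B} yields
\begin{equation*}
\tfrac12\Delta V_\alpha+\langle m+u,\grad V_\alpha\rangle-\alpha V_\alpha
\;=\;\tfrac12\abs{u+\grad V_\alpha}^2-\sR(x,u)\;\ge\;-\sR(x,u)
\qquad\forall\,u\in\Rd,
\end{equation*}
so that, stopping at $\tau_R\df\inf\{t\,\colon\abs{X_t}\ge R\}$ and taking expectations,
\begin{equation*}
V_\alpha(x)\;\le\;\Exp_x^U\bigl[e^{-\alpha(t\wedge\tau_R)}V_\alpha(X_{t\wedge\tau_R})\bigr]
+\Exp_x^U\biggl[\int_0^{t\wedge\tau_R}e^{-\alpha s}\,\sR(X_s,U_s)\,\D s\biggr].
\end{equation*}
Letting $R,t\to\infty$ gives \eqref{ELA.1D} provided the boundary term vanishes; I plan to achieve this by combining the pointwise bound $V_\alpha\le(\ell+c_0/\alpha)/\alpha$ with a Foster--Lyapunov estimate for $\Exp_x^U[\Bar\Lyap(X_t)]$ in the spirit of Lemma~\ref{L1.3}, whose $t$-growth is offset by $e^{-\alpha t}$ thanks to the quadratic integrability built into $\Uadm_x^\alpha$. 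The upper half of \eqref{ELA.1C} is then obtained by inserting $U\equiv 0$ into \eqref{ELA.1D} (this control lies in $\Uadm_x^\alpha$ by \eqref{EE1.8}) and using $\Exp_x^0[\ell(X_s)]\le\ell(x)+C(1+s)$, which follows from Lipschitz continuity of $\ell$ and boundedness of $m$; nonnegativity of $V_\alpha$ furnishes the remaining half.

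For (iii) I first integrate the $U\equiv 0$ case of \eqref{ELA.1D} against the invariant measure $\eta_0^1$ of \eqref{dynamics}: by Fubini and \eqref{EE1.8}, $\int\alpha V_\alpha\,\D\eta_0^1\le\Bar{c}_\ell$, whence $\inf_{\Rd}\alpha V_\alpha\le\Bar{c}_\ell$. To localize this infimum inside $\{\ell\le\Bar{c}_\ell\}$ I plan to show that $V_\alpha$ attains its infimum at some $x_0\in\Rd$ and to invoke the strong minimum principle: at $x_0$ one has $\grad V_\alpha(x_0)=0$ and $\Delta V_\alpha(x_0)\ge 0$, so evaluating \eqref{ELA.1B} at $x_0$ gives $\alpha V_\alpha(x_0)\ge\ell(x_0)$, which combined with $\alpha V_\alpha(x_0)=\inf\alpha V_\alpha\le\Bar{c}_\ell$ forces $x_0\in\{\ell\le\Bar{c}_\ell\}$. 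Attainment of the infimum is established by a perturbation argument: minimize the inf-compact function $V_\alpha+\mu\Bar\Lyap$ at some $x_\mu$ for $\mu>0$, use $\inf V_\alpha\le\Bar{c}_\ell/\alpha$ together with Hypothesis~\ref{H1.1}\,(3a) and the inf-compactness of $\ell$ to show that $\{x_\mu\}_{\mu\le 1}$ remains in a compact set, and pass to $\mu\searrow 0$.

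The step I expect to be the main obstacle is the tail estimate $e^{-\alpha t}\Exp_x^U[V_\alpha(X_t)]\to 0$ for a general admissible $U\in\Uadm_x^\alpha$ in part (ii): a Foster--Lyapunov inequality only controls $\Exp_x^U[\Bar\Lyap(X_t)]$ in terms of the cumulative control energy $\int_0^t\Exp_x^U[\abs{U_s}^2]\,\D s$, and matching this growth against the discount factor $e^{-\alpha t}$ using only the integrability $\int_0^\infty e^{-\alpha s}\Exp_x^U[\abs{U_s}^2]\,\D s<\infty$ encoded in \eqref{E-Uadmalph} is the technical crux of the verification.
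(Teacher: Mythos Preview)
Your overall architecture matches the paper's: existence plus an $\alpha$-uniform gradient bound, verification of \eqref{ELA.1D} via It\^o's formula, and localization of the infimum via the strong maximum principle. The differences are in how the two technical obstacles you single out are dispatched.

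\medskip
\textbf{Part (ii), the vanishing boundary term.} The paper does not use a Foster--Lyapunov argument, and in fact your proposed route runs into trouble: applying $\Lg^U$ to $\Bar\Lyap$ produces a term $\langle U,\nabla\Bar\Lyap\rangle$, and after Young's inequality the $\abs{\nabla\Bar\Lyap}^2\sim\Bar\Lyap$ contribution forces Gronwall to yield $\Exp_x^U[\Bar\Lyap(X_t)]\le C\E^{Ct}(\cdots)$, which the discount $e^{-\alpha t}$ cannot beat for small $\alpha$. The paper instead reverses the order of your two steps. First, take $U\equiv 0$: since $m$ is bounded, one has the easy second-moment bound $\Exp^0_x[\abs{X_t}^2]\le C(1+t^2+\abs{x}^2)$, and the crude quadratic bound $\abs{V_\alpha(x)}\le C_\alpha(1+\abs{x}^2)$ coming from the gradient estimate already suffices to pass to the limit and obtain the upper bound in \eqref{ELA.1C}. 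Once that is in hand, $V_\alpha$ has \emph{linear} growth in $x$ (for fixed $\alpha$), and then for a general $U\in\Uadm_x^\alpha$ a direct first-moment estimate from the SDE,
\[
\E^{-\alpha t}\,\Exp^U_x[\,\abs{X_t}\,]\;\le\; \bigl(\abs{x}+\norm{m}_\infty t+C\sqrt{t}\bigr)\E^{-\alpha t}
+\sqrt{t}\,\E^{-\alpha t/2}\biggl(\Exp^U_x\Bigl[\int_0^t \E^{-\alpha s}\abs{U_s}^2\,\D s\Bigr]\biggr)^{1/2},
\]
obtained by Cauchy--Schwarz on $\int_0^t\abs{U_s}\,\D s$, gives the vanishing of the boundary term immediately from the definition of $\Uadm_x^\alpha$. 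So the ``technical crux'' you identify dissolves once you swap the order of the two steps.

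\medskip
\textbf{Part (iii), attainment of the infimum.} The paper proves directly that $\alpha V_\alpha$ is inf-compact via a barrier: for each $x_0$ one checks that $\psi(x)\df V_\alpha(x)+2\kappa_0/\alpha-\chi(x_0)(1-\abs{x-x_0}^2)$, with $\chi(x_0)^2$ comparable to $\inf_{B_1(x_0)}\ell$, satisfies $-\tfrac12\Delta\psi-\langle m-\nabla V_\alpha,\nabla\psi\rangle+\alpha\psi\ge 0$ in $B_1(x_0)$ and $\psi\ge 0$ on $\partial B_1(x_0)$; the maximum principle then gives $\alpha V_\alpha(x_0)\ge\alpha\chi(x_0)-\kappa_0$. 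Your perturbation $V_\alpha+\mu\Bar\Lyap$ can also work, but as written it is incomplete: the bound $(V_\alpha+\mu\Bar\Lyap)(x_\mu)\le C$ together with $V_\alpha\ge 0$ only gives $\abs{x_\mu}\le C/\sqrt\mu$, not a $\mu$-uniform bound. To close the argument you must evaluate the PDE at $x_\mu$, where $\nabla V_\alpha(x_\mu)=-\mu\nabla\Bar\Lyap(x_\mu)$, and observe that the resulting inequality $\ell(x_\mu)\le \alpha C + O(\mu)+O(\mu^2\abs{x_\mu}^2)$ combined with $\abs{x_\mu}=O(\mu^{-1/2})$ keeps $\ell(x_\mu)$ bounded, whence $\{x_\mu\}$ is precompact by inf-compactness of $\ell$. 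This is the missing step.

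\medskip
A minor remark: the gradient bound you want does not follow from \cite[Lemma~5.1]{Met} as stated, since that lemma is for equations without the zeroth-order term $\alpha V_\alpha$; the paper invokes \cite[Theorem~B.1]{Ichihara-12} instead, which is tailored to this situation and yields the bound in terms of $(\alpha V_\alpha)^-$, $\ell$, and $\nabla\ell$.
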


\begin{proof}
In \cite[Theorem~4.18, p.~177]{Bens-Fre-book} it is proved that \eqref{ELA.1B} has a
solution in $\Cc^2(\Rd)$, and it also shown in the proof of this theorem
that  there exists a
constant $\kappa_{0}>0$ which does not depend on $\alpha$ such that
\begin{equation}\label{ELA.1E}
\alpha\,V_\alpha(x)\;\ge\; -\kappa_{0}\qquad \forall\,x\in \Rd\,.
\end{equation}
By \cite[Theorem~B.1]{Ichihara-12} there exists a constant $C$ not depending
on $R>0$ such that
\begin{equation}\label{ELA.1EX}
\sup_{B_R}\; \abs{\grad V_\alpha}
\;\le\; C\Bigl( 1+ \sup_{B_{R+1}}\; \sqrt{(\alpha V_\alpha)^-} +
\sup_{B_{R+1}}\;\sqrt{\ell^+}
+\sup_{B_{R+1}}\; \abs{\grad \ell}^{\nicefrac{1}{3}}\Bigr)\,.
\end{equation}
from which gradient estimate in \eqref{ELA.1C} follows.
The structural assumption on the Hamiltonian $h(x,p)$ in
\cite[Theorem~B.1]{Ichihara-12} is $p\mapsto h(x,p)$ is strictly convex
for all $x\in\Rd$, and there exists some constant $k_0>0$ such that
\begin{equation}\label{E-IchiharaH1}
k_0\,\abs{p}^2\;\le\; h(x,p)\;\le\; k_0^{-1}\,\abs{p}^2\,,
\qquad \abs{\grad_x h(x,p)}\;\le\; k_0^{-1} \bigl(1+\abs{p}^2\bigr)\,,
\end{equation}
for $(x,p)\in\RR^{2d}$.
This Hamiltonian corresponds to $h(x,p)=\frac{1}{2}\abs{p}^2-\langle m,p\rangle$
for the equation in \eqref{ELA.1B}, and the first bound in
\eqref{E-IchiharaH1} is not satisfied.
However, replacing this bound with
\begin{equation*}
k_0\,\bigl(\abs{p}^2-k_1\bigr)\;\le\; h(x,p)\;\le\;
k_0^{-1}\,\bigl(\abs{p}^2+k_1\bigr)\,,
\end{equation*}
for some constant $k_1\ge0$, the proof of \cite[Theorem~B.1]{Ichihara-12}
goes through unmodified.

Recall the definition of $\widehat\Uadm$ in \eqref{E-HUadm}.
Writing \eqref{ELA.1B} in HJB form, and applying It\^o's formula we obtain
\begin{equation}\label{ELA.1F}
V_{\alpha}(x)-\E^{-\alpha t}\Exp^{U}_{x}\bigl[V_{\alpha}(X_{t})\bigr]\;\le\;
\Exp^{U}_{x}\biggl[\int_{0}^{t}
\E^{-\alpha s}\,\sR(X_{s},U_{s})\,\D{s}\biggr]\qquad\forall\,t>0\,,
\end{equation}
and all $U\in\widehat\Uadm$.
Since $m$ is bounded, then it is standard to show using \eqref{ELA.1A} that
\begin{equation}\label{ELA.1G}
\Exp^{U}_{x}\biggl[\sup_{0\le s\le t}\;\abs{X(s)-x}\biggr]
\;\le\;  \norm{m}_\infty\, t + \sqrt{t}
+ \Exp^{U}_{x}\biggl[\int_{0}^{t} \abs{U_{s}}\,\D{s}\biggr]
\;<\;\infty
\end{equation}
for all $U\in\widehat\Uadm$ and $t>0$.
Also, if $\Exp^0_{x}$ denotes the expectation
$\Exp^U_{x}$ with $U=0$, then by \eqref{ELA.1A} we have the estimate
\begin{equation}\label{ELA.1H}
\Exp^{0}_{x}\bigl[\abs{X_t}^2\bigr]
\;\le\;  \kappa_2 \bigl(1+t^2+\abs{x}^2\bigr)
\;<\;\infty\qquad\forall\, t>0\,,
\end{equation}
for some constant $\kappa_2$.
As shown in the proof of \cite[Theorem~4.18, p.~177]{Bens-Fre-book},
$\alpha\mapsto\alpha V_{\alpha}(0)$ is bounded on $(0,1)$, which
together with the gradient estimate in \eqref{ELA.1C} we have already
proved, provides us with a liberal bound of $V_\alpha$ of the
form $\abs{V_\alpha(x)}\le C \bigl(1+\abs{x}^2)$ for some constant $C$.
This combined with \eqref{ELA.1H} implies that
$\E^{-\alpha t}\Exp^{0}_{x}\bigl[V_{\alpha}(X_{t})\bigr]\to0$
as $t\to\infty$.
Therefore, using \eqref{ELA.1G}, and the Lipschitz constant $C_\ell$ of $\ell$,
we obtain by \eqref{ELA.1F} that
\begin{align*}
\alpha V_\alpha(x) &\;\le\;
\Exp^0_{x}\biggl[\int_0^\infty \alpha\,\E^{-\alpha s}\,\ell(X_s)\,\D{s}\biggr]\\[5pt]
&\;\le\;\ell(x)
+ C_\ell \int_0^\infty \alpha\,\E^{-\alpha s}
\bigl(\norm{m}_\infty\, s + 2 \sqrt{s}\bigr)\,\D{s}
\qquad\forall\,x\in\Rd\,,
\end{align*}
which results in the estimate given in \eqref{ELA.1C},
where without loss of
generality we use a common constant $c_0$.
This completes the proof of part~(i).

Let $g(x,t)\df\abs{x}+\norm{m}_\infty\, t + 2 \sqrt{t}$.
Multiplying both sides  of \eqref{ELA.1G} by $\E^{-\alpha t}$,
strengthening the inequality, and applying the H\"older inequality, we obtain
\begin{align}\label{ELA.1I}
\E^{-\alpha t}\Exp^{U}_{x}\bigl[\abs{X_{t}}\bigr] &\;\le\; 
 g(x,t)\,\E^{-\alpha t} + \E^{-\frac{\alpha}{2}t}
\Exp^{U}_{x}\biggl[\int_{0}^{t} \E^{-\frac{\alpha}{2}s}\,
\abs{U_{s}}\,\D{s}\biggr]\\[5pt]
&\;\le\; g(x,t)\,\E^{-\alpha t}
+ \sqrt{t}\E^{-\frac{\alpha}{4}t}
\biggl(\Exp^{U}_{x}\biggl[\int_{0}^{t}
\E^{-\alpha s}\,\abs{U_{s}}^2\,\D{s}\biggr]\biggr)^{\nicefrac{1}{2}}\nonumber\\[5pt]
&
\;\xrightarrow[t\to\infty]{}\; 0\qquad\forall\,U\in\Uadm_x^\alpha\,,\nonumber
\end{align}
with $\Uadm_x^\alpha$ as defined in \eqref{E-Uadmalph}.
Taking limits as $t\to\infty$ in \eqref{ELA.1F}, and using \eqref{ELA.1I},
and the bound of $V_\alpha$ in  \eqref{ELA.1C} together with
$\abs{\ell(x)}\le C_{l}\abs{x}+\abs{\ell(0)}$,
we obtain \eqref{ELA.1D}.

We now turn to part~(iii).
Let
\begin{equation*}
\chi(x)\;\df\; \frac{1}{\sqrt{3}}\,
\biggl(\min_{y\,\in\,B_1(x)}\,\Bigl[\ell(y)-(d+1+2\sqrt{d}\,\norm{m}_\infty)^2\Bigr]
\biggr)^{\nicefrac{1}{2}}\,,
\end{equation*}
and
\begin{equation*}
\psi(x) \;\df\; V_\alpha(x) +\tfrac{2\kappa_0}{\alpha}
-\chi(x_0)\bigl(1-\abs{x-x_{0}}^{2}\bigr)\,,\qquad x\in B_1(x_0)\,,
\end{equation*}
where $\kappa_0>0$ is the constant in \eqref{ELA.1E}.
With $\phi(x)\df \abs{x-x_{0}}^{2}$, we have 
\begin{align*}
-\tfrac{1}{2}\,\Delta \psi -\langle m-\grad V_\alpha, \grad \psi\rangle
+\alpha \psi 
&\;=\;
\Bigl(-\tfrac{1}{2}\,\Delta V_{\alpha}-\langle m, \nabla V_{\alpha}\rangle
+\tfrac{1}{2}\,\abs{\nabla V_{\alpha}}^{2}+ \alpha V_{\alpha}\Bigr)
\\[5pt]
&\mspace{50mu}
+\tfrac{1}{2}\,\babs{\grad V_\alpha-\chi(x_0)\grad\phi}^2 -2\chi^2(x_0)\phi
+ 2\kappa_0
\\[5pt]
&\mspace{100mu}
-\chi(x_0)\Bigl(\tfrac{1}{2}\,\Delta\phi +\langle m,\grad\phi\rangle
+\alpha(1-\phi)\Bigr) 
\\[5pt]
&\;\ge\;\ell - 2\chi^2(x_0) + 2\kappa_0
- \bigl(d+2\sqrt{d}\,\norm{m}_\infty+1\bigr)\chi(x_0) 
 \\[5pt]
&\;\ge\;  \ell - 3\chi^2(x_0) - \bigl(d+1+2\sqrt{d}\,\norm{m}_\infty\bigr)^2 
\quad \text{in\ \ }B_1(x_0)\,,
\end{align*}
for all $\alpha\in(0,1)$,
where we use \eqref{ELA.1B} and the fact that $\kappa_0\ge0$.
Since $\psi>0$ on $\partial B_1(x_0)$ by \eqref{ELA.1E},
an application of the strong maximum principle shows that
$\psi\ge0$ in $B_1(x_0)$, which implies that
\begin{equation*}
\alpha V_\alpha(x)\;\ge\; \alpha \chi(x) + \kappa_0\qquad \forall\; x\in \Rd\,.
\end{equation*}
Since $\ell$ is inf-compact, and therefore the same is true for $\chi$
by its definition,
this shows that $\alpha V_\alpha$ is inf-compact.
In particular, it attains its infimum in $\Rd$.
With $\eta_0$ denoting the invariant probability measure of the diffusion
in \eqref{ELA.1A} under the control $U=0$, using \eqref{ELA.1D},
we obtain
\begin{equation}\label{ELA.1J}
\inf_{\Rd}\;V_{\alpha} \;\le\; \int V_{\alpha} \,\D\eta_{0}
\;\le\; \int_{\Rd} \Exp_{x}
\biggl[\int_{0}^{\infty} \E^{-\alpha s}\ell(X_{s})\,\D{s}\biggr]\,\eta_{0}(\D{x})
\;\le\; \frac{\Bar{c}_\ell}{\alpha}\,,
\end{equation}
where the last inequality follows by \eqref{EE1.8}.
One more application of the maximum principle implies that
if $V_\alpha$ attains its infimum at $\Hat{x}\in\Rd$ then
$\ell(\Hat{x})\le \alpha V_\alpha(\Hat{x})$.
This together with \eqref{ELA.1J} implies part~(iii).
\end{proof}

\begin{remark}
We should mention, even though we don't need it for the proof of the main
theorem, that \eqref{ELA.1D} holds with equality, and thus
$V_\alpha$ is indeed the value of the infinite horizon
discounted control problem.
The proof of this assertion goes as follows.
Since $\grad V_\alpha$ has at most linear growth, the diffusion
in  \eqref{ELA.1A} under the Markov control $v_\alpha = -\grad V_\alpha$
has a unique strong solution.
It is also clear by \eqref{ELA.1C} that for any $\alpha>0$ we can select
a constant $\kappa_1(\alpha)$ such
that $\abs{\grad V_\alpha(x)}\le \kappa_1(\alpha) + \frac{\alpha}{16}{x}$.
Thus using a standard estimate \cite[Theorem~2.2.2]{book}
we obtain
\begin{equation}\label{ERA.2A}
\Exp^{v_\alpha}_{x}\biggl[\sup_{0\le s\le t}\;\abs{X(s)}^2\biggr]
\;\le\; \kappa_2(\alpha) (1 + t^2) (1+ \abs{x}^2)\E^{\frac{\alpha}{2}t}
\end{equation}
for some constant $\kappa_2(\alpha)>0$.
With $\uptau_R$ denoting the first exit time from $B_R$, applying Dynkin's
formula we obtain
\begin{equation*}
V_\alpha(x) \;=\; \Exp^{v_\alpha}_x\biggl[\int_0^{t\wedge\uptau_R}
\E^{-\alpha s}\,\sR(X_{s},v_\alpha(X_{s}))\,\D{s}\biggr]
+ \Exp^{v_\alpha}_x\bigl[\E^{-\alpha(t\,\wedge\,\uptau_R)}\,
V_\alpha(X_{t\wedge\uptau_R})\bigr]\,.
\end{equation*}
We write
\begin{equation*}
\Exp^{v_\alpha}_x\bigl[\E^{-\alpha(t\wedge\uptau_R)}\,
V_\alpha(X_{t\wedge\uptau_R})\bigr]\;=\;
A_1(t,R)+ A_2(t,R)\,.
\end{equation*}
with
\begin{align*}
A_1(t,R)&\;\df\;\Exp^{v_\alpha}_x\bigl[\E^{-\alpha t}\,
V_\alpha(X_{t\wedge\uptau_R})\,\Ind_{\{t\le\uptau_R\}}\bigr]\,,\\[5pt]
A_2(t,R)&\;\df\;\Exp^{v_\alpha}_x\bigl[\E^{-\alpha \uptau_R}\,
V_\alpha(X_{t\wedge\uptau_R})\,\Ind_{\{\uptau_R\,<\,t\}}\bigr]\,.
\end{align*}
Since $V_\alpha$ has at most linear growth
in $x$ by \eqref{ELA.1C}, it follows by \eqref{ERA.2A}
that
\begin{equation*}
\lim_{t\to\infty}\;\limsup_{R\to\infty}\;\abs{A_1(t,R)} \;=\; 0\,.
\end{equation*}
We also have $\limsup_{R\to\infty}\,\abs{A_2(t,R)} = 0$
 by dominated convergence,
since $\Prob^{v_\alpha}_x \bigl(\uptau_R<t\bigr)\to0$ as $R\to\infty$.
Thus, taking limits first as $R\to\infty$,
and then as $t\to\infty$ in \eqref{ERA.2A}, we obtain
\begin{equation*}
V_\alpha(x) \;\ge\; \Exp^{v_\alpha}_x\biggl[\int_0^{\infty}
\E^{-\alpha s}\,\sR(X_{s},v_\alpha(X_{s}))\,\D{s}\biggr]\,.
\end{equation*}
Thus the converse inequality to \eqref{ELA.1D} also holds.
\end{remark}

Define the class of controls $\overline\Uadm_x$ by
\begin{equation*}
\overline\Uadm_x\;\df\;
\biggl\{U\in\Uadm\,\colon\limsup_{T\to\infty}\;\frac{1}{T}\,
\Exp^U_{x}\biggl[\int_{0}^T \sR(X_{s},U_{s})\,\D{s}\biggr]
\;<\;\infty\biggr\}\,.
\end{equation*}

\begin{lemma}\label{LA.3}
There exists an inf-compact  $V\in\Cc^2(\Rd)$ which satisfies
\begin{equation}\label{ELA.3A}
\cA[V](x)\;\df\; \tfrac{1}{2}\,\Delta V + \langle m,\nabla V\rangle - \tfrac{1}{2}\,
\abs{\nabla V}^{2} + \ell\;=\;\beta\,,
\end{equation}
with
\begin{equation*}
\beta\;=\;\beta_* \;\df\; \inf_{U\in\overline\Uadm_x}\;
\limsup_{T\to\infty}\;\frac{1}{T}\,
\Exp^U_{x}\biggl[\int_{0}^T \sR(X_{s},U_{s})\,\D{s}\biggr]\,,
\end{equation*}
and For some constant $c_0>0$, it holds that
$\abs{\grad V(x)} \;\le\; c_0 \sqrt{1+\abs{x}}$ for all $x\in\Rd$.
In addition, under the Markov control $U_t=v_*(X_t)$, with $v_{*}=-\grad V$, the
diffusion in \eqref{ELA.1A} is positive recurrent, and
$\beta_*=\int_{\Rd}\sR[v_*](x)\,\D\eta_{*}$,
where $\eta_{*}$ is the invariant probability measure
corresponding to the control $v_{*}$.
\end{lemma}

\begin{proof}
The existence of a solution to \eqref{ELA.3A} is established as a limit of
$V_{\alpha}(\cdot)-V_\alpha(0)$, with
$V_\alpha$ the solution of \eqref{ELA.1B} in Lemma~\ref{LA.1}  along
some sequence $\alpha_n\searrow0$ \cite[p.~175]{Bens-Fre-book}.
That $V$ is inf-compact follows by  \cite[Theorem~4.21]{Bens-Fre-book}.
It also follows from the proof from this
convergence result that $\beta\le\limsup_{\alpha\searrow0}\,\alpha V_\alpha(x)$ for
all $x\in\Rd$.

We first show that $\beta\le\beta_*$.
For this, we employ the following assertion which is a special case of the
Hardy--Littlewood theorem \cite{Sz-Fil}.
For any sequence $\{a_{n}\}$ of non-negative real numbers, it holds that
\begin{equation}\label{E-Tauber}
\limsup_{\theta\nearrow 1}\; (1-\theta)\sum_{n=1}^{\infty} \theta^n a_{n} 
\;\le\; \limsup_{N\to\infty}\;\frac{1}{N}\sum_{n=1}^N a_{n}\,.
\end{equation}
Concerning this assertion,
note that if the right hand side of the above display is finite then the set
$\{\frac{a_{n}}{n}\}$ is bounded. Therefore 
$\sum_{n=1}^{\infty} \theta^n a_{n}$ in finite for every $\theta<1$.
Hence we can apply \cite[Theorem~2.2]{Sz-Fil} to obtain \eqref{E-Tauber}.

Fix $x\in \Rd$, and $U\in\overline\Uadm_x$.
Define
\begin{equation*}
a_{n}\;\df\;
\Exp_{x}^{U}\biggl[\int_{n-1}^n\sR(X_{s},U_s)\,\D{s}\biggr]\,,\quad n\ge 1\,.
\end{equation*}
and let $\theta=\E^{-\alpha}$.
Applying \eqref{E-Tauber}, and with $N$ running over the set of natural numbers,
we obtain
\begin{align}\label{ELA.3C}
\limsup_{N\to\infty}\;  \frac{1}{N}\, 
\Exp_{x}^{U}\biggl[\int_{0}^N\sR(X_{s},U_s)\,\D{s}\biggr]
&\;\ge\; \limsup_{\theta\nearrow 1}\; (1-\theta)\sum_{n=1}^{\infty}\theta^n a_{n}\\[5pt]
&\;\ge\; \limsup_{\alpha\searrow 0}\; (1-\E^{-\alpha})\sum_{n=1}^{\infty}
\Exp_{x}^{U}\biggl[\int_{n-1}^n \E^{-\alpha s}\,
\sR(X_{s},U_s)\Bigr)\,\D{s}\biggr]
\nonumber\\[5pt]
&\;\ge\; \limsup_{\alpha\searrow 0}\;
(1-\E^{-\alpha})\Exp_{x}^{U}\biggl[\int_{0}^{\infty}
\E^{-\alpha s}\,\sR(X_{s},U_s)\,\D{s}\biggr]
\;\ge\;\beta\,.\nonumber
\end{align}
where we use the property that
$\limsup_{\alpha\searrow0}\,\alpha V_{\alpha}(x)\ge \beta$.
Since $U\in\overline\Uadm_x$ is arbitrary, \eqref{ELA.3C} together
with the definition of $\beta_*$ imply that $\beta\le \beta_*$.
Note also that \eqref{ELA.3C} implies that $\Uadm_x^\alpha\subset\overline\Uadm_x$
for all $\alpha\in(0,1)$.

Next, we prove the converse inequality.
By \eqref{ELA.1C} we have
$\abs{\nabla V(x)}\le c_0\,\sqrt{1+\abs{x}}$.
Therefore, since the Markov control $v_*\df -\grad V(x)$ has
at most linear growth,
there exists a unique strong solution to \eqref{ELA.1C} under the control $v_{*}$.
Applying It\^{o}'s formula to \eqref{ELA.3A}, and using \eqref{E-sR},
we obtain
\begin{equation*}
\Exp_{x}^{v_*}\bigl[V(X_{T\wedge\uptau_{R}})\bigr]-V(x)
+ \Exp_{x}\biggl[\int_{0}^{T\wedge\uptau_{R}}\sR[v_*](X_s)\,\D{s}\biggr]
\;=\;\beta\,\Exp_{x}\bigl[T\wedge\uptau_{R}\bigr]\,,
\end{equation*}
where $\uptau_{R}$ denotes the exit time from the ball of radius $R>0$ around $0$.
Since $V$ is bounded from below and $\uptau_{R}\to\infty$ a.s.,
as $R\to\infty$, using Fatou's lemma for the integral
on the left hand side, and then dividing by $T$ and taking limits
as $T\to\infty$, results in
\begin{equation*}
\limsup_{T\to\infty}\;\frac{1}{T}\,
\Exp_{x}^{v_*}\biggl[\int_{0}^T\sR[v_*](X_s)\,\D{s}\biggr]\;\le\; \beta\,.
\end{equation*}
Thus $\beta=\beta_*$.
Since $\ell$ is inf-compact this also implies that the diffusion
under the control $v_*$ is positive recurrent,
and by Birkhoff's ergodic theorem we obtain
$\beta_*=\int_{\Rd}\sR[v_*](x)\,\D\eta_{*}$,
and this completes the proof.
\end{proof}

Let $\Lg$ and $\Lg_v$ denote the operators defined in \eqref{E-Lg} and
\eqref{E-Lgv}, respectively, with $\varepsilon=1$.
Also $\fP$ denote the set of infinitesimal ergodic occupation measures
as defined in \eqref{E-eom}, again with $\varepsilon=1$.
Note that if $\uppi=\eta_v\circledast v\in\fP$ then
\eqref{E-eom} can be written as $\int_\Rd \Lg_v f(x)\,\eta_v(\D{x})=0$.

Let $\Hat{v}(x)=\int v(\D{u}\,|\, x)$.
Since
\begin{equation}\label{EA.19}
\int_{\Rd\times\Rd}\abs{u}^2\eta_v(\D{x})\,v(\D{u}\,|\, x)
\;\ge\; \int_\Rd\abs{\Hat{v}(x)}^2\eta_v(\D{x})\,,
\end{equation}
and since $\eta_v\circledast\Hat{v}$ is also an infinitesimal ergodic
occupation measure, it is evident that as far as the proof of strong duality
is concerned we may restrict our attention to the subset of $\fP$ that
corresponds to precise controls, and which we denote
as $\fP_\circ$.

We have the following lemma.

\begin{lemma}\label{LA.4}
If $\uppi=\eta_v\circledast v\in\fP_\circ$ is such that
\begin{equation}\label{ELA.4A}
\int_{\Rd\times\Rd}\sR[v](x)\,\eta_v(\D{x})\;<\;\infty\,,
\end{equation}
then
\begin{equation}\label{ELA.4B}
\int_{\Rd} \sR[v](x)\,\eta_v(\D{x}) \;=\; \beta_*
+ \frac{1}{2}\,\int_\Rd \babs{v(x)+\grad V(x)}^2\,\eta_v(\D{x})\,.
\end{equation}
In addition the measure $\eta_v$
has a density $\varrho_v\in L^{\nicefrac{d}{(d-1)}}(\Rd)$.
\end{lemma}

\begin{proof}
Let $\chi$ be a concave $\Cc^2(\Rd)$ function such that
$\chi(x)= x$ for $x\le0$, and $\chi(x) = 1$ for $x\ge 1$.
Then $\chi'$ and $-\chi''$ are nonnegative on $(0,1)$.
Define $\chi_R(x) \df R + \chi(x-R)$, $R>0$.
By \eqref{ELA.3A}, completing the square, we have
\begin{equation*}
\Lg_v V - \tfrac{1}{2} \abs{v+\grad V}^2 + \sR[v] -\beta_* \;=\;0\,.
\end{equation*}
Therefore, we obtain
\begin{equation}\label{ELA.4C}
\Lg_v\chi_R(V) - \tfrac{1}{2}\chi''_R(V)\,\abs{\grad V}^2
- \tfrac{1}{2}\chi'_R(V)\, \abs{v+\grad V}^2
 + \chi'_R(V)\sR[v] - \chi'_R(V)\beta_* \;=\; 0\,.
\end{equation}

Observe that $\chi_R(V)-R-1$ is compactly supported by construction.
Thus $\int_{\Rd}\Lg_v\chi_R(V(x))\eta_v(\D{x})=0$ for all $R>0$.
Since $\int_{\Rd}\ell(x)\eta_v(\D{x})<\infty$ by \eqref{ELA.4A}, the bound in
\eqref{ELA.1EX} shows that
\begin{equation}\label{ELA.4D}
\int_{\Rd}\abs{\grad V(x)}^2\,\eta_v(\D{x})\;<\;\infty\,.
\end{equation}
Integrating \eqref{ELA.4C} with respect to $\eta_v$, using \eqref{ELA.4D}, and passing to
the limit as $R\to\infty$, we obtain \eqref{ELA.4B}.
We have thus shown that
\begin{equation*}
\int_{\Rd} \abs{m(x)+v(x)}^2\,\eta_v(\D{x}) \;<\;\infty\,.
\end{equation*}
By Theorem~1.1 in \cite{Bogachev-96}, this implies that the measure $\eta_v$
has density in $L^{\nicefrac{d}{(d-1)}}(\Rd)$.
This completes the proof.
\end{proof}

\begin{proof}[Proof of Theorem~\ref{T1.4}]
Without loss of generality we assume $\varepsilon=1$, and we suppress the
explicit dependence on $\varepsilon$ in the notation used in the theorem.
The statement concerning existence of solutions and
the behavior above and below a critical value for
$\beta$ follows by the results in \cite{Ichihara-11}.
For this, we need to first verify
a Foster--Lyapunov type hypothesis, which is part of the assumptions.
Note that the operator $F$ in \cite{Ichihara-11}
has a negative sign in the Laplacian
so that $\cA[\varphi]=-F[\varphi]$, where $\cA$ is the
operator defined in \eqref{ELA.3A}.
So, given that $\ell$ is inf-compact, $\varphi_0=0$ is
an obvious choice to satisfy (A4) in \cite{Ichihara-11}.
Then of course $-\cA[\varphi_0]\to-\infty$ as $\abs{x}\to\infty$.
Note that Theorem~2.2 in \cite{Ichihara-11} then
asserts that $V$ is bounded below in $\Rd$.

Next, consider $\varphi_1 = - a_1 \sqrt\Lyap$ 
with $a_1 \df \inf_{\cK^c}\, \frac{\abs{\langle m,\grad\Lyap\rangle}\sqrt\Lyap}
{\abs{\grad\Lyap}^2}$, where $\cK$ is as in Hypothesis~\ref{H1.1}\,(3),
and $\Lyap$ is as in
Lemma~\ref{L2.3}.
Since $\Lyap$ agrees with $\Bar\Lyap$ outside some compact set
by Lemma~\ref{L2.3}, it follows by Hypothesis~\ref{H1.1}\,(3)
that $a_1>0$.
Then we obtain
\begin{align*}
\tfrac{1}{2}\Delta\varphi_1+ \langle m,\grad\varphi_1\rangle
- \tfrac{1}{2}\abs{\grad\varphi_1}^2
&\;=\; \tfrac{a_1}{4\sqrt\Lyap}\,\Delta\Lyap - \tfrac{a_1}{2\sqrt\Lyap}
\Bigl(\langle m,\grad\Lyap\rangle
+ \tfrac{a_1\sqrt\Lyap-1}{4\Lyap}\,\abs{\grad\Lyap}^2\Bigr)
\\[5pt]
&\;\ge\; \tfrac{a_1}{4\sqrt\Lyap}\,
\bigl(\Delta\Lyap-\langle m,\grad\Lyap\rangle\bigr)
\qquad\text{on\ \ }\cK^c\,.
\end{align*}
Thus, since $\Delta\Bar\Lyap$ is bounded by Hypothesis~\ref{H1.1}\,(3b),
we obtain $-\cA[\varphi_1]\to-\infty$ as $\abs{x}\to\infty$.
It is also clear that
$\phi_0(x)-\phi_1(x)\to \infty$ as $\abs{x}\to\infty$.
Thus, Hypothesis (A.4)$^\prime$ in \cite{Ichihara-11} is also
satisfied.
Therefore, as shown in \cite[Theorem~2.1]{Ichihara-11}, there exists some
critical value $\lambda_*$ such  that
\eqref{HJBerg} has no solution for $\beta>\lambda^*$.
Also by Theorem~2.2 and Corollary~2.3 in \cite{Ichihara-11},
if $V$ is a solution
for $\beta<\lambda^*$, then under the control $v=-\grad V$, the diffusion
is transient.
For $\beta=\lambda^*$ there exists a unique solution $V=V_*$ (up to an additive)
constant, and under the control $v_*=-\grad V_*$ the diffusion
\begin{equation*}
X_{t} \;=\; X_{0} + \int_{0}^{t}\bigl(m(X_{s})+v(X_s)\bigr)\,\D{s}
+ W_{t}\,, \quad t \;\ge\; 0\,,
\end{equation*}
is positive recurrent.
It is clear then that Lemma~\ref{LA.3} implies that $\lambda^*=\beta_*$.

We next turn to the proof of items (a)--(e).
Part~(a) follows directly by \cite[Lemma~5.1]{Met},
and a sharper estimate
was established in the proof of Lemma~\ref{LA.3}
when $\beta=\beta_*$. 
The uniqueness of the solution for $\beta=\beta_*$
follows by the results in \cite{Ichihara-13} discussed above,
while the rest of the assertions in part~(b) follow
by Lemma~\ref{LA.3}.
Part (c) follows by Lemma~\ref{LA.4}.

We now turn to part~(d).
It is enough to show
that for any sequence $\{U^{n}\}\subset\widehat\Uadm$ and
a sequence of times $\{t_n\}$ diverging to $\infty$ then
\begin{equation}\label{PT1.4d1}
\liminf_{n\to\infty}\;\frac{1}{t_n}\,
\Exp_x\biggl[\int_{0}^{t_n} \sR(X^{n}_{s},U^{n}_{s})\,\D{s}\biggr]
\;\ge\;\beta^\varepsilon_*\,,
\end{equation}
where $X^{n}$ denotes the process controlled by $U^n$.
All the terms in this displayed equation are finite,
since
$\int_0^T\Exp_x^U[\sR(X_s,U_s]\,\D{s}<\infty$ for any $U\in\widehat\Uadm$.
This clearly follows by \eqref{EL1.3-bound}.
We include the dependence on the initial condition $X^n_0=x$ in the
notation and denote the corresponding sequence of
mean empirical measures defined in \eqref{E-emp}  by $\Phi^{U^n}_{x,t_n}$.
Extract a subsequence of $\{t_n\}$ over which the terms
on the left hand side of \eqref{PT1.4d1} converge to the `$\liminf$'
and suppose without loss of generality that this limit is finite.
Then the corresponding subsequence of mean empirical measures is tight.
Let $\uppi\in\fP$ be any limit point of this subsequence.
It follows that the left hand side of \eqref{PT1.4d1} is lower bounded
by $\uppi(\sR)$. However, $\uppi(\sR)\ge\beta_*$ by \eqref{EA.19}
and Lemma~\ref{LA.4}. This completes the proof of part (d).

It remains to prove part (e).
Let $\uppi=\eta_v\circledast v\in\fP_\circ$ be any optimal ergodic occupation
measure, and
$\uppi_*\df \eta_*\circledast v_*$, with $v_*=-\grad V$.
By Lemma~\ref{LA.4}, $\eta_v$ has density, which we denote by $\rho_v$.
Let $\xi_v\df\frac{\rho_v}{\rho_v+\rho_*}$ and $\xi_*\df\frac{\rho_*}{\rho_v+\rho_*}$,
and also define $\Bar{v} \df \xi_v v + \xi_* v_*$ and
$\Bar\eta\df\frac{1}{2}(\eta_v+\eta_*)$.
Using the property that the drift of \eqref{E-sde} is an affine function of
the control, it is straightforward to verify that
$\Bar\eta\circledast\Bar{v}\in\fP_\circ$.

By optimality, we have
\begin{align}\label{PT1.4e1}
0 &\;\le\; 2\int_{\Rd} \sR[\Bar{v}]\,\D\Bar\eta
-\int_{\Rd} \sR[v]\,\D\eta_v -\int_{\Rd} \sR[v_*]\,\D\eta_*\\[5pt]
&\;=\; \int_\Rd \abs{\xi_v\, v + \xi_*\, v_*}^2\,\D\Bar\eta
- \tfrac{1}{2}\,\int_\Rd \abs{v}^2\, \D\eta_v
- \tfrac{1}{2}\,\int_\Rd \abs{v_*}^2\, \D\eta_* \nonumber\\[5pt]
&\;=\; \int_{\Rd}\Bigl(\abs{\xi_v\, v + \xi_*\, v_*}^2
-\xi_v\, \abs{v}^2 -\xi_*\, \abs{v_*}^2\Bigr)\, \D\Bar\eta\nonumber\\[5pt]
&\;=\;-\frac{1}{2}\int_{\Rd}\frac{\rho_v(x)\,\rho_*(x)}{\rho_v(x)+\rho_*(x)}\,
\abs{v(x)-v_*(x)}^2\, \D{x}\,.\nonumber
\end{align}
Since $\rho_*$ is strictly positive, \eqref{PT1.4e1} implies
that $\rho_v\,\abs{v-v_*}=0$ a.e.\ in $\Rd$, and thus
$v=v_*$ on the support of $\eta_v$.
It is clear that if $v$ is modified outside the support of $\eta_v$ then
the modified $\eta_v\circledast v$ is also an infinitesimal ergodic occupation
measure.
Therefore $\eta_v\circledast v_*\in\fP_\circ$.
The uniqueness of the invariant measure of the diffusion with
generator $\Lg_{v_*}$ then implies that $\eta_v=\eta_*$,
which in turn implies (since $v=v_*$ on the support of $\eta_v$)
that $v=-\grad V$ a.e.\ in $\Rd$.
This completes the proof of part (e), and also of the theorem.
\end{proof}

\section{Proofs of the results in Section~\ref{S1.5}}\label{AppB}

We start with the proof of Lemma~\ref{L1.16}.

\begin{proof}[Proof of Lemma~\ref{L1.16}]
Suppose that $M$ has a number $q$ of eigenvalues on the open right half complex plane.
Using a similarity transformation we can transform $M$ to a matrix of
the form $\diag(M_1, -M_2)$ where $M_1\in\RR^{(d-q)\times(d-q)}$
and $M_2\in\RR^{q\times q}$ are Hurwitz matrices.
So without loss of generality, we assume $M$ has this form.
Let $S_1$ and $S_2$ be the unique symmetric positive definite matrices
solving the Lyapunov equations $S_1 M_1 + M_1\transp S_1 = -I$ and
$S_2 M_2 + M_2\transp S_2 = -I$, respectively.
Extend these to symmetric matrices in $\RR^{d\times d}$
by defining $\Tilde{S}_1 = \diag(S_1,0)$ and $\Tilde{S}_2 = \diag(0,S_2)$,
and also define, for $\alpha>0$,
\begin{equation*}
\varphi_1(x)\;\df\; \E^{-\alpha \langle x,\Tilde{S}_1 x\rangle}\,,\quad
\varphi_2(x)\;\df\; \E^{-\alpha \langle x,\Tilde{S}_2 x\rangle}\,,\quad
\text{and\ \ } \varphi\;\df\; 1 + \varphi_1-\varphi_2\,.
\end{equation*}
Let $T_1=\diag(I_{(d-q)\times(d-q)}, 0_{q\times q})$,
and $T_2=\diag(0_{(d-q)\times(d-q)}, I_{q\times q})$.
Then, with
$\Lg_v f(x) \df \frac{1}{2}\Delta f(x) +
\bigl\langle Mx+v(x),\grad f(x)\bigr\rangle$,
we obtain
\begin{align}\label{PL1.16A}
\Lg_v \bigl(1-\varphi_2(x)\bigr) &\;=\; \alpha
\varphi_2(x)\Bigl(\trace(\Tilde{S}_2) 
- 2\alpha \bigl\langle x, \Tilde{S}_2^2 x\bigr\rangle + \abs{T_2 x}^2
 + 2\langle v(x), \Tilde{S}_2 x\rangle\Bigr)\\[5pt]
&\;\ge\;
\alpha\varphi_2(x) \Bigl(\trace(\Tilde{S}_2)
+ \abs{T_2 x}^2 - \tfrac{1}{2} \abs{T_2 x}^2
- 2\alpha\norm{\Tilde{S}_2}^2 \abs{T_2 x}^2
- 2\norm{\Tilde{S}_2}^2 \abs{v(x)}^2\Bigr)\nonumber\\[5pt]
&\;=\;
\alpha\varphi_2(x) \Bigl(\trace(S_2)
+ \bigl(\tfrac{1}{2}-2\alpha\norm{\Tilde{S}_2}^2\bigr)\abs{T_2 x}^2
- 2\norm{\Tilde{S}_2}^2 \abs{v(x)}^2\Bigr)\,.\nonumber
\end{align}
For the inequality in \eqref{PL1.16A} we use
\begin{align*}
2 \bigl\langle v(x), \Tilde{S}_2 x\bigr\rangle \;=\;
2 \bigl\langle \Tilde{S}_2v(x), T_2 x\bigr\rangle &\;\ge\;
- \babs{\tfrac{T_2 x}{\sqrt{2}}}^2 - \abs{\sqrt2 \Tilde{S}_2 v(x)}^2\\[5pt]
&\;\ge\; - \tfrac{1}{2}\abs{T_2 x}^2 - 2 \norm{\Tilde{S}_2}^2 \abs{v(x)}^2\,.
\end{align*}
Using the analogous inequality for $\Lg_v \varphi_1(x)$ and combining
the equations we obtain
\begin{align}\label{PL1.16B}
\Lg_v \varphi(x) 
&\;\ge\;
\alpha\E^{-\alpha \langle x, \Tilde{S}_1 x\rangle} \Bigl(-\trace(S_1)
+ \bigl(\tfrac{1}{2}+2\alpha\norm{\Tilde{S}_1}^2\bigr)\abs{T_1 x}^2
- 2\norm{\Tilde{S}_1}^2 \abs{T_1 v(x)}^2\Bigr)\\[5pt]
&\qquad\qquad\;+ \alpha\E^{-\alpha \langle x, \Tilde{S}_2 x\rangle} \Bigl(\trace(S_2)
+ \bigl(\tfrac{1}{2}-2\alpha\norm{\Tilde{S}_2}^2\bigr)\abs{T_2 x}^2
- 2\norm{\Tilde{S}_2}^2 \abs{T_2 v(x)}^2\Bigr)\nonumber\\[5pt]
&\;\ge\; \alpha\Bigl(-\trace(S_1)+\E^{-\alpha\langle x, S x\rangle}
\bigl(\tfrac{1}{2}-2\alpha\norm{S}^2\bigr)\abs{x}^2
- 2\norm{S}^2 \abs{v(x)}^2\Bigr)\,,\nonumber
\end{align}
with $S\df\diag(S_1,S_2)$.

Using It\^o's formula on \eqref{PL1.16B}, dividing by $\alpha$,
and also using the fact that
$\varphi\ge0$ and $\norm{\varphi}_{\infty}=2$,
we obtain
\begin{equation*}
\Exp_{x} \biggl[\int_0^T\Bigl(-\trace(S_1) 
+ \E^{-\alpha\langle X_t, S X_t\rangle}
\bigl(\tfrac{1}{2}-2\alpha\norm{S}^2\bigr) \abs{X_t}^2
- 2\norm{S}^2 \abs{v(X_t)}^2\Bigr)\,\D{t}\biggr]\;\le\;
\frac{2}{\alpha}\,.
\end{equation*}
Dividing by $T$, letting $T\nearrow \infty$ and rearranging terms, we conclude that
$\E^{-\alpha\langle x, S x\rangle}\abs{x}^2$ is integrable with
respect to invariant probability measure
$\mu_v$ under the control $v$
for any $\alpha< \frac{1}{4 \norm{S}^2}$, and the following bound holds
\begin{equation*}
\int_{\Rd} \E^{-\alpha \langle x, S x\rangle}\abs{x}^2 \,\mu_v(\D{x})
\;\le\; \frac{\trace(S_1)}{\tfrac{1}{2}-2\alpha\norm{S}^2}
+ \frac{2\norm{S}^2}{\tfrac{1}{2}-2\alpha\norm{S}^2}
\int_{\Rd} \abs{v(x)}^2 \,\mu_v(\D{x})\,.
\end{equation*}
Taking limits as $\alpha\searrow0$, using monotone convergence, we obtain
\begin{equation*}
\int_{\Rd} \abs{x}^2 \,\mu_v(\D{x})
\;\le\; 2\trace(S_1) + 4\norm{S}^2\, \int_{\Rd} \abs{v(x)}^2 \,\mu_v(\D{x})\,.
\end{equation*}
The proof is complete.
\end{proof}

\begin{proof}[Proof Theorem~\ref{T1.18}]
It is well known that there exists at most one symmetric matrix
$Q$ satisfying \eqref{ET1.18A}-\eqref{ET1.18B} \cite[Theorem~3, p.~150]{Brockett}.
For $\kappa>0$, consider the ergodic control problem of minimizing
\begin{equation}\label{PT1.18A}
J_{\kappa}(v)\;\df\;\limsup_{T\to\infty}\;\frac{1}{T}\,\Exp\biggl[\int_{0}^{T}
\Bigl(\kappa\,\abs{X_{s}}^{2} +
\tfrac{1}{2}\,\abs{v(X_{s})}^{2}\Bigr)\,\D{s}\biggr],
\end{equation}
over $v\in\bUssm$,
subject to the linear controlled diffusion
\begin{equation}\label{PT1.18B}
X_{t} \;=\; X_{0} + \int_{0}^{t}\bigl(M X_{s}+v(X_s)\bigr)\,\D{s}
+ W_{t}\,, \quad t \;\ge\; 0\,. 
\end{equation}
As is also well known, an optimal stationary
Markov control for this problem takes
the form $v(x)= -Q_{\kappa}x$, where $Q_{\kappa}$ is the unique positive
definite symmetric solution to the
matrix Riccati equation
\begin{equation}\label{PT1.18C}
Q_{\kappa}^{2} - M\transp Q_{\kappa} - Q_{\kappa} M \;=\; 
2\,\kappa I\,.
\end{equation}
Moreover, $Q_{\kappa}$ has the following property.
Consider a deterministic linear control system $\Dot{x}(t) = M x(t) + u(t)$,
with $x,u\in\Rd$, and initial condition $x(0)=x_0$.
Let $\cU$ denote the space of controls $u$ satisfying
$\int_{0}^T \abs{u(t)}^2\,\D{t}<\infty$ for all $T>0$, and
$\phi^u_t(x_0)$ denote the solution
of the differential equation under a control $u\in\cU$.
Then 
\begin{equation}\label{PT1.18D}
\bigl\langle x_0, Q_{\kappa} x_0\bigr\rangle
\;=\;\min_{u\in\cU}\;\int_0^\infty \Bigl(\abs{u(t)}^2
+ 2\kappa \abs{\phi^u_t(x_0)}^2\Bigr)\,\D{t}\,.
\end{equation}
For these assertions, see \cite[Theorem~1, p.~147]{Brockett}.

On the other hand,
$\Psi_{\kappa}(x) = \tfrac{1}{2}\bigl\langle x, Q_{\kappa} x\bigr\rangle$ is
a solution of the associated HJB equation
\begin{equation}\label{PT1.18E}
\tfrac{1}{2}\Delta \Psi_{\kappa}(x)
+ \min_{u\in\Rd}\;\Bigl[\bigl\langle Mx+u,
\grad \Psi_{\kappa}(x)\bigr\rangle +\tfrac{1}{2}\abs{u}^{2}\Bigr]
+ \kappa\,\abs{x}^{2}
\;=\; \tfrac{1}{2}\trace(Q_{\kappa})\,.
\end{equation}
The HJB equation \eqref{PT1.18E} characterizes the optimal cost, i.e.,
\begin{equation*}
\inf_{v\in\bUssm}\;J_{\kappa}(v)\;=\;\tfrac{1}{2}\trace(Q_{\kappa})\,.
\end{equation*}
Recall Definition~\ref{D1.17}.
Since the stationary probability distribution of \eqref{PT1.18B}
under the control $v(x)= -Q_{\kappa}x$ is Gaussian,
it follows by \eqref{PT1.18A} that $G=Q_\kappa$ minimizes
\begin{equation*}
\Tilde\cJ_{G;\kappa}(M)
\;\df\;\kappa\,\trace \bigl(\Sigma_{G}\bigr)+
\tfrac{1}{2}\trace\bigl( G\,\Sigma_{G}\, G\transp\bigr)
\end{equation*}
over all matrices $G\in\cG(M)$, where $\Sigma_{G}$ is as in \eqref{ED1.17A}
(note that $\Tilde\cJ_{G;0}(M)=\cJ_{G}(M)$
which is the right hand side of \eqref{ED1.17B}).
Combining this with \eqref{PT1.18E} we have
\begin{equation}\label{PT1.18F}
\inf_{G\in\cG(M)}\;\Tilde\cJ_{G;\kappa}(M)\;=\;
\Tilde\cJ_{Q_\kappa;\kappa}(M)\;=\;
\tfrac{1}{2}\trace(Q_{\kappa})\,.
\end{equation}
By Lemma~\ref{L1.16} we have
\begin{align}\label{PT1.18G}
\trace(\Sigma_{Q_\kappa})&\;\le\;
\widetilde{C}_0\bigl(1 +\cJ_{Q_\kappa;\kappa}(M)\bigr)\\[5pt]
&\;=\; \widetilde{C}_0\bigl(1 + \tfrac{1}{2}\,\trace(Q_{\kappa})\bigr)\,.\nonumber
\end{align}
It also follows by \eqref{PT1.18D} that
$Q_{\kappa'}-Q_{\kappa}$ is nonnegative definite
if $\kappa'\ge\kappa$.
Therefore $Q_{\kappa}$ has a unique limit $Q$
as $\kappa\searrow0$.
It is evident that $Q$ is nonnegative semidefinite, and
\eqref{PT1.18C} shows that it satisfies \eqref{ET1.18A}.
Since $\trace(\Sigma_{Q_\kappa})$ is bounded by \eqref{PT1.18G},
it follows that $\Sigma_{Q_\kappa}$ converges along
some subsequence $\kappa_n\searrow0$ to a symmetric positive semidefinite
matrix $\Sigma$.
Thus \eqref{ET1.18B} holds.
However, \eqref{ET1.18B} implies that $\Sigma$ is invertible, and therefore,
it is positive definite.  In turn, \eqref{ET1.18B} implies that $M-Q$ is Hurwitz.

Since $v_G(x)= -Gx$, $G\in\cG(M)$, is in general suboptimal
for the criterion $J_\kappa(v)$, applying Lemma~\ref{L1.16} once more,
we obtain
\begin{equation*}
\cJ_{Q_\kappa}(M)\;\le\;
\Tilde\cJ_{Q_{\kappa};\kappa}(M)\;\le\; \kappa\,\widetilde{C}_0\bigl(1 + \cJ_G(M)\bigr)
+\cJ_G(M)
\qquad \forall\,G\in\cG(M)\,.
\end{equation*}
Therefore, we have
\begin{equation*}
\cJ_*(M)\;\le\;
\Tilde\cJ_{Q_{\kappa};\kappa}(M)\;\le\; \kappa\,\widetilde{C}_0\bigl(1 + \cJ_*(M)\bigr)
+\cJ_*(M)\,,
\end{equation*}
and taking limits as $\kappa\searrow0$, this implies by \eqref{PT1.18F}
that $\cJ_*(M) \;=\; \tfrac{1}{2}\,\trace(Q)$.

It remains to show that
$\varLambda^+(M) \;=\; \tfrac{1}{2}\,\trace(Q)$.
Let $T$ be a unitary matrix such that $\Tilde{Q}\df T Q T\transp$
takes the form $\Tilde{Q} = \diag(0,\Tilde{Q}_2)$, with
$\Tilde{Q}\in\RR^{q\times q}$ a positive definite matrix.
Write the corresponding block structure of $T M T\transp$ as
\begin{equation*}
\Tilde{M} \;\df\; T M T\transp \;=\;
\begin{pmatrix}\Tilde{M}_{11}& \Tilde{M}_{12}\\[3pt]
\Tilde{M}_{21}&\Tilde{M}_{22}
\end{pmatrix}\,,
\end{equation*}
with $\Tilde{M}_{22}\in\RR^{q\times q}$.
Since $M\transp Q + Q M = Q^2$, we obtain
$\Tilde{M}\transp \Tilde{Q} + \Tilde{Q} \Tilde{M} = \Tilde{Q}^2$,
and block multiplication shows that $\Tilde{Q}_2\Tilde{M}_{21}=0$,
which implies that $\Tilde{M}_{21}=0$.
Since $M-Q$ is similar to $\Tilde{M}-\Tilde{Q}$ the latter must be Hurwitz,
which implies that $\Tilde{M}_{11}$ is Hurwitz.
By block multiplication we have
\begin{equation}\label{PT1.18H}
\Tilde{M}_{22}\transp \Tilde{Q}_2 + \Tilde{Q}_2 \Tilde{M}_{22} = \Tilde{Q}_2^2\,.
\end{equation}
Since  $\Tilde{Q}_2$ is positive definite, the matrix $-\Tilde{M}_{22}$ is Hurwitz
by the Lyapunov theorem.
Thus $\varLambda^+(M) = \trace(\Tilde{M}_{22})$.
Therefore, since $\Tilde{Q}_{2}$ is
invertible, and $\trace(Q)=\trace(\Tilde{Q})$, we obtain by \eqref{PT1.18H} that
\begin{align*}
\trace(Q) \;=\; \trace(\Tilde{Q}_{2})&\;=\;
\trace (\Tilde{M}_{22}\transp+\Tilde{M}_{22})
\nonumber\\
&\;=\;2\trace (\Tilde{M}_{22})\;=\;2\varLambda^+(M)\,.
\end{align*}
This proves part~(a).

Now let $\Hat{v}\in\bUssm$ be any control.
Let $\Bar{V}(x) = \tfrac{1}{2}\langle x,Qx\rangle$.
Then $\Bar{V}$ satisfies \eqref{PT1.18E} with $\kappa=0$.
Since
\begin{align*}
\min_{u\in\Rd}\;\Bigl[\bigl\langle Mx+u,
\grad \Bar{V}(x)\bigr\rangle +\tfrac{1}{2}\abs{u}^{2}\Bigr]
&\;=\; \bigl\langle Mx,\grad \Bar{V}(x)\bigr\rangle
-\tfrac{1}{2}\babs{Qx}^2\\[5pt]
&\;=\;
 \bigl\langle Mx+\Hat{v}(x),\grad \Bar{V}(x)\bigr\rangle
+\tfrac{1}{2}\abs{\Hat{v}(x)}^{2}
-\tfrac{1}{2}\babs{Qx +\Hat{v}(x)}^{2}\,,
\end{align*}
we obtain
\begin{equation}\label{PT1.18I}
\tfrac{1}{2}\Delta \Bar{V}(x)
+ \bigl\langle Mx+\Hat{v}(x),\grad \Bar{V}(x)\bigr\rangle
+\tfrac{1}{2}\abs{\Hat{v}(x)}^{2}
\;=\; \tfrac{1}{2}\trace(Q)+\tfrac{1}{2}\babs{Qx +\Hat{v}(x)}^{2}\,.
\end{equation}
Applying It\^o's formula to \eqref{PT1.18I}, and using the
fact that $\mu_{\Hat{v}}$ has finite second moments as shown in Lemma~\ref{L1.16},
and $\Bar{V}$ is quadratic, a standard argument gives
\begin{equation}\label{PT1.18J}
\int_{\Rd}  \Bigl( \tfrac{1}{2}\abs{\Hat{v}(x)}^{2}
-\tfrac{1}{2}\abs{Qx +\Hat{v}(x)}^{2}\Bigr)\,\mu_{\Hat{v}}(\D{x})\;=\;
\tfrac{1}{2}\trace(Q)\,.
\end{equation}
Thus
$\int_{\Rd}\tfrac{1}{2}\abs{\Hat{v}(x)}^{2}\,\mu_{\Hat{v}}(\D{x})\ge
\tfrac{1}{2}\trace(Q)=\cJ_*(M)$.
Hence \eqref{ET1.18C} holds.

Suppose $\Hat{v}$ is optimal, i.e., attains the infimum in \eqref{ET1.18C}.
By \eqref{PT1.18J}, we
obtain
\begin{equation*}
\lim_{\kappa\searrow0}\;
\int_{\Rd}\abs{Qx +\Hat{v}(x)}^{2}\,\mu_{\Hat{v}}(\D{x})\;=\;0\,.
\end{equation*}
Therefore, since $\mu_{\Hat{v}}$ has a positive density,
it holds that
$\Hat{v}(x)=-Qx$ a.e.\ in $\Rd$.
This completes the proof of part~(b).

We have shown that $\Bar{V}(x) = \tfrac{1}{2}\langle x,Qx\rangle$ satisfies
\eqref{ET1.18D} with $\Bar{\beta}_*=\varLambda^+(M)$
and the associated process is positive recurrent.
Therefore, as in the  proof of Theorem~\ref{T1.4} for a bounded $m$,
part~(c) follows by Theorems~2.1--2.2 and Corollary~2.3 in \cite{Ichihara-11}.
Note that Hypothesis (A4) in \cite{Ichihara-11} is
easily satisfied for the linear problem.
Since $M$ is exponentially dichotomous, then
as seen in the proof of Theorem~\ref{T2.2},
there exists symmetric matrices $S$ and $\Hat{S}$, with $\Hat{S}$ positive
definite such that
$M\transp S + SM = \Hat{S}$.
Consider the  function $\varphi_0(x) \df a\,\langle x,Sx\rangle$,
with $a\df \frac{1}{4}\bigl(\norm{\Hat{S}^{-1}}\norm{S}^{2}\bigr)^{-1}$.
Since
\begin{equation*}
\norm{\Hat{S}^{-1}}\,\langle x, \Hat{S}x\rangle \;\ge\;
\abs{x}^2\;\ge\; \norm{S}^{-2}\,\abs{Sx}^2\,,
\end{equation*}
we obtain
\begin{align*}
\Bar\cA[\varphi_0](x)&\;\df\; \tfrac{1}{2}\,\Delta\varphi_0(x)
+ \langle Mx,\nabla \varphi_0(x)\rangle - \tfrac{1}{2}\,
\abs{\nabla \varphi_0(x)}^{2}\\[5pt]
&\;=\; a\,\trace{S} + a\,\langle x, \Hat{S}x\rangle
- 2 a^2\,\abs{Sx}^2 \\[5pt]
&\;>\;\tfrac{a}{2}\,\bigl(2\trace{S} - \langle x, \Hat{S}x\rangle\bigr)\,.
\end{align*}
Thus  $\Bar\cA[\varphi_a](x)\to\infty$ as $\abs{x}\to\infty$.
This completes the proof.
\end{proof}

\section*{Acknowledgments}
The authors are indebted to the anonymous referees for their 
constructive comments and suggestions.
This work was initiated during Vivek Borkar's visit to the Department of
Electrical Engineering, Technion, supported by Technion. Thanks are due to 
Prof.\ Rami Atar for suggesting the problem as well as for valuable discussions.

The work of Ari Arapostathis was supported in part by the Office of Naval
Research through grants N00014-14-1-0196 and N00014-16-1-2956, and in part by
the Army Research Office through grant W911NF-17-1-001.

The work of Anup Biswas was supported in part by an award
from the Simons Foundation (\# 197982) to The University of Texas at Austin,
in part by the Office of Naval Research grant N00014-14-1-0196, and in part by an 
INSPIRE faculty fellowship.

The work of Vivek Borkar was supported in part by a J.\ C.\ Bose Fellowship
from the Department of Science and Technology, Government of India.

\def\cprime{$'$}

\end{document}